\documentclass[11pt,a4paper]{amsart}
\setlength{\textwidth}{125mm}

\usepackage[mathcal]{eucal}
\usepackage{amsfonts}
\usepackage{amsbsy}
\usepackage{amssymb}
\usepackage{amsmath}
\usepackage{graphicx}
\usepackage[latin2]{inputenc}

\vfuzz2pt 
\hfuzz2pt 
\newtheorem{thm}{Theorem}[section]
\newtheorem{cor}[thm]{Corollary}
\newtheorem{lem}[thm]{Lemma}

\newcounter{hypothA}
\setcounter{hypothA}{0}

\newcounter{VECHIhypothF}
\setcounter{VECHIhypothF}{1}

\newcounter{hypothF}
\setcounter{hypothF}{1}

\newcounter{hypothE}
\setcounter{hypothE}{4}

\newcounter{hypothL}
\setcounter{hypothL}{11}

\theoremstyle{definition}

\theoremstyle{remark}
\newtheorem{rem}[thm]{Remark}
\numberwithin{equation}{section}


\begin{document}

\title[On the existence ...] {On the existence of approximate problems
that preserve the type of a bifurcation point of a nonlinear
problem. Application to the
stationary Navier-Stokes equations}%
\author[C\u{a}t\u{a}lin - Liviu Bichir]%
{C\u{a}t\u{a}lin - Liviu Bichir}%

\address{independent researcher, Apartment 5, Flat V1, 74 Nicolae B\u{a}lcescu Street, Gala\c{t}i, 800001, Romania}%
\email{catalinliviubichir@yahoo.com}%


\subjclass{47J15, 47H14, 47J05, 65P30, 65J05, 35A35, 76M10}%


\keywords{bifurcation point, steady-state nonlinear problem,
perturbation (perturbed equation), overdetermined extended system,
nonlinear Fredholm operator, inverse problem, Graves' theorem,
metric regularity, contraction mapping principle for set-valued
mappings, equivalent maps, approximate bifurcation problem,
stationary Navier-Stokes equations, finite element method}%


\begin{abstract}
We consider a nonlinear problem $F(\lambda,u)=0$ on
infinite-dimensional Banach spaces that correspond to the
steady-state bifurcation case. In the literature, it is found
again a bifurcation point of the approximate problem
$F_{h}(\lambda_{h},u_{h})=0$ only in some cases. We prove that, in
every situation, given $F_{h}$ that approximates $F$, there exists
an approximate problem $F_{h}(\lambda_{h},u_{h})-\varrho_{h} = 0$
that has a bifurcation point with the same properties as the
bifurcation point of $F(\lambda,u)=0$. First, we formulate, for a
function $\widehat{F}$ defined on general Banach spaces, some
sufficient conditions for the existence of an equation that has a
bifurcation point of certain type. For the proof of this result,
we use some methods from variational analysis, Graves' theorem,
one of its consequences and the contraction mapping principle for
set-valued mappings. These techniques allow us to prove the
existence of a solution with some desired components that equal
zero of an overdetermined extended system. We then obtain the
existence of a constant (or a function) $\widehat{\varrho}$ so
that the equation $\widehat{F}(\lambda,u)-\widehat{\varrho} = 0$
has a bifurcation point of certain type. This equation has
$\widehat{F}(\lambda,u) = 0$ as a perturbation. It is also made
evident a class of maps $C^{p}$ - equivalent (right equivalent) at
the bifurcation point to
$\widehat{F}(\lambda,u)-\widehat{\varrho}$ at the bifurcation
point. Then, for the study of the approximation of
$F(\lambda,u)=0$, we give conditions that relate the exact and the
approximate functions. As an application of the theorem on general
Banach spaces, we formulate conditions in order to obtain the
existence of the approximate equation
$F_{h}(\lambda_{h},u_{h})-\varrho_{h} = 0$. For example, we
consider the finite element approximation of stationary
Navier-Stokes equations.
\end{abstract}
\maketitle

\section{Introduction}
\label{sectiunea_0_introduction}

For a steady-state bifurcation problem on infinite-dimensional
Banach spaces, we study the existence of an approximate problem
that has a bifurcation point with the same properties as the
bifurcation point of the given nonlinear problem. The problem of
bifurcation is present in the analysis of many mathematical models
of phenomena from the physical world. Generally, these models are
formulated with an equation on Banach spaces, infinite or
finite-dimensional. Examples of infinite-dimensional problems are
from fluid mechanics, solid mechanics, elasticity, nonlinear
vibrations, structural analysis, ocean, atmosphere and climate
models and so on. As far as the finite-dimensional problems are
concerned, examples are from medicine (cardiology, neuroscience),
biology, chemistry, economy, etc. In both cases,
infinite-dimensional and finite-dimensional, practical
computations are necessary. For this purpose, the
infinite-dimensional problem must be approximated by a
finite-dimensional problem using methods such as finite element
method, finite differences method, finite volume method, spectral
methods or wavelets. The first problem is named "exact", defined
on exact spaces, by an exact equation and it has exact solutions.
The second equation is the approximate equation and the related
entities are called "approximate".

We consider equations that correspond to the steady-state
bifurcation case. We retain the exact equation (\ref{e5_1}) and
the hypothesis (\ref{ipotezaHypF}) on the bifurcation point from
\cite{CLBichir_bib_Cr_Ra1990}, where Crouzeix and Rappaz study the
bifurcation problems and their approximations. Usually, the
Liapunov - Schmidt method is applied and it is obtained that,
locally, around the bifurcation point, the solution set of the
equation on Banach spaces is in one-to-one correspondence with the
solution set of the classical bifurcation equation. The study of
the solutions of the classical bifurcation equation, using
singularity theory, is performed in
\cite{CLBichir_bib_Golubitsky_Schaeffer1985,
CLBichir_bib_Golubitsky_Stewart_Schaeffer1988}. As the authors of
\cite{CLBichir_bib_Cr_Ra1990} specify, their method, for the exact
equation, is equivalent to the Lyapunov-Schmidt method.

Let $W$ and $Z$ be real Banach spaces. Let $m \geq 1$, $p \geq 2$.
Let $F:\mathbb{R}^{m} \times W \rightarrow Z$ be a nonlinear
function of class $C^{p}$. Consider the equation in $(\lambda,u)
\in \mathbb{R}^{m} \times W$
\begin{equation}
\label{e5_1}
   F(\lambda,u)=0 \, .
\end{equation}

Assume that $(\lambda_{0},u_{0})$ is a solution of (\ref{e5_1})
that satisfies the hypothesis (\cite{CLBichir_bib_Cr_Ra1990}):
\begin{eqnarray}
   & \ & D_{u}F(\lambda_{0},u_{0}) \
            \textrm{is a Fredholm operator of} \
            W \ \textrm{onto} \ Z \ \textrm{with index zero} \, ,
         \label{ipotezaHypF} \\
   & \ & \quad n \geq 1 \ \textrm{and} \ q \geq 1\, ,
        \nonumber
\end{eqnarray}
where $n = dim \ Ker(D_{u}F(\lambda_{0},u_{0}))$ and $q=codim \
Range(DF(\lambda_{0},u_{0}))$. The solution $(\lambda_{0},u_{0})$
is called \textit{a bifurcation point of problem (\ref{e5_1})}. If
$(\lambda_{1},u_{1})$ is a solution of (\ref{e5_1}) and
$D_{u}F(\lambda_{1},u_{1})$ is an isomorphism of $W$ onto $Z$,
then $(\lambda_{1},u_{1})$ is \textit{a regular solution (a
regular point) of problem (\ref{e5_1})}.

Let $\breve{W}$, $\breve{Z}$ be some real Banach spaces and let
$\breve{F}:\mathbb{R}^{m} \times \breve{W} \rightarrow \breve{Z}$.
If a solution $(\breve{\lambda}_{0},\breve{u}_{0})$ of the
equation $\breve{F}(\breve{\lambda},\breve{u})=0$ satisfies
hypothesis (\ref{ipotezaHypF}), with the same $n$ and $q$, we say
that $(\breve{\lambda}_{0},\breve{u}_{0})$ is \textit{a
bifurcation point of the same type as} $(\lambda_{0},u_{0})$.

The above spaces $W$ and $Z$ are both infinite-dimensional or they
are both finite-dimensional. If they are infinite-dimensional,
then equation (\ref{e5_1}) is approximated by an equation
\begin{equation}
\label{e5_9_introd}
   F_{h}(\lambda_{h},u_{h})=0 \, ,
\end{equation}
where $F_{h}:\mathbb{R}^{m} \times W_{h} \rightarrow Z_{h}$,
$W_{h}$ is a closed subspace  of $W$ and $Z_{h}$ is a closed
subspace of $Z$. $W_{h}$ and $Z_{h}$ are both infinite-dimensional
spaces or they are both finite-dimensional spaces with $dim \
W_{h}$ $=$ $dim \ Z_{h}$. Usually, (\ref{e5_9_introd}) is obtained
by finite element method or by the other methods mentioned above
\cite{CLBichir_bib_Bochev_GunzburgerLSFEM2009,
CLBichir_bib_Bohmer2001, CLBichir_bib_Bohmer_Dahlke2003,
CLBichir_bib_Bohmer2010, CLBichir_bib_Brezzi_Rappaz_Raviart1_1980,
CLBichir_bib_Brezzi_Rappaz_Raviart2_1981,
CLBichir_bib_Brezzi_Rappaz_Raviart3_1981,
CLBichir_bib_Cliffe_Spence_Tavener2000, CLBichir_bib_Cr_Ra1990,
CLBichir_bib_Elman_Silvester_Wathen2005, CLBichir_bib_Gir_Rav1986,
CLBichir_bib_Ili1980, CLBichir_bib_Ka_Ak1986,
CLBichir_bib_Quarteroni_Valli2008, CLBichir_bib_Temam1979,
CLBichir_bib_E_Zeidler_NFA_IIA, CLBichir_bib_E_Zeidler_NFA_IIB}.
The theoretical Galerkin method is also taken into account.

A question arises with regard to an approximate equation
(\ref{e5_9_introd}):

\textit{($Q_{1}$) Has the approximate equation (\ref{e5_9_introd})
also a bifurcation point ? In case that it exists, is this point
of the same type as $(\lambda_{0},u_{0})$?}

In case of the simple limit point, the point is generic
(\cite{CLBichir_bib_Golubitsky_Schaeffer1985}) and the approximate
problem has a simple limit point $(\lambda_{0h},u_{0h})$
(\cite{CLBichir_bib_Brezzi_Rappaz_Raviart2_1981,
CLBichir_bib_Cr_Ra1990}).

In case of the simple bifurcation point of the problems on Banach
spaces, it is found again a bifurcation point of the approximate
problem only in some situations. In two particular cases
(\cite{CLBichir_bib_Brezzi_Rappaz_Raviart3_1981,
CLBichir_bib_CalozRappaz1997, CLBichir_bib_Cr_Ra1990}), which are
generic, bifurcation from the trivial branch and symmetry-breaking
bifurcation, a simple bifurcation point $(\lambda_{0h},u_{0h})$ of
the approximate problem exists (numerical bifurcation) and there
is a diffeomorphism between the solution set of the approximate
bifurcation equation and a degenerate hyperbola. In the general
case (\cite{CLBichir_bib_Beyn1980,
CLBichir_bib_Brezzi_Rappaz_Raviart3_1981,
CLBichir_bib_CalozRappaz1997, CLBichir_bib_Cr_Ra1990,
CLBichir_bib_Moore1980}), in the hyperbolic case, the solution set
of the approximate equation is composed of two branches that do
not intersect. This solution set and the solution set of the
approximate bifurcation equation are diffeomorphic to a part of a
nondegenerate hyperbola (imperfect numerical bifurcation). In the
course of their study, Brezzi, Rappaz and Raviart made evident a
perturbed approximate bifurcation equation (equation (3.21), page
11, \cite{CLBichir_bib_Brezzi_Rappaz_Raviart3_1981}) of the
approximate bifurcation equation. The branches of this perturbed
equation intersect transversally in a point. In this general case,
Weber \cite{CLBichir_bib_Weber1981} propose to calculate an
approximation of the exact solution as a component of the solution
of an approximate adequate extended system and then, compute two
approximate branches that intersect in this point. In this way,
bifurcation is not destroyed by approximation.

We mention that the terminology, in \cite{CLBichir_bib_Cr_Ra1990},
is the following: simple bifurcation points are simple bifurcation
points (fold bifurcation and cusp bifurcation in
\cite{CLBichir_bib_Brezzi_Rappaz_Raviart3_1981}, transcritical in
\cite{CLBichir_bib_Cliffe_Spence_Tavener2000}, transcritical and
pitchfork (subcritical and supercritical) in
\cite{CLBichir_bib_Drazin_Reid2004} and
\cite{CLBichir_bib_Kielhofer2012}) or double limit points.

We cite, among other references,
\cite{CLBichir_bib_Cliffe_Spence_Tavener2000,
CLBichir_bib_Foias_Temam1978, CLBichir_bib_AG_Mo_O1999,
CLBichir_bib_Golubitsky_Schaeffer1985, CLBichir_bib_Govaerts2000,
CLBichir_bib_Kielhofer2012, CLBichir_bib_Temam1995,
CLBichir_bib_E_Zeidler_NFA_I, CLBichir_bib_E_Zeidler_NFA_III,
CLBichir_bib_E_Zeidler_NFA_IV} for a discussion about the
genericity of bifurcation points. For the exact stationary Navier
- Stokes equations, there exist bifurcation points that are not
generic \cite{CLBichir_bib_Foias_Temam1978,
CLBichir_bib_Temam1995}. For the bifurcation of the solutions of
the stationary Navier - Stokes equations, we mention
\cite{CLBichir_bib_Bohmer2001, CLBichir_bib_Bohmer2010,
CLBichir_bib_Brezzi_Rappaz_Raviart2_1981,
CLBichir_bib_Cliffe_Spence_Tavener2000,
CLBichir_bib_Elman_Silvester_Wathen2005,
CLBichir_bib_Foias_Temam1978, CLBichir_bib_AG1985,
CLBichir_bib_AG_Mo_O1999, CLBichir_bib_Kielhofer2012,
CLBichir_bib_Temam1979, CLBichir_bib_Temam1995,
CLBichir_bib_Temam1999, CLBichir_bib_E_Zeidler_NFA_I,
CLBichir_bib_E_Zeidler_NFA_IV}. For the approximate case of these
equations, studies are performed in, e.g.,
\cite{CLBichir_bib_Brezzi_Rappaz_Raviart2_1981,
CLBichir_bib_Cliffe_Spence_Tavener2000}.

Referring to the results mentioned above, from
\cite{CLBichir_bib_Cr_Ra1990}, concerning imperfect numerical
bifurcation, and to the theory from
\cite{CLBichir_bib_Golubitsky_Schaeffer1985,
CLBichir_bib_Golubitsky_Stewart_Schaeffer1988}, Georgescu
\cite{CLBichir_bib_AG_CLB1997_2002_NS_hs_bt_es} interpreted the
approximate equation (\ref{e5_9_introd}) as a perturbation of the
exact equation (\ref{e5_1}) on different spaces. She also
suggested us \cite{CLBichir_bib_AG_CLB1997_2002_NS_hs_bt_es} that
there probably exists a perturbation of (\ref{e5_9_introd}) which
has the approximate bifurcation point that we sought at
(\ref{e5_9_introd}) in all the situations given by the hypothesis
(\ref{ipotezaHypF}).

As we have seen, in literature, the above question
\textit{($Q_{1}$)} has a positive answer only in some situations.
Moreover, (\ref{e5_9_introd}) cannot be used to study the
qualitative aspects of (\ref{e5_1}). When we use an approximation
method, we expect not only to find some branches of approximate
solutions but also information about the qualitative aspects of
the exact equation. On the other hand, we expect that
(\ref{e5_9_introd}) is the perturbation of an approximate equation
that has a bifurcation point, not only of (\ref{e5_1}) (this
follows e.g. from (the interpretation of)
\cite{CLBichir_bib_Golubitsky_Schaeffer1985,
CLBichir_bib_Golubitsky_Stewart_Schaeffer1988}, from the
discussion \cite{CLBichir_bib_AG_CLB1997_2002_NS_hs_bt_es}
described above and from the interpretation of the approximate
bifurcation equation (3.21), page 11,
\cite{CLBichir_bib_Brezzi_Rappaz_Raviart3_1981}). In order to
obtain a positive answer in all the situations, let us replace the
question \textit{($Q_{1}$)} by the following question

\textit{($Q_{2}$) Does an approximate problem that preserves the
type of $(\lambda_{0},u_{0})$ exist in all the situations given by
the hypothesis (\ref{ipotezaHypF})? If this approximate problem
exists, is the given problem (\ref{e5_9_introd}) a perturbation of
this one?}

We prove an affirmative answer to \textit{($Q_{2}$)}. To the best
of our knowledge, this approach and the results we prove are new.
We do not discuss if the exact bifurcation point is generic or
not. We prove that if an exact bifurcation point exists,
satisfying the hypothesis (\ref{ipotezaHypF}), then, for an
approximation method, there exists an approximate equation that
has a bifurcation point with the same properties (hypotheses).

To be specific, given a function $F_{h}$ that approximates $F$, we
prove that there exists $\varrho_{h}$ such that the equation
\begin{equation}
\label{e5_1_sol_widetilde_x_0h_Inv_Fc_Th_ec_DATA_introd}
      F_{h}(\lambda_{h},u_{h})-\varrho_{h}
      = 0 \, ,
\end{equation}
has a bifurcation point $(\lambda_{0h},u_{0h})$ of the same type
as the bifurcation point $(\lambda_{0},u_{0})$ of (\ref{e5_1}).
$\varrho_{h}$ is a constant. The usual approximate equation
(\ref{e5_9_introd}) is a perturbation of the new approximate
equation (\ref{e5_1_sol_widetilde_x_0h_Inv_Fc_Th_ec_DATA_introd}).
The result for
(\ref{e5_1_sol_widetilde_x_0h_Inv_Fc_Th_ec_DATA_introd}) is local.
Equation (\ref{e5_1_sol_widetilde_x_0h_Inv_Fc_Th_ec_DATA_introd})
can be used in order to study the qualitative aspects of
(\ref{e5_1}). Moreover, there exists a class of maps $C^{p}$ -
equivalent (right equivalent) at $(\lambda_{0h},u_{0h})$ to
$F_{h}(\lambda_{h},u_{h})-\varrho_{h}$ at $(\lambda_{0h},u_{0h})$
and that satisfies the hypothesis (\ref{ipotezaHypF}) in
$(\lambda_{0h},u_{0h})$. The problem can be formulated as an
inverse problem: given $F_{h}$, there exists $\varrho_{h}$ and it
must be determined  such that
(\ref{e5_1_sol_widetilde_x_0h_Inv_Fc_Th_ec_DATA_introd}) has a
bifurcation point of the same type as the exact equation
(\ref{e5_1}).
(\ref{e5_1_sol_widetilde_x_0h_Inv_Fc_Th_ec_DATA_introd})
approximates (\ref{e5_1}). Not every approximate equation of
(\ref{e5_1}) has a bifurcation point. Equation
(\ref{e5_1_sol_widetilde_x_0h_Inv_Fc_Th_ec_DATA_introd}) is a
particular form of (\ref{e5_9_introd}), obtained by replacing
$F_{h}(\lambda_{h},u_{h})$ with
$F_{h}(\lambda_{h},u_{h})-\varrho_{h}$. If (\ref{e5_1}) has two
bifurcation points satisfying hypothesis (\ref{ipotezaHypF}), it
is possible that the corresponding two $\varrho_{h}$ are not
equal. These results do not contradict the present literature
results.

The equation
(\ref{e5_1_sol_widetilde_x_0h_Inv_Fc_Th_ec_DATA_introd}) and the
conclusion for this are the consequences of the formulation of two
main results that we introduce: (i) Theorem
\ref{teorema_principala_parteaMAIN} about the equivalence between
the properties of a bifurcation point that satisfies
(\ref{ipotezaHypF}) and the existence of the solution of an
overdetermined extended system; (ii) Theorem
\ref{teorema_principala_spatii_infinit_dimensionale_widetilde_s_3_0_exact_inf}
where we formulate some sufficient conditions and we establish, on
general Banach spaces, the existence of an equation that has a
bifurcation point of certain type. The reasoning we use is the
following: if the exact problem (\ref{e5_1}) has a bifurcation
point, we construct an adequate extended system applying the
direct implication of the first theorem. This system is
approximated and the proof of the second theorem furnishes an
extended system that satisfies the hypotheses of the converse
implication of the first theorem. In this way, we obtain
(\ref{e5_1_sol_widetilde_x_0h_Inv_Fc_Th_ec_DATA_introd}) and the
fact that (\ref{e5_1_sol_widetilde_x_0h_Inv_Fc_Th_ec_DATA_introd})
has a bifurcation point of the same type as the bifurcation point
of (\ref{e5_1}). Practically, we use Theorem
\ref{teorema_principala_spatii_infinit_dimensionale_widetilde_s_3_0_exact_inf}.
This second theorem is generally valid and regards not only the
approximate equations, but also the exact equations. Theorem
\ref{teorema_principala_spatii_infinit_dimensionale_widetilde_s_3_0_exact_inf}
is a result in its own right. Theorem
\ref{teorema_principala_spatii_infinit_dimensionale_widetilde_s_3_0_exact_inf}
allows us to obtain the existence of a bifurcation problem for
which a given problem is a perturbation. Theorem
\ref{teorema_principala_spatii_infinit_dimensionale_widetilde_s_3_0_exact_h}
and Theorem
\ref{teorema_principala_spatii_infinit_dimensionale_widetilde_s_3_0_lim}
give the affirmative answer to the question \textit{($Q_{2}$)}. We
also give some conditions that relate the exact and the
approximate functions in Theorem \ref{teorema_lema5_8p}. In
Corollary
\ref{corolarul_doi_2_teorema_principala_spatii_infinit_dimensionale_widetilde_s_3_0_exact_fi3_DIF},
we obtain $\varrho$ in the form of a function of $(\lambda,u)$,
where $\varrho$ is the corresponding form of $\varrho_{h}$, from
(\ref{e5_1_sol_widetilde_x_0h_Inv_Fc_Th_ec_DATA_introd}), in the
infinite-dimensional case.

In our approach, the numerical analysis and the numerical
experiments must be performed using the inverse problem attached
to (\ref{e5_1_sol_widetilde_x_0h_Inv_Fc_Th_ec_DATA_introd}) and
not, as usual, using (\ref{e5_9_introd}).

Our results can be applied to the particular case of the exact
simple bifurcation point of (\ref{e5_1}), in the general case, in
the hyperbolic case, studied in
\cite{CLBichir_bib_Brezzi_Rappaz_Raviart3_1981,
CLBichir_bib_CalozRappaz1997, CLBichir_bib_Cr_Ra1990}, mentioned
above. Let us consider (\ref{e5_1}), (\ref{e5_9_introd}) and
(\ref{e5_1_sol_widetilde_x_0h_Inv_Fc_Th_ec_DATA_introd}) in this
case. We obtain the approximate equation
(\ref{e5_1_sol_widetilde_x_0h_Inv_Fc_Th_ec_DATA_introd}) that has
a (an approximate) simple bifurcation point in this case. This
(\ref{e5_1_sol_widetilde_x_0h_Inv_Fc_Th_ec_DATA_introd})
approximates (\ref{e5_1}). The equation (\ref{e5_9_introd}) used
in \cite{CLBichir_bib_Brezzi_Rappaz_Raviart3_1981,
CLBichir_bib_CalozRappaz1997, CLBichir_bib_Cr_Ra1990} is
impractical in order to regain the qualitative aspects of
(\ref{e5_1}), as we saw above; the equation
(\ref{e5_1_sol_widetilde_x_0h_Inv_Fc_Th_ec_DATA_introd}) maintains
the qualitative aspects of (\ref{e5_1}). The Liapunov - Schmidt
method or the alternate equivalent method of Crouzeix and Rappaz
can be applied to
(\ref{e5_1_sol_widetilde_x_0h_Inv_Fc_Th_ec_DATA_introd}) as to
(\ref{e5_1}). There results a (classical) (approximate)
bifurcation equation and there is a diffeomorphism between the
solution set of this one and a degenerate hyperbola. The solution
set of (\ref{e5_1_sol_widetilde_x_0h_Inv_Fc_Th_ec_DATA_introd}) is
composed of two branches that intersect in the simple bifurcation
point. The approximate bifurcation equation from
\cite{CLBichir_bib_Brezzi_Rappaz_Raviart3_1981,
CLBichir_bib_CalozRappaz1997, CLBichir_bib_Cr_Ra1990} is obtained
using mathematical entities related to (\ref{e5_1}); the
(approximate) bifurcation equation for
(\ref{e5_1_sol_widetilde_x_0h_Inv_Fc_Th_ec_DATA_introd}) can be
constructed using only mathematical entities related to
(\ref{e5_1_sol_widetilde_x_0h_Inv_Fc_Th_ec_DATA_introd}). We have
three other remarks related to the results from the literature: 1.
Recall the perturbed approximate bifurcation equation whose
branches intersect transversally in a point (equation (3.21), page
11, \cite{CLBichir_bib_Brezzi_Rappaz_Raviart3_1981}), mentioned
above, made evident by Brezzi, Rappaz and Raviart. This
approximate bifurcation equation is not related to any approximate
equation in \cite{CLBichir_bib_Brezzi_Rappaz_Raviart3_1981}. We do
not perform a study to answer if the approximate equation
(\ref{e5_1_sol_widetilde_x_0h_Inv_Fc_Th_ec_DATA_introd})
corresponds to this perturbed approximate bifurcation equation,
but it seems that this is the case. 2. We can interpret that the
approximation of the exact solution calculated by Weber
\cite{CLBichir_bib_Weber1981}, cited above, is the solution of an
approximate equation of the form
(\ref{e5_1_sol_widetilde_x_0h_Inv_Fc_Th_ec_DATA_introd}). 3. In
each of the generic cases of simple limit point, bifurcation from
the trivial branch and symmetry-breaking bifurcation, in
\cite{CLBichir_bib_Brezzi_Rappaz_Raviart2_1981,
CLBichir_bib_Brezzi_Rappaz_Raviart3_1981,
CLBichir_bib_CalozRappaz1997, CLBichir_bib_Cr_Ra1990}, an estimate
$|\lambda_{0h}$ $-$ $\lambda_{0}|$ is given and it is not proven
that $\lambda_{0h}$ equals $\lambda_{0}$. The bifurcation point
$(\lambda_{0h},u_{0h})$ of
(\ref{e5_1_sol_widetilde_x_0h_Inv_Fc_Th_ec_DATA_introd}) has also
this limitation.

The results can be applied in nonlinear functional analysis
(bifurcation theory, nonlinear Fredholm operators), singularity
theory, analysis on manifolds, modelling, hydrodynamic stability
and bifurcation, solid mechanics, PDEs, other mathematical models
where bifurcation is present, infinite-dimensional and
finite-dimensional dynamical systems, numerical methods.

Let us observe that if some numerical algorithms are implemented
in order to determine the bifurcation point of
(\ref{e5_1_sol_widetilde_x_0h_Inv_Fc_Th_ec_DATA_introd}), then, on
a computer, it is obtained an approximation
$(\lambda_{0\epsilon},u_{0\epsilon})$ of $(\lambda_{0h},u_{0h})$
which is a solution of an equation of the form of
(\ref{e5_1_sol_widetilde_x_0h_Inv_Fc_Th_ec_DATA_introd}),
\begin{equation}
\label{e5_1_sol_widetilde_x_0h_Inv_Fc_Th_ec_DATA_introd_epsilon}
      F_{\epsilon}(\lambda_{\epsilon},u_{\epsilon})-\varrho_{\epsilon}
      = 0 \, ,
\end{equation}
where $F_{\epsilon}$ is an approximation of $F_{h}$ in the
computer's arithmetics.

Under the conditions of the above discussion, the equilibria
(stationary) solutions, at least locally, for an approximate study
of an evolution equation, are given by
(\ref{e5_1_sol_widetilde_x_0h_Inv_Fc_Th_ec_DATA_introd}) and not
by (\ref{e5_9_introd}). In other words, at least locally (related
to equilibria), the approximation of
\begin{equation}
\label{NESTATIONAR_e5_1}
   \frac{\partial u}{\partial t} - F(\lambda,u)=0
\end{equation}
is
\begin{equation}
\label{NESTATIONAR_e5_1_sol_widetilde_x_0h_Inv_Fc_Th_ec_DATA_introd}
      \frac{\partial u_{h}}{\partial t} - F_{h}(\lambda_{h},u_{h})+ \varrho_{h}
      = 0 \, .
\end{equation}

The text is organized as follows.

In our work, we use the method of Crouzeix and Rappaz
\cite{CLBichir_bib_Cr_Ra1990}, so we remind it briefly in Section
\ref{sectiunea_1_preliminaries_Cr_Rap}.

In literature \cite{CLBichir_bib_Bohmer2001,
CLBichir_bib_Bohmer_Dahlke2003, CLBichir_bib_Cliffe_Spence1984,
CLBichir_bib_Cliffe_Spence_Tavener2000, CLBichir_bib_Cr_Ra1990,
CLBichir_bib_Glashoff_Allgower_Peitgen1981,
CLBichir_bib_Govaerts2000, CLBichir_bib_Griewank_Reddien1984,
CLBichir_bib_Griewank_Reddien1986,
CLBichir_bib_Hermann_Middelmann_Kunkel1998,
CLBichir_bib_Hermann_Middelmann1998,
CLBichir_bib_Jepson_Spence1984, CLBichir_bib_Jepson_Spence1989,
CLBichir_bib_Jepson_Spence_Cliffe1991, CLBichir_bib_Keller_1987,
CLBichir_bib_Ku_Mitt_Web1984, CLBichir_bib_Kuznetsov1998,
CLBichir_bib_Menzel1984, CLBichir_bib_Moore1980,
CLBichir_bib_Moore_Spence1980, CLBichir_bib_Sey32_1979,
CLBichir_bib_Sey33_1979, CLBichir_bib_Sey1996,
CLBichir_bib_Weber1981}, an extended system is used in order to
reduce a problem that presents a bifurcation to a problem without
a bifurcation. We develop the work on the basis of a connection
between the properties of a bifurcation point of a nonlinear
equation on Banach spaces and a somewhat new extended system (this
one is constructed based on a local $C^{p}$ - diffeomorphism
related to the bifurcation point). Sections
\ref{sectiunea_1_main_theorem_on_extended_systems} and
\ref{sectiunea_I_main_th_partea1_PROOF} are devoted to this
subject.

In Section
\ref{sectiunea_01_O_formulare_pe_spatii_infinit_dimensionale}, we
formulate and we prove the main result about the existence of an
equation of form
(\ref{e5_1_sol_widetilde_x_0h_Inv_Fc_Th_ec_DATA_introd}), on
infinite-dimensional Banach spaces, which has a solution
$(\lambda_{0},u_{0})$ that satisfies the hypothesis
(\ref{ipotezaHypF}). These developments are based on the methods
presented in the monograph
\cite{CLBichir_bib_Dontchev_Rockafellar2009} of Dontchev and
Rockafellar. Between them, there are the Graves' theorem, one of
its consequences and the contraction mapping principle for
set-valued mappings. These are reminded in Section
\ref{sectiunea_1_preliminaries_Dont_Rock}. The results that we
obtain in Section
\ref{sectiunea_01_O_formulare_pe_spatii_infinit_dimensionale} are
placed in the formalism of
\cite{CLBichir_bib_Brezzi_Rappaz_Raviart1_1980,
CLBichir_bib_Brezzi_Rappaz_Raviart2_1981,
CLBichir_bib_Brezzi_Rappaz_Raviart3_1981,
CLBichir_bib_CalozRappaz1997, CLBichir_bib_Cr_Ra1990,
CLBichir_bib_Dontchev_Rockafellar2009, CLBichir_bib_Gir_Rav1986}
and of Graves' theorem
\cite{CLBichir_bib_Dontchev_Rockafellar2009}.

In Section \ref{sectiunea05_class}, the existence of a class of
maps equivalent to $F_{h}(\lambda_{h},u_{h})-\varrho_{h}$ is made
evident.

In Section \ref{sectiunea06}, the case of the approximate equation
is studied. If the exact problem has a bifurcation point, a
theorem that connects the exact and the approximate problems is
formulated. Then, the main result from Section
\ref{sectiunea_01_O_formulare_pe_spatii_infinit_dimensionale} is
formulated for the approximate case.

In Section \ref{sectiunea04_cazul_ecNS}, we relate the exact and
the finite element formulations from
\cite{CLBichir_bib_Gir_Rav1986}, for the Dirichlet problem for the
stationary Navier-Stokes equations, to the framework of Section
\ref{sectiunea06}.

In Section
\ref{sectiunea_01_O_formulare_pe_spatii_infinit_dimensionale_COMPLEMENTE_NOU},
a complement to Theorem
\ref{teorema_principala_spatii_infinit_dimensionale_widetilde_s_3_0_exact_inf}
is formulated. Instead of a constant $\varrho$ in the equation
(\ref{e5_1_sol_widetilde_x_0h_Inv_Fc_Th_ec_DATA_introd_exact}), we
obtain $\varrho$ in the form of a function of $(\lambda,u)$.

In Section \ref{sectiunea_CONCLUZII_0}, an intended further
research is presented.

\section{Preliminaries}
\label{sectiunea_1_preliminaries}

\subsection{The setting of Crouzeix and Rappaz
\cite{CLBichir_bib_Cr_Ra1990}}
\label{sectiunea_1_preliminaries_Cr_Rap}

Let us retain the equation (\ref{e5_1}) and the hypothesis
(\ref{ipotezaHypF}) considered above following the work of
Crouzeix and Rappaz \cite{CLBichir_bib_Cr_Ra1990}. First, Crouzeix
and Rappaz treat the case $q = 0$ where they reduce a problem that
has bifurcation to a problem without bifurcation (in Chapter 4,
\cite{CLBichir_bib_Cr_Ra1990}). Second, for the case $q \geq 1$,
they reduce the study to the case $q = 0$ (in Chapter 6,
\cite{CLBichir_bib_Cr_Ra1990}). In this subsection, for the sake
of brevity, we remind these in the inverse order, by modifying the
presentation of Crouzeix and Rappaz \cite{CLBichir_bib_Cr_Ra1990}.
The problem without bifurcation is obtained directly for the
reduced problem.

Under the hypothesis that $(\lambda_{0},u_{0})$ is a bifurcation
solution satisfying hypothesis (\ref{ipotezaHypF}) (recall that $q
\geq 1$), $Z_{2}$ $=$ $Range(DF(\lambda_{0},u_{0}))$ is closed in
Z, $DF(\lambda_{0},u_{0})$ is a Fredholm operator of
$\mathbb{R}^{m}\times W$ onto $Z$ with index $m$ and $Z$ $=$
$Z_{1}$ $\oplus$ $Z_{2}$, where $Z_{1}=sp \
\{\bar{a}_{1},\ldots,\bar{a}_{q} \}$ and
$\bar{a}_{1},\ldots,\bar{a}_{q}$ are some linearly independent
some linearly independent elements from $Z$. Let $X$ $=$
$\mathbb{R}^{q+m}$ $\times$ $W$, $Y$ $=$ $\mathbb{R}^{q+m}$
$\times$ $Z$, $f$ $=$ $(f^{1},\ldots,f^{q})$ $\in$
$\mathbb{R}^{q}$, $f_{0}$ $=$ $0$ $\in$ $\mathbb{R}^{q}$, $x$ $=$
$(f,\lambda,u)$, $x_{0}$ $=$ $(f_{0},\lambda_{0},u_{0})$ $\in$
$X$. Crouzeix and Rappaz define the function
\begin{equation}
\label{e5_2}
   G:X \rightarrow Z, \ G(x)=F(\lambda,u)-\sum_{i=1}^{q}f^{i}\bar{a}_{i} \, ,
\end{equation}
and they replace the study of (\ref{e5_1}) around
$(\lambda_{0},u_{0})$ by the study of the case
$Range(DG(x_{0}))=Z$ (corresponding to the case $q = 0$ in Chapter
4, \cite{CLBichir_bib_Cr_Ra1990}) for the problem
\begin{equation}
\label{e5_3}
   G(x)=0 \, ,
\end{equation}
around the solution $x_{0}$ of it. For this, they use the remark
that $(\lambda,u)$ is a solution of (\ref{e5_1}) is equivalent to
the fact that $x$ is a solution of (\ref{e5_3}) satisfying $f=0$.

$G(x_{0})=0$ and $G(x)=F(\lambda,u)$ if $f=0$. $DG(x)y$ $=$
$DG(f,\lambda,u)(g,\mu,w)$ $=$
$DF(\lambda,u)(\mu,w)-\sum_{i=1}^{q}g^{i}\bar{a}_{i}$ for $y$ $=$
$(g,\mu,w)$. $dim \ Ker(DG(x_{0}))=q+m$,
\begin{equation}
\label{e5_2_B_int_ker}
   Ker(DG(x_{0}))=
   \{ y = (g,\mu,w) \in X; g=0,
          (\mu,w) \in Ker(DF(\lambda_{0},u_{0})) \} \, .
\end{equation}
$Range(DG(x_{0}))=Z$ and $DG(x_{0}) \in L(X,Z)$ is a Fredholm
operator with index $q+m$. This is equivalent to
$D_{u}G(0,\lambda_{0},u_{0})$ is a Fredholm operator of $W$ onto
$Z$ with index zero and $Range(DG(x_{0}))=Z$ (according to Chapter
4, \cite{CLBichir_bib_Cr_Ra1990}).

In order to reduce the problem (\ref{e5_3}) to a problem without
bifurcation, Crouzeix and Rappaz justify the introduction of an
operator $B$ and the reduction of the problem of the study of the
solutions of (\ref{e5_3}), in a neighborhood of $x_{0}$, to the
study of the solutions of the extended system in $(\theta,x) \in
\mathbb{R}^{q+m} \times X$
\begin{equation}
\label{e5_1_introduction_2}
   \mathcal{F}(\theta,x) = 0 \, ,
\end{equation}
in a neighborhood of its regular solution $(0,x_{0})$, where
\begin{equation}
\label{e5_1_introduction_2_0}
   \mathcal{F}:\mathbb{R}^{q+m} \times X
      \rightarrow Y \, , \
   \mathcal{F}(\theta,x) = \left[\begin{array}{l}
      B(x)-B(x_{0})-\theta \\
      G(x)
      \end{array}\right].
\end{equation}
The continuous linear operator $B$ $\in$ $L(X,\mathbb{R}^{q+m})$
is introduced such that it satisfies $Ker(DG(x_{0})) \cap Ker(B) =
\{ 0 \}$. $B$ is an isomorphism of $Ker(DG(x_{0}))$ onto
$\mathbb{R}^{q+m}$. To choose $B$ is equivalent to choose $q+m$
linear forms $\chi_{i}$, $i=1,\ldots,q+m$, on $X$ that are
linearly independent on $Ker(DG(x_{0}))$. By identifying
$Ker(DG(x_{0}))$ to $\mathbb{R}^{q+m}$, $B(x)$ can be considered
the component of $x \in X$ on $Ker(DG(x_{0}))$ with respect to the
decomposition $X$ $=$ $Ker(DG(x_{0})) \oplus Ker(B)$. The function
$\mathcal{F}$ is of class $C^{p}$.
\begin{equation}
\label{e5_1_introduction_2_1}
   D_{x}\mathcal{F}(0,x_{0})y= \left[\begin{array}{l}
      B(y) \\
      DG(x_{0})y
      \end{array}\right].
\end{equation}
Since $Ker(D_{x}\mathcal{F}(0,x_{0}))=\{ 0 \}$ and
$Range(D_{x}\mathcal{F}(0,x_{0}))=Y$, there results that
$D_{x}\mathcal{F}(0,x_{0})$ is an isomorphism of $X$ onto $Y$.
Implicit functions theorem leads to

\begin{lem}
\label{lema_A2_7_th_fc_impl} (Lemma 4.1 and Lemma 6.1,
\cite{CLBichir_bib_Cr_Ra1990}). Under the above hypotheses, there
exists a neighborhood $\mathcal{V}$ of $0$ in $\mathbb{R}^{q+m}$,
a neighborhood $\mathcal{U}$ of $x_{0}$ in $X$ and a unique
$C^{p}$ - mapping $x : \theta \in \mathcal{V} \rightarrow
x(\theta)=(f(\theta),\lambda(\theta),u(\theta)) \in \mathcal{U}$
satisfying: a) $G(x(\theta)) = 0$, $\theta =
B(f(\theta)-f_{0},\lambda(\theta)-\lambda_{0},u(\theta)-u_{0})$,
$\forall \theta \in \mathcal{V}$; b) $x(0) = x_{0}$; c)
$(\theta,x(\theta))$ is a regular point of
(\ref{e5_1_introduction_2}), $\forall \theta \in \mathcal{V}$; d)
if $y = (g,\mu,v) \in \mathcal{U}$ is such that $G(y) = 0$, then
$g = f(\theta)$, $\mu = \lambda(\theta)$, $v = u(\theta)$, with
$\theta = B(g-f_{0},\mu-\lambda_{0},v-u_{0})$.
\end{lem}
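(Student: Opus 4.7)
The plan is to apply the classical implicit function theorem to $\mathcal{F}$ at the point $(0,x_{0})$, using the preparatory work already carried out in the preceding paragraphs. First I would verify the base conditions: $\mathcal{F}(0,x_{0}) = (B(x_{0})-B(x_{0})-0,\, G(x_{0}))^{T} = (0,0)^{T}$ since $G(x_{0}) = 0$ by hypothesis, and $\mathcal{F}$ is of class $C^{p}$ as an affine combination (via $B$ and $\theta$) with a $C^{p}$ map $G$. Crucially, the text has already established that $D_{x}\mathcal{F}(0,x_{0})$, given by (\ref{e5_1_introduction_2_1}), has trivial kernel and surjective range, hence is an isomorphism of $X$ onto $Y$.

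With these in hand, the implicit function theorem on Banach spaces supplies neighborhoods $\mathcal{V}$ of $0$ in $\mathbb{R}^{q+m}$ and $\mathcal{U}$ of $x_{0}$ in $X$, together with a unique $C^{p}$ map $\theta \mapsto x(\theta) \in \mathcal{U}$ satisfying $\mathcal{F}(\theta,x(\theta)) = 0$ for every $\theta \in \mathcal{V}$, and $x(0) = x_{0}$. Reading componentwise, the second row of $\mathcal{F} = 0$ yields $G(x(\theta)) = 0$, while the first row yields $\theta = B(x(\theta)) - B(x_{0})$; writing $x(\theta) = (f(\theta),\lambda(\theta),u(\theta))$ and exploiting linearity of $B$, this becomes $\theta = B(f(\theta)-f_{0},\lambda(\theta)-\lambda_{0},u(\theta)-u_{0})$. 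This gives (a), and (b) is just the initial condition.

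For the regularity statement (c), I would invoke the fact that the set of isomorphisms in $L(X,Y)$ is open (stability of invertibility under small perturbations, via Neumann series), together with the continuity of $\theta \mapsto D_{x}\mathcal{F}(\theta,x(\theta))$ that comes from $\mathcal{F}$ being at least $C^{1}$. Shrinking $\mathcal{V}$ if necessary, $D_{x}\mathcal{F}(\theta,x(\theta))$ remains an isomorphism of $X$ onto $Y$ throughout $\mathcal{V}$, which is exactly the regularity of $(\theta,x(\theta))$ as a solution of (\ref{e5_1_introduction_2}). For (d), given $y = (g,\mu,v) \in \mathcal{U}$ with $G(y) = 0$, I would set $\theta := B(g-f_{0},\mu-\lambda_{0},v-u_{0})$; then $\mathcal{F}(\theta,y) = 0$, so the local uniqueness clause of the implicit function theorem forces $y = x(\theta)$, which unpacks into the component identities claimed.

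I do not anticipate any substantial obstacle: the whole argument is a direct invocation of the implicit function theorem. The only delicate point is (c), which requires the openness of the isomorphism set in $L(X,Y)$ and a possible shrinking of $\mathcal{V}$; everything else is bookkeeping on the two components of $\mathcal{F}$.
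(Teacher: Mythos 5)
Your proof is correct and takes essentially the same route as the paper, which likewise applies the implicit function theorem on Banach spaces once $D_{x}\mathcal{F}(0,x_{0})$ is established to be an isomorphism (the lemma is in any case imported from Crouzeix and Rappaz, Lemmas 4.1 and 6.1). One minor detail worth adding: in part (d) the uniqueness clause requires $\theta = B(y-x_{0}) \in \mathcal{V}$, so you should also shrink $\mathcal{U}$ so that $B(\mathcal{U}-x_{0}) \subseteq \mathcal{V}$, analogous to the shrinking of $\mathcal{V}$ you already perform for (c).
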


Equation $f(\theta)=0$ is the bifurcation equation of problem
(\ref{e5_1}) (\cite{CLBichir_bib_Cr_Ra1990}). Crouzeix and Rappaz
specify that their method is equivalent to the Lyapunov-Schmidt
method. They call "classical" the bifurcation equation related to
the Lyapunov-Schmidt method.

For the case $q \geq 1$, Crouzeix and Rappaz use $B$ reduced to
$\mathbb{R}^{m} \times W$, $B$ $\in$ $L(\mathbb{R}^{m} \times
W,\mathbb{R}^{q+m})$. In the above presentation and in Lemma
\ref{lema_A2_7_th_fc_impl}, we maintain the operator $B$ $\in$
$L(X,\mathbb{R}^{q+m})$ corresponding to the case in Chapter 4,
\cite{CLBichir_bib_Cr_Ra1990}.

The problem (\ref{e5_1_introduction_2}), in the case $q = 0$, is
also formulated in a different context in
\cite{CLBichir_bib_Dontchev_Rockafellar2009}. Related to the study
of a linear Fredholm operator and of the system
$D_{x}\mathcal{F}(0,x)y-[\theta',0]^{T}=0$, where $x$ and
$\theta'$ $\in$ $\mathbb{R}^{q+m}$ are fixed, the use of an
operator similar to $B$ (from \cite{CLBichir_bib_Cr_Ra1990}) and
of some elements similar to $\bar{a}_{1},\ldots,\bar{a}_{q}$ (from
\cite{CLBichir_bib_Cr_Ra1990}) is found in
\cite{CLBichir_bib_Bochev_GunzburgerLSFEM2009,
CLBichir_bib_Bohmer2001, CLBichir_bib_Bohmer_Dahlke2003,
CLBichir_bib_Dontchev_Rockafellar2009,CLBichir_bib_Griewank_Reddien1984,
CLBichir_bib_Griewank_Reddien1986, CLBichir_bib_Jepson_Spence1989,
CLBichir_bib_Keller_1987}.

Given two normed (linear) spaces $E$ and $F$, $L(E,F)$ is the
space of all continuous linear mappings (operators) $K:E
\rightarrow F$. An isomorphism of $E$ onto $F$ is a linear,
continuous and bijective mapping $K:E \rightarrow F$ whose inverse
$K^{-1}$ is continuous
\cite{CLBichir_bib_Bochev_GunzburgerLSFEM2009,
CLBichir_bib_Cr_Ra1990, CLBichir_bib_Dontchev_Rockafellar2009,
CLBichir_bib_Fabian_Habala_Hajek_Montesinos_Zizler2011,
CLBichir_bib_Gir_Rav1986, CLBichir_bib_V_L_Hansen1999,
CLBichir_bib_Sir_AF1982, CLBichir_bib_Sir_Spaces_AF1982,
CLBichir_bib_Sir_AM1983}.

\begin{lem}
\label{lema_A2_7_th_fc_impl_th_apl_inv} (The hypotheses of a
formulation of the inverse function theorem, Theorem I.2.2,
\cite{CLBichir_bib_CalozRappaz1997} and a partial result from the
proof of this one). For $v$ $\in$ $X$ and the function
$\widehat{G}:X \rightarrow Z$ of class $C^{p}$, $p \geq 1$, we
assume that $D\widehat{G}(v)$ $ \in$ $L(X,Z)$ is an isomorphism
and that $\beta$ satisfies $2 \gamma L_{\widehat{G}}(\beta)$
$\leq$ $1$, with $\gamma$ $=$
$\widetilde{\gamma}(\widehat{G},v,X,Z)$ and
$L_{\widehat{G}}(\beta)$ $=$
$\widetilde{L}(\widehat{G},v,x,\beta,X,Z)$, where we use the
notations (\ref{e_A2_17_TEXT_gamma_bar_gen}) and
(\ref{e_A2_17_TEXT_mu_bar_gen}) below. Then, for any $z$ $\in$
$\textrm{int} \, \mathbb{B}_{\frac{\beta}{2
\gamma}}(\widehat{G}(v))$, the equation $\widehat{G}(x)$ $=$ $z$
has a unique solution $x$ in $\mathbb{B}_{\beta_{1}}(v)$, where
$\beta_{1}$ $=$ $2 \gamma\| \widehat{G}(v) - z \|_{Z}$ $\leq$
$\beta$.
\end{lem}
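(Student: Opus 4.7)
The plan is to recognize this as a quantitative inverse function theorem whose natural proof is via the Banach contraction principle applied to a Newton-type fixed point map, exactly in the spirit of the Caloz--Rappaz presentation that is being quoted. Although the macros $\widetilde{\gamma}$ and $\widetilde{L}$ are only alluded to in the excerpt, context forces $\gamma = \widetilde{\gamma}(\widehat{G},v,X,Z)$ to play the role of $\|D\widehat{G}(v)^{-1}\|_{L(Z,X)}$ and $L_{\widehat{G}}(\beta) = \widetilde{L}(\widehat{G},v,x,\beta,X,Z)$ the role of a modulus controlling $\|D\widehat{G}(y) - D\widehat{G}(v)\|$ for $y$ in the closed ball $\mathbb{B}_{\beta}(v)$. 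The hypothesis $2\gamma L_{\widehat{G}}(\beta)\le 1$ is the usual contraction estimate.

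Given $z$ with $\|\widehat{G}(v)-z\|_{Z} < \beta/(2\gamma)$, I set $\beta_{1} := 2\gamma\|\widehat{G}(v)-z\|_{Z}$, so $\beta_{1}\le \beta$ with strict inequality allowed on the boundary, and define
\[
   T:\mathbb{B}_{\beta_{1}}(v)\to X,\qquad
   T(x) := x - D\widehat{G}(v)^{-1}\bigl(\widehat{G}(x)-z\bigr).
\]
Fixed points of $T$ are exactly the solutions of $\widehat{G}(x)=z$. First I would verify that $T$ is a contraction on $\mathbb{B}_{\beta_{1}}(v)\subset\mathbb{B}_{\beta}(v)$. For $x_{1},x_{2}$ in this ball one writes
\[
   T(x_{1})-T(x_{2})
   = D\widehat{G}(v)^{-1}\!\int_{0}^{1}\!\bigl[D\widehat{G}(v)-D\widehat{G}(x_{2}+t(x_{1}-x_{2}))\bigr](x_{1}-x_{2})\,dt,
\]
so the defining properties of $\gamma$ and $L_{\widehat{G}}(\beta)$ yield $\|T(x_{1})-T(x_{2})\|_{X}\le \gamma L_{\widehat{G}}(\beta)\|x_{1}-x_{2}\|_{X}\le \tfrac{1}{2}\|x_{1}-x_{2}\|_{X}$.

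Next I would check that $T$ maps $\mathbb{B}_{\beta_{1}}(v)$ into itself. For $x\in\mathbb{B}_{\beta_{1}}(v)$,
\[
   \|T(x)-v\|_{X}
   \le \|T(x)-T(v)\|_{X}+\|T(v)-v\|_{X}
   \le \tfrac{1}{2}\|x-v\|_{X}+\gamma\|\widehat{G}(v)-z\|_{Z}
   \le \tfrac{\beta_{1}}{2}+\tfrac{\beta_{1}}{2}=\beta_{1},
\]
which is the self-map property. Because $\mathbb{B}_{\beta_{1}}(v)$ is a closed subset of the Banach space $X$ and $T$ is a strict contraction on it, the Banach fixed point theorem supplies a unique fixed point $x\in\mathbb{B}_{\beta_{1}}(v)$ of $T$, which is the unique solution of $\widehat{G}(x)=z$ in that ball.

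The only genuine subtlety in this plan is choosing the radius correctly: the radius must be $\beta_{1}=2\gamma\|\widehat{G}(v)-z\|_{Z}$ rather than simply $\beta$, because this is precisely the size forced by the estimate $\|T(v)-v\|_{X}\le \beta_{1}/2$ together with the contraction constant $1/2$, and it is also what makes the conclusion quantitative. The hypothesis $z\in \mathrm{int}\,\mathbb{B}_{\beta/(2\gamma)}(\widehat{G}(v))$ is exactly what guarantees $\beta_{1}\le \beta$, keeping the Newton iterates inside the region where the Lipschitz-type control of $D\widehat{G}$ is valid. No other step is delicate; the whole argument is the standard Newton--Kantorovich linearisation combined with a one-shot contraction.
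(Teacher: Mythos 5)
Your argument is correct, and it is indeed the standard Newton--Kantorovich contraction argument that underlies Theorem I.2.2 in the Caloz--Rappaz reference. Note, however, that the paper does not prove this lemma at all: it is recalled verbatim as a quoted preliminary (``the hypotheses of a formulation of the inverse function theorem\ldots and a partial result from the proof of this one''), so there is no in-paper proof to compare against. Your identification of $\gamma$ with $\|D\widehat G(v)^{-1}\|_{L(Z,X)}$ and of $L_{\widehat G}(\beta)$ with $\sup_{y\in\mathbb B_{\beta}(v)}\|D\widehat G(v)-D\widehat G(y)\|_{L(X,Z)}$ matches the paper's definitions (\ref{e_A2_17_TEXT_gamma_bar_gen})--(\ref{e_A2_17_TEXT_mu_bar_gen}) exactly, and the fixed-point map $T(x)=x-D\widehat G(v)^{-1}(\widehat G(x)-z)$ with contraction constant $\gamma L_{\widehat G}(\beta)\le 1/2$, the mean-value/integral estimate along the segment (which stays inside $\mathbb B_{\beta}(v)$ by convexity), the self-map bound $\|T(x)-v\|\le\tfrac12\|x-v\|+\gamma\|\widehat G(v)-z\|\le\beta_1$, and the choice $\beta_1=2\gamma\|\widehat G(v)-z\|\le\beta$ are all correct. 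One small quibble: $z\in\mathrm{int}\,\mathbb B_{\beta/(2\gamma)}(\widehat G(v))$ in fact gives $\beta_1<\beta$ strictly, so ``strict inequality allowed on the boundary'' is slightly off --- the strict inequality is forced, and the non-strict $\beta_1\le\beta$ in the lemma is just the weaker statement one needs. Nothing in the argument is missing.
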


\subsection{Contraction mapping principle for set-valued mappings
and Graves' theorem \cite{CLBichir_bib_Dontchev_Rockafellar2009}}
\label{sectiunea_1_preliminaries_Dont_Rock}

Let us present the following mathematical entities as they are
introduced in the monograph
\cite{CLBichir_bib_Dontchev_Rockafellar2009} of Dontchev and
Rockafellar.

Let $(X,\rho)$ be a metric space. For the sets $C$ and $D$ in $X$
and $x$ $\in$ $C$,

$d(x,C)$ $=$ $\inf_{x' \in C} \rho(x,x')$, $e(C,D)$ $=$ $\sup_{x
\in C} d(x,D)$, \\
with the convention $e(\emptyset,D)$ $=$ $0$ when $D \neq
\emptyset$ and $e(\emptyset,D)$ $=$ $\infty$ otherwise.

$\rho(x,y) = \| x-y \|$ if $(X,\| \cdot \|)$ is a normed space.

Let $X$, $Y$ be Banach spaces. Let $F:X \rightrightarrows Y$ be a
set-valued mapping, that is, for $x$ $\in$ $X$, $F$ assigns a set
$F(x)$ that contains one or more elements of $Y$ or it is empty.
The graph of $F$ is the set $gph \ F$ $=$ $\{ (x,y) \in X \times Y
| y \in F(x) \}$. The domain of $F$ is the set $dom \ F$ $=$ $\{ x
| F(x) \neq \emptyset \}$. The range of $F$ is the set $rge \ F$
$=$ $\{ y | y \in F(x) \ \textrm{for some} \ x \}$. A set-valued
mapping $F:X \rightrightarrows Y$ has an inverse $F^{-1}:Y
\rightrightarrows X$, $F^{-1}(y)$ $=$ $\{$ $x$ $|$ $y$ $\in$
$F(x)$ $\}$. Let us retain that $x$ $\in$ $F^{-1}(y)$
$\Leftrightarrow$ $y$ $\in$ $F(x)$.

Let $A$ $\in$ $L(X,Y)$ be a surjective mapping. A consequence of
Theorem 5A.1 (Banach open mapping theorem), page 253,
\cite{CLBichir_bib_Dontchev_Rockafellar2009}, is the existence of
a $\kappa > 0$ such that $d(0,A^{-1}(y))$ $\leq$ $\kappa \| y \|$,
for all $y$. The regularity modulus is defined by
\begin{equation}
\label{e_A2_17_TEXT_reg}
   reg \, A = \sup_{\| y \| \leq 1} d(0,A^{-1}(y))
   \, ,
\end{equation}
$reg \, A < \infty$. For a Banach space $\mathcal{X}$ with norm
$\| \cdot \|_{\mathcal{X}}$, let $\mathbb{B}_{a}(\tilde{s})$ $=$
$\{ s \in \mathcal{X} ; \| \tilde{s}-s \|_{\mathcal{X}} \leq a \}$
be the closed ball with center $\tilde{s}$ and radius $a$.

\begin{thm}
\label{teorema_contraction_mapping_principle_Dontchev_Rockafellar2009}
(Contraction mapping principle for set-valued mappings, Theorem
5E.2, page 284, \cite{CLBichir_bib_Dontchev_Rockafellar2009}) Let
$(X,\rho)$ be a complete metric space, and consider a set-valued
mapping $T:X \rightrightarrows X$ and a point $\bar{x}$ $\in$ $X$.
Suppose that there exist scalars $a > 0$ and $\lambda \in (0,1)$
such that the set $gph T \cap (\mathbb{B}_{a}(\bar{x}) \times
\mathbb{B}_{a}(\bar{x}))$ is closed and

I. $d(\bar{x},T(\bar{x}))$ $<$ $a(1-\lambda)$;

II. $e(T(u) \cap \mathbb{B}_{a}(\bar{x}), T(v))$ $\leq$ $\lambda
\rho(u,v)$ for all $u$, $v$ $\in$ $\mathbb{B}_{a}(\bar{x})$.

Then, $T$ has a fixed point in $\mathbb{B}_{a}(\bar{x})$; that is,
there exists $x$ $\in$ $\mathbb{B}_{a}(\bar{x})$ such that $x$
$\in$ $T(x)$.
\end{thm}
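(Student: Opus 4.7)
The plan is to construct inductively a sequence $\{x_n\}_{n \geq 0} \subset \mathbb{B}_{a}(\bar{x})$ with $x_{n+1} \in T(x_n)$ for every $n$, show it is Cauchy, and then identify its limit as a fixed point of $T$ by using the closedness of $gph\, T \cap (\mathbb{B}_{a}(\bar{x}) \times \mathbb{B}_{a}(\bar{x}))$. I would set $x_0 = \bar{x}$. Hypothesis I guarantees that $d(\bar{x},T(\bar{x})) < a(1-\lambda)$, so, using that $d(\bar{x},T(\bar{x})) = \inf_{y \in T(\bar{x})}\rho(\bar{x},y)$, I can pick $x_1 \in T(\bar{x})$ with $\rho(\bar{x},x_1) \leq \alpha$ for some fixed $\alpha < a(1-\lambda)$. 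The strictness in I is essential: it buys the slack that will be needed to absorb the approximation errors inherent in selecting from set-valued images.

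Next, I would fix a positive sequence $\{\varepsilon_n\}$ with $\sum_{n \geq 1}\varepsilon_n$ small enough that $\tfrac{\alpha}{1-\lambda} + \tfrac{1}{1-\lambda}\sum_{n \geq 1}\varepsilon_n \leq a$. Suppose $x_0,\ldots,x_n \in \mathbb{B}_{a}(\bar{x})$ have been chosen with $x_k \in T(x_{k-1})$ for $1 \leq k \leq n$. Then by hypothesis II,
\[
d(x_n, T(x_n)) \leq e\bigl(T(x_{n-1}) \cap \mathbb{B}_{a}(\bar{x}),\, T(x_n)\bigr) \leq \lambda\,\rho(x_{n-1},x_n),
\]
so I can select $x_{n+1} \in T(x_n)$ satisfying $\rho(x_n,x_{n+1}) \leq \lambda\,\rho(x_{n-1},x_n) + \varepsilon_n$. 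Unrolling this recursion yields
\[
\rho(x_n,x_{n+1}) \leq \lambda^{n}\alpha + \sum_{k=1}^{n}\lambda^{n-k}\varepsilon_k,
\]
and hence, by the triangle inequality and geometric summation,
\[
\rho(\bar{x},x_{n+1}) \leq \sum_{j=0}^{n}\rho(x_j,x_{j+1}) \leq \frac{\alpha}{1-\lambda} + \frac{1}{1-\lambda}\sum_{k=1}^{n}\varepsilon_k \leq a,
\]
so $x_{n+1} \in \mathbb{B}_{a}(\bar{x})$ and the induction closes.

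The same geometric bounds show that $\{x_n\}$ is Cauchy in $(X,\rho)$; by completeness it converges to some $x^{*} \in \mathbb{B}_{a}(\bar{x})$. Each pair $(x_n, x_{n+1})$ belongs to $gph\, T \cap (\mathbb{B}_{a}(\bar{x}) \times \mathbb{B}_{a}(\bar{x}))$; since both coordinates of the pair converge to $x^{*}$ inside the ball and this set is closed by hypothesis, the limit $(x^{*},x^{*})$ also lies in $gph\, T$, i.e.\ $x^{*} \in T(x^{*})$.

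The main obstacle, and the place where the hypotheses must be used with care, is the selection step. Because $T$ is set-valued, the infimum $d(x_n,T(x_n))$ need not be attained, and II controls only the one-sided excess $e(\cdot,\cdot)$; consequently the iterates can be chosen only ``$\varepsilon$-contractively''. The strict inequality in I must then finance the total slack budget $\tfrac{1}{1-\lambda}\sum_{k}\varepsilon_k$ while still leaving all iterates inside $\mathbb{B}_{a}(\bar{x})$, since otherwise II could not be reapplied at the next step. Balancing $\alpha$ against $a(1-\lambda)$ and the $\varepsilon_k$ so that both the ball constraint and the Cauchy estimate are preserved simultaneously is the delicate bookkeeping at the heart of the proof.
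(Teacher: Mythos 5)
Your proposal is correct. The paper does not prove this result --- it quotes it verbatim as Theorem 5E.2 of Dontchev--Rockafellar --- and the argument you give is essentially the standard Nadler-style proof used there: an inductive $\varepsilon$-approximate selection from $T(x_n)$ financed by the slack in the strict inequality of hypothesis~I, geometric summation to keep the iterates in $\mathbb{B}_a(\bar{x})$ and to obtain a Cauchy sequence, and passage to the limit via the closedness of $\mathrm{gph}\,T \cap (\mathbb{B}_a(\bar{x})\times\mathbb{B}_a(\bar{x}))$. The only point worth flagging for hygiene is that the step ``$d(x_n,T(x_n)) \le e(T(x_{n-1})\cap\mathbb{B}_a(\bar{x}),T(x_n)) \le \lambda\rho(x_{n-1},x_n) < \infty$'' also tacitly establishes that $T(x_n)\ne\emptyset$ (otherwise the excess would be $+\infty$ since $T(x_{n-1})\cap\mathbb{B}_a(\bar{x})$ is nonempty by the induction hypothesis), which is exactly what licenses the next $\varepsilon_n$-selection; you use this implicitly and correctly.
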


\begin{thm}
\label{teorema_principala_spatii_infinit_dimensionale_th_Graves}
(Graves, Theorem 5D.2, page 276,
\cite{CLBichir_bib_Dontchev_Rockafellar2009}) Let $X$ and $Y$ be
Banach spaces. Consider a function $f:X \rightarrow Y$ and a point
$\bar{x}$ $\in$ $int \, dom \, f$ and let $f$ be continuous in
$\mathbb{B}_{\varepsilon}(\bar{x})$ for some $\varepsilon > 0$.
Let $A$ $\in$ $L(X,Y)$ be surjective and let $\kappa \geq reg \,
A$. Suppose there is a nonnegative $\mu$ such that $\kappa \mu <
1$ and
\begin{equation}
\label{e5_57_conditie_th_Graves_dem}
   \| f(x) - f(x') - A(x - x') \| \leq \mu \| x - x' \|
   \ \textrm{whenever} \
   x, x' \in \mathbb{B}_{\varepsilon}(\bar{x})
   \, .
\end{equation}
Then, in terms of $\bar{y} = f(\bar{x})$ and $c = \kappa^{-1} -
\mu$, if $y$ is such that $ \| y - \bar{y} \| \leq c
\varepsilon$, then the equation $y = f(x)$ has a solution $x$
$\in$ $\mathbb{B}_{\varepsilon}(\bar{x})$.
\end{thm}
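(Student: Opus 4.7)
The plan is to recast the equation $y=f(x)$ as a fixed point problem for a set-valued map and apply the contraction mapping principle for set-valued mappings (Theorem~\ref{teorema_contraction_mapping_principle_Dontchev_Rockafellar2009}). Define $T:X \rightrightarrows X$ by
\[
   T(x) = A^{-1}\bigl( y - f(x) + Ax \bigr) = \{\, x' \in X : A x' = y - f(x) + A x \,\}.
\]
Surjectivity of $A$ ensures $T(x) \neq \emptyset$ for every $x$, and the equivalence
\[
   x \in T(x) \ \Longleftrightarrow \ A x = y - f(x) + A x \ \Longleftrightarrow \ f(x) = y
\]
shows that any fixed point of $T$ lying in $\mathbb{B}_{\varepsilon}(\bar{x})$ is exactly the sought solution.

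I would then invoke Theorem~\ref{teorema_contraction_mapping_principle_Dontchev_Rockafellar2009} with center $\bar{x}$, radius $a=\varepsilon$, and contraction constant $\lambda := \kappa \mu \in (0,1)$. The crucial tool is the regularity modulus bound $d(0, A^{-1}(w)) \leq \kappa \| w \|$ for every $w \in Y$, which is valid because $\kappa \geq reg \, A$ and follows by homogeneity from (\ref{e_A2_17_TEXT_reg}). For condition II, pick $u,v \in \mathbb{B}_{\varepsilon}(\bar{x})$ and an arbitrary $x_1 \in T(u) \cap \mathbb{B}_{\varepsilon}(\bar{x})$; an element $x_2 \in T(v)$ must satisfy $A(x_2-x_1) = (f(u)-f(v)) - A(u-v)$, and by hypothesis \eqref{e5_57_conditie_th_Graves_dem} the right-hand side has norm at most $\mu\|u-v\|$. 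The regularity bound yields $d(x_1,T(v)) \leq \kappa \mu \|u-v\|$, hence
\[
   e\bigl(T(u) \cap \mathbb{B}_{\varepsilon}(\bar{x}),\, T(v)\bigr) \leq \kappa \mu \,\|u-v\| = \lambda\,\|u-v\|.
\]
For condition I, putting $\bar{y}=f(\bar{x})$, the elements of $T(\bar{x})$ are exactly $\bar{x}+z$ with $A z = y - \bar{y}$, so
\[
   d(\bar{x},T(\bar{x})) = d\bigl(0, A^{-1}(y-\bar{y})\bigr) \leq \kappa \|y-\bar{y}\| \leq \kappa c \varepsilon = (1-\kappa\mu)\varepsilon = (1-\lambda)\varepsilon.
\]
Closedness of $gph \, T \cap (\mathbb{B}_{\varepsilon}(\bar{x}) \times \mathbb{B}_{\varepsilon}(\bar{x}))$ follows by passing to the limit in the defining relation $Ax' = y - f(x) + Ax$, using continuity of $f$ on $\mathbb{B}_{\varepsilon}(\bar x)$ and of $A$.

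The one subtlety is that condition I of Theorem~\ref{teorema_contraction_mapping_principle_Dontchev_Rockafellar2009} demands a \emph{strict} inequality, whereas the computation above delivers only $\leq$. I would handle this boundary situation by first proving the claim under the strict assumption $\|y-\bar{y}\| < c\varepsilon$ and then approximating a general $y$ with $\|y-\bar{y}\| = c\varepsilon$ by a sequence $y_n\to y$ satisfying the strict inequality, extracting a convergent subsequence of fixed points $x_n \in \mathbb{B}_{\varepsilon}(\bar{x})$ via the closedness of the ball and continuity of $f$. Once a fixed point $x \in \mathbb{B}_{\varepsilon}(\bar{x})$ of $T$ is produced, the equivalence from the first paragraph gives $f(x)=y$. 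The principal obstacle is translating the pointwise Lipschitz-like estimate on $f - A$ into the excess bound required by the set-valued contraction principle, and this translation is precisely what the regularity modulus of the surjection $A$ makes possible.
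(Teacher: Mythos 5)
The paper does not prove Theorem~\ref{teorema_principala_spatii_infinit_dimensionale_th_Graves}; it is quoted verbatim from Dontchev--Rockafellar, so there is no in-paper proof to compare against. Your route via the set-valued contraction principle (Theorem~\ref{teorema_contraction_mapping_principle_Dontchev_Rockafellar2009}) is a legitimate and standard one, and the core of your argument is sound: the map $T(x)=A^{-1}(y-f(x)+Ax)$ turns the equation into a fixed-point problem, the excess estimate in condition~II follows correctly from the regularity-modulus bound $d(0,A^{-1}(w))\leq\kappa\|w\|$ combined with~\eqref{e5_57_conditie_th_Graves_dem}, and the graph-closedness argument from continuity of $f$ and $A$ is fine.

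The gap is exactly where you yourself flag a ``subtlety,'' and your proposed fix does not close it. You observe that condition~I of Theorem~\ref{teorema_contraction_mapping_principle_Dontchev_Rockafellar2009} demands a strict inequality while the hypothesis $\|y-\bar{y}\|\leq c\varepsilon$ only delivers $d(\bar{x},T(\bar{x}))\leq(1-\lambda)\varepsilon$, and you propose to remedy the equality case by taking $y_n\to y$ with $\|y_n-\bar{y}\|<c\varepsilon$, producing solutions $x_n\in\mathbb{B}_{\varepsilon}(\bar{x})$, and then ``extracting a convergent subsequence \dots via the closedness of the ball and continuity of $f$.'' This step fails: $X$ is an arbitrary Banach space, so $\mathbb{B}_{\varepsilon}(\bar{x})$ is closed and bounded but \emph{not} norm-compact, and neither closedness nor continuity of $f$ gives you a norm-convergent subsequence of $(x_n)$. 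Weak compactness (in a reflexive space) would not help either, since $f$ need not be weakly sequentially continuous. To make the boundary case rigorous you need a quantitative argument, not a soft compactness one: for instance, show that the solutions $x_t$ of $f(x_t)=\bar{y}+t(y-\bar{y})$ for $t<1$ form a Cauchy family as $t\uparrow1$, or carry out a direct Newton-type iteration $x_{k+1}=x_k+u_k$, $Au_k=y-f(x_k)$, with careful bookkeeping of the slack incurred because $d(0,A^{-1}(\cdot))$ is an infimum that need not be attained. As written, the proof is incomplete at the boundary $\|y-\bar{y}\|=c\varepsilon$.
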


\begin{cor}
\label{teorema_principala_spatii_infinit_dimensionale_cor_th_Graves}
(A consequence of Graves' theorem, pages 277-278,
\cite{CLBichir_bib_Dontchev_Rockafellar2009}) Let
(\ref{e5_57_conditie_th_Graves_dem}) hold for $x$, $x'$ $\in$
$\mathbb{B}_{\varepsilon}(\bar{x})$ and choose a positive $\tau <
\varepsilon$. Then, there is a neighborhood $U$ of $\bar{x}$ such
that $\mathbb{B}_{\tau}(x)$ $\subset$
$\mathbb{B}_{\varepsilon}(\bar{x})$ for all $x \in U$. Make $U$
smaller if necessary so that $\| f(x) - f(\bar{x}) \|$ $<$ $c
\tau$ for $x \in U$. Pick $x \in U$ and a neighborhood $V$ of
$\bar{y}$ such that $\| y - f(x) \|$ $\leq$ $c \tau$ for $y \in
V$. Then,
\begin{equation}
\label{e5_57_consecinta_th_Graves_dem_carte}
   d(x,f^{-1}(y)) \leq \frac{\kappa}{1-\kappa \mu} \|y-f(x)\|
   \ \textrm{for} \
   (x,y) \in U \times V
   \, .
\end{equation}
This is the metric regularity property of the function $f$ at
$\bar{x}$ for $\bar{y}$.
\end{cor}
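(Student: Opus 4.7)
The plan is to deduce the metric regularity estimate by applying Graves' theorem (Theorem \ref{teorema_principala_spatii_infinit_dimensionale_th_Graves}) not at the original point $\bar{x}$, but at the shifted center $x \in U$, with a radius tailored to the residual $\|y - f(x)\|$. The key observation is that the estimate in hypothesis (\ref{e5_57_conditie_th_Graves_dem}) is invariant under shifts of center: it only requires the two arguments to lie in $\mathbb{B}_{\varepsilon}(\bar{x})$, and any ball $\mathbb{B}_{\tau'}(x)$ with $\tau' \leq \tau$ and $x \in U$ is contained in $\mathbb{B}_{\varepsilon}(\bar{x})$ by the very choice of $U$. So the pair $(A,\mu)$ witnessing the Graves hypothesis at $\bar{x}$ continues to witness it at $x$ on the ball $\mathbb{B}_{\tau'}(x)$, with the same constant $c = \kappa^{-1} - \mu$.

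Concretely, I would fix $(x,y) \in U \times V$ and set $r = \|y - f(x)\|$. By the choice of $V$, $r \leq c\tau$, so the number $\tau' := r/c$ satisfies $\tau' \leq \tau < \varepsilon$; in particular $\mathbb{B}_{\tau'}(x) \subset \mathbb{B}_{\varepsilon}(\bar{x})$. Applying Graves' theorem to $f$ at the point $x$, with the radius $\tau'$ playing the role of $\varepsilon$, the same surjective $A$, and the same $\mu$, I obtain: any point $y'$ with $\|y' - f(x)\| \leq c \tau'$ lies in $f(\mathbb{B}_{\tau'}(x))$. Since $\|y - f(x)\| = r = c\tau'$, this applies to $y$, yielding some $x' \in \mathbb{B}_{\tau'}(x)$ with $f(x') = y$.

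The estimate then follows immediately: $x' \in f^{-1}(y)$ and $\|x - x'\| \leq \tau' = r/c = \kappa r/(1 - \kappa\mu)$, hence
\begin{equation*}
   d(x,f^{-1}(y)) \leq \|x - x'\| \leq \frac{\kappa}{1-\kappa\mu}\,\|y - f(x)\|,
\end{equation*}
which is exactly (\ref{e5_57_consecinta_th_Graves_dem_carte}). The conditions $\|f(x) - f(\bar{x})\| < c\tau$ on $U$ and $\|y - f(x)\| \leq c\tau$ on $V$ enter only to guarantee $r \leq c\tau$, so that $\tau' \leq \tau$ and the ball $\mathbb{B}_{\tau'}(x)$ stays inside $\mathbb{B}_{\varepsilon}(\bar{x})$.

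The only mildly subtle step is the invariance of the Graves hypothesis under change of center; once this is recognized, the proof is a matter of choosing the radius $\tau'$ proportional to the residual $\|y - f(x)\|$ so that the Graves solvability condition becomes an equality and yields the sharp Lipschitz constant $\kappa/(1-\kappa\mu)$. No fixed-point argument beyond the one already contained in the proof of Graves' theorem is required.
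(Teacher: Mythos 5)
Your proof is correct and is essentially the argument given in the cited source. The paper does not reproduce a proof of this corollary (it is stated as a direct citation of pages 277--278 of Dontchev--Rockafellar), and the standard proof there is exactly your re-centering trick: since the Lipschitz hypothesis (\ref{e5_57_conditie_th_Graves_dem}) is a two-point condition on the fixed ball $\mathbb{B}_{\varepsilon}(\bar{x})$, it transports to any sub-ball $\mathbb{B}_{\tau'}(x)\subset\mathbb{B}_{\varepsilon}(\bar{x})$ with the same $A$, $\kappa$, $\mu$, and applying Graves' theorem at the shifted center $x$ with radius $\tau'=\|y-f(x)\|/c$ produces a preimage at distance $\leq\tau'=\kappa\|y-f(x)\|/(1-\kappa\mu)$. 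The only point you leave implicit is the degenerate case $y=f(x)$, where $\tau'=0$; there $x\in f^{-1}(y)$ and the inequality is trivial, so nothing is lost.
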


If for every $\mu > 0$ there exists $\varepsilon > 0$ such that
(\ref{e5_57_conditie_th_Graves_dem}) holds for every $x$, $x'$
$\in$ $\mathbb{B}_{\varepsilon}(\bar{x})$, then $A$ is, by
definition (\cite{CLBichir_bib_Dontchev1996},
\cite{CLBichir_bib_Dontchev_Rockafellar2009}), the strict
derivative of $f$ at $\bar{x}$, $A$ $=$ $Df(\bar{x})$.

\begin{thm}
\label{teorema_principala_spatii_infinit_dimensionale_th_Graves_revisited}
(Theorem 1.3, \cite{CLBichir_bib_Dontchev1996}) Let $X$ and $Y$ be
Banach spaces, let $\bar{x}$ $\in$ $X$ and let $f:X \rightarrow Y$
be a function which is strictly differentiable at $\bar{x}$ (see
the definition at page 31,
\cite{CLBichir_bib_Dontchev_Rockafellar2009}). Suppose that
$Df(\bar{x})$ is onto. Then, there exist a neighborhood $W$ of
$\bar{x}$ and a constant $c > 0$ such that for every $x \in W$ and
$\tau > 0$ with $\mathbb{B}_{\tau}(x)$ $\subset$ $W$,
\begin{equation}
\label{e5_57_consecinta_th_Graves_revisited}
   \mathbb{B}_{c \tau}(f(x))
   \subset
   f(\mathbb{B}_{\tau}(x))
   \, .
\end{equation}
\end{thm}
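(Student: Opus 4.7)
The plan is to reapply Graves' theorem (Theorem \ref{teorema_principala_spatii_infinit_dimensionale_th_Graves}) with the same linearization $A := Df(\bar{x})$ but at varying base points $x$, exploiting the uniform estimate built into the definition of strict differentiability. First, since $A$ is surjective, the regularity modulus $\kappa := reg \, A$ is finite. Choose any $\mu \in (0, 1/\kappa)$ and set $c := \kappa^{-1} - \mu > 0$. By the characterisation of the strict derivative recalled right before Theorem \ref{teorema_principala_spatii_infinit_dimensionale_th_Graves_revisited}, there exists $\varepsilon > 0$ such that
\begin{equation*}
\| f(u) - f(u') - A(u - u') \| \leq \mu \| u - u' \|
\quad \text{for all } u, u' \in \mathbb{B}_{\varepsilon}(\bar{x}).
\end{equation*}
Take $W$ to be the open ball of radius $\varepsilon$ centred at $\bar{x}$; then $W$ is a neighbourhood of $\bar{x}$, and for every $x \in W$ small enough $\tau > 0$ satisfy $\mathbb{B}_\tau(x) \subset W$.

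Second, I would fix such an $x \in W$ together with $\tau > 0$ such that $\mathbb{B}_\tau(x) \subset W$. Since $\mathbb{B}_\tau(x) \subset W \subset \mathbb{B}_\varepsilon(\bar{x})$, the displayed inequality automatically persists on $\mathbb{B}_\tau(x)$. I then invoke Theorem \ref{teorema_principala_spatii_infinit_dimensionale_th_Graves} with $\bar{x}$ replaced by $x$ and $\varepsilon$ replaced by $\tau$, while keeping $A$, $\kappa$, $\mu$ and $c$ unchanged. Its conclusion says that for every $y$ satisfying $\| y - f(x) \| \leq c\tau$, the equation $f(u) = y$ admits a solution $u \in \mathbb{B}_\tau(x)$; equivalently,
\begin{equation*}
\mathbb{B}_{c\tau}(f(x)) \subset f(\mathbb{B}_\tau(x)),
\end{equation*}
which is precisely \eqref{e5_57_consecinta_th_Graves_revisited}.

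The main (though modest) point to watch, and what I expect to be the only real obstacle, is the bookkeeping: one has to recognise that $c$ depends only on the fixed linearisation $A$ and on the a priori choice of $\mu$, so it does not shrink as the base point $x$ moves inside $W$. It is exactly the uniformity inherent in strict differentiability, as opposed to mere Fréchet differentiability at the single point $\bar{x}$, that allows Graves' theorem to be re-run with identical quantitative data at every centre $x \in W$, provided only that $\mathbb{B}_\tau(x)$ fits inside $W$. This covering/openness property is the natural counterpart at varying centres of the metric regularity estimate \eqref{e5_57_consecinta_th_Graves_dem_carte} established in Corollary \ref{teorema_principala_spatii_infinit_dimensionale_cor_th_Graves}.
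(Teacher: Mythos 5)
Your proof is correct. Note that the paper states this result merely as a cited theorem (Theorem~1.3 of Dontchev, 1996) and does not supply a proof of its own, so there is nothing in the paper to compare against; what you give is the natural derivation: strict differentiability supplies a single radius $\varepsilon$ and a single $\mu$ that work uniformly on the whole ball $\mathbb{B}_{\varepsilon}(\bar{x})$, so Theorem~\ref{teorema_principala_spatii_infinit_dimensionale_th_Graves} can be re-invoked verbatim at any shifted centre $x$ with the smaller radius $\tau$, keeping $A$, $\kappa$, $\mu$ and hence $c=\kappa^{-1}-\mu$ fixed. All hypotheses of the quoted Graves theorem are visibly met at the shifted centre (continuity of $f$ on $\mathbb{B}_{\tau}(x)$ follows from strict differentiability, the Lipschitz-type estimate persists because $\mathbb{B}_{\tau}(x)\subset\mathbb{B}_{\varepsilon}(\bar{x})$, and $\kappa\mu<1$ is built in), and its conclusion literally is the inclusion $\mathbb{B}_{c\tau}(f(x))\subset f(\mathbb{B}_{\tau}(x))$. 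The only point you should make explicit, which you correctly flag, is that $c$ is determined once and for all by $A$ and the a priori choice of $\mu$ and does not depend on the moving centre $x$ or on $\tau$; this is precisely the uniformity that strict differentiability provides and that pointwise Fr\'{e}chet differentiability would not.
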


Let us consider the notations $\gamma$ and $L(\varepsilon)$ from
\cite{CLBichir_bib_CalozRappaz1997, CLBichir_bib_Cr_Ra1990,
CLBichir_bib_Gir_Rav1986} in the following way:
\begin{equation}
\label{e_A2_17_TEXT_gamma_bar_gen}
   \widetilde{\gamma}(f,\bar{x},X,Y) = \| Df(\bar{x})^{-1} \|_{L(Y,X)} \, ,
\end{equation}
\begin{equation}
\label{e_A2_17_TEXT_mu_bar_gen}
   \widetilde{L}(f,\bar{x},x,\varepsilon,X,Y) = \sup_{x \in \mathbb{B}_{\varepsilon}(\bar{x})}
   \| Df(\bar{x}) - Df(x) \|_{L(X,Y)} \, .
\end{equation}

\section{The main theorem on the extended systems considered in the work}
\label{sectiunea_1_main_theorem_on_extended_systems}

We formulate a theorem for the equivalence between the properties
of the bifurcation point $(\lambda_{0},u_{0})$ of (\ref{e5_1}),
satisfying hypothesis (\ref{ipotezaHypF}), and the existence of
the solution of an (overdetermined) extended system. This Section
and Section \ref{sectiunea_I_main_th_partea1_PROOF} are devoted to
this subject.

Let us consider $\theta=0$ in equation (\ref{e5_1_introduction_2})
and conclude that we can introduce an equation based on the value
$B(x_{0})$. Let us denote
\begin{equation}
\label{e5_21_BIS}
   \theta_{\ast}=B(x_{0}) \, .
\end{equation}
It follows that the equation
\begin{eqnarray}
   & \ & B(x)-\theta_{\ast}=0 \, ,
         \label{e5_1_introduction_3_teta0} \\
   & \ & G(x)=0
         \nonumber
\end{eqnarray}
has an unique solution $x$ $=$ $x_{0}$ $=$
$(f_{0},\lambda_{0},u_{0})$.

There results the existence of a $\theta_{\ast} \in
\mathbb{R}^{q+m}$ such that we can consider equation
(\ref{e5_1_introduction_3_teta0}) on its own and we can formulate
questions about the existence, the uniqueness and the properties
of the solution $(\lambda_{0},u_{0})$ of (\ref{e5_1}). Since our
results are results of existence, it is not necessary to know (to
have explicitely) the value of $\theta_{\ast}$. In the sequel, we
use (\ref{e5_21_BIS}) in two respects: 1) Given $B$ and $x_{0}$,
$\theta_{\ast}$ results from (\ref{e5_21_BIS}). 2) Given $B$ and
$\theta_{\ast}$, $x_{0}$ is a solution of (\ref{e5_21_BIS}).

Take some fixed $\theta_{0} \in \mathbb{R}^{q+m}$. Consider $p
\geq 2$. Let us introduce the function
\begin{equation}
\label{e5_23}
   \Psi:X \rightarrow Y , \
   \Psi(x)=\left[\begin{array}{l}
      B(x)-\theta_{0}\\
      G(x)
      \end{array}\right] \, ,
\end{equation}
and the equation in $x \in X$
\begin{equation}
\label{e5_22}
   \Psi(x)=0 \, .
\end{equation}
We have $\Psi(x_{0})$ $=$ $\mathcal{F}(0,x_{0})$ and
$D\Psi(x_{0})$ $=$ $D_{x}\mathcal{F}(0,x_{0})$, where
$D\Psi(x)\overline{x}$ $=$
$[B(\overline{x}),DG(x)\overline{x}]^{T}$ and $[\cdot]^{T}$
denotes the transpose of the row vector $[\cdot]$.

As $DG(x_{0})$ is associated to the Fredholm operator
$DF(\lambda_{0},u_{0})$, let us introduce a function $H$, of class
$C^{p-1}$, associated to $D_{u}F(\lambda_{0},u_{0})$. The space
$Z_{4}$ $=$ $Range(D_{u}F(\lambda_{0},u_{0}))$ is closed in $Z$,
$Z$ $=$ $Z_{3}$ $\oplus$ $Z_{4}$, where $Z_{3}$ $=$ $sp \
\{\bar{b}_{1},\ldots,\bar{b}_{n} \}$ and
$\bar{b}_{1},\ldots,\bar{b}_{n}$ are some linearly independent
elements from $Z$. Let $\Delta$ $=$ $\mathbb{R}^{n} \times W$,
$\Sigma$ $=$ $\mathbb{R}^{n} \times Z$, $e$ $=$
$(e^{1},\ldots,e^{n})$ $\in$ $\mathbb{R}^{n}$, $\mathrm{z}$ $=$
$(e,v)$ $\in$ $\Delta$. Define the function $H$ by
\begin{equation}
\label{e5_2_H}
   H:\mathbb{R}^{m} \times W \times \Delta \rightarrow Z, \
      H(\lambda,u,\mathrm{z})=D_{u}F(\lambda,u)v
         -\sum_{k=1}^{n}e^{k}\bar{b}_{k} \, .
\end{equation}
\begin{equation}
\label{e5_2_H_B_ker}
   Ker(H(\lambda_{0},u_{0},\cdot))=
   \{ \mathrm{z} = (e,v) \in \Delta; e=0,
          v \in Ker(D_{u}F(\lambda_{0},u_{0})) \} \, .
\end{equation}
It follows that $dim \ Ker(H(\lambda_{0},u_{0},\cdot))=n$,
$Range(H(\lambda_{0},u_{0},\cdot))$ $=Z$ and
$H(\lambda_{0},u_{0},\cdot) \in L(\Delta,Z)$ is a Fredholm
operator with index $n$. We have
$DH(\lambda,u,\mathrm{z})(\overline{\lambda},\overline{u},\overline{\mathrm{z}})$
$=$
$D_{(\lambda,u)}(D_{u}F(\lambda,u)v)(\overline{\lambda},\overline{u})$
$+$ $D_{u}F(\lambda,u)\overline{v}$ $-$
$\sum_{k=1}^{n}\overline{e}^{ \: k}\bar{b}_{k}$.

Consider an operator $\bar{\mathcal{B}}$, similar to $B$,
$\bar{\mathcal{B}} \in L(\Delta,\mathbb{R}^{n})$ such that $
Ker(H(\lambda_{0},u_{0},\cdot)) \cap Ker(\bar{\mathcal{B}}) = \{ 0
\}$. $\bar{\mathcal{B}}$ is an isomorphism of
$Ker(H(\lambda_{0},u_{0},\cdot))$ onto $\mathbb{R}^{n}$ and the
decomposition $\Delta$ $=$ $Ker(H(\lambda_{0},u_{0},\cdot))$
$\oplus$ $Ker(\mathcal{B}_{n})$ holds. To choose
$\bar{\mathcal{B}}$ is equivalent to choose $n$ linear forms
$\bar{\chi}_{k}$, $k=1,\ldots,n$, on $\Delta$ that are linearly
independent on $Ker(H(\lambda_{0},u_{0},\cdot))$.

Define the functions $\Phi_{G}(x,\cdot):X \rightarrow Y$ and
$\Phi_{H}(x,\cdot):\Delta \rightarrow \Sigma$  by
\begin{equation*}
\label{e5_26_T1}
   \Phi_{G}(x,y)=\left[\begin{array}{l}
      B(y) \\
      DG(x)y
      \end{array}\right]
      \, \ \textrm{and} \ \,
   \Phi_{H}(x,\mathrm{z})=\left[\begin{array}{l}
      \bar{\mathcal{B}}(\mathrm{z}) \\
      H(\lambda,u,\mathrm{z})
      \end{array}\right] \, .
\end{equation*}
We have $\Phi_{G}(x,y)$ $=$ $D\Psi(x)y$ and
\begin{equation*}
\label{e5_26_T1_dif}
   D\Phi_{G}(x,y)(\overline{x},\overline{y})=\left[\begin{array}{l}
      B(\overline{y}) \\
      D^{2}F(\lambda,u)
         ((\mu,w),(\overline{\lambda},\overline{u}))
            +DG(x)\overline{y}
      \end{array}\right] \, ,
\end{equation*}
\begin{equation*}
\label{e5_26_T1_dif2}
   D\Phi_{H}(x,\mathrm{z})(\overline{x},\overline{\mathrm{z}})=\left[\begin{array}{l}
      \bar{\mathcal{B}}(\overline{\mathrm{z}}) \\
      D_{(\lambda,u)}(D_{u}F(\lambda,u)v)
         (\overline{\lambda},\overline{u})
            +H(\lambda,u,\overline{\mathrm{z}})
      \end{array}\right] \, .
\end{equation*}

We remember that $X=\mathbb{R}^{q+m} \times W$,
$Y=\mathbb{R}^{q+m} \times Z$, $f=(f^{1},\ldots,f^{q})$ $\in$
$\mathbb{R}^{q}$, $f_{0}=0$ $\in$ $\mathbb{R}^{q}$,
$x=(f,\lambda,u)$, $x_{0}=(f_{0},\lambda_{0},u_{0})$ $\in$ $X$.

Related to $DF(\lambda_{0},u_{0})$, we denote
$g=(g^{1},\ldots,g^{q})$, $g_{i}=(g_{i}^{1},\ldots,g_{i}^{q})$
$\in$ $\mathbb{R}^{q}$, $g_{0}=0$, $g_{i,0}=0$ $\in$
$\mathbb{R}^{q}$ and $y=(g,\mu,w)$, $y_{0}=(g_{0},\mu_{0},w_{0})$,
$y_{i}=(g_{i},\mu_{i},w_{i})$,
$y_{i,0}=(g_{i,0},\mu_{i,0},w_{i,0})$ $\in$ $X$, $i=1,\ldots,q+m$.

Related to $D_{u}F(\lambda_{0},u_{0})$, we denote
$e_{k}=(e_{k}^{1},\ldots,e_{k}^{n})$ $\in$ $\mathbb{R}^{n}$,
$e_{k,0}=0$ $\in$ $\mathbb{R}^{n}$ and $\mathrm{z}_{k} =
(e_{k},v_{k})$, $\mathrm{z}_{k,0} = (e_{k,0}, v_{k,0})$ $\in$
$\Delta$, $k=1,\ldots,n$.

We also denote $\Gamma = X^{1+q+m} \times \Delta^{n}$, $\Sigma =
(\mathbb{R}^{q+m} \times Z)^{1+q+m} \times (\mathbb{R}^{n} \times
Z)^{n}$ and
$s=(x,y_{1},\ldots,y_{q+m},\mathrm{z}_{1},\ldots,\mathrm{z}_{n})$,
$s_{0}=(x_{0},y_{1,0},\ldots,y_{q+m,0},\mathrm{z}_{1,0},\ldots,\mathrm{z}_{n,0})$
$\in \Gamma$.

On the product space $Y_{0}=\prod_{\jmath \: = 1}^{N}Y_{\jmath}$,
we use the norm $\| \cdot \|_{(1)}$, $\| \kappa_{0} \|_{(1)}$ $=$
$\| \kappa_{0} \|_{Y}=\sum_{\jmath \: = 1}^{N}\| \kappa_{\jmath}
\|_{Y_{\jmath}}$, where $Y_{\jmath}$ is a normed space with norm
$\| \cdot \|_{Y_{\jmath}}$, for $\jmath=1,\ldots,N$, and
$\kappa_{0}=(\kappa_{1},\ldots,\kappa_{N}) \in Y$. We write $\|
\cdot \|$ instead of $\| \cdot \|_{E}$ or $\| \cdot \|_{L(E,F)}$
when the spaces $E$ and $F$ are clear. Let $I_{N}$ be the identity
operator on $\mathbb{R}^{N}$, $N \geq 1$. Let $I_{W}$ be the
identity operator on $W$.

$\delta_{ij}$ is the Kronecker delta and $\delta_{k}^{N}$ = $\{
\delta_{k1},\ldots,\delta_{kk},\ldots,\delta_{kN} \}$, for $1 \leq
k \leq N$, but $\{ \delta_{1}^{N}$, $\ldots$, $\delta_{N}^{N} \}$
can be any fixed basis of $\mathbb{R}^{N}$.

For the solution $x_{0}$ of (\ref{e5_1_introduction_3_teta0}), a
basis of $Ker(DF(\lambda_{0},u_{0}))$ and a basis of
$Ker(D_{u}F(\lambda_{0},u_{0}))$ can be determined by the solution
$s_{0}$ of (\ref{e5_56}) (where $\theta_{0}$ $=$ $\theta_{\ast}$)
below.

Together with $\theta_{0}$, let us consider some elements
$\bar{a}_{1},\ldots,\bar{a}_{q}$, $\bar{b}_{1},\ldots,\bar{b}_{n}$
$\in$ $Z$. Having in mind the local $C^{p}$ - diffeomorphism
$\Psi$ at $x_{0}$, let us introduce the function $S:\Gamma
\rightarrow \Sigma$ (\cite{CLBichir_bib_PhDThesis2002}),
\begin{equation}
\label{e5_57}
   S(s)=\left[\begin{array}{l}
      \Psi(x)\\
      B(y_{i})-\delta_{i}^{q+m}\\
      DG(x)y_{i} \\
      \bar{\mathcal{B}}(\mathrm{z}_{k})-\delta_{k}^{n} \\
      H(\lambda,u,\mathrm{z}_{k})
      \end{array}\right]
    \ \textrm{or} \
   S(s)=\left[\begin{array}{l}
      \Psi(x) \\
      \Phi_{G}(x,y_{i})
         - \left[\begin{array}{l}
           \delta_{i}^{q+m} \\
           0
           \end{array}\right] \\
      \Phi_{H}(x,\mathrm{z}_{k})
         - \left[\begin{array}{l}
           \delta_{k}^{n} \\
           0
           \end{array}\right]
      \end{array}\right] \ ,
\end{equation}
for all $i=1,\ldots,q+m$, $k=1,\ldots,n$, with the following
convention (in the left formulation of $S(s)$) that we use
throughout the paper: the second and the third rows (components)
are taken $q+m$ times, for all the values of $i$, and the fourth
and the fifth rows (components) are taken $n$ times, for all the
values of $k$. Consider the extended system in $s$
\begin{equation}
\label{e5_56}
   S(s)=0 \ .
\end{equation}
In (\ref{e5_56}), $x$ is determined by equation $\Psi(x)=0$ and
the rest of components of $s$ are determined by the rest of the
equations.

\begin{rem}
\label{observatia_diferentiala_G} Let us observe that
$DG(f,\lambda,u)(g,\mu,w)$ $=$ $DG(f',\lambda,u)(g,\mu,w)$ for $f$
$\neq$ $f'$.
\end{rem}

We have
\begin{equation}
\label{e5_27_S_3}
       DS(s)\overline{s}=\left[\begin{array}{l}
      B(\overline{x}) \\
      DG(x)\overline{x} \\
      B(\overline{y}_{i}) \\
      D^{2}F(\lambda,u)
         ((\mu_{i},w_{i}),(\overline{\lambda},\overline{u}))
            +DG(x)\overline{y}_{i} \\
      \bar{\mathcal{B}}(\overline{\mathrm{z}}_{k}) \\
      D_{(\lambda,u)}(D_{u}F(\lambda,u)v_{k})
         (\overline{\lambda},\overline{u})
            +H(\lambda,u,\overline{\mathrm{z}}_{k})
      \end{array}\right] \, ,
\end{equation}
$i=1,\ldots,q+m$, $k=1,\ldots,n$.

Let us define the function $\Phi:X \times \Gamma \rightarrow
\Sigma$,
\begin{equation}
\label{e5_57_Phi_3}
   \Phi(x,\phi')=\left[\begin{array}{l}
      \Phi_{G}(x,y') \\
      \Phi_{G}(x,y_{i}') \\
      \Phi_{H}(x,\mathrm{z}_{k}')
      \end{array}\right] \ ,
\end{equation}
for all $i=1,\ldots,q+m$, $k=1,\ldots,n$, where
$\phi'=(y',y_{1}',\ldots,y_{q+m}',\mathrm{z}_{1}',\ldots,\mathrm{z}_{n}')$.

We have
\begin{equation}
\label{e5_57_Phi_3_dif}
   D\Phi(x,\phi')(\overline{x},\overline{\phi}')=\left[\begin{array}{l}
      B(\overline{y}') \\
      D^{2}F(\lambda,u)
         ((\mu',w'),(\overline{\lambda},\overline{u}))
            +DG(x)\overline{y}' \\
      B(\overline{y}_{i}') \\
      D^{2}F(\lambda,u)
         ((\mu_{i}',w_{i}'),(\overline{\lambda},\overline{u}))
            +DG(x)\overline{y}_{i}' \\
      \bar{\mathcal{B}}(\overline{\mathrm{z}}_{k}') \\
      D_{(\lambda,u)}(D_{u}F(\lambda,u)v_{k}')
         (\overline{\lambda},\overline{u})
            +H(\lambda,u,\overline{\mathrm{z}}_{k}')
      \end{array}\right] \ ,
\end{equation}

\begin{rem}
\label{observatia_Lema_solutie_01_echivalenta}

Related to the solution $(\lambda_{0},u_{0})$ of (\ref{e5_1}), to
a basis of $Ker(DF(\lambda_{0},u_{0}))$ and to a basis of
$Ker(D_{u}F(\lambda_{0},u_{0}))$, let us observe that the
following statements i) and ii) must be equivalent:

i) $\theta_{0}$ $=$ $\theta_{\ast}$ from (\ref{e5_21_BIS}),
$g_{i}=0$, $e_{k}=0$, $i=1,\ldots,q+m$, $k=1,\ldots,n$.

ii) There exists $\theta_{0}$ $\in$ $\mathbb{R}^{q+m}$ such that
$f=0$, $g_{i}=0$, $e_{k}=0$, $i=1,\ldots,q+m$, $k=1,\ldots,n$.

\end{rem}

\begin{lem}
\label{Lema_solutie_01_DS3} Let $s$ $\in$ $\Gamma$. The following
statements i) and ii) are equivalent:

i) $D\Psi(x)$ is an isomorphism of $X$ onto $Y$, $DS(s)$ is an
isomorphism of $\Gamma$ onto $\Sigma$

ii) $\Phi_{G}(x,\cdot)$ is an isomorphism of $X$ onto $Y$,
$\Phi_{H}(x,\cdot)$ is an isomorphism of $\Delta$ onto $\Sigma$.
\end{lem}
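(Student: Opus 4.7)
The plan is first to observe that, by direct comparison of $\Phi_G(x, y) = [B(y), DG(x)y]^T$ with the expression $D\Psi(x)\overline{x} = [B(\overline{x}), DG(x)\overline{x}]^T$ recorded just after (\ref{e5_22}), the linear maps $\Phi_G(x, \cdot)$ and $D\Psi(x)$ from $X$ to $Y$ are literally the same. Consequently the first clause of (i) is identical to the first clause of (ii), and the lemma reduces to showing that, in the presence of this common hypothesis, $DS(s) \in L(\Gamma, \Sigma)$ is an isomorphism if and only if $\Phi_H(x,\cdot) \in L(\Delta, \mathbb{R}^n \times Z)$ is an isomorphism.

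To carry this out I would exploit the block structure of $DS(s)$ visible in (\ref{e5_27_S_3}). Writing $\overline{s} = (\overline{x}, \overline{y}_1, \ldots, \overline{y}_{q+m}, \overline{\mathrm{z}}_1, \ldots, \overline{\mathrm{z}}_n)$, the first pair of rows of $DS(s)\overline{s}$ equals $D\Psi(x)\overline{x}$, the $i$-th $y$-block equals
\[
   \Phi_G(x, \overline{y}_i) + \bigl[0,\; D^2 F(\lambda, u)((\mu_i, w_i), (\overline{\lambda}, \overline{u}))\bigr]^T,
\]
and the $k$-th $z$-block equals
\[
   \Phi_H(x, \overline{\mathrm{z}}_k) + \bigl[0,\; D_{(\lambda, u)}(D_u F(\lambda, u) v_k)(\overline{\lambda}, \overline{u})\bigr]^T.
\]
In both formulas the additive perturbation is a bounded linear function of $\overline{x}$ alone. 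Thus $\overline{x}$ is decoupled in the first block, and the remaining blocks depend on their own $\overline{y}_i$ or $\overline{\mathrm{z}}_k$ plus a known linear function of $\overline{x}$.

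For the implication (ii) $\Rightarrow$ (i), assume $\Phi_G(x, \cdot)$ and $\Phi_H(x, \cdot)$ are isomorphisms. Given any element of $\Sigma$, the first block uniquely determines $\overline{x}$ via $D\Psi(x)^{-1}$; once $\overline{x}$ is known, each $i$-th $y$-block is an equation $\Phi_G(x, \overline{y}_i) = \text{(known)}$ with a unique solution, and similarly each $k$-th $z$-block uniquely yields $\overline{\mathrm{z}}_k$ via $\Phi_H(x, \cdot)^{-1}$. This produces a continuous linear bijection $DS(s): \Gamma \to \Sigma$, whose inverse is automatically continuous by the Banach open mapping theorem applied on the Banach spaces $\Gamma, \Sigma$.

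For the converse (i) $\Rightarrow$ (ii), since $D\Psi(x) = \Phi_G(x, \cdot)$ is already an isomorphism, only the isomorphism property of $\Phi_H(x, \cdot)$ remains to be derived. For surjectivity, given $(\beta_1, \beta_2) \in \mathbb{R}^n \times Z$, I would pick the target in $\Sigma$ that vanishes in every slot except one fixed $k$-th $z$-block, which is set equal to $(\beta_1, \beta_2)$; then $DS(s)^{-1}$ applied to this target forces $\overline{x} = 0$ (from the first block together with $D\Psi(x)$ iso), so the $k$-th $z$-block collapses to $\Phi_H(x, \overline{\mathrm{z}}_k) = (\beta_1, \beta_2)$. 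For injectivity, any $\overline{\mathrm{z}} \in \ker \Phi_H(x, \cdot)$ embeds as an element of $\ker DS(s) = \{0\}$ by placing it in the $k$-th $z$-slot with all other entries zero, forcing $\overline{\mathrm{z}} = 0$. I anticipate no substantial obstacle here; the main task is simply to keep the block decomposition straight and to verify that each ``extra'' coupling term depends only on $\overline{x}$.
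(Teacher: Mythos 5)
Your proof is correct and rests on the same structural observation the paper uses: the identity $\Phi_G(x,\cdot)=D\Psi(x)$ plus the block-triangular form of $DS(s)$ visible in (\ref{e5_27_S_3}), where the only coupling terms in the $y$- and $z$-blocks are linear functions of $\overline{x}$ alone. You carry out both implications carefully, whereas the paper's proof only sketches the direction (i) $\Rightarrow$ (ii) by isolating a single $z$-block and moving the $\overline{x}$-dependent term to the right-hand side; your version is more complete, in particular spelling out the forcing $\overline{x}=0$ needed to decouple the $k$-th $z$-block, and supplying the converse direction and the open-mapping remark.
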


\begin{proof}
For the implication (i) $\Rightarrow$ (ii), we mentions the
followings remarks: $\Phi_{G}(x,\cdot)$ $=$ $D\Psi(x)$. Let us
take the last component of $DS(s)$ and obtain that
$\Phi_{H}(x,\cdot)$ is bijective from the study of the equation in
$\overline{\mathrm{z}}$
\begin{equation*}
\label{e5_27_S_3_UN_Y_UN_Z_ecuatie}
   \Phi_{H}(x,\overline{\mathrm{z}})
      =[\varsigma',\varsigma'']^{T}
      -{[0,D_{(\lambda,u)}(D_{u}F(\lambda,u)v_{n})
         (\overline{\lambda},\overline{u})]}^{T} \, ,
\end{equation*}
for $(\varsigma', \varsigma'')$ $\in$ $Y$ and
$(\overline{\lambda},\overline{u})$ fixed in $\mathbb{R}^{m}
\times W$.

\qquad
\end{proof}

\begin{lem}
\label{Lema_solutie_01_DS3_fi3} Let the assumptions (i) or (ii) of
Lemma \ref{Lema_solutie_01_DS3} hold. Then, $\Phi(x,\cdot)$ is an
isomorphism of $\Gamma$ onto $\Sigma$.
\end{lem}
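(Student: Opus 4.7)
The plan is to exploit the block-diagonal structure of $\Phi(x,\cdot)$ with respect to the natural product decompositions of $\Gamma = X^{1+q+m} \times \Delta^{n}$ and of $\Sigma = Y^{1+q+m} \times (\mathbb{R}^{n} \times Z)^{n}$. Reading off (\ref{e5_57_Phi_3}) together with the convention that the middle row is taken $q+m$ times and the last row is taken $n$ times, one sees that $\Phi(x,\cdot)$ applies $\Phi_{G}(x,\cdot)$ independently to each of the $1+(q+m)$ factors $y', y_{1}', \ldots, y_{q+m}'$ of $X$, and applies $\Phi_{H}(x,\cdot)$ independently to each of the $n$ factors $\mathrm{z}_{1}', \ldots, \mathrm{z}_{n}'$ of $\Delta$. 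In other words, $\Phi(x,\cdot)$ is the direct sum of $1+(q+m)$ copies of $\Phi_{G}(x,\cdot)$ and $n$ copies of $\Phi_{H}(x,\cdot)$.

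First I would verify that $\Phi(x,\cdot)$ is linear and continuous, which is immediate from the facts $\Phi_{G}(x,\cdot) \in L(X,Y)$ and $\Phi_{H}(x,\cdot) \in L(\Delta, \mathbb{R}^{n} \times Z)$, together with the product norm $\| \cdot \|_{(1)}$ used throughout the paper. Next, under the assumption of Lemma \ref{Lema_solutie_01_DS3} (either (i) or (ii), which are equivalent), part (ii) of that lemma asserts that both $\Phi_{G}(x,\cdot)$ and $\Phi_{H}(x,\cdot)$ are themselves isomorphisms. Injectivity of $\Phi(x,\cdot)$ then follows because $\Phi(x,\phi')=0$ forces each block output to vanish, hence by injectivity of the two building blocks each of $y'$, $y_{i}'$, $\mathrm{z}_{k}'$ must be zero; surjectivity is obtained by the analogous component-wise argument, solving the $1+(q+m)+n$ independent equations against any prescribed right-hand side in $\Sigma$.

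Finally, the inverse of $\Phi(x,\cdot)$ is the block-diagonal operator assembled from $\Phi_{G}(x,\cdot)^{-1}$ and $\Phi_{H}(x,\cdot)^{-1}$, and is therefore continuous (one may invoke the Banach open mapping theorem on the Banach space $\Gamma$, or simply use a direct norm estimate on each block against the product norm $\| \cdot \|_{(1)}$). I do not foresee a real obstacle; the only point requiring care is bookkeeping of the product decomposition and checking that component-wise norm bounds combine into a bound for the full inverse of $\Phi(x,\cdot)$. The conclusion ultimately reduces to the elementary fact that a finite direct product of continuous linear isomorphisms between Banach spaces is again a continuous linear isomorphism, which is precisely what Lemma \ref{Lema_solutie_01_DS3}(ii) makes available.
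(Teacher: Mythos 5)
Your proof is correct. The paper states Lemma \ref{Lema_solutie_01_DS3_fi3} without any proof, which is consistent with your observation: once one reads off from (\ref{e5_57_Phi_3}) that $\Phi(x,\cdot)$ is the block-diagonal direct sum of $1+(q+m)$ copies of $\Phi_{G}(x,\cdot)$ and $n$ copies of $\Phi_{H}(x,\cdot)$, the conclusion is immediate from Lemma \ref{Lema_solutie_01_DS3}(ii) and the elementary fact that a finite direct product of continuous linear isomorphisms between Banach spaces is again a continuous linear isomorphism.
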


We study the case $q \geq 1$. The results can be transferred,
easily, to the case $q =0$. We formulate, now, the main theorem on
the extended systems considered above.

\begin{thm}
\label{teorema_principala_parteaMAIN}

Let $F$, $n \geq 1$, $q \geq 1$, $\bar{a}_{1},\ldots,\bar{a}_{q}$,
$\bar{b}_{1},\ldots,\bar{b}_{n}$ $\in$ $Z \backslash \{ 0 \}$ and
$G$, $H$ defined above. Let $B$ $=$ $(\chi_{1},\ldots,\chi_{q+m})$
$\in$ $L(X,\mathbb{R}^{q+m})$ and $\bar{\mathcal{B}}$ $=$
$(\bar{\chi}_{1},\ldots,\bar{\chi}_{n})$ $\in$
$L(\Delta,\mathbb{R}^{n})$, where $\chi_{1}$, $\ldots$,
$\chi_{q+m}$ are $(q+m)$ nonzero linear forms on $X$ and
$\bar{\chi}_{1}$, $\ldots$, $\bar{\chi}_{n}$ are $n$ nonzero
linear forms on $\Delta$. The following statements (i) and (ii)
are equivalent

(i) Let (\ref{e5_1}). $(\lambda_{0},u_{0})$ is a solution of
(\ref{e5_1}). Hypothesis (\ref{ipotezaHypF}) holds for
$(\lambda_{0},u_{0})$. $Z=Z_{1} \oplus Z_{2}$ with
$\bar{a}_{1},\ldots,\bar{a}_{q}$ linearly independent. $Z=Z_{3}
\oplus Z_{4}$ with $\bar{b}_{1},\ldots,\bar{b}_{n}$ linearly
independent. $x_{0}$ is the unique solution of (\ref{e5_3}). $B$
and $\bar{\mathcal{B}}$ are such that
\begin{equation}
\label{e5_2_B_int}
   Ker(DG(x_{0})) \cap Ker(B) = \{ 0 \} \, ,
\end{equation}
\begin{equation}
\label{e5_2_H_B}
   Ker(H(\lambda_{0},u_{0},\cdot)) \cap Ker(\bar{\mathcal{B}})
      = \{ 0 \} \, .
\end{equation}
where these kernels are given by (\ref{e5_2_B_int_ker}) and
(\ref{e5_2_H_B_ker}). We have
\begin{equation}
\label{e5_21}
   \theta_{0}=B(x_{0}) \in \mathbb{R}^{q+m}.
\end{equation}

(ii) Fix $\theta_{0}$ $\in$ $\mathbb{R}^{q+m}$, a basis $\{
\delta_{i}^{q+m} \}_{i=1,\ldots,q+m}$ of $\mathbb{R}^{q+m}$ and a
basis $\{ \delta_{k}^{n} \}_{k=1,\ldots,n}$ of $\mathbb{R}^{n}$.
Consider $\Psi$ and (\ref{e5_22}) together with $S$ and
(\ref{e5_56}). We have:

(a) The system (\ref{e5_56}) has the solution $s_{0}$ $=$ $
(x_{0},$ $y_{1,0},$ $\ldots,$ $y_{q+m,0},$ $\mathrm{z}_{1,0},$
$\ldots,$ $\mathrm{z}_{n,0})$, where
$x_{0}=(f_{0},\lambda_{0},u_{0})$,
$y_{i,0}=(g_{i,0},\mu_{i,0},w_{i,0})$, $\mathrm{z}_{k,0} =
(e_{k,0}, v_{k,0})$, $f_{0}=0$, $g_{i,0}=0$, $e_{k,0}=0$,
$i=1,\ldots,q+m$, $k=1,\ldots,n$ and the component $x_{0}$ of
$s_{0}$ is the solution of (\ref{e5_22}).

(b) $D\Psi(x_{0})$ is an isomorphism of $X$ onto $Y$; $DS(s_{0})$
is an isomorphism of $\Gamma$ onto $\Sigma$.

\end{thm}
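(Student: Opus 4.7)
The plan is to establish the equivalence in two steps, leveraging the Crouzeix--Rappaz framework of Section \ref{sectiunea_1_preliminaries_Cr_Rap} together with Lemmas \ref{Lema_solutie_01_DS3} and \ref{Lema_solutie_01_DS3_fi3}.

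For the direct implication (i) $\Rightarrow$ (ii), I would first verify that $\Psi(x_{0}) = 0$ is automatic: the first block vanishes by the choice $\theta_{0} = B(x_{0})$ from (\ref{e5_21}), while $G(x_{0}) = F(\lambda_{0}, u_{0}) - \sum_{i=1}^{q} f_{0}^{i} \bar{a}_{i} = 0$ since $f_{0} = 0$ and $(\lambda_{0}, u_{0})$ solves (\ref{e5_1}). Next, hypothesis (\ref{ipotezaHypF}) together with (\ref{e5_2_B_int_ker}) gives $\dim Ker(DG(x_{0})) = q + m$, and combined with (\ref{e5_2_B_int}) this means $B$ restricts to an isomorphism $Ker(DG(x_{0})) \to \mathbb{R}^{q+m}$. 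For each basis vector $\delta_{i}^{q+m}$ I obtain a unique preimage $y_{i,0} \in Ker(DG(x_{0}))$ which automatically has $g_{i,0} = 0$ by (\ref{e5_2_B_int_ker}). The analogous construction using $\bar{\mathcal{B}}$ and $Ker(H(\lambda_{0}, u_{0}, \cdot))$ produces $\mathrm{z}_{k,0}$ with $e_{k,0} = 0$. That $D\Psi(x_{0}) = D_{x}\mathcal{F}(0, x_{0})$ is an isomorphism is exactly what was established via (\ref{e5_1_introduction_2_1}) in Section \ref{sectiunea_1_preliminaries_Cr_Rap} from (\ref{e5_2_B_int}) and the surjectivity of $DG(x_{0})$. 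Finally, inspection of (\ref{e5_27_S_3}) shows that $DS(s_{0})$ has a block lower-triangular structure in the ordering $(\overline{x}, \overline{y}_{1}, \ldots, \overline{y}_{q+m}, \overline{\mathrm{z}}_{1}, \ldots, \overline{\mathrm{z}}_{n})$, whose diagonal blocks are $D\Psi(x_{0})$, copies of $\Phi_{G}(x_{0}, \cdot)$, and copies of $\Phi_{H}(x_{0}, \cdot)$; the off-diagonal second-derivative couplings do not affect invertibility, and one concludes via Lemmas \ref{Lema_solutie_01_DS3} and \ref{Lema_solutie_01_DS3_fi3}.

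For the converse (ii) $\Rightarrow$ (i), I would first read off from $\Psi(x_{0}) = 0$ and $f_{0} = 0$ that $\theta_{0} = B(x_{0})$ (which yields (\ref{e5_21})) and that $F(\lambda_{0}, u_{0}) = G(x_{0}) + \sum_{i} f_{0}^{i} \bar{a}_{i} = 0$, so $(\lambda_{0}, u_{0})$ solves (\ref{e5_1}). Applying Lemma \ref{Lema_solutie_01_DS3} to the isomorphism $DS(s_{0})$, I obtain that $\Phi_{G}(x_{0}, \cdot) = D\Psi(x_{0})$ and $\Phi_{H}(x_{0}, \cdot)$ are isomorphisms. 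Surjectivity of $D\Psi(x_{0})$ then forces $Range(DG(x_{0})) = Z$, which rewrites as $Z = Range(DF(\lambda_{0}, u_{0})) + sp\{\bar{a}_{1}, \ldots, \bar{a}_{q}\}$; injectivity of $D\Psi(x_{0})$ supplies (\ref{e5_2_B_int}) and upgrades the sum to a direct sum $Z = Z_{1} \oplus Z_{2}$, with the $\bar{a}_{i}$ linearly independent. A parallel analysis of $\Phi_{H}(x_{0}, \cdot)$ yields $Z = Z_{3} \oplus Z_{4}$, linear independence of the $\bar{b}_{k}$, condition (\ref{e5_2_H_B}), and $\dim Ker(D_{u}F(\lambda_{0}, u_{0})) = n$. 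Since $D_{u}F(\lambda_{0}, u_{0})$ has finite-dimensional kernel and closed range $Z_{4}$ of codimension $n$, it is Fredholm of index $0$; the induced index of $DF(\lambda_{0}, u_{0})$ equals $m$ with codimension of range equal to $q$, so (\ref{ipotezaHypF}) is recovered. Local uniqueness of $x_{0}$ as a solution of $\Psi(x) = 0$ then follows from the inverse function theorem (Lemma \ref{lema_A2_7_th_fc_impl_th_apl_inv}) applied at the point where $D\Psi(x_{0})$ is an isomorphism.

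The main obstacle I anticipate is the dimensional bookkeeping in the converse direction: recovering both direct-sum decompositions of $Z$, the linear independence of the auxiliary vectors $\bar{a}_{i}$ and $\bar{b}_{k}$, and the exact dimensions $n, q$ purely from the abstract invertibility of $\Phi_{G}(x_{0}, \cdot)$ and $\Phi_{H}(x_{0}, \cdot)$. In particular, one must show that any nontrivial linear combination of the $\bar{a}_{i}$ lying in $Range(DF(\lambda_{0}, u_{0}))$ would produce a nonzero element of $Ker(D\Psi(x_{0}))$ contradicting injectivity, and similarly for the $\bar{b}_{k}$ with respect to $\Phi_{H}(x_{0}, \cdot)$; these transversality arguments, together with the interaction between the ranks of $B, \bar{\mathcal{B}}$ and the kernel structures (\ref{e5_2_B_int_ker})--(\ref{e5_2_H_B_ker}), are the delicate step.
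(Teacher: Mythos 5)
Your direct implication (i) $\Rightarrow$ (ii) tracks the paper's own route closely: verify $\Psi(x_0)=0$ from $\theta_0=B(x_0)$ and $f_0=0$; use the Fredholm structure together with (\ref{e5_2_B_int}) to pull back the basis $\{\delta_i^{q+m}\}$ to $y_{i,0}\in Ker(DG(x_0))$ with $g_{i,0}=0$ (and similarly for $\mathrm{z}_{k,0}$); observe that $D\Psi(x_0)=D_x\mathcal{F}(0,x_0)$ is an isomorphism by the Crouzeix--Rappaz argument; and finally invoke the block-triangular structure of $DS(s_0)$ from (\ref{e5_27_S_3}), whose diagonal blocks are $\Phi_G(x_0,\cdot)$ and $\Phi_H(x_0,\cdot)$, to conclude via Lemma \ref{Lema_solutie_01_DS3}. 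This is precisely the chain the paper uses.

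In the converse (ii) $\Rightarrow$ (i), however, there is a genuine gap in the argument for the linear independence of $\bar{a}_1,\ldots,\bar{a}_q$ and the direct sum $Z=Z_1\oplus Z_2$. You propose that a nontrivial relation among the $\bar{a}_i$ (or a nontrivial intersection $Z_1\cap Range(DF(\lambda_0,u_0))$) would produce a nonzero element of $Ker(D\Psi(x_0))$, contradicting injectivity. That is not quite right: such data produce an element $\xi=(\alpha,\lambda,u)\in Ker(DG(x_0))$ with $\alpha\neq 0$, but there is no reason for $B(\xi)=0$, so $\xi$ need not lie in $Ker(D\Psi(x_0))$. The mechanism that actually works, and that the paper uses (Lemma \ref{corolarul_teorema_principala_parteaMAIN} followed by Lemmas \ref{Lema_Range_01} and \ref{Lema_Range_03}), is different: since $\Phi_G(x_0,\cdot)$ is an isomorphism, the vectors $y_{i,0}$ solving $\Phi_G(x_0,y_{i,0})=[\delta_i^{q+m},0]^T$ form a basis of $Ker(DG(x_0))$, and because hypothesis (ii)(a) stipulates $g_{i,0}=0$, \emph{every} element of $Ker(DG(x_0))$ has vanishing $f$-component, i.e., (\ref{NOU_e5_61}) holds. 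It is this derived kernel structure --- not the injectivity of $D\Psi(x_0)$ alone --- that rules out a nontrivial relation $\sum\alpha_i\bar{a}_i=0$ (by producing $(\alpha,\mu,w)\in Ker(DG(x_0))$ with $\alpha\neq 0$) and shows $Z_1\cap Range(DF(\lambda_0,u_0))=\{0\}$. Your sketch skips the step of extracting the kernel characterization from the special form of $s_0$, which is where the hypothesis $g_{i,0}=0$ (and $e_{k,0}=0$ on the $\Phi_H$ side) enters in an essential way.
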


\begin{cor}
\label{corolar_teorema_principala_parteaMAIN_neliniar} Theorem
\ref{teorema_principala_parteaMAIN} remains true if we replace $B$
by a nonlinear $B_{N}$ in the definition (\ref{e5_23}) of $\Psi$.
\end{cor}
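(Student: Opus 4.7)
The proof proceeds by checking that the three roles played by the linear operator $B$ in Theorem \ref{teorema_principala_parteaMAIN} --- namely, (1) supplying the definition of $\Psi$, (2) contributing the kernel condition (\ref{e5_2_B_int}) that makes $D\Psi(x_{0})$ an isomorphism, and (3) enforcing the compatibility (\ref{e5_21}) between $\theta_{0}$ and $x_{0}$ --- all transfer to $B_{N}$ through its Fr\'echet derivative at $x_{0}$. The appropriate nonlinear analogue of condition (i) requires $B_{N}:X \rightarrow \mathbb{R}^{q+m}$ of class $C^{p}$ (so that $\Psi$ remains $C^{p}$), the kernel condition $Ker(DG(x_{0})) \cap Ker(DB_{N}(x_{0})) = \{ 0 \}$ in place of (\ref{e5_2_B_int}), and the compatibility $\theta_{0}=B_{N}(x_{0})$ in place of (\ref{e5_21}). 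The linear operator $B$ continues to appear unchanged in the rows $B(y_{i})-\delta_{i}^{q+m}$ of the extended system (\ref{e5_57}), so those components are not affected by the substitution.

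For the direction (i) $\Rightarrow$ (ii), I would first verify (a): $\Psi(x_{0}) = [B_{N}(x_{0})-\theta_{0}, G(x_{0})]^{T}=0$ by the new compatibility condition together with $G(x_{0})=0$, while the remaining components of $s_{0}$ are determined by equations that still involve the linear $B$ and $\bar{\mathcal{B}}$, hence are identical to the original case. Next, for (b), since $D\Psi(x_{0})\overline{x} = [DB_{N}(x_{0})\overline{x}, DG(x_{0})\overline{x}]^{T}$, the isomorphism argument of the linear case applies with $DB_{N}(x_{0}) \in L(X,\mathbb{R}^{q+m})$ in the role of $B$, because what that argument really uses is the injectivity of $B$ on the $(q+m)$-dimensional space $Ker(DG(x_{0}))$, and this now holds for $DB_{N}(x_{0})$ by the new kernel condition. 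Finally, $DS(s_{0})$ differs from (\ref{e5_27_S_3}) only in its first row (now $DB_{N}(x_{0})\overline{x}$ instead of $B(\overline{x})$); Lemmas \ref{Lema_solutie_01_DS3} and \ref{Lema_solutie_01_DS3_fi3} still apply, since their proofs reduce the isomorphism question to $D\Psi(x_{0})$, $\Phi_{G}(x_{0},\cdot)$ and $\Phi_{H}(x_{0},\cdot)$, and the last two are unaffected.

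For the reverse direction (ii) $\Rightarrow$ (i), I would read off $G(x_{0})=0$ and $B_{N}(x_{0})=\theta_{0}$ from $\Psi(x_{0})=0$, and extract $Ker(DG(x_{0})) \cap Ker(DB_{N}(x_{0})) = \{ 0 \}$ from the fact that $D\Psi(x_{0})$ is an isomorphism, with everything else proceeding exactly as in the proof of Theorem \ref{teorema_principala_parteaMAIN}. The main obstacle I anticipate is purely notational: one must carefully distinguish between the nonlinear map $B_{N}$ (appearing inside $\Psi$) and its linearization $DB_{N}(x_{0})$ (appearing in $D\Psi(x_{0})$ and in the first row of $DS(s_{0})$), while simultaneously keeping the original linear $B$ in the rows of $S$ that enforce $B(y_{i,0})=\delta_{i}^{q+m}$. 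Once this bookkeeping is in place, every step of the proof of Theorem \ref{teorema_principala_parteaMAIN} translates mechanically with $DB_{N}(x_{0})$ wherever the original argument differentiated $\Psi$.
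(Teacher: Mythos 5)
Your proposal is correct in spirit and reaches the same conclusion as the paper, but it is a genuinely more explicit (and longer) route. The paper's proof is two sentences: it invokes Lemma~\ref{teorema_principala_partea1}, whose statement already carries a remark extending it to a nonlinear $B_{N}$ (keeping $B$ linear inside $\Phi_{G}$), and then notes that under Theorem~\ref{teorema_principala_parteaMAIN}~(ii)(b) the isomorphy of $\Phi_{G}(x,\cdot)$ can still be \emph{extracted} from the isomorphy of $DS(s_{0})$ by the same argument used for $\Phi_{H}(x,\cdot)$ in the proof of Lemma~\ref{Lema_solutie_01_DS3} (i.e.\ solving the equation given by the rows $B(\overline{y}_{i})$, $D^{2}F((\mu_{i},w_{i}),\cdot)+DG(x)\overline{y}_{i}$ for $\overline{y}_{i}$ with $(\overline{\lambda},\overline{u})$ fixed). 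You instead re-derive everything directly, tracking $B_{N}$ versus $DB_{N}(x_{0})$ versus the linear $B$; that buys self-containedness and makes explicit what hypotheses must be adjusted, at the cost of repeating work the paper delegates to Lemma~\ref{teorema_principala_partea1}.

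There is one real gap you should repair. You write that Lemmas~\ref{Lema_solutie_01_DS3} and \ref{Lema_solutie_01_DS3_fi3} ``still apply, since their proofs reduce the isomorphism question to $D\Psi(x_{0})$, $\Phi_{G}(x_{0},\cdot)$ and $\Phi_{H}(x_{0},\cdot)$, and the last two are unaffected.'' That is not quite right: the proof of Lemma~\ref{Lema_solutie_01_DS3} begins with the identity $\Phi_{G}(x,\cdot)=D\Psi(x)$, which is precisely what is destroyed by replacing $B$ by $B_{N}$ in $\Psi$, since then $D\Psi(x)\overline{x}=[DB_{N}(x)\overline{x},DG(x)\overline{x}]^{T}$ while $\Phi_{G}(x,\overline{x})=[B(\overline{x}),DG(x)\overline{x}]^{T}$. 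The implication you actually need (that $DS(s_{0})$ being an isomorphism forces $\Phi_{G}(x_{0},\cdot)$ to be one) requires a separate argument, namely the same fixed-$(\overline{\lambda},\overline{u})$ argument the paper uses for $\Phi_{H}$; the converse implication of Lemma~\ref{Lema_solutie_01_DS3} (from $\Phi_{G},\Phi_{H}$ iso to $D\Psi,DS$ iso) needs your added hypothesis $Ker(DG(x_{0}))\cap Ker(DB_{N}(x_{0}))=\{0\}$, which is not a consequence of the Lemma. A related wording issue: you say your kernel condition on $DB_{N}(x_{0})$ is ``in place of'' (\ref{e5_2_B_int}), but since the linear $B$ still lives in the rows $B(y_{i})-\delta_{i}^{q+m}$ of $S$ and in $\Phi_{G}$, condition~(\ref{e5_2_B_int}) on $Ker(B)$ is still required; your new condition is in \emph{addition} to it, as you in fact implicitly acknowledge later when you insist on keeping $B$ in those rows.
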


\begin{proof} We first use Lemma \ref{teorema_principala_partea1}
(i) and (ii) below. Then, we observe that, under the assumptions
of the statement (ii) (b), we obtain that $\Phi_{G}(x,\cdot)$ is
an isomorphism of $X$ onto $Y$ as in the proof of Lemma
\ref{Lema_solutie_01_DS3}.

\qquad
\end{proof}

\begin{rem}
\label{observatia_teorema_principala_parteaMAIN_1} Since we have
$f_{0}=0$, $g_{i,0}=0$ ($i=1,\ldots,q+m$) and $e_{k,0}=0$
($k=1,\ldots,n$), it follows that the results of Theorem
\ref{teorema_principala_parteaMAIN} do not depend on the choice of
$\bar{a}_{1},\ldots,\bar{a}_{q}$,
$\bar{b}_{1},\ldots,\bar{b}_{n}$.
\end{rem}

\begin{rem}
\label{observatia_teorema_principala_parteaMAIN_2} The results of
Theorem \ref{teorema_principala_parteaMAIN} do not depend on the
choice of $B$ $=$ $(\chi_{1},\ldots,\chi_{q+m})$ and
$\bar{\mathcal{B}}$ $=$ $(\bar{\chi}_{1},\ldots,\bar{\chi}_{n})$.
\end{rem}

\begin{cor}
\label{corolar_teorema_principala_parteaMAIN_ii}

Theorem \ref{teorema_principala_parteaMAIN} ii) a) is equivalent
to the following formulation: $s_{0}$ is a solution of the
overdetermined system
\begin{equation}
\label{e5_57_corolar_teorema_principala_parteaMAIN_ii}
   S(s) = 0 \, ,
   f = 0 \, , \ g_{i} = 0 \, , \ e_{k} = 0 \, , \ i=1,\ldots,q+m \, , \ k=1,\ldots,n \, .
\end{equation}
\end{cor}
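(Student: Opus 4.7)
The plan is to prove the equivalence by directly unpacking the definition (\ref{e5_57}) of $S$ and observing that the two formulations literally encode the same information about $s_0$. No nontrivial estimate or auxiliary construction is needed; the corollary is essentially a notational restatement, convenient because in the later approximation arguments the overdetermined system with its explicit zero-component constraints will be the natural object to transfer from the exact to the approximate setting.

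For the forward implication (Theorem \ref{teorema_principala_parteaMAIN}~ii)(a) implies (\ref{e5_57_corolar_teorema_principala_parteaMAIN_ii})), I would observe that ii)(a) asserts on the one hand that $s_0$ solves (\ref{e5_56}), i.e.\ $S(s_0)=0$, and on the other hand that the specific components $f_0$, $g_{i,0}$ ($i=1,\ldots,q+m$) and $e_{k,0}$ ($k=1,\ldots,n$) all vanish. These are precisely the conditions listed in (\ref{e5_57_corolar_teorema_principala_parteaMAIN_ii}), so no further verification is required.

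For the converse, assume $s_0$ satisfies (\ref{e5_57_corolar_teorema_principala_parteaMAIN_ii}). Then $S(s_0)=0$ is equation (\ref{e5_56}); inspecting the first block in the definition (\ref{e5_57}) of $S$ shows that $\Psi(x_0)=0$, so the component $x_0$ of $s_0$ is a solution of (\ref{e5_22}). Combining this with the supplementary conditions $f_0=0$, $g_{i,0}=0$, $e_{k,0}=0$ from the overdetermined system reproduces the full content of ii)(a), with $s_0$ having the stated structure $s_0=(x_0,y_{1,0},\ldots,y_{q+m,0},\mathrm{z}_{1,0},\ldots,\mathrm{z}_{n,0})$ and $x_0=(f_0,\lambda_0,u_0)$, $y_{i,0}=(g_{i,0},\mu_{i,0},w_{i,0})$, $\mathrm{z}_{k,0}=(e_{k,0},v_{k,0})$.

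There is no genuine obstacle: the only care needed is bookkeeping — matching the successive blocks of $S$ in (\ref{e5_57}) with the equations $\Psi(x)=0$, $B(y_i)-\delta_i^{q+m}=0$, $DG(x)y_i=0$, $\bar{\mathcal{B}}(\mathrm{z}_k)-\delta_k^n=0$, $H(\lambda,u,\mathrm{z}_k)=0$ — and noting that uniqueness of $x_0$ as a solution of $\Psi(x)=0$ is \emph{not} part of ii)(a) as stated, so nothing about uniqueness needs to be reproved in the reverse direction.
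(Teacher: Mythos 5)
Your proof is correct and takes the only sensible approach: the paper states this Corollary without any proof, precisely because it is a definitional restatement, and your unpacking of the blocks of $S$ in (\ref{e5_57}) against the conditions $f=0$, $g_i=0$, $e_k=0$ supplies exactly the bookkeeping the paper leaves implicit. Your closing remark — that the phrase in ii)(a) ``the component $x_0$ of $s_0$ is the solution of (\ref{e5_22})'' carries no extra content beyond what $S(s_0)=0$ already forces, since $\Psi(x_0)$ is the first block of $S(s_0)$ — is the one point worth making explicit, and you made it.
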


\begin{cor}
\label{corolar_teorema_principala_parteaMAIN_ii_q_0} In the case
$q = 0$, we replace $G$ by $F$ in the definition (\ref{e5_57}) of
$S$ and (\ref{e5_57_corolar_teorema_principala_parteaMAIN_ii}) is
reduced to
\begin{eqnarray}
   && B(\lambda,u)-\theta_{0} = 0 \, ,
         \nonumber \\
   && F(\lambda,u) = 0 \, ,
         \nonumber \\
   && B(\mu_{i},w_{i})-\delta_{i}^{m} = 0 \, ,
         \nonumber \\
   && DF(\lambda,u)(\mu_{i},w_{i}) = 0 \, ,
         \label{e5_57_corolar_teorema_principala_parteaMAIN_ii_q_0} \\
   && \bar{\mathcal{B}}(\mathrm{z}_{k})-\delta_{k}^{n} = 0 \, ,
         \nonumber \\
   && H(\lambda,u,\mathrm{z}_{k}) = 0 \, ,
         \nonumber \\
   && e_{k} = 0 \, , \ k=1,\ldots,n \, .
         \nonumber
\end{eqnarray}
\end{cor}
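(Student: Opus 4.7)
The plan is to verify the corollary by direct specialization: simply trace through every piece of the overdetermined system (\ref{e5_57_corolar_teorema_principala_parteaMAIN_ii}) under the hypothesis $q=0$ and observe that the result coincides with the displayed system. No new analytic content is needed beyond the definitions already fixed in Section \ref{sectiunea_1_main_theorem_on_extended_systems}, so the work is a bookkeeping argument.

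First, I would enumerate the reductions that $q=0$ forces on the data. The subspace $Z_{1} = \mathrm{sp}\{\bar{a}_{1},\ldots,\bar{a}_{q}\}$ is empty, so the sum $\sum_{i=1}^{q} f^{i}\bar{a}_{i}$ in (\ref{e5_2}) is empty and $G(x) = F(\lambda,u)$ throughout. The $\mathbb{R}^{q}$ factor disappears from $X = \mathbb{R}^{q+m}\times W$, leaving $X = \mathbb{R}^{m}\times W$ with $x = (\lambda,u)$ and $y_{i} = (\mu_{i},w_{i})$ for $i = 1,\ldots,m$; in particular $B \in L(X,\mathbb{R}^{m})$ and $\delta_{i}^{q+m}$ becomes $\delta_{i}^{m}$. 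The components involving $\bar{\mathcal{B}}$, $H$, $\mathrm{z}_{k}$ and $\delta_{k}^{n}$ are defined purely in terms of $D_{u}F(\lambda_{0},u_{0})$ and are therefore unaffected.

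Second, I would substitute these simplifications into the two equivalent forms of $S$ given in (\ref{e5_57}). The block $\Psi(x) = 0$ becomes the pair $B(\lambda,u) - \theta_{0} = 0$ together with $F(\lambda,u) = 0$. The blocks $B(y_{i}) - \delta_{i}^{q+m} = 0$ and $DG(x)y_{i} = 0$ become $B(\mu_{i},w_{i}) - \delta_{i}^{m} = 0$ and $DF(\lambda,u)(\mu_{i},w_{i}) = 0$ for $i = 1,\ldots,m$. The last two blocks $\bar{\mathcal{B}}(\mathrm{z}_{k}) - \delta_{k}^{n} = 0$ and $H(\lambda,u,\mathrm{z}_{k}) = 0$ are carried over verbatim.

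Finally, I would handle the extra constraints attached in (\ref{e5_57_corolar_teorema_principala_parteaMAIN_ii}). Since both $f$ and $g_{i}$ live in $\mathbb{R}^{q} = \{0\}$ when $q = 0$, the equalities $f = 0$ and $g_{i} = 0$ are vacuous and drop from the list, leaving only $e_{k} = 0$ for $k = 1,\ldots,n$. Collecting the surviving equations in order reproduces (\ref{e5_57_corolar_teorema_principala_parteaMAIN_ii_q_0}) line by line. There is no genuine obstacle; the only care required is to keep the index ranges ($i = 1,\ldots,m$ versus $k = 1,\ldots,n$) and the collapse of the $\mathbb{R}^{q}$-valued variables straight when writing out the reduced system.
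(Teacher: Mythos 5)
Your proof is correct and takes essentially the same route the paper implicitly intends: the corollary carries no separate proof in the text because it is precisely the bookkeeping specialization you carry out. Setting $q=0$ makes $\mathbb{R}^{q}=\{0\}$ (more precisely, $Z_{1}=\{0\}$, not ``empty''), so the sum defining $G$ vanishes, $G\equiv F$, $x$ collapses to $(\lambda,u)$, each $y_{i}$ to $(\mu_{i},w_{i})$ with $i$ running to $m$, $\delta_{i}^{q+m}$ becomes $\delta_{i}^{m}$, the blocks built from $\bar{\mathcal{B}}$ and $H$ are untouched, and the overdetermined constraints $f=0$, $g_{i}=0$ become vacuous while $e_{k}=0$ survives. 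That reproduces (\ref{e5_57_corolar_teorema_principala_parteaMAIN_ii_q_0}) line by line, as you say.
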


Let us remark that the study of a nonlinear Fredholm operator
$\widehat{F}$ can be reduced to the study of the equation
(\ref{e5_1}) satisfying hypothesis (\ref{ipotezaHypF}). Let
$\widehat{U}$ be an open set, $\widehat{U}$ $\subseteq$ $W$, and
$\widehat{F}:\widehat{U} \rightarrow Z$. Let us fix
$\widehat{u}_{0} \in \widehat{U}$ and let $\widehat{\zeta}_{0} =
\widehat{F}(\widehat{u}_{0})$. Then, the equation
\begin{equation}
\label{e5_1_widehat}
   \widehat{F}(\widehat{u}) - \widehat{\zeta}_{0}=0 \,
\end{equation}
has the solution $\widehat{u}_{0}$ that satisfies the hypothesis:
\begin{eqnarray}
   & \ & D\widehat{F}(\widehat{u}_{0}) \
            \textrm{is a Fredholm operator of} \
            W \ \textrm{onto} \ Z \ \textrm{with index} \ m \, ,
         \label{ipotezaHypF_widehat} \\
   & \ & \quad dim \ Ker(D\widehat{F}(\widehat{u}_{0}))=q+m \, .
        \nonumber
\end{eqnarray}
For instance, we can choose $\widehat{u}_{0}$ such that $dim \
Ker(D\widehat{F}(\widehat{u}_{0}))=\max$.

\section{Proof of the main theorem on the extended systems, Theorem
\ref{teorema_principala_parteaMAIN}}
\label{sectiunea_I_main_th_partea1_PROOF}

\subsection{Some properties of the extended systems
associated to $D_{u}F(\lambda,u)$ and $DF(\lambda,u)$}
\label{subsectiunea_I_partea_4_DFu_sis_DF_sis_x}

Throughout this Section, let us consider the hypotheses of Theorem
\ref{teorema_principala_parteaMAIN}.

\begin{lem}
\label{teorema_principala_partea1} The statements (i) and (ii) are
equivalent, and the same is true for (iii) and (iv).

(i) $x$ is the unique solution of (\ref{e5_3}) and
$\theta_{0}=B(x) \in \mathbb{R}^{q+m}$.

(ii) Let $\theta_{0}$ be a fixed element in $\mathbb{R}^{q+m}$.
Let us consider $\Psi$ and (\ref{e5_22}). $x$ is a solution of
(\ref{e5_22}). $D\Psi(x)$ is an isomorphism of $X$ onto $Y$.

(iii) Assume (i). $DG(x)$ is a Fredholm operator with index $q+m$,
$Range(DG(x))=Z$ and $Ker(DG(x)) \cap Ker(B) = \{ 0 \}$.

(iv) Assume (ii). $\Phi_{G}(x,\cdot)$ is an isomorphism of $X$
onto $Y$.

The lemma remains true if we replace $B$ by a nonlinear $B_{N}$ in
the definition (\ref{e5_23}) of $\Psi$ and we keep $B$ in the
definition of $\Phi_{G}(x,\cdot)$.
\end{lem}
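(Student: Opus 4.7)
The plan is to treat the four implications separately, exploiting the fact that for linear $B$ one has the direct identification $D\Psi(x)\bar{x} = [B(\bar{x}),\,DG(x)\bar{x}]^{T} = \Phi_{G}(x,\bar{x})$. This makes the isomorphism condition in (ii) and the isomorphism condition in (iv) literally the same operator. So the crux of the lemma is really the equivalence (iii) $\Leftrightarrow$ (iv), with (i) $\Leftrightarrow$ (ii) essentially a bookkeeping reformulation once this identification is noted.

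For (i) $\Rightarrow$ (ii), substituting $\theta_{0} = B(x)$ and $G(x) = 0$ immediately yields $\Psi(x) = 0$. That $D\Psi(x)$ is an isomorphism reduces, via the identification above, to the isomorphism property of $\Phi_{G}(x,\cdot)$, which I would derive in the (iii) $\Rightarrow$ (iv) step. For (ii) $\Rightarrow$ (i), reading off the two components of $\Psi(x)=0$ gives $B(x) = \theta_{0}$ and $G(x) = 0$; local uniqueness of this $x$ follows from applying the inverse function theorem (Lemma \ref{lema_A2_7_th_fc_impl_th_apl_inv}) to $\Psi$ at $x$ via the isomorphism of $D\Psi(x)$.

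For (iii) $\Rightarrow$ (iv), I would verify injectivity and surjectivity of $\Phi_{G}(x,\cdot)$: injectivity is immediate, as $\Phi_{G}(x,y)=0$ forces $y \in Ker(B) \cap Ker(DG(x)) = \{0\}$; for surjectivity, given $(\alpha,z) \in \mathbb{R}^{q+m} \times Z$, pick $y_{1}$ with $DG(x)y_{1} = z$ using that $DG(x)$ is onto, and then note that $B$ restricted to the $(q+m)$-dimensional space $Ker(DG(x))$ is injective by hypothesis, hence a linear bijection onto $\mathbb{R}^{q+m}$ by dimension count; this lets one correct $y_{1}$ by an element of $Ker(DG(x))$ so that $B(y) = \alpha$. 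Continuity of the inverse is then the Banach open mapping theorem. For the converse (iv) $\Rightarrow$ (iii), I would read off the onto property of $DG(x)$ by inverting $\Phi_{G}(x,\cdot)$ on the pairs $(0,z)$; extract the intersection condition $Ker(B) \cap Ker(DG(x)) = \{0\}$ from $Ker(\Phi_{G}(x,\cdot)) = \{0\}$; and pin down $\dim Ker(DG(x)) = q+m$ via bijectivity of $B\!\mid_{Ker(DG(x))}$, obtained by inverting $\Phi_{G}(x,\cdot)$ on pairs $(\alpha,0)$. Since $Range(DG(x)) = Z$ is closed of codimension zero, this gives the Fredholm property with index $q+m$.

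The main obstacle is the direction (ii) $\Rightarrow$ (i): uniqueness of $x$ as a solution of (\ref{e5_3}) is the strongest ingredient and must be read as local uniqueness in the appropriate neighborhood, which is precisely what the inverse function theorem applied to $\Psi$ delivers. The concluding remark concerning the replacement of $B$ by a nonlinear $B_{N}$ in (\ref{e5_23}) — while retaining the linear $B$ in $\Phi_{G}$ — requires only to note that $D\Psi(x)\bar{x} = [DB_{N}(x)\bar{x},\,DG(x)\bar{x}]^{T}$, so the reasoning for (i) $\Leftrightarrow$ (ii) adapts mechanically with $DB_{N}(x)$ playing the role of $B$ at the base point, while the (iii) $\Leftrightarrow$ (iv) part is unchanged since $\Phi_{G}$ retains the linear $B$ by hypothesis.
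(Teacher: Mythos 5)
Your proof takes a genuinely different route from the paper's. The paper's proof of this lemma is a single sentence: ``These results are obtained by replacing $x_{0}$ with $x$ in the formulation of problem (\ref{e5_1_introduction_2}) as it is introduced by Crouzeix and Rappaz.'' In other words, the author simply points to the Crouzeix--Rappaz development reproduced in Section \ref{sectiunea_1_preliminaries_Cr_Rap}, where the isomorphism of $D_{x}\mathcal{F}(0,x_{0})$ is derived from the kernel-intersection condition (\ref{e5_2_B_int}) and the Fredholm hypothesis, and uniqueness of $x_{0}$ is produced by the inverse/implicit function theorem (Lemma \ref{lema_A2_7_th_fc_impl}). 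You instead give a direct, self-contained verification of bijectivity of $\Phi_{G}(x,\cdot)$: injectivity from $Ker(B)\cap Ker(DG(x))=\{0\}$, surjectivity by solving $DG(x)y_{1}=z$ and correcting by an element of $Ker(DG(x))$ using that $B$ restricts to a bijection of this $(q+m)$-dimensional space onto $\mathbb{R}^{q+m}$, then Banach open mapping for continuity of the inverse. For (iv) $\Rightarrow$ (iii), you invert $\Phi_{G}$ on $(0,z)$ and $(\alpha,0)$ to recover surjectivity of $DG(x)$ and bijectivity of $B|_{Ker(DG(x))}$, hence $\dim Ker(DG(x))=q+m$, and the Fredholm index follows since $Range(DG(x))=Z$ is closed of codimension zero. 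All of this is correct, and your key observation $D\Psi(x)\bar{x}=\Phi_{G}(x,\bar{x})$ for linear $B$ (so (ii) and (iv) state literally the same isomorphism) is exactly the identity the paper records just below (\ref{e5_22}). What the direct route buys you is that the statement is actually proved rather than outsourced; what the citation buys the paper is brevity, at the cost of making the logical content of the lemma opaque.

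There is one point where your argument, as written, is circular rather than complete, and you should flag it rather than dismiss (i) $\Leftrightarrow$ (ii) as ``essentially a bookkeeping reformulation.'' In the (i) $\Rightarrow$ (ii) direction you need $D\Psi(x)$ to be an isomorphism, and you source this from the (iii) $\Rightarrow$ (iv) step. But statement (iii) is ``Assume (i); $DG(x)$ is Fredholm of index $q+m$, $Range(DG(x))=Z$, $Ker(DG(x))\cap Ker(B)=\{0\}$'' --- these are \emph{additional} hypotheses, not consequences of (i). Local or even global uniqueness of $x$ as a solution of a nonlinear equation does not, by itself, force the linearization to be invertible (think of $x\mapsto x^{3}$), so (i) on its own cannot yield (ii)'s isomorphism. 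The resolution, which you should make explicit, is that the lemma operates under the Crouzeix--Rappaz background hypotheses imported from Section \ref{sectiunea_1_preliminaries_Cr_Rap} and Theorem \ref{teorema_principala_parteaMAIN} (Fredholm, the decompositions, and the kernel-intersection conditions (\ref{e5_2_B_int})--(\ref{e5_2_H_B})); under those, your verification of the isomorphism goes through and (i) $\Leftrightarrow$ (ii) closes. Equivalently, the logically clean reading is that the pair $\{(\mathrm{i})+(\mathrm{iii})\}$ is equivalent to the pair $\{(\mathrm{ii})+(\mathrm{iv})\}$, with (iii) $\Leftrightarrow$ (iv) carrying the real content, exactly as you observe. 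This imprecision is inherited from the lemma's own wording --- (\ref{e5_3}) is $G(x)=0$, which near a bifurcation point has a $(q{+}m)$-dimensional manifold of solutions, so ``$x$ is the unique solution of (\ref{e5_3})'' must in any case be read as uniqueness of the extended system (\ref{e5_1_introduction_3_teta0}) --- but since you are constructing a proof, you should state the assumption you actually use rather than leave it implicit.
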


\begin{proof} These results are obtained by replacing $x_{0}$ with $x$ in the
formulation of problem (\ref{e5_1_introduction_2}) as it is
introduced by Crouzeix and Rappaz in
\cite{CLBichir_bib_Cr_Ra1990}.

\qquad
\end{proof}

\begin{rem}
\label{observatie_teorema_principala_partea1} We formulate only
the results for $DF(\lambda,u)$ since those for
$D_{u}F(\lambda,u)$ are similar.
\end{rem}

\begin{cor}
\label{corolarul_teorema_principala_partea2} Under the conditions
of Lemma \ref{teorema_principala_partea1} (iv), we have:
\begin{equation}
\label{e5_2_B_dim_S1_ii_P2}
   dim \ Ker(DG(x))=q+m \, .
\end{equation}
\begin{equation}
\label{e5_2_B_plus_S1_i_P2_B0}
   B \ \textrm{is an isomorphism of} \ Ker(DG(x)) \
      \textrm{onto} \ \mathbb{R}^{q+m} \, ,
\end{equation}
\begin{equation}
\label{e5_2_B_plus_S1_i_P2_phi}
   \chi_{1}, \ldots , \chi_{q+m} \ \textrm{are linearly
      independent on} \ Ker(DG(x)) \, .
\end{equation}
\begin{equation}
\label{e5_2_B_plus_S1_i_P2}
   X = Ker(DG(x)) \oplus Ker(B) \, ,
\end{equation}
\end{cor}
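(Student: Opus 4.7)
The plan is to invoke the equivalence between statements (iii) and (iv) of Lemma \ref{teorema_principala_partea1}, so that assuming (iv) we automatically get all the properties in (iii): namely, $DG(x)$ is a Fredholm operator of index $q+m$ with $Range(DG(x)) = Z$, and $Ker(DG(x)) \cap Ker(B) = \{0\}$. From these, each of the four claims of the corollary falls out by a short direct argument.

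First, since $Range(DG(x)) = Z$, the cokernel of $DG(x)$ is trivial, so the index formula $q+m = \dim Ker(DG(x)) - \mathrm{codim}\,Range(DG(x))$ reduces to $\dim Ker(DG(x)) = q+m$, which establishes \eqref{e5_2_B_dim_S1_ii_P2}. Next, the condition $Ker(DG(x)) \cap Ker(B) = \{0\}$ says exactly that $B$ is injective when restricted to $Ker(DG(x))$; since both $Ker(DG(x))$ and $\mathbb{R}^{q+m}$ have dimension $q+m$, this restriction is automatically bijective, which gives \eqref{e5_2_B_plus_S1_i_P2_B0}. Because $B = (\chi_{1},\ldots,\chi_{q+m})$, injectivity of $B|_{Ker(DG(x))}$ is synonymous with linear independence of the forms $\chi_{1},\ldots,\chi_{q+m}$ on $Ker(DG(x))$, giving \eqref{e5_2_B_plus_S1_i_P2_phi}.

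For the direct sum decomposition \eqref{e5_2_B_plus_S1_i_P2}, I would argue as follows. The intersection $Ker(DG(x)) \cap Ker(B)$ is trivial by hypothesis, so I only need to show $X = Ker(DG(x)) + Ker(B)$. Given any $\overline{x} \in X$, I pick $\overline{x}_{1} \in Ker(DG(x))$ to be the unique element such that $B(\overline{x}_{1}) = B(\overline{x})$ (which exists by the surjectivity of $B|_{Ker(DG(x))}$ established above), and then set $\overline{x}_{2} = \overline{x} - \overline{x}_{1}$; this yields $B(\overline{x}_{2}) = 0$, so $\overline{x}_{2} \in Ker(B)$ and the decomposition $\overline{x} = \overline{x}_{1} + \overline{x}_{2}$ is complete.

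I do not anticipate a real obstacle here: assumption (iv) packages exactly the Fredholm-theoretic input needed, and the rest is a routine dimension count together with the splitting construction above. The only mild care required is in the direct-sum step, where one must explicitly use that $B|_{Ker(DG(x))}$ is surjective (and not merely that $B$ is surjective on $X$, though both are true and equivalent given the intersection is trivial).
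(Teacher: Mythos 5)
Your derivations are mathematically sound once one takes for granted the implication (iv) $\Rightarrow$ (iii) of Lemma~\ref{teorema_principala_partea1}, and some steps are cleaner than the paper's. But that is exactly where the concern lies: the paper's one-line proof of Lemma~\ref{teorema_principala_partea1} refers to the Crouzeix--Rappaz construction, whose argument proceeds in the direction of obtaining the isomorphism from the Fredholm hypotheses (the forward direction); it does not obviously deliver (iv) $\Rightarrow$ (iii) for free. In the paper's internal architecture, Corollary~\ref{corolarul_teorema_principala_partea2} together with Lemmas~\ref{Lema_solutie_01_aaa_BIS_4_baza_introduction}--\ref{Lema_Frechet_01} is precisely the machinery later cited to carry out the converse implication (ii) $\Rightarrow$ (i) of Theorem~\ref{teorema_principala_parteaMAIN}. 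If the corollary were proved by appealing to (iv) $\Rightarrow$ (iii), which in turn is established via the corollary and its siblings, the logic would loop. So importing the Fredholm index $q+m$ and $Range(DG(x)) = Z$ wholesale, as you do, is a shortcut that risks circularity in the paper's development.

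The paper avoids this by proving the corollary directly from the single hypothesis that $\Phi_{G}(x,\cdot)$ is an isomorphism of $X$ onto $Y$, with no appeal to index formulas. Concretely: it produces preimages $x_{1},\ldots,x_{q+m} \in X$ of $(\delta_{i}^{q+m},0) \in Y$ under $\Phi_{G}(x,\cdot)$, so that $DG(x)x_{i}=0$ and $\chi_{j}(x_{i})=\delta_{ij}$; deduces linear independence of the $x_{i}$ in $Ker(DG(x))$ from the Kronecker relations; rules out any further independent kernel element $x'$ by building a nonzero element of $Ker(\Phi_{G}(x,\cdot))$ from $x'$ minus a suitable combination of the $x_{i}$, contradicting injectivity; and establishes $X = Ker(DG(x)) \oplus Ker(B)$ by assuming $\overline{x} \notin Ker(DG(x)) \oplus Ker(B)$ and contradicting uniqueness of the $\Phi_{G}(x,\cdot)$-preimage of $(\varsigma',\varsigma'')$. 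Your splitting construction for (\ref{e5_2_B_plus_S1_i_P2}) --- solving $B(\overline{x}_{1})=B(\overline{x})$ for $\overline{x}_{1}\in Ker(DG(x))$ and subtracting --- is in fact a cleaner positive argument than the paper's contradiction, and it survives intact if you first obtain $\dim Ker(DG(x))=q+m$ and the bijectivity of $B|_{Ker(DG(x))}$ by the paper's explicit preimage route rather than via the index.
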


\begin{proof}
Let us prove (\ref{e5_2_B_dim_S1_ii_P2}). Consider the elements
$(\delta_{i}^{q+m},0)$ $\in$ $Y$, $i=1,\ldots,q+m$. Since
$\Phi_{G}(x,\cdot)$ is an isomorphism of $X$ onto $Y$, it follows
that there exist $x_{1},\ldots,x_{q+m}$ $\in$ $X$ such that
$\Phi_{G}(x,x_{i})$ $=$ $(\delta_{i}^{q+m},0)$, so $B(x_{i}) =
\delta_{i}^{q+m}$ (otherwise said, $\chi_{j}(x_{i}) =
\delta_{ij}$) and $DG(x)x_{i} = 0$. We have $x_{i} \neq 0$ since
$\Phi_{G}(x,x_{i})$ $=$ $(0,0)$ otherwise. Suppose that there
exist $\beta_{i}$ $\in$ $\mathbb{R}$, $i=1,\ldots,q+m$, where
$\beta_{i} \neq 0$ for at least one $i$, such that
$\sum_{i=1}^{q+m}\beta_{i}x_{i}=0$. For every $j$,
$j=1,\ldots,q+m$, we obtain $0$ $=$ $\chi_{j}(0)$ $=$
$\chi_{j}(\sum_{i=1}^{q+m}\beta_{i}x_{i})$ $=$
$\sum_{i=1}^{q+m}\beta_{i}\chi_{j}(x_{i})$ $=$
$\beta_{j}\chi_{j}(x_{j})$ $=$ $\beta_{j}$. So our supposition is
false. Hence $\{ x_{1},\ldots,x_{q+m} \}$ is a linearly
independent subset of $Ker(DG(x))$. So $dim \ Ker(DG(x)) \geq
q+m$. Suppose that $dim \ Ker(DG(x)) > q+m$. Then, we have another
element $x'$, $x' \neq 0$, such that $x_{1},\ldots,x_{q+m}$, $x'$
are linearly independent and $DG(x)x' = 0$. Let $\varsigma' =
B(x')$. Suppose that $\varsigma' = 0$. Since $\Phi_{G}(x,\cdot)$
is an isomorphism of $X$ onto $Y$, there results that $x' = 0$, so
it remains $\varsigma' \neq 0$. Since $\varsigma'$ $\in$
$\mathbb{R}^{q+m}$, there exist $\beta_{i} \in \mathbb{R}$,
$i=1,\ldots,q+m$, such that $\varsigma'$ $=$
$\sum_{i=1}^{q+m}\beta_{i}\delta_{i}^{q+m}$. We have
$(\varsigma',0)$ $=$
$\sum_{i=1}^{q+m}\beta_{i}(\delta_{i}^{q+m},0)$, $\beta_{i} \in
\mathbb{R}$. We obtain $\Phi_{G}(x,(x' -
\sum_{i=1}^{q+m}\beta_{i}x_{i}))$ $=$ $(\varsigma',0)$ $-$
$\sum_{i=1}^{q+m}\beta_{i}(\delta_{i}^{q+m},0)$ $=$ $0$.
$\Phi_{G}(x,\cdot)$ is an isomorphism of $X$ onto $Y$ so
$Ker(\Phi_{G}(x,\cdot) = \{ 0 \}$. It follows that $x' -
\sum_{i=1}^{q+m}\beta_{i}x_{i} = 0$ which contradicts the
supposition that $x_{1},\ldots,x_{q+m}$, $x'$ are linearly
independent. It remains (\ref{e5_2_B_dim_S1_ii_P2}).

From (\ref{e5_2_B_dim_S1_ii_P2}), it follows that $B$ is an
isomorphism of $Ker(DG(x))$ onto $\mathbb{R}^{q+m}$.

It follows that $\chi_{1}$, $\ldots$, $\chi_{q+m}$ are linearly
independent on $Ker(DG(x))$.

Let us define $X_{1}$ $=$ $Ker(DG(x))$ $\oplus$ $Ker(B)$. Suppose
that there exists $\overline{x}$ $\in$ $X$ and $\overline{x}$
$\notin$ $X_{1}$. Let $(\varsigma',\varsigma'')$ $=$
$\Phi_{G}(x,\overline{x})$ ($\ast$), where $\varsigma'$ $\in$
$\mathbb{R}^{q+m}$ and $\varsigma''$ $\in$ $Z$. Since
$\Phi_{G}(x,\cdot)$ is an isomorphism of $X$ onto $Y$, there
results that, for $(\varsigma',0)$ $\in$ $Y$, there exists $x'$
$\in$ $X$ such that $\Phi_{G}(x,x')$ $=$ $(\varsigma',0)$ and, for
$(0,\varsigma'')$ $\in$ $Y$, there exists $x''$ $\in$ $X$ such
that $\Phi_{G}(x,x'')$ $=$ $(0,\varsigma'')$. We obtain that $x'$
$\in$ $Ker(DG(x))$ and $x''$ $\in$ $Ker(B)$, so $\overline{\xi}$
$=$ $x'+x''$ $\in$ $X_{1}$, and $\Phi_{G}(x,\overline{\xi})$ $=$
$(\varsigma',\varsigma'')$. So we obtain two solutions
$\overline{x}$, $\overline{\xi}$, $\overline{x}$ $\neq$
$\overline{\xi}$, for ($\ast$). This contradicts the uniqueness of
$\overline{x}$ in ($\ast$). It remains
(\ref{e5_2_B_plus_S1_i_P2}).

\qquad
\end{proof}

\begin{lem}
\label{Lema_solutie_01_aaa_BIS_4_baza_introduction} Under the
hypotheses and the conclusions of Lemma
\ref{teorema_principala_partea1} (iii) or (iv), we have:

(i) $y$ $\in$ $Ker(DG(x))$, $y \neq 0$, if and only if $\exists$
$\theta_{q+m} \neq 0$ such that $B(y) = \theta_{q+m}$ and
$DG(x)y=0$.

(ii) Let $q+m = dim \ Ker(DG(x))$. Then, $y_{1}$, $\ldots$,
$y_{q+m}$ $\in$ $Ker(DG(x))$, $y_{i}$ $\neq 0$, $i=1,\ldots,q+m$,
form a basis for $Ker(DG(x))$ if and only if $\exists$
$\hat{\theta}_{1}$, $\ldots$, $\hat{\theta}_{q+m}$,
$\hat{\theta}_{i}$ $\neq 0$, $i=1,\ldots,q+m$, which form a basis
for $\mathbb{R}^{q+m}$, such that $B(y_{i}) = \hat{\theta}_{i}$
and $DG(x)y_{i}=0$.
\end{lem}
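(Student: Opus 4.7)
The plan is to reduce both statements to a single fact that has already been secured: by Corollary \ref{corolarul_teorema_principala_partea2}, equation (\ref{e5_2_B_plus_S1_i_P2_B0}), the restriction $B|_{Ker(DG(x))}$ is a (linear) isomorphism of $Ker(DG(x))$ onto $\mathbb{R}^{q+m}$, and by hypothesis $Ker(DG(x))\cap Ker(B)=\{0\}$. Once these are invoked, each implication in (i) and (ii) is a short linear-algebra transport along this isomorphism.

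For part (i), the forward direction will go as follows. Given $y\in Ker(DG(x))$ with $y\neq 0$, the relation $DG(x)y=0$ is automatic, and the transversality $Ker(DG(x))\cap Ker(B)=\{0\}$ forces $B(y)\neq 0$, so one sets $\theta_{q+m}:=B(y)\neq 0$. For the converse, $DG(x)y=0$ places $y$ in $Ker(DG(x))$; and $B(y)=\theta_{q+m}\neq 0$, together with linearity of $B$ (so that $B(0)=0$), forces $y\neq 0$.

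For part (ii), I would use that, under the dimension assumption $q+m=\dim Ker(DG(x))$, a family of $q+m$ vectors in $Ker(DG(x))$ is a basis iff it is linearly independent. Since $B|_{Ker(DG(x))}$ is a linear isomorphism onto $\mathbb{R}^{q+m}$, it carries linearly independent families to linearly independent families and vice versa. Concretely, from left to right: if $\{y_1,\dots,y_{q+m}\}$ is a basis of $Ker(DG(x))$, then $\hat\theta_i:=B(y_i)$ satisfies $DG(x)y_i=0$ by assumption, each $\hat\theta_i\neq 0$ by injectivity of $B|_{Ker(DG(x))}$, and $\{\hat\theta_i\}$ is a basis of $\mathbb{R}^{q+m}$ because $B|_{Ker(DG(x))}$ is an isomorphism. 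From right to left: the equations $DG(x)y_i=0$ place each $y_i$ in $Ker(DG(x))$, and linear independence of $\{y_i\}$ in $Ker(DG(x))$ follows from linear independence of $\{B(y_i)\}=\{\hat\theta_i\}$ in $\mathbb{R}^{q+m}$ via $(B|_{Ker(DG(x))})^{-1}$; a linearly independent family of cardinality $q+m=\dim Ker(DG(x))$ is automatically a basis.

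I do not anticipate any real obstacle: the whole lemma is just a dictionary between statements about $Ker(DG(x))$ and statements about $\mathbb{R}^{q+m}$, furnished by the isomorphism $B|_{Ker(DG(x))}$. The only small point to watch is that the equations $DG(x)y=0$ (resp.\ $DG(x)y_i=0$) must be carried through both directions, so that the equivalence is phrased within $Ker(DG(x))$ rather than across the full space $X$, where the decomposition $X=Ker(DG(x))\oplus Ker(B)$ from (\ref{e5_2_B_plus_S1_i_P2}) would otherwise allow $B$-images of vectors outside the kernel of $DG(x)$.
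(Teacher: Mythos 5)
Your proof is correct, and it is essentially the paper's own argument spelled out: the paper's entire proof is the one-liner ``$\Phi_{G}(x,\cdot)$ is an isomorphism of $X$ onto $Y$,'' and you unfold this by passing through the restriction $B|_{Ker(DG(x))}$, which Corollary \ref{corolarul_teorema_principala_partea2} (equations (\ref{e5_2_B_dim_S1_ii_P2}) and (\ref{e5_2_B_plus_S1_i_P2_B0})) has already extracted from that same isomorphism. Every step you write (the transversality argument for nonvanishing of $B(y)$, the transport of linear independence through the isomorphism, and the cardinality-equals-dimension observation for the basis claim) is exactly what the paper leaves implicit, so there is nothing to correct.
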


\begin{proof}
$\Phi_{G}(x,\cdot)$ is an isomorphism of $X$ onto $Y$.

\qquad
\end{proof}

\begin{lem}
\label{observatia_teorema_principala_partea5} Replacing
$\Phi_{G}(x,\cdot)$ by $\Phi_{H}(x,\cdot)$ in Lemma
\ref{teorema_principala_partea1} (iii) and (iv), Corollary
\ref{corolarul_teorema_principala_partea2}, Lemma
\ref{Lema_solutie_01_aaa_BIS_4_baza_introduction}, we obtain
similar results related to $H(\lambda,u,\cdot)$ and
$\bar{\mathcal{B}}$ instead of $DG(x)$ and $B$.
\end{lem}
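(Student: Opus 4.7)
The plan is to exploit the complete structural parallelism between the pair $(DG(x),B)$ acting on $X$ with target $Y = \mathbb{R}^{q+m} \times Z$ and the pair $(H(\lambda,u,\cdot),\bar{\mathcal{B}})$ acting on $\Delta$ with target $\Sigma = \mathbb{R}^{n} \times Z$. In both cases, the second component is a bounded linear map onto $Z$ whose kernel is finite-dimensional (of dimension $q+m$, respectively $n$, at the reference point), and the first component is a linear ``selector'' into a Euclidean space of matching dimension whose kernel meets the kernel of the second component only in $\{0\}$. The operators $\Phi_{G}(x,\cdot)$ and $\Phi_{H}(x,\cdot)$ are literally of the same shape, so the arguments from Lemma \ref{teorema_principala_partea1}, Corollary \ref{corolarul_teorema_principala_partea2}, and Lemma \ref{Lema_solutie_01_aaa_BIS_4_baza_introduction} apply unchanged after the evident relabelling.

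First, I would establish the analog of Lemma \ref{teorema_principala_partea1} (iii)$\Leftrightarrow$(iv): $\Phi_{H}(x,\cdot)$ is an isomorphism of $\Delta$ onto $\Sigma$ if and only if $H(\lambda,u,\cdot) \in L(\Delta,Z)$ is a Fredholm operator of index $n$ with $Range(H(\lambda,u,\cdot)) = Z$ and $Ker(H(\lambda,u,\cdot)) \cap Ker(\bar{\mathcal{B}}) = \{0\}$. The proof is verbatim the Crouzeix--Rappaz construction recalled in Section \ref{sectiunea_1_preliminaries_Cr_Rap} after the substitutions $DG(x) \leftrightarrow H(\lambda,u,\cdot)$, $B \leftrightarrow \bar{\mathcal{B}}$, $X \leftrightarrow \Delta$, $Y \leftrightarrow \Sigma$, $\mathbb{R}^{q+m} \leftrightarrow \mathbb{R}^{n}$, and $q+m \leftrightarrow n$, using the already-noted facts that $H(\lambda_{0},u_{0},\cdot)$ is Fredholm onto $Z$ of index $n$ with kernel characterised by \eqref{e5_2_H_B_ker}.

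Second, under the assumption that $\Phi_{H}(x,\cdot)$ is an isomorphism, I would reproduce the proof of Corollary \ref{corolarul_teorema_principala_partea2} by choosing preimages $\mathrm{z}_{k} \in \Delta$ of the vectors $(\delta_{k}^{n},0) \in \Sigma$, $k=1,\dots,n$. The same linear-independence computation (via $\bar{\chi}_{j}(\mathrm{z}_{k}) = \delta_{kj}$) combined with the injectivity and surjectivity of $\Phi_{H}(x,\cdot)$ yields $\dim Ker(H(\lambda,u,\cdot)) = n$, that $\bar{\mathcal{B}}$ is an isomorphism of $Ker(H(\lambda,u,\cdot))$ onto $\mathbb{R}^{n}$, that $\bar{\chi}_{1},\dots,\bar{\chi}_{n}$ are linearly independent on $Ker(H(\lambda,u,\cdot))$, and that the decomposition $\Delta = Ker(H(\lambda,u,\cdot)) \oplus Ker(\bar{\mathcal{B}})$ holds.

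Finally, for the analog of Lemma \ref{Lema_solutie_01_aaa_BIS_4_baza_introduction}, once the restriction of $\bar{\mathcal{B}}$ to $Ker(H(\lambda,u,\cdot))$ is a linear isomorphism onto $\mathbb{R}^{n}$, the characterisation of nonzero kernel elements and of bases of $Ker(H(\lambda,u,\cdot))$ follows immediately by transporting bases along this isomorphism. I do not expect a real obstacle: the substantive content is the recognition that Lemma \ref{teorema_principala_partea1}, Corollary \ref{corolarul_teorema_principala_partea2}, and Lemma \ref{Lema_solutie_01_aaa_BIS_4_baza_introduction} never used anything beyond the abstract structure ``linear selector coupled with a surjective Fredholm component and trivially intersecting kernels'', a structure shared by $\Phi_{H}(x,\cdot)$.
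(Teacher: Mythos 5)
Your proposal is correct and matches the paper's intent exactly: the paper offers no explicit proof of this lemma, treating the transfer to $\Phi_{H}(x,\cdot)$, $H(\lambda,u,\cdot)$, $\bar{\mathcal{B}}$ as self-evident by structural analogy, which is precisely the substitution scheme (with $X \leftrightarrow \Delta$, $Y \leftrightarrow \Sigma$, $q+m \leftrightarrow n$, $DG(x) \leftrightarrow H(\lambda,u,\cdot)$, $B \leftrightarrow \bar{\mathcal{B}}$) that you spell out. Your explicit articulation of the relabelling and of the fact that the earlier proofs use nothing beyond the abstract ``selector plus surjective Fredholm component with trivially intersecting kernels'' structure is a faithful and complete rendering of what the paper leaves implicit.
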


\subsection{The properties of kernel of $DF(\lambda_{0},u_{0})$ and of
$D_{u}F(\lambda_{0},u_{0})$}
\label{NOU_sectiunea08_S3}

Let us replace $x$ by $x_{0}=(0,\lambda_{0},u_{0})$ and $y_{i}$ by
$y_{i,0}$ $=$ $(0,\mu_{i,0},w_{i,0})$ in Subsection
\ref{subsectiunea_I_partea_4_DFu_sis_DF_sis_x}.

\begin{lem}
\label{corolarul_teorema_principala_parteaMAIN} Under the
hypotheses of Theorem \ref{teorema_principala_parteaMAIN} (ii), we
have:
\begin{equation}
\label{NOU_e5_62_prim}
   \textrm{A basis of} \ Ker(DG(x_{0})) \ \textrm{is} \
      \{(0,\mu_{1,0},w_{1,0}),
      \ldots,
      (0,\mu_{q+m,0},w_{q+m,0})
      \} \, ,
\end{equation}
\begin{equation}
\label{NOU_e5_61}
   Ker(DG(x_{0}))=
   \{ (f,\lambda,u) \in X; f=0,
          (\lambda,u) \in Ker(DF(\lambda_{0},u_{0})) \} \, ,
\end{equation}
\begin{equation}
\label{NOU_e5_62_prim_prim}
   \textrm{A basis of} \ Ker(DF(\lambda_{0},u_{0})) \ \textrm{is} \
      \{(\mu_{1,0},w_{1,0}),
      \ldots,
      (\mu_{q+m,0},w_{q+m,0})
      \} \, ,
\end{equation}
\begin{equation}
\label{NOU_e5_63}
   dim \ Ker(DF(\lambda_{0},u_{0}))=q+m \, ,
\end{equation}
\begin{equation}
\label{NOU_e5_62_prim_H}
   \textrm{A basis of} \ Ker(H(\lambda_{0},u_{0},\cdot)) \ \textrm{is} \
      \{(0,v_{1,0}),
      \ldots,
      (0,v_{n,0})
      \} \, ,
\end{equation}
\begin{equation}
\label{NOU_e5_61_H}
   Ker(H(\lambda_{0},u_{0},\cdot))=
   \{ (e,v) \in \Delta; e=0,
          v \in Ker(D_{u}F(\lambda_{0},u_{0})) \} \, ,
\end{equation}
\begin{equation}
\label{NOU_e5_62_prim_prim_H}
   \textrm{A basis of} \ Ker(D_{u}F(\lambda_{0},u_{0})) \ \textrm{is} \
      \{v_{1,0},
      \ldots,
      v_{n,0}
      \} \, ,
\end{equation}
\begin{equation}
\label{NOU_e5_63_H}
   dim \ Ker(D_{u}F(\lambda_{0},u_{0}))=n \, ,
\end{equation}

$\chi_{1}$, $\ldots$, $\chi_{q+m}$ are linearly independent on
$Ker(DG(x_{0}))$,

$\bar{\chi}_{1}$, $\ldots$, $\bar{\chi}_{n}$ are linearly
independent on $Ker(H(\lambda_{0},u_{0},\cdot))$.
\end{lem}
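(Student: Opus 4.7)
The plan is to extract all conclusions from the already established machinery, using that $s_{0}$ solves the extended system (\ref{e5_56}) with the vanishing components $f_{0}=0$, $g_{i,0}=0$, $e_{k,0}=0$, and that $D\Psi(x_{0})$ and $DS(s_{0})$ are isomorphisms. By Lemma \ref{Lema_solutie_01_DS3}, the latter isomorphism property is equivalent to both $\Phi_{G}(x_{0},\cdot)$ being an isomorphism of $X$ onto $Y$ and $\Phi_{H}(x_{0},\cdot)$ being an isomorphism of $\Delta$ onto $\Sigma$. So the hypotheses of Lemma \ref{teorema_principala_partea1}(iv) (with $x=x_{0}$) and, via Lemma \ref{observatia_teorema_principala_partea5}, its analogue for $H(\lambda_{0},u_{0},\cdot)$ are met.

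First I would address $DG(x_{0})$. Reading off the relevant rows of (\ref{e5_56}), for each $i=1,\ldots,q+m$ the vector $y_{i,0}$ satisfies $B(y_{i,0})=\delta_{i}^{q+m}$ and $DG(x_{0})y_{i,0}=0$; since $g_{i,0}=0$, this forces $y_{i,0}=(0,\mu_{i,0},w_{i,0})$. Because $\{\delta_{i}^{q+m}\}_{i=1,\ldots,q+m}$ is a basis of $\mathbb{R}^{q+m}$, Lemma \ref{Lema_solutie_01_aaa_BIS_4_baza_introduction}(ii) (whose hypotheses hold thanks to Corollary \ref{corolarul_teorema_principala_partea2}, which also yields $\dim Ker(DG(x_{0}))=q+m$) gives that $\{y_{1,0},\ldots,y_{q+m,0}\}$ is a basis of $Ker(DG(x_{0}))$; this is (\ref{NOU_e5_62_prim}). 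The characterization (\ref{NOU_e5_61}) is just the instance of (\ref{e5_2_B_int_ker}) at $x_{0}$, which follows directly from the definition (\ref{e5_2}) of $G$. Combining (\ref{NOU_e5_62_prim}) and (\ref{NOU_e5_61}), the projection onto the last two coordinates is a linear isomorphism between $Ker(DG(x_{0}))$ and $Ker(DF(\lambda_{0},u_{0}))$ that sends the basis of the former onto $\{(\mu_{i,0},w_{i,0})\}_{i=1,\ldots,q+m}$, giving (\ref{NOU_e5_62_prim_prim}) and (\ref{NOU_e5_63}).

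The $H$-part is handled in the same way: the rows of (\ref{e5_56}) corresponding to $\mathrm{z}_{k}$ give $\bar{\mathcal{B}}(\mathrm{z}_{k,0})=\delta_{k}^{n}$ and $H(\lambda_{0},u_{0},\mathrm{z}_{k,0})=0$; since $e_{k,0}=0$, $\mathrm{z}_{k,0}=(0,v_{k,0})$. Invoking the $H$-analogue of Lemma \ref{Lema_solutie_01_aaa_BIS_4_baza_introduction}(ii) from Lemma \ref{observatia_teorema_principala_partea5}, together with (\ref{e5_2_H_B_ker}), yields (\ref{NOU_e5_62_prim_H})--(\ref{NOU_e5_63_H}) by the same projection argument. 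Finally, the linear independence of $\chi_{1},\ldots,\chi_{q+m}$ on $Ker(DG(x_{0}))$, respectively of $\bar{\chi}_{1},\ldots,\bar{\chi}_{n}$ on $Ker(H(\lambda_{0},u_{0},\cdot))$, is precisely (\ref{e5_2_B_plus_S1_i_P2_phi}) of Corollary \ref{corolarul_teorema_principala_partea2} and its $H$-analogue via Lemma \ref{observatia_teorema_principala_partea5}.

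There is no genuine obstacle here; the statement is essentially a transcription into coordinates of the conclusions already packaged in Corollary \ref{corolarul_teorema_principala_partea2}, Lemma \ref{Lema_solutie_01_aaa_BIS_4_baza_introduction} and Lemma \ref{observatia_teorema_principala_partea5}. The only mildly delicate point is bookkeeping: making sure that the hypotheses $f_{0}=0$, $g_{i,0}=0$, $e_{k,0}=0$ (built into statement (ii)(a)) are used at the right moment to rule out spurious first-coordinate contributions, so that the $y_{i,0}$ and $\mathrm{z}_{k,0}$ pass cleanly from bases of the extended kernels to bases of $Ker(DF(\lambda_{0},u_{0}))$ and $Ker(D_{u}F(\lambda_{0},u_{0}))$.
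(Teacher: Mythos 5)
Your proof is correct and follows essentially the same route as the paper, resting on the isomorphism of $\Phi_{G}(x_{0},\cdot)$ and $\Phi_{H}(x_{0},\cdot)$ together with Corollary \ref{corolarul_teorema_principala_partea2} and its $H$-analogue via Lemma \ref{observatia_teorema_principala_partea5}. The only minor divergence is in deriving (\ref{NOU_e5_62_prim_prim})--(\ref{NOU_e5_63}): you note directly that the projection $(0,\lambda,u)\mapsto(\lambda,u)$ is a linear isomorphism from $Ker(DG(x_{0}))$ onto $Ker(DF(\lambda_{0},u_{0}))$, whereas the paper argues by contradiction (supposing $\dim Ker(DF(\lambda_{0},u_{0}))>q+m$ would force $\dim Ker(DG(x_{0}))>q+m$); both are valid, and yours is arguably the more direct formulation.
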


\begin{proof}
Observe that $y_{i,0}$ $=$ $(0,\mu_{i,0},w_{i,0})$, where
$(\mu_{i,0},w_{i,0})$ $\in$ $Ker(DF(\lambda_{0},u_{0}))$.

$\Phi_{G}(x_{0},\cdot)$ is an isomorphism of $X$ onto $Y$, so we
have (\ref{e5_2_B_dim_S1_ii_P2}) and
(\ref{e5_2_B_plus_S1_i_P2_B0}). Hence, we obtain
(\ref{NOU_e5_62_prim}) and (\ref{NOU_e5_61}).

There results that the set $\{ (\mu_{1,0},w_{1,0})$, $\ldots$,
$(\mu_{q+m,0},w_{q+m,0}) \}$ is a linearly independent subset of
$Ker(DF(\lambda_{0},u_{0}))$. Suppose that $dim \
Ker(DF(\lambda_{0},u_{0}))$ $>$ $q+m$. So there exists $(\mu',w')$
$\in$ $Ker(DF(\lambda_{0},u_{0}))$, $(\mu',w') \neq 0$, such that
$\{(\mu_{1,0},w_{1,0})$, $\ldots$, $(\mu_{q+m,0},w_{q+m,0})$,
$(\mu',w') \}$ is a linearly independent subset of
$Ker(DF(\lambda_{0},u_{0}))$. Hence $(0,\mu',w')$ $\in$
$Ker(DG(x_{0}))$, so $dim \ Ker(DG(x_{0}))>q+m$, which contradicts
(\ref{e5_2_B_dim_S1_ii_P2}). It remains that
(\ref{NOU_e5_62_prim_prim}) and (\ref{NOU_e5_63}) are true.

\qquad
\end{proof}

\subsection{The decompositions of space $Z$}
\label{NOU_sectiunea09}

\begin{lem}
\label{Lema_Range_01} Under the hypotheses of Theorem
\ref{teorema_principala_parteaMAIN} (ii), the elements
$\bar{a}_{1}$, $\ldots$, $\bar{a}_{q}$ form a linearly independent
set and the elements $\bar{b}_{1},\ldots,\bar{b}_{n}$ form a
linearly independent set.
\end{lem}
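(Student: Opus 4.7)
The plan is to argue by contradiction, leveraging the kernel characterizations of $DG(x_0)$ and $H(\lambda_0,u_0,\cdot)$ that were established in Lemma \ref{corolarul_teorema_principala_parteaMAIN} under the same hypothesis (ii). The essential ingredient is the observation that relations (\ref{NOU_e5_61}) and (\ref{NOU_e5_61_H}) already force the $f$-component (respectively the $e$-component) of any kernel element to vanish; hence any nonzero candidate kernel element constructed from a linear relation among the $\bar{a}_i$ (respectively the $\bar{b}_k$) will immediately yield a contradiction.

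First I would handle the $\bar{a}_i$. Suppose, for contradiction, that there exist $\alpha_1,\ldots,\alpha_q \in \mathbb{R}$, not all zero, with $\sum_{i=1}^{q} \alpha_i \bar{a}_i = 0$. Taking $y=(\alpha,0,0) \in X$ with $\alpha=(\alpha_1,\ldots,\alpha_q)$, the definition (\ref{e5_2}) of $G$ gives $DG(x_0)y = DF(\lambda_0,u_0)(0,0) - \sum_{i=1}^{q} \alpha_i \bar{a}_i = 0$, so $y \in Ker(DG(x_0))$ while $y \neq 0$. On the other hand, the hypothesis of Theorem \ref{teorema_principala_parteaMAIN} (ii) combined with Lemma \ref{Lema_solutie_01_DS3} gives that $\Phi_G(x_0,\cdot)$ is an isomorphism, so the characterization (\ref{NOU_e5_61}) of Lemma \ref{corolarul_teorema_principala_parteaMAIN} applies: every element of $Ker(DG(x_0))$ has vanishing $f$-component. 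This contradicts $\alpha \neq 0$.

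The argument for the $\bar{b}_k$ is completely analogous. If $\sum_{k=1}^{n} \beta_k \bar{b}_k = 0$ with some $\beta_k \neq 0$, set $\mathrm{z}=(\beta,0)\in\Delta$ with $\beta=(\beta_1,\ldots,\beta_n)$. The definition (\ref{e5_2_H}) of $H$ yields $H(\lambda_0,u_0,\mathrm{z}) = D_u F(\lambda_0,u_0)(0) - \sum_{k=1}^{n} \beta_k \bar{b}_k = 0$, so $\mathrm{z}$ is a nonzero element of $Ker(H(\lambda_0,u_0,\cdot))$; this contradicts (\ref{NOU_e5_61_H}), which forces the $e$-component of any such kernel element to vanish.

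The one subtlety worth checking is whether Lemma \ref{corolarul_teorema_principala_parteaMAIN} silently presupposes the very linear independence we are proving, which would render the whole argument circular. Inspecting its proof, linear independence of the $\bar{a}_i$ and the $\bar{b}_k$ is never invoked: only the isomorphism of $\Phi_G(x_0,\cdot)$ (respectively $\Phi_H(x_0,\cdot)$), the identities $DG(x_0)y_{i,0}=0$ and $H(\lambda_0,u_0,\mathrm{z}_{k,0})=0$ with $g_{i,0}=0$, $e_{k,0}=0$ extracted from the extended system (\ref{e5_56}), and the dimension count $dim\ Ker(DG(x_0))=q+m$ of Corollary \ref{corolarul_teorema_principala_partea2} are used. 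This is the step most worth verifying carefully, but no circularity arises, and the proof goes through cleanly.
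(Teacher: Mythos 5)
Your proof is correct and essentially the same as the paper's: the paper also argues by contradiction, producing from a linear dependence $\sum_i (f')^i\bar{a}_i=0$ (with $f'\neq 0$) an element $(f',\mu,w)$ of $Ker(DG(x_0))$ with nonzero $f$-component, contradicting (\ref{NOU_e5_61}); your choice $(\mu,w)=(0,0)$ is just the simplest instance of that. Your circularity check is also accurate, and you make explicit the analogous $\bar{b}_k$ argument that the paper leaves implicit.
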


\begin{proof}
Suppose that $\bar{a}_{1}$, $\ldots$, $\bar{a}_{q}$ form a
linearly dependent set, therefore there exists
$f'=((f')^{1},\ldots,(f')^{q}) \in \mathbb{R}^{q}$, $f' \neq 0$
and $\sum_{i=1}^{q}(f')^{i}\bar{a}_{i}=0$. Then, for an element
$(\mu,w) \in Ker(DF(\lambda_{0},u_{0}))$, we have $(f',\mu,w) \in
Ker(DG(x_{0}))$, with $f' \neq 0$. This is a contradiction.
\end{proof}

\begin{lem}
\label{Lema_Range_03} Under the hypotheses of Theorem
\ref{teorema_principala_parteaMAIN} (ii), we have:
\begin{equation}
\label{NOU_e5_80}
   Z=Z_{1} \oplus Range(DF(\lambda_{0},u_{0})) \, .
\end{equation}
\begin{equation}
\label{NOU_e5_80_H}
   Z=Z_{3} \oplus Range(D_{u}F(\lambda_{0},u_{0})) \, .
\end{equation}
\end{lem}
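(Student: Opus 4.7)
The plan is to derive both direct-sum decompositions as immediate consequences of the isomorphism properties already in hand: surjectivity of the augmented operators $DG(x_0)$ and $H(\lambda_0,u_0,\cdot)$ supplies the additive splittings $Z = Z_i + Range(\cdot)$, while the explicit descriptions of $Ker(DG(x_0))$ and $Ker(H(\lambda_0,u_0,\cdot))$ obtained in Lemma \ref{corolarul_teorema_principala_parteaMAIN} force the pairwise intersections to be trivial.

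First I would prove (\ref{NOU_e5_80}). Under the hypotheses of Theorem \ref{teorema_principala_parteaMAIN} (ii), the isomorphism $D\Psi(x_0): X \to Y$ transfers via Lemma \ref{Lema_solutie_01_DS3} into the statement that $\Phi_G(x_0,\cdot)$ is an isomorphism of $X$ onto $Y$, and then the implication (iv)$\Rightarrow$(iii) of Lemma \ref{teorema_principala_partea1} yields $Range(DG(x_0)) = Z$. For any $z \in Z$, an element $(g,\mu,w) \in X$ with $DG(x_0)(g,\mu,w) = z$ furnishes the representation
\[ z = DF(\lambda_0,u_0)(\mu,w) - \sum_{i=1}^{q} g^i \bar{a}_i, \]
which exhibits $z$ as a sum of an element of $Range(DF(\lambda_0,u_0))$ and an element of $Z_1$. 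To upgrade this to a direct sum, suppose $z \in Z_1 \cap Range(DF(\lambda_0,u_0))$ and write $z = \sum_{i=1}^{q} \alpha^i \bar{a}_i = DF(\lambda_0,u_0)(\mu,w)$. Then $DG(x_0)(\alpha,\mu,w) = 0$, so $(\alpha,\mu,w) \in Ker(DG(x_0))$, and the characterization (\ref{NOU_e5_61}) forces the $\mathbb{R}^q$ component to vanish, giving $\alpha = 0$ and therefore $z = 0$.

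Next I would prove (\ref{NOU_e5_80_H}) by the same template, now applied to the pair $(H(\lambda_0,u_0,\cdot),\bar{\mathcal{B}})$ in place of $(DG(x_0),B)$. Lemma \ref{observatia_teorema_principala_partea5} provides the analogues of Lemma \ref{teorema_principala_partea1} and Lemma \ref{Lema_solutie_01_DS3} in this setting, and Theorem \ref{teorema_principala_parteaMAIN} (ii)(b) ensures that $\Phi_H(x_0,\cdot)$ is an isomorphism of $\Delta$ onto its target space. Hence $Range(H(\lambda_0,u_0,\cdot)) = Z$ and the analogous splitting $z = D_u F(\lambda_0,u_0)v - \sum_{k=1}^{n} e^k \bar{b}_k$ gives $Z = Z_3 + Range(D_u F(\lambda_0,u_0))$, while (\ref{NOU_e5_61_H}) rules out any non-zero element of $Z_3 \cap Range(D_u F(\lambda_0,u_0))$ via exactly the same kernel argument.

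I do not anticipate a genuine obstacle: the proof is mostly bookkeeping between the surjectivity of the augmented operators and the explicit kernel descriptions already established. The only minor point is to keep track of signs (the $-\sum g^i \bar{a}_i$ versus $+\sum \alpha^i \bar{a}_i$) when passing from ``$z$ lies in the range of $DG(x_0)$'' to the explicit splitting of $z$ into $Z_1$ and $Range(DF(\lambda_0,u_0))$, and analogously for $H$.
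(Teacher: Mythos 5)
Your proof is correct and follows essentially the same route as the paper: derive $Z=Z_1+Range(DF(\lambda_0,u_0))$ from the explicit form of $DG(x_0)$ together with $Range(DG(x_0))=Z$, and then use the kernel characterization (\ref{NOU_e5_61}) to show $Z_1\cap Range(DF(\lambda_0,u_0))=\{0\}$ (the paper simply orders these two steps the other way, establishing the intersection is trivial before invoking surjectivity). The second decomposition by the analogous argument with $(\Phi_H,\bar{\mathcal{B}},H)$ and (\ref{NOU_e5_61_H}) matches the paper's implicit treatment.
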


\begin{proof} We have $DG(x_{0})y=DF(\lambda_{0},u_{0})(\mu,w)
-\sum_{i=1}^{q}g^{i}\bar{a}_{i}$, where
$-\sum_{i=1}^{q}g^{i}\bar{a}_{i} \in Z_{1}$, $g_{i} \in
\mathbb{R}$, $i=1,\ldots,q$. So
\begin{equation}
\label{NOU_e5_80_3}
   Range(DG(x_{0}))=\{z \in Z ;
      z = z_{1} + z_{2},
      z_{1} \in Z_{1},
      z_{2} \in Range(DF(\lambda_{0},u_{0})) \} \, .
\end{equation}

Let us prove that
\begin{equation}
\label{NOU_e5_80_1}
   Z_{1} \cap Range(DF(\lambda_{0},u_{0})) = \{ 0 \} \, .
\end{equation}

Suppose that (\ref{NOU_e5_80_1}) is not true, so there exists $a
\in Z$, $a \neq 0$, such that $a \in Z_{1}$ and $a \in
Range(DF(\lambda_{0},u_{0}))$. So there exits
$g=(g^{1},\ldots,g^{q})$ $\in$ $\mathbb{R}^{q}$ and $(\lambda,u)$
$\in$ $\mathbb{R}^{m} \times W$ such that $a =
\sum_{i=1}^{q}g^{i}\bar{a}_{i}$ and $a$ $=$
$DF(\lambda_{0},u_{0})(\lambda,u)$. It follows that
$DF(\lambda_{0},u_{0})(\lambda,u)$ $-$
$\sum_{i=1}^{q}g^{i}\bar{a}_{i}$ $= 0$, i.e. $(g,\lambda,u) \in
Ker(DG(x_{0}))$. From (\ref{NOU_e5_61}), there results that $g =
0$, so $a = 0$. This contradicts the supposition that $a \neq 0$,
hence (\ref{NOU_e5_80_1}).

From (\ref{NOU_e5_80_3}) and (\ref{NOU_e5_80_1}), it follows that
\begin{equation}
\label{NOU_e5_80_2}
   Range(DG(x_{0}))=Z_{1} \oplus Range(DF(\lambda_{0},u_{0})) \, .
\end{equation}

Since $\Phi_{G}(x_{0},\cdot)$ is an isomorphism of $X$ onto $Y$,
it follows follows that $Range(\Phi_{G}(x_{0},\cdot))$ $=$ $Y$ ,
therefore
\begin{equation}
\label{NOU_e5_81}
   Range(DG(x_{0}))=Z \, .
\end{equation}

From (\ref{NOU_e5_81}) and (\ref{NOU_e5_80_2}), there results
(\ref{NOU_e5_80}).

\qquad
\end{proof}

\begin{lem}
\label{Lema_Range_02} Under the hypotheses of Theorem
\ref{teorema_principala_parteaMAIN} (ii), we have:
\begin{equation}
\label{NOU_e5_70}
   codim \ Z_{2}=q \, ,
      \quad Z_{2} \ \textrm{is closed in} \ Z \, ,
\end{equation}
\begin{equation}
\label{NOU_e5_70_H}
   codim \ Z_{4}=n \, ,
      \quad Z_{4} \ \textrm{is closed in} \ Z \, .
\end{equation}
\end{lem}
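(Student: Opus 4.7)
The plan is to leverage the continuous isomorphisms $\Phi_{G}(x_{0}, \cdot): X \to Y$ and $\Phi_{H}(x_{0}, \cdot): \Delta \to \Sigma$ (supplied by Lemma \ref{Lema_solutie_01_DS3}) in order to upgrade the algebraic direct sum decompositions of Lemma \ref{Lema_Range_03} to topological ones; once this is in hand, both the codimension and the closedness assertions are immediate.

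The codimension part is short. Lemma \ref{Lema_Range_03} gives $Z = Z_{1} \oplus Z_{2}$ (as an algebraic direct sum), and Lemma \ref{Lema_Range_01} asserts that $\bar{a}_{1},\ldots,\bar{a}_{q}$ are linearly independent, so $dim \ Z_{1} = q$. Hence $codim \ Z_{2} = \dim(Z/Z_{2}) = \dim Z_{1} = q$. The same reasoning applied to $Z = Z_{3} \oplus Z_{4}$ and $\bar{b}_{1},\ldots,\bar{b}_{n}$ yields $codim \ Z_{4} = n$.

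For closedness of $Z_{2}$, the idea is to produce an explicit continuous linear projection $P: Z \to Z_{1}$ along $Z_{2}$. By the open mapping theorem, the inverse $\Phi_{G}(x_{0},\cdot)^{-1}: Y \to X$ is continuous. For each $z \in Z$ set $y(z) := \Phi_{G}(x_{0},\cdot)^{-1}(0,z)$ and write $y(z) = (g(z), \lambda(z), u(z))$; all components depend linearly and continuously on $z$. By construction $B(y(z)) = 0$ and $DG(x_{0}) y(z) = z$, which rewrites as
\begin{equation*}
DF(\lambda_{0}, u_{0})(\lambda(z), u(z)) - \sum_{i=1}^{q} g^{i}(z)\, \bar{a}_{i} = z.
\end{equation*}
Defining $P(z) := -\sum_{i=1}^{q} g^{i}(z)\, \bar{a}_{i}$ gives a continuous linear map $Z \to Z_{1}$ with $z - P(z) \in Z_{2}$. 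By uniqueness of the algebraic decomposition $Z = Z_{1} \oplus Z_{2}$, this $P$ is the projection onto $Z_{1}$ along $Z_{2}$, hence $Z_{2} = \ker P$ is closed.

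The proof for $Z_{4}$ is strictly parallel, using $\Phi_{H}(x_{0},\cdot)^{-1}(0,z) = (e(z), v(z))$ to obtain
\begin{equation*}
D_{u}F(\lambda_{0}, u_{0}) v(z) - \sum_{k=1}^{n} e^{k}(z)\, \bar{b}_{k} = z,
\end{equation*}
and the associated continuous projection $\widetilde{P}(z) := -\sum_{k=1}^{n} e^{k}(z)\, \bar{b}_{k}$ onto $Z_{3}$. The main conceptual step, and really the only nontrivial point, is the promotion of the algebraic direct sum to a topological one via the continuous inverse of the isomorphism; absent this, closedness would not follow from the fact that $Z_{2}$ has finite codimension, since a finite-codimensional subspace of a Banach space need not be closed in general.
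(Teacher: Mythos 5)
Your argument for the codimension is exactly the paper's: Lemma~\ref{Lema_Range_03} supplies the algebraic decomposition $Z = Z_1 \oplus Z_2$, Lemma~\ref{Lema_Range_01} gives $\dim Z_1 = q$, and $codim\, Z_2 = q$ follows, with the parallel computation for $Z_4$.

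Where you diverge is closedness. The paper's proof is a single sentence and offers no separate argument for this part at all; your construction of the continuous projection $P(z) = -\sum_{i=1}^{q} g^{i}(z)\,\bar a_{i}$ through $y(z) = \Phi_G(x_0,\cdot)^{-1}(0,z)$ is a correct, self-contained way to fill that gap, and it has the merit of reusing the isomorphism $\Phi_G(x_0,\cdot)$ already furnished by Lemma~\ref{Lema_solutie_01_DS3}. One small caveat on your closing remark: while an \emph{arbitrary} finite-codimensional subspace of a Banach space indeed need not be closed, $Z_2 = Range(DF(\lambda_0,u_0))$ is the range of a continuous linear operator, and it is a classical consequence of the open mapping theorem that a continuous linear operator between Banach spaces whose range has finite algebraic codimension automatically has closed range. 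The paper is very likely leaning on this tacitly. Your route is equivalent in substance --- both arguments ultimately rest on the open mapping theorem, yours via the continuity of $\Phi_G(x_0,\cdot)^{-1}$ --- but yours is more explicit and dovetails more tightly with the surrounding lemma chain, so it is a genuine improvement in rigor over what appears in the text.
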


\begin{proof}
From (\ref{NOU_e5_80}) and Lemma \ref{Lema_Range_01}, we have
$codim \ Z_{2}=dim \ Z_{1}=q$. We obtain (\ref{NOU_e5_70}).

\qquad
\end{proof}

\subsection{Properties of the Fréchet derivatives and the consequences
on the qualitative aspects}
\label{NOU_sectiunea12}

\begin{lem}
\label{Lema_Frechet_01} Under the hypotheses of Theorem
\ref{teorema_principala_parteaMAIN} (ii), we
have:  \\
(a) $DF(\lambda_{0},u_{0})$ is a Fredholm operator of
$\mathbb{R}^{m} \times W$ onto $Z$ with index $m$;  \\
(b) $D_{u}F(\lambda_{0},u_{0})$ is a Fredholm operator of $W$ onto
$Z$ with index zero.
\end{lem}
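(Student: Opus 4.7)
The plan is to assemble the conclusion from the kernel and range information already established in Subsections \ref{NOU_sectiunea08_S3} and \ref{NOU_sectiunea09}. Recall that a bounded linear operator $T$ between Banach spaces is Fredholm of index $j$ exactly when $\ker T$ is finite-dimensional, $\operatorname{range} T$ is closed with finite codimension, and $j = \dim \ker T - \operatorname{codim} \operatorname{range} T$. Thus for each of the two derivatives I only need to read off these three pieces of data and combine them.

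For part (a), I would first invoke Lemma \ref{corolarul_teorema_principala_parteaMAIN}, specifically (\ref{NOU_e5_63}), to record that $\dim \ker DF(\lambda_{0},u_{0}) = q+m$. Next I would invoke Lemma \ref{Lema_Range_02}, equation (\ref{NOU_e5_70}), which gives that $Z_{2} = \operatorname{range} DF(\lambda_{0},u_{0})$ is closed in $Z$ and has codimension $q$. Consequently $DF(\lambda_{0},u_{0}) \in L(\mathbb{R}^{m}\times W, Z)$ is Fredholm with index $(q+m)-q = m$, which is exactly the claim of (a).

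For part (b) the argument is completely parallel. I would apply Lemma \ref{corolarul_teorema_principala_parteaMAIN}, equation (\ref{NOU_e5_63_H}), to get $\dim \ker D_{u}F(\lambda_{0},u_{0}) = n$, then apply Lemma \ref{Lema_Range_02}, equation (\ref{NOU_e5_70_H}), to get that $Z_{4} = \operatorname{range} D_{u}F(\lambda_{0},u_{0})$ is closed in $Z$ with codimension $n$. These two facts together yield that $D_{u}F(\lambda_{0},u_{0}) \in L(W,Z)$ is Fredholm with index $n - n = 0$, establishing (b) and thereby confirming that hypothesis (\ref{ipotezaHypF}) is recovered from the statement (ii) of Theorem \ref{teorema_principala_parteaMAIN}.

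Since the substantive content (dimensions of kernels, closedness and codimensions of ranges) has already been extracted from the isomorphism property of $DS(s_{0})$ and $D\Psi(x_{0})$ in the preceding lemmas, there is no real obstacle left: the proof of Lemma \ref{Lema_Frechet_01} is a straightforward compilation. The only step that required genuine work was the direct-sum decomposition in Lemma \ref{Lema_Range_03}, which in turn relied on the isomorphism property of $\Phi_{G}(x_{0},\cdot)$ together with the characterization (\ref{NOU_e5_61}) of $\ker DG(x_{0})$; that work is not repeated here.
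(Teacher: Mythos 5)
Your proof is correct and follows exactly the same route as the paper: the paper's proof simply cites (\ref{NOU_e5_63}), (\ref{NOU_e5_70}) for part (a) and (\ref{NOU_e5_63_H}), (\ref{NOU_e5_70_H}) for part (b), which are precisely the kernel-dimension and range-codimension facts you assemble. You merely spell out the index arithmetic $(q+m)-q=m$ and $n-n=0$ that the paper leaves implicit.
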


\begin{proof}
The proof follows from (\ref{NOU_e5_63}), (\ref{NOU_e5_70}) and
(\ref{NOU_e5_63_H}), (\ref{NOU_e5_70_H}).

\qquad
\end{proof}

\subsection{The proof of implication (i) $\Rightarrow$ (ii)
of Theorem \ref{teorema_principala_parteaMAIN}}
\label{subcap_dem_implicatia_directa}

From the implications (i) $\Rightarrow$ (ii) and (iii)
$\Rightarrow$ (iv) of Lemma \ref{teorema_principala_partea1} and
from Remark \ref{observatie_teorema_principala_partea1}, we deduce
that $x_{0}$ is a solution of (\ref{e5_22}),
$\Phi_{G}(x_{0},\cdot)$ is an isomorphism of $X$ onto $Y$ and
$\Phi_{H}(x_{0},\cdot)$ is an isomorphism of $\Delta$ onto
$\Sigma$. From (\ref{e5_56}), we have the equations
$\Phi_{G}(x_{0},y_{i,0})-[\delta_{i}^{q+m},0]^{T}=0$,
$\Phi_{H}(x_{0},\mathrm{z}_{k,0})-[\delta_{k}^{n},0]^{T}=0$. From
(\ref{e5_2_B_int}), (\ref{e5_2_H_B}), (\ref{e5_2_B_int_ker}) and
(\ref{e5_2_H_B_ker}), we deduce $g_{i,0}=0$, $e_{k,0}=0$. The
proof is complete by applying Lemma \ref{Lema_solutie_01_DS3}.

\subsection{The proof of implication (ii) $\Rightarrow$ (i)
of Theorem \ref{teorema_principala_parteaMAIN}}
\label{subcap_dem_implicatia_reciproca} The proof follows from the
implications (ii) $\Rightarrow$ (i) and (iv) $\Rightarrow$ (iii)
of Lemma \ref{teorema_principala_partea1} where $x$ is $x_{0}$,
from Remark \ref{observatie_teorema_principala_partea1}, from
Corollary \ref{corolarul_teorema_principala_partea2} with $x$ is
$x_{0}$, from Lemma
\ref{Lema_solutie_01_aaa_BIS_4_baza_introduction} to Lemma
\ref{Lema_Frechet_01}.

\section{The main result on the existence of a bifurcation problem}
\label{sectiunea_01_O_formulare_pe_spatii_infinit_dimensionale}

\subsection{Introduction}
\label{sectiunea_01_O_formulare_pe_spatii_infinit_dimensionale_1}

We formulate some sufficient conditions for the existence of an
equation which has a bifurcation point that satisfies the
hypothesis (\ref{ipotezaHypF}). We also obtain the existence of a
bifurcation problem for which a given problem is a perturbation.

In the sequel, in Sections
\ref{sectiunea_01_O_formulare_pe_spatii_infinit_dimensionale} -
\ref{sectiunea_CONCLUZII_0}, we keep the notations from the
preceding sections, but we do not consider the equations they
define. Equations (\ref{e5_1}), (\ref{e5_3}) and (\ref{e5_56}),
hypothesis (\ref{ipotezaHypF}) and Theorem
\ref{teorema_principala_parteaMAIN} will be related to a function
$S_{0}$ obtained as a perturbation of $S$. Let us fix these:

Let $W$ and $Z$ be real Banach spaces. They are both
infinite-dimensional spaces or they are both finite-dimensional
spaces with $dim \ W$ $=$ $dim \ Z$. Let $m \geq 1$, $p \geq 2$.
Let $F:\mathbb{R}^{m} \times W \rightarrow Z$ be a nonlinear
function of class $C^{p}$. Let $q \geq 1$, $n \geq 1$,
$\bar{a}_{1},\ldots,\bar{a}_{q}$, $\bar{b}_{1},\ldots,\bar{b}_{n}$
$\in$ $Z \backslash \{ 0 \}$. Let us consider $G$, $H$ defined in
(\ref{e5_2}), (\ref{e5_2_H}) and $B$ $\in$
$L(X,\mathbb{R}^{q+m})$, $\bar{\mathcal{B}}$ $\in$
$L(\Delta,\mathbb{R}^{n})$. Let us take some points
$\widetilde{s}_{0}$, $\widetilde{\phi}_{0}'$ $\in$ $\Gamma$. Let
$S$ and $\Phi$ be the functions defined in (\ref{e5_57}), for some
$\theta_{0}$ $\in$ $\mathbb{R}^{q+m}$, and (\ref{e5_57_Phi_3}),
respectively.

Let us construct the function $S$ from (\ref{e5_57}), assuming the
existence of the elements that allow this construction. We seek a
function $S_{0}$ of the same form (\ref{e5_57}) such that the
equation $S_{0}(s)$ $=$ $0$ has a solution $s_{0}$ and both
$S_{0}$ and $s_{0}$ satisfy the statement (ii) of Theorem
\ref{teorema_principala_parteaMAIN}.

We started with the following analysis. For the function $S$ from
(\ref{e5_57}), consider a point $\widetilde{s}_{0}$ of the form of
$s_{0}$, that is, $\widetilde{f}_{0}=0$, $\widetilde{g}_{i,0}=0$,
$\widetilde{e}_{k,0}=0$. Assume that $DS(\widetilde{s}_{0})$ is an
isomorphism of $\Gamma$ onto $\Sigma$. Under an additional
hypothesis, according to Theorem 3.1
\cite{CLBichir_bib_Cr_Ra1990}, Theorem IV.3.1
\cite{CLBichir_bib_Gir_Rav1986} and Theorem I.2.1
\cite{CLBichir_bib_CalozRappaz1997}, equation $S(s)$ $=$ $0$ (that
is, (\ref{e5_56})) has a solution $s$ in a neighborhood of
$\widetilde{s}_{0}$. We are interested in the case that the
solution $s$ is of the form of $s_{0}$ from Theorem
\ref{teorema_principala_parteaMAIN} (ii) (a). Let
$\widetilde{\varrho}_{0}$ $=$ $S(\widetilde{s}_{0})$. Applying the
inverse function theorem, $S$ is a local $C^{p}$-diffeomorphism at
$\widetilde{s}_{0}$ from $\widetilde{\mathcal{U}}$ onto
$\widetilde{\mathcal{V}}$, where $\widetilde{\mathcal{U}}$ is a
neighborhood of $\widetilde{s}_{0}$ and $\widetilde{\mathcal{V}}$
is a neighborhood of $\widetilde{\varrho}_{0}$. Let us take $\Phi$
defined in (\ref{e5_57_Phi_3}). $\Phi(x,\cdot)$ is an isomorphism
of $\Gamma$ onto $\Sigma$ for $x$ from some neighborhood of
$\widetilde{x}_{0}$. For each $s$ $\in$ $\widetilde{\mathcal{U}}$,
we can consider the equation in $\phi'$ $\in$ $\Gamma$,
\begin{equation}
\label{corolarul_trei_3_fi_3_ec_e5_57_forma2_sistem_REG_2_GGGvarianta_infinit_dimensionale_DEM_pc_fix_cont_EC}
   S(s)
   - \Phi(x,\phi')
   = 0 \, .
\end{equation}
Let us consider now this equation for $(s,\phi')$ $\in$
$\widetilde{\mathcal{U}} \times \Gamma$. Let us observe that the
form of the left hand side of
(\ref{corolarul_trei_3_fi_3_ec_e5_57_forma2_sistem_REG_2_GGGvarianta_infinit_dimensionale_DEM_pc_fix_cont_EC})
can be compared, especially the rows that correspond to
$\Phi_{G}(x,y_{i})$ and $\Phi_{H}(x,\mathrm{z}_{k})$, with the
form of the function $S(s)$ defined in (\ref{e5_57}). We then ask
if there exists a solution $(s,\phi')$ of
(\ref{corolarul_trei_3_fi_3_ec_e5_57_forma2_sistem_REG_2_GGGvarianta_infinit_dimensionale_DEM_pc_fix_cont_EC})
which allows us to find the function $S_{0}$ and the solution
$s_{0}$.

We obtain the function $S_{0}$ as a perturbation of $S$. The main
result is in Theorem
\ref{teorema_principala_spatii_infinit_dimensionale_widetilde_s_3_0_exact_inf}.

The idea of the proof came when we read the proofs of Theorem 6C.1
and Theorem 6D.1, from
\cite{CLBichir_bib_Dontchev_Rockafellar2009}. Here, some mappings
$G_{\textrm{\cite{CLBichir_bib_Dontchev_Rockafellar2009}}}$ and
$g_{\textrm{\cite{CLBichir_bib_Dontchev_Rockafellar2009}}}$ are
constructed and, under adequate hypotheses, the existence of a
solution of the equation
$g_{\textrm{\cite{CLBichir_bib_Dontchev_Rockafellar2009}}}(x) \in
G_{\textrm{\cite{CLBichir_bib_Dontchev_Rockafellar2009}}}(x)$ is
proved (using the contraction mapping principle for set-valued
mappings, Theorem 5E.2
\cite{CLBichir_bib_Dontchev_Rockafellar2009}). For the formulation
of the results and of the proofs, we also take into account
Graves' theorem 5D.2 \cite{CLBichir_bib_Dontchev_Rockafellar2009}
together with one of its consequences and the techniques from the
proofs of Theorem 3.1 \cite{CLBichir_bib_Cr_Ra1990}, Theorem
IV.3.1 \cite{CLBichir_bib_Gir_Rav1986} and Theorem I.2.1
\cite{CLBichir_bib_CalozRappaz1997}.

The analysis of
(\ref{corolarul_trei_3_fi_3_ec_e5_57_forma2_sistem_REG_2_GGGvarianta_infinit_dimensionale_DEM_pc_fix_cont_EC})
and the \cite{CLBichir_bib_Dontchev_Rockafellar2009} method
mentioned above lead us to the construction of the mappings
$\mathcal{G}(s,\phi')$ and $\mathcal{Q}(s,\phi')$ from
(\ref{e5_57_forma2_sistem_REG_2_GGGvarianta_infinit_dimensionale_DEM})
and
(\ref{e5_57_forma2_sistem_REG_2_GGGmic_infinit_dimensionale_DEM}).
The aim of this construction is that equation
$\mathcal{Q}(s,\phi') \in \mathcal{G}(s,\phi')$ and a solution
$(\bar{s},\bar{\phi}')$ of this one generate a function $S_{0}$ of
the form (\ref{e5_57}) and a solution $s_{0}$, with $f_{0}=0$,
$g_{i,0}=0$, $e_{k,0}=0$, $i=1,\ldots,q+m$, $k=1,\ldots,n$, of the
equation $S_{0}(s)$ $=$ $0$. $S_{0}$ and $s_{0}$ satisfy the
statements (a) and (b) of Theorem
\ref{teorema_principala_parteaMAIN} (ii).

We obtain
\begin{equation}
\label{corolarul_trei_3_fi_3_e5_57_forma2_sistem_REG_2_GGGvarianta_infinit_dimensionale_DEM_pc_fix_cont}
   S(s_{0})
   - \Phi(x_{0},\phi_{0}')
   \ni 0 \, ,
\end{equation}
that is, $(s_{0},\phi_{0}')$ is a solution of the equation
\begin{equation}
\label{corolarul_trei_3_fi_3_ec_e5_57_forma2_sistem_REG_2_GGGvarianta_infinit_dimensionale_DEM_pc_fix_cont}
   S(s)
   - \Phi(x,\phi')
   \ni 0 \, .
\end{equation}
This is equation
(\ref{e5_57_forma2_sistem_REG_2_GGGvarianta_infinit_dimensionale_DEM_pc_fix_cont_th_princ}).
Relation
(\ref{corolarul_trei_3_fi_3_e5_57_forma2_sistem_REG_2_GGGvarianta_infinit_dimensionale_DEM_pc_fix_cont})
allows us to get $S_{0}$ and $s_{0}$. Corollary
\ref{corolarul_trei_3_teorema_principala_spatii_infinit_dimensionale_widetilde_s_3_0_exact_zero}
tells us that equation $S_{0}(s)$ $=$ $0$ provides a bifurcation
problem $F(\lambda,u) - \varrho$ $=$ $0$ for which the given
problem (\ref{e5_1}) is a perturbation.

\subsection{The definition of the mappings $\mathcal{G}(s,\phi')$ and
$\mathcal{Q}(s,\phi')$}
\label{sectiunea_01_O_formulare_pe_spatii_infinit_dimensionale_2}

\textbf{- The points $\widetilde{s}_{0}$ and
$\widetilde{\phi}_{0}'$}

We denote $\widetilde{s}_{0}$ $=$
$(\widetilde{x}_{0},\widetilde{y}_{i,0},\widetilde{\mathrm{z}}_{k,0})$
and $\widetilde{\phi}_{0}'$ $=$
$(\widetilde{y}_{0}',\widetilde{y}_{i,0}',\widetilde{\mathrm{z}}_{k,0}')$
$\in$ $\Gamma$.
\\
$\widetilde{s}_{0}$ $=$
$(\widetilde{x}_{0},\widetilde{y}_{1,0},\ldots,\widetilde{y}_{q+m,0},\widetilde{\mathrm{z}}_{1,0},\ldots,\widetilde{\mathrm{z}}_{n,0})$,
$\widetilde{\phi}_{0}'$ $=$
$(\widetilde{y}_{0}',\widetilde{y}_{1,0}',\ldots,\widetilde{y}_{q+m,0}',\widetilde{\mathrm{z}}_{1,0}',\ldots,\widetilde{\mathrm{z}}_{n,0}')$.
\\
$\widetilde{x}_{0}$ $=$
$(\widetilde{f}_{0},\widetilde{\lambda}_{0},\widetilde{u}_{0})$,
$\widetilde{y}_{i,0}$ $=$
$(\widetilde{g}_{i,0},\widetilde{\mu}_{i,0},\widetilde{w}_{i,0})$,
$\widetilde{\mathrm{z}}_{k,0}$ $=$
$(\widetilde{e}_{k,0},\widetilde{v}_{k,0})$, \\
$\widetilde{y}_{0}'$ $=$
$(\widetilde{g}_{0}',\widetilde{\mu}_{0}',\widetilde{w}_{0}')$,
$\widetilde{y}_{i,0}'$ $=$
$(\widetilde{g}_{i,0}',\widetilde{\mu}_{i,0}',\widetilde{w}_{i,0}')$,
$\widetilde{\mathrm{z}}_{k,0}'$ $=$
$(\widetilde{e}_{k,0}',\widetilde{v}_{k,0}')$ \\
and $\widetilde{f}_{0}=0$, $\widetilde{g}_{i,0}=0$,
$\widetilde{e}_{k,0}=0$, $\widetilde{y}_{0}'=0$,
$\widetilde{\phi}_{0}'=0$, $i=1,\ldots,q+m$, $k=1,\ldots,n$.

Let us take the value $\widetilde{\theta}_{0}$ $=$
$B(\widetilde{x}_{0})$ $\in$ $\mathbb{R}^{q+m}$ for $\theta_{0}$
in the definition (\ref{e5_23}) of $\Psi$ and in the definition
(\ref{e5_57}) of $S$. Let $\Phi$ be defined by
(\ref{e5_57_Phi_3}).

Let us denote \\
$\xi(g',g_{i}',e_{k}')$ $=$ $\widetilde{\phi}_{0}'$ $+$
$((g',0,0),(g_{1}',0,0),\ldots,(g_{q+m}',0,0),(e_{1}',0,0),\ldots,(e_{n}',0,0))$,
that is, $\xi(g',g_{i}',e_{k}')$ is $\widetilde{\phi}_{0}'$ where
$\widetilde{g}_{0}'$, $\widetilde{g}_{i,0}'$,
$\widetilde{e}_{k,0}'$ are replaced by $\widetilde{g}_{0}'+g'$,
$\widetilde{g}_{i,0}'+g_{i}'$, $\widetilde{e}_{k,0}'+e_{k}'$
respectively. We have
\begin{equation}
\label{e5_57_Phi_3_dif_0_tilde_0_Phi_prim}
   \Phi(x,\xi(g',g_{i}',e_{k}'))=\left[\begin{array}{l}
      B(g',0,0) \\
      -\sum_{\imath=1}^{q}(g')^{\imath}\bar{a}_{\imath} \\
      B(g_{i}',0,0) \\
      -\sum_{\ell=1}^{q}(g_{i}')^{\ell}\bar{a}_{\ell} \\
      \bar{\mathcal{B}}(e_{k}',0) \\
      -\sum_{\jmath=1}^{n}(e_{k}')^{\jmath}\bar{b}_{\jmath}
      \end{array}\right] \ .
\end{equation}

\textbf{- The mappings $\mathcal{G}$ and $\mathcal{Q}$}

Let us fix a real $\alpha$, $0 < \alpha < 1$, $\alpha \neq
\frac{1}{2}$.

Let us define
\begin{equation}
\label{e5_57_forma2_sistem_REG_2_GGGvarianta_infinit_dimensionale_DEM_mathcal_H_S_3}
   \mathcal{H}_{S}(s)
   = S(\widetilde{s}_{0})
   + DS(\widetilde{s}_{0})(s-\widetilde{s}_{0})
   \, ,
\end{equation}

\begin{equation}
\label{corlar_e5_57_forma2_sistem_REG_2_GGGvarianta_infinit_dimensionale_DEM_mathcal_H_Phi_3}
   \mathcal{H}_{\Phi}(x,\phi')
   = \Phi(\widetilde{x}_{0},\widetilde{\phi}_{0}')
   + D\Phi(\widetilde{x}_{0},\widetilde{\phi}_{0}')((x,\phi')-(\widetilde{x}_{0},\widetilde{\phi}_{0}'))
   \, .
\end{equation}

$\mathcal{H}_{S}$ is a strict first-order approximation of $S$ at
$\widetilde{s}_{0}$.

$\mathcal{H}_{\Phi}$ is a strict first-order approximation of
$\Phi$ at $(\widetilde{x}_{0},\widetilde{\phi}_{0}')$.

$\mathcal{G}:\Gamma \times \Gamma \rightarrow \Sigma$,
\begin{eqnarray}
   && \mathcal{G}(s,\phi')
   = \frac{1}{2}S(s)
   - \frac{1}{2}\Phi(x,\phi')
   + \frac{1}{2}\mathcal{H}_{S}(s)
   - \frac{1}{2}\mathcal{H}_{\Phi}(x,\phi')
          \label{e5_57_forma2_sistem_REG_2_GGGvarianta_infinit_dimensionale_DEM} \\
   && - (1-\alpha) \Phi(\widetilde{x}_{0},\xi(f,g_{i},e_{k}))
   + (1-\alpha) \Phi(\widetilde{x}_{0},\xi(g',g_{i}',e_{k}'))
   \, .
         \nonumber
\end{eqnarray}

$\mathcal{Q}:\Gamma \times \Gamma \rightarrow \Sigma$,
\begin{eqnarray}
   && \mathcal{Q}(s,\phi')
   = - \frac{1}{2}S(s)
   + \frac{1}{2}\Phi(x,\phi')
   + \frac{1}{2}\mathcal{H}_{S}(s)
   - \frac{1}{2}\mathcal{H}_{\Phi}(x,\phi')
          \label{e5_57_forma2_sistem_REG_2_GGGmic_infinit_dimensionale_DEM} \\
   && + \alpha \Phi(\widetilde{x}_{0},\xi(f,g_{i},e_{k}))
   - \alpha \Phi(\widetilde{x}_{0},\xi(g',g_{i}',e_{k}'))
   \, .
         \nonumber
\end{eqnarray}

Observe that we have
\begin{equation*}
\label{corlar_e5_57_forma2_sistem_REG_2_GGGvarianta_infinit_dimensionale_DEM_DIFERENTA}
   \Phi(x,\xi(f,g_{i},e_{k}))
   - \Phi(x,\xi(g',g_{i}',e_{k}'))
   =
   \Phi(\widetilde{x}_{0},\xi(f,g_{i},e_{k}))
   - \Phi(\widetilde{x}_{0},\xi(g',g_{i}',e_{k}'))
   \, ,
\end{equation*}
\begin{equation}
\label{dem_e5_57_forma2_sistem_REG_2_GGGvarianta_infinit_dimensionale_DEM_val}
   \mathcal{G}(\widetilde{s}_{0},\widetilde{\phi}_{0}')
   = S(\widetilde{s}_{0}) \, , \
   \mathcal{Q}(\widetilde{s}_{0},\widetilde{\phi}_{0}')
   = 0 \, .
\end{equation}

\begin{lem}
\label{Lema_mathcal_G_surjective} $\mathcal{G}$ is surjective.
\end{lem}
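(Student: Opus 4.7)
My plan is to reduce the problem to the surjectivity of the linear map $\Phi(\widetilde{x}_{0},\cdot)\colon\Gamma\to\Sigma$, which is available from Lemma \ref{Lema_solutie_01_DS3_fi3} (it applies because $DS(\widetilde{s}_{0})$ is an isomorphism by the standing assumption made at the start of Section \ref{sectiunea_01_O_formulare_pe_spatii_infinit_dimensionale_1}). The key observation is that restricting $\mathcal{G}$ to the slice $s=\widetilde{s}_{0}$ collapses the six summands in (\ref{e5_57_forma2_sistem_REG_2_GGGvarianta_infinit_dimensionale_DEM}) to an affine map in $\phi'$ whose linear part is $-\Phi(\widetilde{x}_{0},\cdot)$.

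Concretely, I would evaluate (\ref{e5_57_forma2_sistem_REG_2_GGGvarianta_infinit_dimensionale_DEM}) at $s=\widetilde{s}_{0}$ and apply three routine simplifications: (i) $\mathcal{H}_{S}(\widetilde{s}_{0})=S(\widetilde{s}_{0})$ because $\mathcal{H}_{S}$ is the strict first-order approximation of $S$ at $\widetilde{s}_{0}$; (ii) from $\widetilde{f}_{0}=0$, $\widetilde{g}_{i,0}=0$, $\widetilde{e}_{k,0}=0$ and $\widetilde{\phi}_{0}'=0$ we obtain $\xi(0,0,0)=\widetilde{\phi}_{0}'=0$, so the term $\Phi(\widetilde{x}_{0},\xi(f,g_{i},e_{k}))$ vanishes; (iii) the map $\phi'\mapsto\Phi(\widetilde{x}_{0},\phi')$ is linear (it is the direct sum of the linear operators $\Phi_{G}(\widetilde{x}_{0},\cdot)$ and $\Phi_{H}(\widetilde{x}_{0},\cdot)$), so its strict first-order approximation at $\widetilde{\phi}_{0}'=0$ equals itself and hence $\mathcal{H}_{\Phi}(\widetilde{x}_{0},\phi')=\Phi(\widetilde{x}_{0},\phi')$. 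Collecting everything I get
\begin{equation*}
\mathcal{G}(\widetilde{s}_{0},\phi')
\;=\; S(\widetilde{s}_{0})+\Phi\bigl(\widetilde{x}_{0},\,-\phi'+(1-\alpha)\,\xi(g',g_{i}',e_{k}')\bigr).
\end{equation*}

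From here the conclusion is immediate. Given any $\sigma\in\Sigma$, Lemma \ref{Lema_solutie_01_DS3_fi3} yields $\eta\in\Gamma$ with $\Phi(\widetilde{x}_{0},\eta)=\sigma-S(\widetilde{s}_{0})$; choosing $g'=0$, $g_{i}'=0$, $e_{k}'=0$ (so that $\xi(g',g_{i}',e_{k}')=0$) and $\phi'=-\eta$ gives $\mathcal{G}(\widetilde{s}_{0},\phi')=\sigma$. I do not expect a genuine obstacle: the only non-bookkeeping ingredient is the isomorphism property of $\Phi(\widetilde{x}_{0},\cdot)$, already in hand, and the one point worth flagging when writing this up is step (iii), where the linearity of $\Phi(\widetilde{x}_{0},\cdot)$ is what collapses its strict first-order approximation and exposes the surjective linear structure that drives the argument.
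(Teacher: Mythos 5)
Your reduction of $\mathcal{G}(\widetilde{s}_{0},\phi')$ to the affine expression $S(\widetilde{s}_{0})+\Phi\bigl(\widetilde{x}_{0},-\phi'+(1-\alpha)\xi(g',g_{i}',e_{k}')\bigr)$ is correct and is precisely the identity the paper records (there the identity $\Phi(\widetilde{x}_{0},\overline{\phi}')=D\Phi(\widetilde{x}_{0},\widetilde{\phi}_{0}')(\overline{x},\overline{\phi}')$ does the work of your steps (ii) and (iii), and both rely on $\widetilde{\phi}_{0}'=0$ together with the linearity of $\Phi(x,\cdot)$). So the approach is the same. But the closing step of your argument has a concrete error.

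You write ``choosing $g'=0$, $g_{i}'=0$, $e_{k}'=0$ (so that $\xi(g',g_{i}',e_{k}')=0$) and $\phi'=-\eta$.'' The quantities $g'$, $g_{i}'$, $e_{k}'$ that enter $\xi(g',g_{i}',e_{k}')$ in the definition of $\mathcal{G}$ are not free parameters; they are the scalar coordinates of $\phi'$ itself (recall $\phi'=(y',y_{i}',\mathrm{z}_{k}')$ with $y'=(g',\mu',w')$, $y_{i}'=(g_{i}',\mu_{i}',w_{i}')$, $\mathrm{z}_{k}'=(e_{k}',v_{k}')$). So once you fix $\phi'=-\eta$, the values $g'$, $g_{i}'$, $e_{k}'$ are the corresponding coordinates of $-\eta$, and there is no reason they vanish for a generic $\eta=\Phi(\widetilde{x}_{0},\cdot)^{-1}(\sigma-S(\widetilde{s}_{0}))$. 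As written, the ``choice'' is inconsistent. The repair is straightforward: instead of forcing $\xi$ to vanish, solve the linear equation $-\phi'+(1-\alpha)\xi(g',g_{i}',e_{k}')=\eta$ directly. In the marked scalar coordinates it reads $-g'+(1-\alpha)g'=-\alpha g'=\eta_{g'}$ (and similarly for $g_{i}'$, $e_{k}'$), and in all remaining coordinates it is simply $-\phi'_{\mathrm{other}}=\eta_{\mathrm{other}}$; since $\alpha\in(0,1)$ is nonzero, this is always solvable, and the surjectivity follows as you intended. Put differently, you need to observe that $\phi'\mapsto-\phi'+(1-\alpha)\xi(g',g_{i}',e_{k}')$ is itself a linear bijection of $\Gamma$ (being $-\alpha\cdot\mathrm{id}$ on the scalar coordinates and $-\mathrm{id}$ elsewhere), not merely that one of its two summands can be made to vanish.
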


\begin{proof} Using
(\ref{e5_57_forma2_sistem_REG_2_GGGvarianta_infinit_dimensionale_DEM})
and (\ref{e5_57_Phi_3_dif_0_tilde_obs}), we have
\begin{eqnarray*}
   && \mathcal{G}(\widetilde{s}_{0},\phi')
   = S(\widetilde{s}_{0})
   - \Phi(\widetilde{x}_{0},\phi')
   + (1-\alpha) \Phi(\widetilde{x}_{0},\xi(g',g_{i}',e_{k}'))
   \, .
             \label{e5_57_forma2_sistem_REG_2_GGGvarianta_infinit_dimensionale_DEM_tilde}
\end{eqnarray*}

From Lemma \ref{Lema_solutie_01_DS3_fi3}, we have that
$\Phi(\widetilde{x}_{0},\cdot)$ is an isomorphism of $\Gamma$ onto
$\Sigma$. Hence $\forall$ $\zeta$ $\in$ $\Sigma$, $\exists$
$(\widetilde{s}_{0},\phi')$ $\in$ $\Gamma \times \Gamma$ such that
$\mathcal{G}(\widetilde{s}_{0},\phi')$ $=$ $\zeta$ and
$\mathcal{G}$ is surjective.

\qquad
\end{proof}

We use the surjectivity of $\mathcal{G}$ in the Proof of Theorem
\ref{teorema_principala_spatii_infinit_dimensionale_widetilde_s_3_0_exact_inf},
Subsection
\ref{sectiunea_01_O_formulare_pe_spatii_infinit_dimensionale_5}.
We mention that we can skip this condition since we work with
set-valued mappings; for details for the methodology, see
\cite{CLBichir_bib_Dontchev_Rockafellar2009}.

\textbf{- The operator $A$ $=$ $\mathcal{A}$}

\begin{equation}
\label{e5_57_forma2_sistem_REG_2_GGGvarianta_infinit_dimensionale_DEM_dif_def}
   \mathcal{A}
   = D\mathcal{G}(\widetilde{s}_{0},\widetilde{\phi}_{0}')
   \, .
\end{equation}

We write $\mathcal{A}(\alpha)$ when it is necessary to consider
$\alpha$ as a variable.

\begin{eqnarray}
   && D\mathcal{G}(s,\phi')(\overline{s},\overline{\phi}')
   = \frac{1}{2}DS(s)\overline{s}
   - \frac{1}{2}D\Phi(x,\phi')(\overline{x},\overline{\phi}')
   - \frac{1}{2}D\Phi(\widetilde{x}_{0},\widetilde{\phi}_{0}')(\overline{x},\overline{\phi}')
          \label{e5_57_forma2_sistem_REG_2_GGGvarianta_infinit_dimensionale_DEM_dif} \\
   && + \frac{1}{2}DS(\widetilde{s}_{0})\overline{s}
   - (1-\alpha) \Phi(\widetilde{x}_{0},\xi(\overline{f},\overline{g}_{i},\overline{e}_{k}))
   + (1-\alpha) \Phi(\widetilde{x}_{0},\xi(\overline{g}',\overline{g}_{i}',\overline{e}_{k}'))
   \, .
         \nonumber
\end{eqnarray}

\begin{eqnarray}
   && \mathcal{A}(\overline{s},\overline{y}')
   = DS(\widetilde{s}_{0})\overline{s}
   - D\Phi(\widetilde{x}_{0},\widetilde{\phi}_{0}')(\overline{x},\overline{\phi}')
          \label{e5_57_forma2_sistem_REG_2_GGGvarianta_infinit_dimensionale_DEM_dif_tilde} \\
   && - (1-\alpha) \Phi(\widetilde{x}_{0},\xi(\overline{f},\overline{g}_{i},\overline{e}_{k}))
   + (1-\alpha) \Phi(\widetilde{x}_{0},\xi(\overline{g}',\overline{g}_{i}',\overline{e}_{k}'))
   \, ,
         \nonumber
\end{eqnarray}
where
\begin{equation}
\label{e5_27_S_3_0_tilde}
       DS(\widetilde{s}_{0})\overline{s}=\left[\begin{array}{l}
      B(\overline{x}) \\
      DG(\widetilde{x}_{0})\overline{x} \\
      B(\overline{y}_{i}) \\
      D^{2}F(\widetilde{\lambda}_{0},\widetilde{u}_{0})
         ((\widetilde{\mu}_{i,0},\widetilde{w}_{i,0}),(\overline{\lambda},\overline{u}))
            +DG(\widetilde{x}_{0})\overline{y}_{i} \\
      \bar{\mathcal{B}}(\overline{\mathrm{z}}_{k}) \\
      D_{(\lambda,u)}(D_{u}F(\widetilde{\lambda}_{0},\widetilde{u}_{0})\widetilde{v}_{k,0})
         (\overline{\lambda},\overline{u})
            +H(\widetilde{\lambda}_{0},\widetilde{u}_{0},\overline{\mathrm{z}}_{k})
      \end{array}\right] \, ,
\end{equation}
and
\begin{equation}
\label{e5_57_Phi_3_dif_0_tilde}
   D\Phi(\widetilde{x}_{0},\widetilde{\phi}_{0}')(\overline{x},\overline{\phi}')=\left[\begin{array}{l}
      B(\overline{y}') \\
      D^{2}F(\widetilde{\lambda}_{0},\widetilde{u}_{0})
         ((\widetilde{\mu}_{0}',\widetilde{w}_{0}'),(\overline{\lambda},\overline{u}))
            +DG(\widetilde{x}_{0})\overline{y}' \\
      B(\overline{y}_{i}') \\
      D^{2}F(\widetilde{\lambda}_{0},\widetilde{u}_{0})
         ((\widetilde{\mu}_{i,0}',\widetilde{w}_{i,0}'),(\overline{\lambda},\overline{u}))
            +DG(\widetilde{x}_{0})\overline{y}_{i}' \\
      \bar{\mathcal{B}}(\overline{\mathrm{z}}_{k}') \\
      D_{(\lambda,u)}(D_{u}F(\widetilde{\lambda}_{0},\widetilde{u}_{0})\widetilde{v}_{k,0}')
         (\overline{\lambda},\overline{u})
            +H(\widetilde{\lambda}_{0},\widetilde{u}_{0},\overline{\mathrm{z}}_{k}')
      \end{array}\right] \ ,
\end{equation}
for all $i=1,\ldots,q+m$, $k=1,\ldots,n$. We have
$D^{2}F(\widetilde{\lambda}_{0},\widetilde{u}_{0})
         ((\widetilde{\mu}_{0}',\widetilde{w}_{0}'),(\overline{\lambda},\overline{u})) = 0$,
$D^{2}F(\widetilde{\lambda}_{0},\widetilde{u}_{0})
         ((\widetilde{\mu}_{i,0}',\widetilde{w}_{i,0}'),(\overline{\lambda},\overline{u})) = 0$,
$D_{(\lambda,u)}(D_{u}F(\widetilde{\lambda}_{0},\widetilde{u}_{0})\widetilde{v}_{k,0}')
         (\overline{\lambda},\overline{u}) = 0$.

\textbf{- Condition $A$ $=$ $\mathcal{A}$ is surjective} By fixing
$\overline{y}'$, the condition is verified immediately.

\textbf{- The definitions of $reg \, \mathcal{A}$, $\kappa$ and
$\gamma$}

$reg \, \mathcal{A}$ is defined as in (\ref{e_A2_17_TEXT_reg}). We
take $\kappa$ $=$ $reg \, \mathcal{A}$. Define $\gamma_{\Psi}$ $=$
$\widetilde{\gamma}(\Psi,\widetilde{x}_{0},X,Y)$, $\gamma$ $=$
$\gamma_{S}$ $=$
$\widetilde{\gamma}(S,\widetilde{s}_{0},\Gamma,\Sigma)$,
$\gamma_{S_{0}}$ $=$
$\widetilde{\gamma}(S_{0},\widetilde{s}_{0},\Gamma,\Sigma)$, where
we use (\ref{e_A2_17_TEXT_gamma_bar_gen}). Observe that $\gamma$
do not depend on $\alpha$.

\begin{lem}
\label{Lema_spatii_infinit_dimensionale_reg}
\begin{equation}
\label{e_A2_16_TEXT}
   \kappa \leq \gamma \, .
\end{equation}
\end{lem}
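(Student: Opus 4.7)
The plan is to unpack the definition $\kappa = reg\,\mathcal{A} = \sup_{\|y\|\leq 1} d(0,\mathcal{A}^{-1}(y))$ and, for each $y\in\Sigma$ with $\|y\|\leq 1$, to construct an explicit preimage $(\overline{s},\overline{\phi}')\in\mathcal{A}^{-1}(y)$ whose $\Gamma\times\Gamma$-norm is bounded by $\gamma\|y\|$; taking the supremum then yields the desired inequality. The ingredients I will draw on are the formula (\ref{e5_57_forma2_sistem_REG_2_GGGvarianta_infinit_dimensionale_DEM_dif_tilde}) for $\mathcal{A}=D\mathcal{G}(\widetilde{s}_{0},\widetilde{\phi}_{0}')$, the invertibility of $DS(\widetilde{s}_{0})$ (guaranteed by Theorem \ref{teorema_principala_parteaMAIN}(ii)(b)) with $\|DS(\widetilde{s}_{0})^{-1}\|=\gamma$, and the isomorphism property of $\Phi(\widetilde{x}_{0},\cdot)$ from Lemma \ref{Lema_solutie_01_DS3_fi3}.

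First I would exploit the structure of $\mathcal{A}$ at $(\widetilde{s}_{0},\widetilde{\phi}_{0}')=(\widetilde{s}_{0},0)$. Since $\widetilde{\phi}_{0}'=0$, the entries of (\ref{e5_57_Phi_3_dif_0_tilde}) involving $D^{2}F$ evaluated at $(\widetilde{\mu}_{0}',\widetilde{w}_{0}')$, $(\widetilde{\mu}_{i,0}',\widetilde{w}_{i,0}')$ and $\widetilde{v}_{k,0}'$ all vanish, so $D\Phi(\widetilde{x}_{0},\widetilde{\phi}_{0}')(\overline{x},\overline{\phi}')$ reduces to $\Phi(\widetilde{x}_{0},\overline{\phi}')$. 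Taking $\overline{\phi}'=0$ in (\ref{e5_57_forma2_sistem_REG_2_GGGvarianta_infinit_dimensionale_DEM_dif_tilde}) therefore leaves
\[
\mathcal{A}(\overline{s},0) = DS(\widetilde{s}_{0})\overline{s} - (1-\alpha)\,\Phi(\widetilde{x}_{0},\xi(\overline{f},\overline{g}_{i},\overline{e}_{k})) .
\]
A row-by-row comparison of (\ref{e5_27_S_3_0_tilde}) with (\ref{e5_57_Phi_3_dif_0_tilde_0_Phi_prim}), using the linearity of $B$, $\bar{\mathcal{B}}$, $DG(\widetilde{x}_{0})$ and $H(\widetilde{\lambda}_{0},\widetilde{u}_{0},\cdot)$, then identifies this with $DS(\widetilde{s}_{0})(J_{\alpha}\overline{s})$, where $J_{\alpha}:\Gamma\to\Gamma$ is the rescaling that multiplies the scalar coordinates $(\overline{f},\overline{g}_{i},\overline{e}_{k})$ by $\alpha$ and fixes the remaining functional components.

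Since $\alpha\neq 0$ the operator $J_{\alpha}$ is invertible, and combined with the isomorphism $DS(\widetilde{s}_{0}):\Gamma\to\Sigma$ this makes $DS(\widetilde{s}_{0})\circ J_{\alpha}$ a topological isomorphism. For $y\in\Sigma$ with $\|y\|\leq 1$, a natural preimage in $\mathcal{A}^{-1}(y)$ is the pair $(\overline{s},\overline{\phi}')=\bigl(DS(\widetilde{s}_{0})^{-1}y,\;-\tfrac{1-\alpha}{\alpha}\,P'DS(\widetilde{s}_{0})^{-1}y\bigr)$, where $P':\Gamma\to\Gamma$ denotes the linear lift of the scalar coordinates $(\overline{f},\overline{g}_{i},\overline{e}_{k})$ of $\overline{s}$ into the $(\overline{g}',\overline{g}_{i}',\overline{e}_{k}')$-slots of $\overline{\phi}'$. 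That this pair lies in $\mathcal{A}^{-1}(y)$ is to be verified by substituting into $\mathcal{A}(\overline{s},\overline{\phi}')=y$ and using the projection identity $P''P'=P'$, where $P''$ is the complementary projection on $\overline{\phi}'$, together with the Neumann-type identity $(I-(1-\alpha)P'')^{-1}=I+\tfrac{1-\alpha}{\alpha}P''$.

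The main obstacle I anticipate is controlling the sum-norm $\|\overline{s}\|_{(1)}+\|\overline{\phi}'\|_{(1)}$ so that no extra factor beyond $\gamma$ appears in the final bound. A direct estimate gives $\|\overline{\phi}'\|_{(1)}\leq\tfrac{1-\alpha}{\alpha}\|P'DS(\widetilde{s}_{0})^{-1}y\|_{(1)}$, and naively one is led only to $\kappa\leq\gamma/\alpha$; the decisive step is the observation that the scalar coordinates $(\overline{f},\overline{g}_{i},\overline{e}_{k})$ of $\overline{s}$ and the scalar entries of $\overline{\phi}'$ are supported on matching index sets, so the $\ell^{1}$-structure of the product norm on $\Gamma\times\Gamma$ collapses the combined contribution back to $\|DS(\widetilde{s}_{0})^{-1}y\|_{(1)}$ after using $\|DS(\widetilde{s}_{0})^{-1}\|=\gamma$. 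This tight arithmetic identity, together with the surjectivity of $\mathcal{A}$ (established in Lemma \ref{Lema_mathcal_G_surjective}), is what forces (\ref{e_A2_16_TEXT}).
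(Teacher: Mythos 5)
Your identification $\mathcal{A}(\overline{s},0)=DS(\widetilde{s}_{0})(J_{\alpha}\overline{s})$ is correct, and it is the same structural fact that underlies the paper's argument: the paper's auxiliary operator $DS_{\alpha}(\widetilde{s}_{0})$, introduced via $\mathcal{A}(\overline{s},\overline{\phi}')=DS_{\alpha}(\widetilde{s}_{0})\overline{s}-\Phi_{\alpha}(\widetilde{x}_{0},\overline{\phi}')$, is precisely $DS(\widetilde{s}_{0})\circ J_{\alpha}$ in your notation. Where you diverge is in the choice of preimage inside $\mathcal{A}^{-1}(y)$. The paper takes $\overline{\phi}'=0$, so its preimage is $(DS_{\alpha}(\widetilde{s}_{0})^{-1}y,\,0)=(J_{1/\alpha}\,DS(\widetilde{s}_{0})^{-1}y,\,0)$; you keep $\overline{s}=DS(\widetilde{s}_{0})^{-1}y$ and load a nonzero $\overline{\phi}'$ to cancel the $J_{\alpha}$-defect. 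Your verification that the pair actually solves $\mathcal{A}(\overline{s},\overline{\phi}')=y$ is sound.

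The gap is in the norm estimate, exactly at the step you call "decisive." Write $\|DS(\widetilde{s}_{0})^{-1}y\|_{(1)}=a+b$ with $a=\|P'DS(\widetilde{s}_{0})^{-1}y\|$ the scalar contribution and $b$ the rest. Your $\overline{\phi}'$ has only scalar entries, so $\|\overline{\phi}'\|_{(1)}=\tfrac{1-\alpha}{\alpha}\,a$. But $\overline{s}$ and $\overline{\phi}'$ sit in the two separate copies of $\Gamma$ inside $\Gamma\times\Gamma$, so under the $\|\cdot\|_{(1)}$ convention the contributions simply add:
\begin{equation*}
\|(\overline{s},\overline{\phi}')\|_{(1)}=(a+b)+\tfrac{1-\alpha}{\alpha}\,a=\tfrac{a}{\alpha}+b\;\ge\;a+b\,,
\end{equation*}
with strict inequality whenever $a>0$ and $\alpha<1$. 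There is no index-set cancellation available: coordinates in distinct factors of a product cannot offset one another in the sum norm, and "matching index sets" across the two factors does not change that. What your construction actually yields is only $\kappa\le\gamma/\alpha$, the "naive" bound you set out to improve. (For comparison, the paper's preimage $(J_{1/\alpha}DS(\widetilde{s}_{0})^{-1}y,0)$ has the identical $\ell^{1}$-norm $\tfrac{a}{\alpha}+b$; the paper then passes to $\|DS(\widetilde{s}_{0})^{-1}\|$ in a single line without further comment.) The Neumann-type identity $(I-(1-\alpha)P'')^{-1}=I+\tfrac{1-\alpha}{\alpha}P''$ is algebraically fine but irrelevant here: it controls the size of $\overline{\phi}'$ alone, not the size of the pair $(\overline{s},\overline{\phi}')$ in the product norm, and the surjectivity of $\mathcal{A}$ plays no role in the norm arithmetic either. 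You would need a genuinely different idea — for instance, a preimage whose scalar coordinates vanish identically, or a different choice of norm on the scalar blocks — to close the gap between $\gamma/\alpha$ and $\gamma$.
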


\begin{proof}
We have $\widetilde{\phi}_{0}'=0$. (\ref{e5_57_Phi_3_dif_0_tilde})
gives
\begin{equation}
\label{e5_57_Phi_3_dif_0_tilde_obs}
   \Phi(\widetilde{x}_{0},\overline{\phi}')
   = D\Phi(\widetilde{x}_{0},\widetilde{\phi}_{0}')(\overline{x},\overline{\phi}') \ ,
\end{equation}
and we replace
$D\Phi(\widetilde{x}_{0},\widetilde{\phi}_{0}')(\overline{x},\overline{\phi}')$
by $\Phi(\widetilde{x}_{0},\overline{\phi}')$ in the expression
(\ref{e5_57_forma2_sistem_REG_2_GGGvarianta_infinit_dimensionale_DEM_dif_tilde})
of $\mathcal{A}$.

We denote
\begin{equation}
\label{abc_e5_57_forma2_sistem_REG_2_GGGvarianta_infinit_dimensionale_DEM_dif_tilde}
   DS_{\alpha}(\widetilde{s}_{0})\overline{s}
   =DS(\widetilde{s}_{0})\overline{s}
   - (1-\alpha) \Phi(\widetilde{x}_{0},\xi(\overline{f},\overline{g}_{i},\overline{e}_{k}))
   \, ,
\end{equation}
\begin{equation}
\label{abc_e5_57_forma2_sistem_REG_2_GGGvarianta_infinit_dimensionale_DEM_dif_tilde}
   \Phi_{\alpha}(\widetilde{x}_{0},\overline{\phi}')
   =\Phi(\widetilde{x}_{0},\overline{\phi}')
   - (1-\alpha) \Phi(\widetilde{x}_{0},\xi(\overline{g}',\overline{g}_{i}',\overline{e}_{k}'))
   \, .
\end{equation}
Hence
\begin{equation}
\label{abc_e5_57_forma2_sistem_REG_2_GGGvarianta_infinit_dimensionale_DEM_dif_tilde}
   \mathcal{A}(\overline{s},\overline{y}')
   = DS_{\alpha}(\widetilde{s}_{0})\overline{s}
   - \Phi_{\alpha}(\widetilde{x}_{0},\overline{\phi}')
   \, .
\end{equation}

For $y$ $\in$ $\Sigma$, we have the sets $\mathcal{E}_{y}$,
$\mathcal{E}_{y}^{0}$ such that

$\mathcal{E}_{y}$ $=$ $\mathcal{A}^{-1}(y)$ $=$
$(DS_{\alpha}(\widetilde{s}_{0}) -
\Phi_{\alpha}(\widetilde{x}_{0},\cdot))^{-1}(y)$ $=$ $\{
(\overline{s},\overline{\phi}') |
DS_{\alpha}(\widetilde{s}_{0})\overline{s}
   - \Phi_{\alpha}(\widetilde{x}_{0},\overline{\phi}') = y \}$ $\supseteq$ $\{
(\overline{s},\overline{\phi}') |
DS_{\alpha}(\widetilde{s}_{0})\overline{s} = y, \,
\overline{\phi}' = 0 \}$ $=$ $\{ (\overline{s},\overline{\phi}') |
\overline{s} = DS_{\alpha}(\widetilde{s}_{0})^{-1}(y), \,
\overline{\phi}' = 0 \}$ $=$ $\mathcal{E}_{y}^{0}$.

\begin{equation}
\label{e_A2_17_TEXT_reg_lema}
   reg \, \mathcal{A} = \sup_{\| y \| \leq 1} d(0,\mathcal{A}^{-1}(y))
   = \sup_{\| y \| \leq 1} d(0,\mathcal{E}_{y})
\end{equation}
\begin{equation*}
\label{abc_e_A2_17_TEXT_reg_lema}
   \leq \sup_{\| y \| \leq 1} d(0,\mathcal{E}_{y}^{0})
   = \sup_{\| y \| \leq 1} d(0,
      \{ \overline{s} | \overline{s} =
      DS_{\alpha}(\widetilde{s}_{0})^{-1}(y) \})
\end{equation*}
\begin{equation*}
\label{abc_e_A2_17_TEXT_reg_lema}
   = \sup_{\| y \| \leq 1} \| DS_{\alpha}(\widetilde{s}_{0})^{-1}(y) \|
   = \| DS(\widetilde{s}_{0})^{-1} \|_{L(\Sigma,\Gamma)}
   \, .
\end{equation*}

\qquad
\end{proof}

\textbf{- The definition of $\mu$ $=$ $L(\varepsilon)$}

We denote $\widetilde{\mathcal{G}}_{3}(s,\phi')
   = \frac{1}{2}S(s)
   - \frac{1}{2}\Phi(x,\phi')$.

$D\widetilde{\mathcal{G}}_{3}(s,\phi')(\overline{s},\overline{\phi}')
   = \frac{1}{2}DS(s)\overline{s}
   - \frac{1}{2}D\Phi(x,\phi')(\overline{x},\overline{\phi}')$

$\psi_{1}((s,\phi'),(\overline{s},\overline{\phi}'))$

$=$

$\| \widetilde{\mathcal{G}}_{3}(s,\phi')
   - \widetilde{\mathcal{G}}_{3}(\overline{s},\overline{\phi}')
   - D\widetilde{\mathcal{G}}_{3}(\widetilde{s}_{0},\widetilde{\phi}_{0}')
      ((s,\phi') - (\overline{s},\overline{\phi}')) \|$

$=$

$\|
\int_{0}^{1}[D\widetilde{\mathcal{G}}_{3}((\overline{s},\overline{\phi}')+t((s,\phi')-(\overline{s},\overline{\phi}')))
-
D\widetilde{\mathcal{G}}_{3}(\widetilde{s}_{0},\widetilde{\phi}_{0}')]
\cdot ((s,\phi') - (\overline{s},\overline{\phi}')) dt \|$,

where we use some standard techniques from
\cite{CLBichir_bib_CalozRappaz1997, CLBichir_bib_Cr_Ra1990,
CLBichir_bib_Gir_Rav1986, CLBichir_bib_S_Lang1993}.

$\psi_{2}((s,\phi'),(\overline{s},\overline{\phi}'))$ $=$

$\sum_{\imath=1}^{q}|f^{\imath}-\overline{f}^{\, \imath}|
\|\bar{a}_{\imath}\|$ $+$ $\sum_{i =
1}^{q+m}\sum_{\ell=1}^{q}|g_{i}^{\ell}-\overline{g}_{i}^{\, \ell}|
\|\bar{a}_{\ell}\|$ $+$ $\sum_{k =
1}^{n}\sum_{\jmath=1}^{n}|e_{k}^{\jmath}-\overline{e}_{k}^{\,
\jmath}| \|\bar{b}_{\jmath}\|$

$+$ $\sum_{\imath=1}^{q}|(g')^{\imath}-(\overline{g}')^{\imath}|
\|\bar{a}_{\imath}\|$ $+$ $\sum_{i =
1}^{q+m}\sum_{\ell=1}^{q}|(g_{i}')^{\ell}-(\overline{g}_{i}')^{\ell}|
\|\bar{a}_{\ell}\|$ $+$ $\sum_{k =
1}^{n}\sum_{\jmath=1}^{n}|(e_{k}')^{\jmath}-(\overline{e}_{k}')^{\jmath}|
\|\bar{b}_{\jmath}\|$.

Take $\mu$ $=$ $L(\varepsilon)$ $=$
$\widetilde{L}(\mathcal{G},(\widetilde{s}_{0},\widetilde{\phi}_{0}'),(s,\phi'),\varepsilon,\Gamma
\times \Gamma,\Sigma)$, where we use
(\ref{e_A2_17_TEXT_mu_bar_gen}). We obtain $\mu$ $=$
$L(\varepsilon)$ $=$
$\widetilde{L}(\widetilde{\mathcal{G}}_{3},(\widetilde{s}_{0},\widetilde{\phi}_{0}'),(s,\phi'),\varepsilon,\Gamma
\times \Gamma,\Sigma)$.

Define $L_{\Psi}(\varepsilon)$ $=$
$\widetilde{L}(\Psi,\widetilde{x}_{0},x,\varepsilon,X,Y)$,
$L_{S}(\varepsilon)$ $=$
$\widetilde{L}(S,\widetilde{s}_{0},s,\varepsilon,\Gamma,\Sigma)$
and $L_{S_{0}}(\varepsilon)$ $=$
$\widetilde{L}(S_{0},\widetilde{s}_{0},s,\varepsilon,\Gamma,\Sigma)$.
We have
\begin{equation}
\label{abc_e_A2_17_TEXT_mu_LS}
   L_{S}(\varepsilon)
   =
   \sup_{(s,\phi') \in \mathbb{B}_{\varepsilon}(\widetilde{s}_{0},\widetilde{\phi}_{0}'), \, \phi' = 0}
   \|DS(\widetilde{s}_{0})
   -DS(s)\|_{L(\Gamma,\Sigma)} \ .
\end{equation}

\begin{lem}
\label{Lema_spatii_infinit_dimensionale_Lh}
\begin{equation}
\label{e_Lema_spatii_infinit_dimensionale_Lh}
   \frac{1}{2}L_{S}(\varepsilon) \leq L(\varepsilon) \ .
\end{equation}
\end{lem}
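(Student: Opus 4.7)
The plan is to obtain the inequality by restricting the supremum that defines $L(\varepsilon)$ to a carefully chosen subset of directions that isolates the $S$-part of $\widetilde{\mathcal{G}}_{3}$ from its $\Phi$-part. Since $\widetilde{\phi}_{0}'=0$, the key observation is that when we evaluate $D\Phi(x,\phi')(\overline{x},\overline{\phi}')$ via (\ref{e5_57_Phi_3_dif}) at $\phi'=0$ and $\overline{\phi}'=0$, all six rows vanish: the rows containing $B(\overline{y}')$, $B(\overline{y}_{i}')$, $\bar{\mathcal{B}}(\overline{\mathrm{z}}_{k}')$ and $DG(x)\overline{y}'$, $DG(x)\overline{y}_{i}'$, $H(\lambda,u,\overline{\mathrm{z}}_{k}')$ are zero because $\overline{\phi}'=0$, while the cross terms involving $D^{2}F(\lambda,u)((\mu',w'),(\overline{\lambda},\overline{u}))$ and the like are zero because the prime-labeled components of $\phi'$ are zero. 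The same is true at $(x,\phi')=(\widetilde{x}_{0},\widetilde{\phi}_{0}')$.

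First, I would fix an arbitrary $s\in\mathbb{B}_{\varepsilon}(\widetilde{s}_{0})$ and consider the point $(s,\widetilde{\phi}_{0}')$, which lies in $\mathbb{B}_{\varepsilon}((\widetilde{s}_{0},\widetilde{\phi}_{0}'))$ since the product norm satisfies $\|(s,\widetilde{\phi}_{0}')-(\widetilde{s}_{0},\widetilde{\phi}_{0}')\|=\|s-\widetilde{s}_{0}\|\leq\varepsilon$. Then I would estimate the operator norm from below by restricting to directions of the form $(\overline{s},0)$ with $\|\overline{s}\|\leq 1$; this is valid because $\|(\overline{s},0)\|=\|\overline{s}\|\leq 1$. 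By the observation above, the contribution of $\Phi$ to $D\widetilde{\mathcal{G}}_{3}(\widetilde{s}_{0},\widetilde{\phi}_{0}')(\overline{s},0) - D\widetilde{\mathcal{G}}_{3}(s,\widetilde{\phi}_{0}')(\overline{s},0)$ is zero, leaving exactly $\tfrac{1}{2}[DS(\widetilde{s}_{0})-DS(s)]\overline{s}$.

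Taking the supremum over $\|\overline{s}\|\leq 1$ yields
\[
   \tfrac{1}{2}\|DS(\widetilde{s}_{0})-DS(s)\|_{L(\Gamma,\Sigma)}
   \leq
   \|D\widetilde{\mathcal{G}}_{3}(\widetilde{s}_{0},\widetilde{\phi}_{0}')-D\widetilde{\mathcal{G}}_{3}(s,\widetilde{\phi}_{0}')\|_{L(\Gamma\times\Gamma,\Sigma)},
\]
and then taking the supremum over $s\in\mathbb{B}_{\varepsilon}(\widetilde{s}_{0})$ (i.e.\ over points $(s,\widetilde{\phi}_{0}')\in\mathbb{B}_{\varepsilon}((\widetilde{s}_{0},\widetilde{\phi}_{0}'))$ with the second coordinate constrained) gives $\tfrac{1}{2}L_{S}(\varepsilon)$ on the left, via (\ref{abc_e_A2_17_TEXT_mu_LS}), and something no larger than $L(\varepsilon)$ on the right, yielding (\ref{e_Lema_spatii_infinit_dimensionale_Lh}).

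The only subtle point, and thus the main thing to verify carefully, is that both the point restriction $\phi'=\widetilde{\phi}_{0}'=0$ and the direction restriction $\overline{\phi}'=0$ are needed simultaneously to kill all six components of the difference $D\Phi(\widetilde{x}_{0},\widetilde{\phi}_{0}')(\overline{x},\overline{\phi}')-D\Phi(x,\phi')(\overline{x},\overline{\phi}')$; otherwise either the $B(\overline{y}')$-type terms or the $D^{2}F$-type cross terms would survive and the clean separation from $DS$ would fail. Once this is checked by direct inspection of (\ref{e5_57_Phi_3_dif}), the remainder is a standard two-step supremum argument.
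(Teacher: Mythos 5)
Your proof is correct and follows essentially the same route as the paper's: both arguments restrict the supremum defining $L(\varepsilon)$ to the slice $\phi'=\widetilde{\phi}_{0}'=0$, $\overline{\phi}'=0$, observe by direct inspection of (\ref{e5_57_Phi_3_dif}) that the $\Phi$-contribution to the derivative difference then vanishes identically (leaving exactly $\tfrac{1}{2}[DS(\widetilde{s}_{0})-DS(s)]\overline{s}$), and conclude by the monotonicity of suprema over the enlarged ball of directions and of base points. The paper works with $D\mathcal{G}$ rather than $D\widetilde{\mathcal{G}}_{3}$, but it already records that these yield the same $L(\varepsilon)$ since the remaining terms of $\mathcal{G}$ are affine, so this is a cosmetic, not substantive, difference.
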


\begin{proof}

See Appendix \ref{sectiunea_evaluarea lui_mu_L_h}. Let us use
(\ref{appendix_e_A2_17_TEXT_mu}). We have
$\widetilde{\phi}_{0}'=0$. Take $\phi'=0$. We have

\begin{equation*}
\label{abc_e5_57_forma2_sistem_REG_2_GGGvarianta_infinit_dimensionale_DEM_dif_tilde}
   \frac{1}{2}\|DS(\widetilde{s}_{0})
   -DS(s)\|
\end{equation*}
\begin{equation*}
\label{abc_e5_57_forma2_sistem_REG_2_GGGvarianta_infinit_dimensionale_DEM_dif_tilde}
   = \sup_{\|(\overline{s},\overline{\phi}')\| \leq 1, \, \overline{\phi}' = 0}
   \|D\mathcal{G}(\widetilde{s}_{0},\widetilde{\phi}_{0}')(\overline{s},\overline{\phi}')
   -D\mathcal{G}(s,0)(\overline{s},\overline{\phi}')\|
\end{equation*}
\begin{equation*}
\label{abc_e5_57_forma2_sistem_REG_2_GGGvarianta_infinit_dimensionale_DEM_dif_tilde}
   \leq \sup_{\|(\overline{s},\overline{\phi}')\| \leq 1}
   \|D\mathcal{G}(\widetilde{s}_{0},\widetilde{\phi}_{0}')(\overline{s},\overline{\phi}')
   -D\mathcal{G}(s,0)(\overline{s},\overline{\phi}')\|
\end{equation*}
\begin{equation*}
\label{abc_appendix_e_A2_17_TEXT_mu}
   =\|D\mathcal{G}(\widetilde{s}_{0},\widetilde{\phi}_{0}')
   -D\mathcal{G}(s,0)\| \ .
\end{equation*}

\begin{equation*}
\label{abc_e_A2_17_TEXT_mu}
   \frac{1}{2}L_{S}(\varepsilon)
   \leq \sup_{(s,\phi') \in \mathbb{B}_{\varepsilon}(\widetilde{s}_{0},\widetilde{\phi}_{0}'), \, \phi' = 0}
   \|
   D\mathcal{G}(\widetilde{s}_{0},\widetilde{\phi}_{0}') - D\mathcal{G}(s,0)
   \|_{L(\Gamma \times \Gamma,\Sigma)}
\end{equation*}
\begin{equation*}
\label{abc_e_A2_17_TEXT_mu}
   \leq \sup_{(s,\phi') \in \mathbb{B}_{\varepsilon}(\widetilde{s}_{0},\widetilde{\phi}_{0}')}
   \|
   D\mathcal{G}(\widetilde{s}_{0},\widetilde{\phi}_{0}') - D\mathcal{G}(s,\phi')
   \|_{L(\Gamma \times \Gamma,\Sigma)}
   = \mu = L(\varepsilon) \ .
\end{equation*}

\qquad
\end{proof}

\subsection{The theorem on the existence of a bifurcation problem}
\label{sectiunea_01_O_formulare_pe_spatii_infinit_dimensionale_3}

\begin{thm}
\label{teorema_principala_spatii_infinit_dimensionale_widetilde_s_3_0_exact_inf}

Let $W$ and $Z$ be real Banach spaces. They are both
infinite-dimensional spaces or they are both finite-dimensional
spaces with $dim \ W$ $=$ $dim \ Z$. Let $m \geq 1$, $p \geq 2$.
Let $F:\mathbb{R}^{m} \times W \rightarrow Z$ be a nonlinear
function of class $C^{p}$. Let $q \geq 1$, $n \geq 1$,
$\bar{a}_{1},\ldots,\bar{a}_{q}$, $\bar{b}_{1},\ldots,\bar{b}_{n}$
$\in$ $Z \backslash \{ 0 \}$. Let us consider $G$, $H$ defined in
(\ref{e5_2}), (\ref{e5_2_H}) and $B$ $\in$
$L(X,\mathbb{R}^{q+m})$, $\bar{\mathcal{B}}$ $\in$
$L(\Delta,\mathbb{R}^{n})$. Let us take the points
$\widetilde{s}_{0}$ and $\widetilde{\phi}_{0}'$ introduced above.
Let $S$ and $\Phi$ be the functions defined in (\ref{e5_57}),
where $\theta_{0}$ is replaced by $\widetilde{\theta}_{0}$ $=$
$B(\widetilde{x}_{0})$, and (\ref{e5_57_Phi_3}), respectively. Let
us consider the above definitions
(\ref{e5_57_forma2_sistem_REG_2_GGGvarianta_infinit_dimensionale_DEM})
and
(\ref{e5_57_forma2_sistem_REG_2_GGGmic_infinit_dimensionale_DEM})
of $\mathcal{G}$ and $\mathcal{Q}$ and the related entities.

Assume that for some $\varepsilon$ and $\alpha$, $\varepsilon
> 0$ and $0 < \alpha < 1$, $\alpha \neq
\frac{1}{2}$, $\alpha$ arbitrarily small, we have
\begin{equation}
\label{dem_e_A2_37_TEXT_G_prod_alphaastastast_var_exact}
   2 \kappa L(\varepsilon) + 2 \kappa \alpha \widehat{a} < 1 \, ,
\end{equation}
where $\kappa \geq reg \, \mathcal{A}$ and $\widehat{a}$ $=$ $\max
\{$ $\|\bar{a}_{1}\|,\ldots,\|\bar{a}_{q}\|$,
$\|\bar{b}_{1}\|,\ldots,\|\bar{b}_{n}\|$ $\}$.

Let $c = \kappa^{-1} - L(\varepsilon)$ and $M$ $>$ $\|
D\mathcal{G}(\widetilde{s}_{0},\widetilde{\phi}_{0}') \|_{L(\Gamma
\times \Gamma,\Sigma)}$.

Then, there exist positive constants $a^{\ast}$ and $b^{\ast}$ $=$
$c a^{\ast}$ such that
\begin{equation}
\label{e5_57_consecinta_th_Graves_dem}
   d((s,\phi'),\mathcal{G}^{-1}(\zeta)) \leq \frac{\kappa}{1-\kappa \mu} \|\zeta-\mathcal{G}(s,\phi')\|
   \, ,
\end{equation}
for $((s,\phi'),\zeta)$ $\in$
$\mathbb{B}_{a^{\ast}}(\widetilde{s}_{0},\widetilde{\phi}_{0}')$
$\times$
$\mathbb{B}_{b^{\ast}}(\mathcal{G}(\widetilde{s}_{0},\widetilde{\phi}_{0}'))$.

Let $\delta$ $=$ $\|
\mathcal{G}(\widetilde{s}_{0},\widetilde{\phi}_{0}') \|_{\Sigma}$
$=$ $\| S(\widetilde{s}_{0}) \|_{\Sigma}$. Assume
\begin{equation}
\label{dem_e_A2_38_TEXT_G_estimare_M_a_b_h_conditie_inf_th_exact}
   \delta <
   \frac{1}{2} c a^{\ast} \, .
\end{equation}

Assume that $D\Psi(\widetilde{x}_{0})$ is an isomorphism of $X$
onto $Y$ and $DS(\widetilde{s}_{0})$ is an isomorphism of $\Gamma$
onto $\Sigma$.

Then, there exists a solution $(\hat{s}_{0},\hat{\phi}_{0}')$
$\in$
$\mathbb{B}_{a^{\ast}}(\widetilde{s}_{0},\widetilde{\phi}_{0}')$
of the equation
\begin{equation}
\label{e5_57_forma2_sistem_REG_2_GGGvarianta_infinit_dimensionale_DEM_pc_fix_cont_th_princ}
   S(s)
   - \Phi(x,\phi')
   \ni 0 \, ,
\end{equation}
where $\hat{s}_{0}$ $=$
$(\hat{x}_{0},\hat{y}_{1,0},\ldots,\hat{y}_{q+m,0},\hat{\mathrm{z}}_{1,0},\ldots,\hat{\mathrm{z}}_{n,0})$,
\\
$\hat{x}_{0}$ $=$ $(\hat{f}_{0},\hat{\lambda}_{0},\hat{u}_{0})$,
$\hat{y}_{i,0}$ $=$
$(\hat{g}_{i,0},\hat{\mu}_{i,0},\hat{w}_{i,0})$,
$\hat{\mathrm{z}}_{k,0}$ $=$ $(\hat{e}_{k,0},\hat{v}_{k,0})$,
\\
$\hat{f}_{0}=0$, $\hat{g}_{i,0}=0$, $\hat{e}_{k,0}=0$,
$i=1,\ldots,q+m$, $k=1,\ldots,n$. \\
$\hat{\phi}_{0}'$ $=$
$(\hat{y}_{0}',\hat{y}_{1,0}',\ldots,\hat{y}_{q+m,0}',\hat{\mathrm{z}}_{1,0}',\ldots,\hat{\mathrm{z}}_{n,0}')$,
\\
$\hat{y}_{0}'$ $=$ $(\hat{g}_{0}',\hat{\mu}_{0}',\hat{w}_{0}')$,
$\hat{y}_{i,0}'$ $=$
$(\hat{g}_{i,0}',\hat{\mu}_{i,0}',\hat{w}_{i,0}')$,
$\hat{\mathrm{z}}_{k,0}'$ $=$ $(\hat{e}_{k,0}',\hat{v}_{k,0}')$
\\
and $\hat{g}_{0}'=0$, $\hat{g}_{i,0}'$ $=$ $0$, $\hat{e}_{k,0}'$
$=$ $0$, $i=1,\ldots,q+m$, $k=1,\ldots,n$.

Let $s_{0}$ $=$ $(x_{0}, y_{1,0}, \ldots, y_{q+m,0},
\mathrm{z}_{1,0}, \ldots, \mathrm{z}_{n,0})$ with
$(f_{0},\lambda_{0},u_{0})$ $=$ $x_{0}$ $=$ $\hat{x}_{0}$,
$y_{i,0}$ $=$ $\hat{y}_{i,0}-\hat{y}_{i,0}'$, $\mathrm{z}_{k,0}$
$=$ $\hat{\mathrm{z}}_{k,0}-\hat{\mathrm{z}}_{k,0}'$ and
$i=1,\ldots,q+m$, $k=1,\ldots,n$. $s_{0}$ $\in$
$\mathbb{B}_{a^{\ast}}(\widetilde{s}_{0})$.

Let us fix $\hat{x}_{0}$ and $\hat{y}_{0}'$ (whose existence is
demonstrated) in $\Phi_{G}(x,y')$ from the first line of
$\Phi(x,\phi')$ in
(\ref{e5_57_forma2_sistem_REG_2_GGGvarianta_infinit_dimensionale_DEM_pc_fix_cont_th_princ}).
Let us take $\theta_{0}$ $=$
$\widetilde{\theta}_{0}+B(\hat{y}_{0}')$ and $\varrho$ $=$
$DF(\hat{\lambda}_{0},\hat{u}_{0})(\hat{\mu}_{0}',\hat{w}_{0}')$
and consider them as constants. Let us denote by $S_{0}$ the
function $S$ from (\ref{e5_57}) formulated for this $\theta_{0}$
and for $F(\lambda,u)-\varrho$ instead of $F(\lambda,u)$.

Then, $s_{0}$ is the solution of the equation
\begin{equation}
\label{e5_56_b}
   S_{0}(s)= 0 \ .
\end{equation}
Equation (\ref{e5_56_b}) is of the form of equation (\ref{e5_56}).

Then, the component $(\lambda_{0},u_{0})$ of $s_{0}$ is a solution
of the equation
\begin{equation}
\label{e5_1_sol_widetilde_x_0h_Inv_Fc_Th_ec_DATA_introd_exact}
      F(\lambda,u) - \varrho = 0 \, ,
\end{equation}
$(\lambda_{0},u_{0})$ $\in$
$\mathbb{B}_{a^{\ast}}(\widetilde{\lambda}_{0},\widetilde{u}_{0})$.

Let us replace $\kappa$ by $\gamma$ in
(\ref{dem_e_A2_37_TEXT_G_prod_alphaastastast_var_exact}) and,
instead of
(\ref{dem_e_A2_37_TEXT_G_prod_alphaastastast_var_exact}), consider
\begin{equation}
\label{dem_e_A2_37_TEXT_G_prod_alphaastastast_var_exact_gamma}
   2 \gamma L(\varepsilon) + 2 \kappa \alpha \widehat{a} < 1 \, .
\end{equation}

Then, $s_{0}$ is the unique solution of the equation
(\ref{e5_56_b}) in $\mathbb{B}_{a}(\widetilde{s}_{0})$ for every
$a$ $\geq$ $a^{\ast}$ that satisfies $\gamma L_{S}(a) < 1$. The
system (\ref{e5_56_b}) and its solution $s_{0}$ verify the
assertions (a) and (b) of the statement (ii) of Theorem
\ref{teorema_principala_parteaMAIN}.

Then, the component $(\lambda_{0},u_{0})$ of $s_{0}$ is the unique
solution of the equation
(\ref{e5_1_sol_widetilde_x_0h_Inv_Fc_Th_ec_DATA_introd_exact})
that satisfies hypothesis (\ref{ipotezaHypF}) and the rest of the
hypotheses of the statement (i) of Theorem
\ref{teorema_principala_parteaMAIN} in
$\mathbb{B}_{a}(\widetilde{\lambda}_{0},\widetilde{u}_{0})$ for
every $a$ $\geq$ $a^{\ast}$ that satisfies $\gamma L_{S}(a) < 1$.
The solution $(\lambda_{0},u_{0})$ is a bifurcation point of
problem
(\ref{e5_1_sol_widetilde_x_0h_Inv_Fc_Th_ec_DATA_introd_exact}).

We have
\begin{equation}
\label{e5_1_sol_widetilde_x_0h_Inv_Fc_Th_ec_DATA_introd_exact_varrho}
      \| \varrho \|
         \leq \frac{a^{\ast}}{2 \gamma}
         + \| S(\widetilde{s}_{0}) \|
         < (\frac{1}{\gamma} - \frac{1}{2}L(\varepsilon))a^{\ast} \, ,
\end{equation}
\begin{equation}
\label{e5_1_sol_widetilde_x_0h_Inv_Fc_Th_ec_DATA_introd_exact_varrho_var}
      \| \varrho \|
         \leq
         (\| DF(\widetilde{\lambda}_{0},\widetilde{u}_{0}) \|
         + L_{F}(a^{\ast})) a^{\ast} \, ,
\end{equation}
\begin{equation}
\label{e5_49_th}
   \| s_{0} - r \|_{\Gamma} \leq
      [\gamma /(1-\gamma L_{S}(a)] \cdot
      \| S_{0}(r) \|_{\Sigma} \, ,
   \forall r \in
      \mathbb{B}_{a}(\widetilde{s}_{0}) \, .
\end{equation}
\end{thm}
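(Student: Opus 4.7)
\emph{Proof plan.} The strategy is to apply the contraction mapping principle for set-valued mappings (Theorem \ref{teorema_contraction_mapping_principle_Dontchev_Rockafellar2009}) to $T(s,\phi') := \mathcal{G}^{-1}(\mathcal{Q}(s,\phi'))$: a fixed point $(\hat{s}_{0},\hat{\phi}_{0}') \in T(\hat{s}_{0},\hat{\phi}_{0}')$ is equivalent to $\mathcal{G}(\hat{s}_{0},\hat{\phi}_{0}') = \mathcal{Q}(\hat{s}_{0},\hat{\phi}_{0}')$, and the careful design of $\mathcal{G}$ and $\mathcal{Q}$ will force this identity to collapse to (\ref{e5_57_forma2_sistem_REG_2_GGGvarianta_infinit_dimensionale_DEM_pc_fix_cont_th_princ}). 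The pivotal ingredient is the metric regularity of $\mathcal{G}$ at $(\widetilde{s}_{0},\widetilde{\phi}_{0}')$, which I would obtain from Corollary \ref{teorema_principala_spatii_infinit_dimensionale_cor_th_Graves} with $A = \mathcal{A}$ (surjective, as observed after its definition), $\kappa \geq \mathrm{reg}\,\mathcal{A}$ and $\mu = L(\varepsilon)$. The hypothesis (\ref{dem_e_A2_37_TEXT_G_prod_alphaastastast_var_exact}) forces in particular $\kappa L(\varepsilon) < 1/2$, so the corollary applies and yields (\ref{e5_57_consecinta_th_Graves_dem}) on a ball of radius $a^{\ast}$ with $b^{\ast} = c a^{\ast}$.

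Next I would verify the two hypotheses of Theorem \ref{teorema_contraction_mapping_principle_Dontchev_Rockafellar2009} at $\bar{x} = (\widetilde{s}_{0},\widetilde{\phi}_{0}')$ on $\mathbb{B}_{a^{\ast}}(\bar{x})$. Condition I, $d(\bar{x},T(\bar{x})) < a^{\ast}(1-\lambda)$, follows from metric regularity together with (\ref{dem_e5_57_forma2_sistem_REG_2_GGGvarianta_infinit_dimensionale_DEM_val}): $\mathcal{G}(\bar{x})-\mathcal{Q}(\bar{x}) = S(\widetilde{s}_{0})$, so $d(\bar{x},T(\bar{x})) \leq (\kappa/(1-\kappa L(\varepsilon)))\delta$, and the smallness (\ref{dem_e_A2_38_TEXT_G_estimare_M_a_b_h_conditie_inf_th_exact}) makes this strictly less than $a^{\ast}(1-\lambda)$. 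Condition II is a Lipschitz-contraction bound for $T$; splitting $\mathcal{Q}$ into its strict-first-order-remainder part $-\tfrac{1}{2}(S-\mathcal{H}_{S}) + \tfrac{1}{2}(\Phi-\mathcal{H}_{\Phi})$ (Lipschitz modulus at most $L(\varepsilon)$, cf.\ Lemma \ref{Lema_spatii_infinit_dimensionale_Lh}) and its two explicit linear $\alpha$-terms (Lipschitz constant at most $2\alpha\widehat{a}$, via (\ref{e5_57_Phi_3_dif_0_tilde_0_Phi_prim})), and combining with metric regularity of $\mathcal{G}$, produces the contraction modulus $\lambda = \kappa(L(\varepsilon) + 2\alpha\widehat{a})/(1-\kappa L(\varepsilon))$, which lies in $(0,1)$ precisely by (\ref{dem_e_A2_37_TEXT_G_prod_alphaastastast_var_exact}).

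The resulting fixed point $(\hat{s}_{0},\hat{\phi}_{0}') \in \mathbb{B}_{a^{\ast}}(\bar{x})$ satisfies $\mathcal{G}(\hat{s}_{0},\hat{\phi}_{0}') = \mathcal{Q}(\hat{s}_{0},\hat{\phi}_{0}')$; subtracting and using the definitions, this reads
\[
S(\hat{s}_{0}) - \Phi(\hat{x}_{0},\hat{\phi}_{0}') = \Phi(\widetilde{x}_{0},\xi(\hat{f}_{0},\hat{g}_{i,0},\hat{e}_{k,0})) - \Phi(\widetilde{x}_{0},\xi(\hat{g}_{0}',\hat{g}_{i,0}',\hat{e}_{k,0}')).
\]
Reading the $B$- and $\bar{\mathcal{B}}$-rows of (\ref{e5_57_Phi_3_dif_0_tilde_0_Phi_prim}) against the Kronecker entries of $S$ in (\ref{e5_57}) forces $\hat{f}_{0} = \hat{g}_{i,0} = \hat{e}_{k,0} = 0$ and $\hat{g}_{0}' = \hat{g}_{i,0}' = \hat{e}_{k,0}' = 0$, collapsing the identity to (\ref{e5_57_forma2_sistem_REG_2_GGGvarianta_infinit_dimensionale_DEM_pc_fix_cont_th_princ}). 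Setting $\theta_{0} := \widetilde{\theta}_{0} + B(\hat{y}_{0}')$ and $\varrho := DF(\hat{\lambda}_{0},\hat{u}_{0})(\hat{\mu}_{0}',\hat{w}_{0}')$, the translated $s_{0}$ then satisfies $S_{0}(s_{0}) = 0$ by direct substitution in the definition of $S$.

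For the bifurcation conclusion I would invoke the converse implication Theorem \ref{teorema_principala_parteaMAIN}(ii)$\Rightarrow$(i): the isomorphism hypothesis on $DS(\widetilde{s}_{0})$ is transferred to $DS_{0}(s_{0})$ by a Neumann perturbation whose size is controlled by $L_{S}(a^{\ast})$; this is made effective precisely by replacing $\kappa$ by $\gamma$ in (\ref{dem_e_A2_37_TEXT_G_prod_alphaastastast_var_exact_gamma}) and imposing $\gamma L_{S}(a) < 1$. Uniqueness in $\mathbb{B}_{a}(\widetilde{s}_{0})$ and the residual estimate (\ref{e5_49_th}) then follow from Lemma \ref{lema_A2_7_th_fc_impl_th_apl_inv} applied to $S_{0}$, while (\ref{e5_1_sol_widetilde_x_0h_Inv_Fc_Th_ec_DATA_introd_exact_varrho})--(\ref{e5_1_sol_widetilde_x_0h_Inv_Fc_Th_ec_DATA_introd_exact_varrho_var}) follow by reading $\varrho$ out of the $\mathcal{G} = \mathcal{Q}$ identity combined with $\gamma = \|D\Psi(\widetilde{x}_{0})^{-1}\|$, and from a Taylor expansion of $F$ between $(\widetilde{\lambda}_{0},\widetilde{u}_{0})$ and $(\hat{\lambda}_{0},\hat{u}_{0})$. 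The most delicate step I anticipate is the contraction bookkeeping in the second paragraph: the proof must exploit the freedom to choose $\alpha$ arbitrarily small in order to pull $\lambda$ below $1$ in the regime where $\kappa L(\varepsilon)$ is close to $1/2$, and the clean split of $\mathcal{Q}$ into a strict-first-order remainder plus two explicit linear $\alpha$-terms (the reason for the awkward-looking definitions (\ref{e5_57_forma2_sistem_REG_2_GGGvarianta_infinit_dimensionale_DEM})--(\ref{e5_57_forma2_sistem_REG_2_GGGmic_infinit_dimensionale_DEM})) is precisely what keeps this trade-off tractable.
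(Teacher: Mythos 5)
Your proposal follows the paper's proof closely: same metric-regularity step via Graves' theorem and its consequence, same fixed-point formulation for the set-valued map $T_{0}(s,\phi') = \mathcal{G}^{-1}(\mathcal{Q}(s,\phi'))$, same verification of conditions I and II of the contraction mapping principle, same collapse of the fixed-point identity $\mathcal{G} = \mathcal{Q}$ to $S(\hat{s}_{0}) - \Phi(\hat{x}_{0},\hat{\phi}_{0}') \ni 0$, same appeal to Theorem~\ref{teorema_principala_parteaMAIN}~(ii)$\Rightarrow$(i) for the bifurcation conclusion, and same use of Lemma~\ref{lema_A2_7_th_fc_impl_th_apl_inv} for the estimates. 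One caveat worth flagging: the identity $\mathcal{G}=\mathcal{Q}$ does not in itself ``force'' the Kronecker-type components of the fixed point to vanish; rather, as in the paper, the hat-variables are \emph{defined} from the fixed point $(\bar{s},\bar{\phi}')$ by zeroing those components, and the two $\Phi(\widetilde{x}_{0},\xi(\cdot))$ terms absorb exactly the resulting discrepancy — and your Lipschitz modulus should carry the paper's coefficient $\tfrac{1}{2}L(\varepsilon)+\alpha\widehat{a}$ rather than $L(\varepsilon)+2\alpha\widehat{a}$, since the paper's condition~I argument needs $\lambda<\tfrac{1}{2}$, not merely $\lambda<1$.
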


The proof of Theorem
\ref{teorema_principala_spatii_infinit_dimensionale_widetilde_s_3_0_exact_inf}
is given after some remarks.

The following conditions
(\ref{dem_e_A2_37_TEXT_G_prod_alphaastastast_var_exact_fara_alfa2})
and
(\ref{dem_e_A2_37_TEXT_G_prod_alphaastastast_var_exact_fara_alfa2_1})
do not depend on $\alpha$.
\begin{cor}
\label{teorema_principala_spatii_infinit_dimensionale_widetilde_s_3_0_exact_cor1}
(i) Let $\alpha_{0}$ be an arbitrarily fixed positive number, $0 <
\alpha_{0} < 0.1$. Let us take
\begin{equation*}
\label{e_A2_17_TEXT_reg_alfa_kapa_exact_alpha_0}
   \kappa = \sup_{0 < \alpha \leq \alpha_{0}} reg \, \mathcal{A}(\alpha)
   \, , \
   M > \sup_{0 < \alpha \leq \alpha_{0}} \|
      D\mathcal{G}(\widetilde{s}_{0},\widetilde{y}_{0}') \|
      = \sup_{0 < \alpha \leq \alpha_{0}} \| \mathcal{A}(\alpha) \| \, .
\end{equation*}
Instead of
(\ref{dem_e_A2_37_TEXT_G_prod_alphaastastast_var_exact}), we can
assume that for some $\varepsilon$, $\varepsilon > 0$, we have
\begin{equation}
\label{dem_e_A2_37_TEXT_G_prod_alphaastastast_var_exact_fara_alfa2}
   2 \kappa L(\varepsilon) < 1 \, .
\end{equation}

(ii) Instead of
(\ref{dem_e_A2_37_TEXT_G_prod_alphaastastast_var_exact}), we can
assume that for some $\varepsilon$, $\varepsilon > 0$, we have
\begin{equation}
\label{dem_e_A2_37_TEXT_G_prod_alphaastastast_var_exact_fara_alfa2_1}
   2 \gamma L(\varepsilon) < 1 \, .
\end{equation}
\end{cor}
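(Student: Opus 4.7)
The plan is to reduce both parts of the corollary to Theorem \ref{teorema_principala_spatii_infinit_dimensionale_widetilde_s_3_0_exact_inf} by exploiting the clause in that theorem which permits $\alpha$ to be chosen \emph{arbitrarily small}. The key observation is that the offending term in (\ref{dem_e_A2_37_TEXT_G_prod_alphaastastast_var_exact}) is $2\kappa\alpha\widehat{a}$, which is linear in $\alpha$ with coefficient independent of $\alpha$ (provided $\kappa$ and $\widehat{a}$ are treated as fixed, uniform quantities). So once a strict inequality $2\kappa L(\varepsilon) < 1$ is available, the residual budget $1 - 2\kappa L(\varepsilon) > 0$ can be absorbed by any sufficiently small $\alpha$.

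For part (i), I would proceed as follows. The sup-definitions
\[
\kappa = \sup_{0 < \alpha \leq \alpha_{0}} reg \, \mathcal{A}(\alpha) , \qquad M > \sup_{0 < \alpha \leq \alpha_{0}} \| \mathcal{A}(\alpha) \|
\]
produce \emph{uniform} upper bounds that dominate the corresponding quantities in the theorem for every admissible $\alpha$. Set $\eta := 1 - 2\kappa L(\varepsilon) > 0$ by hypothesis (\ref{dem_e_A2_37_TEXT_G_prod_alphaastastast_var_exact_fara_alfa2}), and pick
\[
\alpha_{\star} \in \Bigl(0, \, \min\bigl\{ \alpha_{0}, \tfrac{1}{2}, \tfrac{\eta}{4 \kappa \widehat{a} + 1} \bigr\}\Bigr) .
\]
With this choice $2 \kappa L(\varepsilon) + 2 \kappa \alpha_{\star} \widehat{a} < 1$, which is exactly (\ref{dem_e_A2_37_TEXT_G_prod_alphaastastast_var_exact}) for the parameter $\alpha_{\star}$. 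Because $\kappa$ and $M$ are the uniform suprema, they still satisfy the hypotheses $\kappa \geq reg\,\mathcal{A}(\alpha_{\star})$ and $M > \| D\mathcal{G}(\widetilde{s}_{0},\widetilde{\phi}_{0}')\|$ required by the theorem for this specific $\alpha_{\star}$. All other hypotheses of Theorem \ref{teorema_principala_spatii_infinit_dimensionale_widetilde_s_3_0_exact_inf} are inherited verbatim (the points $\widetilde{s}_{0}$, $\widetilde{\phi}_{0}'$, the isomorphism assumptions on $D\Psi(\widetilde{x}_{0})$ and $DS(\widetilde{s}_{0})$, and the size condition (\ref{dem_e_A2_38_TEXT_G_estimare_M_a_b_h_conditie_inf_th_exact}) on $\delta$ do not involve $\alpha$). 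Applying the theorem with $\alpha = \alpha_{\star}$ yields the full set of conclusions.

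For part (ii), I would combine (i) with Lemma \ref{Lema_spatii_infinit_dimensionale_reg}. Inspecting the proof of that lemma, the inequality $reg\,\mathcal{A}(\alpha) \leq \| DS(\widetilde{s}_{0})^{-1} \|_{L(\Sigma,\Gamma)} = \gamma$ is obtained by the inclusion $\mathcal{E}_{y} \supseteq \mathcal{E}_{y}^{0}$, and the final bound is $\|DS(\widetilde{s}_{0})^{-1}\|$, which is independent of $\alpha$. Hence the supremum over $\alpha \in (0,\alpha_{0}]$ likewise satisfies $\kappa \leq \gamma$. The assumption (\ref{dem_e_A2_37_TEXT_G_prod_alphaastastast_var_exact_fara_alfa2_1}) then gives $2\kappa L(\varepsilon) \leq 2 \gamma L(\varepsilon) < 1$, and we are back in the situation of part (i); pick the same $\alpha_{\star}$ (now with $\eta := 1 - 2\gamma L(\varepsilon)$ replaced into the bound, which is only sharper) and conclude.

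The only substantive point to verify is the uniformity of the bound on $reg\,\mathcal{A}(\alpha)$, which, as noted, is already embedded in Lemma \ref{Lema_spatii_infinit_dimensionale_reg}; no new estimate is needed. The rest is the trivial absorption argument in $\alpha$. I do not expect a genuine obstacle: the corollary is essentially a convenient rephrasing that separates the $\alpha$-dependent nuisance term from the structural smallness condition on $L(\varepsilon)$, and the proof amounts to observing that one can always retreat to a smaller $\alpha$ inside $(0,\alpha_{0}]$ without disturbing the uniform constants $\kappa$ and $M$.
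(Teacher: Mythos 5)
Your proposal is correct and follows essentially the same approach as the paper: exploit that $L(\varepsilon)$ is $\alpha$-independent so the residual $1 - 2\kappa L(\varepsilon) > 0$ can absorb the nuisance term $2\kappa\alpha\widehat{a}$ by shrinking $\alpha$ within $(0,\alpha_0]$, then invoke Theorem \ref{teorema_principala_spatii_infinit_dimensionale_widetilde_s_3_0_exact_inf} at that smaller $\alpha$. You merely spell out the explicit choice of $\alpha_{\star}$ and the reduction of (ii) to (i) via Lemma \ref{Lema_spatii_infinit_dimensionale_reg}, both of which the paper leaves implicit in its terse one-line argument.
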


\begin{proof} Let us observe that $L(\varepsilon)$ do not depend on $\alpha$. We have
(\ref{dem_e_A2_37_TEXT_G_prod_alphaastastast_var_exact_fara_alfa2}).
There exists  $0 < \alpha < \alpha_{0}$ ($0 < \alpha < 1$, $\alpha
\neq \frac{1}{2}$), such that
(\ref{dem_e_A2_37_TEXT_G_prod_alphaastastast_var_exact}) holds.

\qquad
\end{proof}

\subsection{Preliminaries related to the metric regularity property}
\label{sectiunea_01_O_formulare_pe_spatii_infinit_dimensionale_4}

We now formulate some estimates of the radii of some balls
included in the neighborhoods $U$ and $V$ from Corollary
\ref{teorema_principala_spatii_infinit_dimensionale_cor_th_Graves}
related to the metric regularity property of the function $f$ at
$\bar{x}$ for $\bar{y}$.

\begin{lem}
\label{Lema_spatii_infinit_dimensionale_th_Graves_estimare_raze_vecinatati}
Let us retain the hypotheses of Graves' theorem
\ref{teorema_principala_spatii_infinit_dimensionale_th_Graves}.

Let $\widetilde{a}$ be such that $0<\widetilde{a}<\varepsilon$ and
$int \mathbb{B}_{\widetilde{a}}(\bar{x})$ is the maximal (for the
inclusion) open ball, with center $\bar{x}$ and radius
$\widetilde{a}$, contained in $U$.

Let $a$ be fixed close to $\widetilde{a}$, $0<a<\widetilde{a}$.

Let us fix $b$, $0 < b \leq c \tau$.

Let us take $a^{\ast}$ $=$ $\min \{ a, \frac{b}{c} \}$ and
$b^{\ast}$ $=$ $c a^{\ast}$. We denote $U^{\ast}$ $=$
$\mathbb{B}_{a^{\ast}}(\bar{x})$, $V^{\ast}$ $=$
$\mathbb{B}_{b^{\ast}}(f(\bar{x}))$.

The relation (\ref{e5_57_consecinta_th_Graves_dem_carte}) holds
for all $(x,y)$ $\in$ $U^{\ast} \times V^{\ast}$.
\end{lem}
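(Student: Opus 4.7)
The plan is to apply Corollary \ref{teorema_principala_spatii_infinit_dimensionale_cor_th_Graves} uniformly on the product $U^{\ast} \times V^{\ast}$. The corollary, as stated, gives the inequality (\ref{e5_57_consecinta_th_Graves_dem_carte}) for a single $x \in U$ together with a neighborhood $V$ of $\bar{y}$ depending on that $x$ (through the condition $\|y - f(x)\| \leq c\tau$ for $y \in V$). My goal is therefore to verify two things: first, that $U^{\ast} \subseteq U$, so that one may indeed pick any $x \in U^{\ast}$; second, that the fixed ball $V^{\ast}$ serves as an admissible $V$ simultaneously for every $x \in U^{\ast}$.

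First I would address the inclusion $U^{\ast} \subseteq U$. By assumption, $\mathrm{int}\,\mathbb{B}_{\widetilde{a}}(\bar{x})$ is the maximal open ball centered at $\bar{x}$ contained in $U$; since $0 < a < \widetilde{a}$, the closed ball $\mathbb{B}_{a}(\bar{x})$ is contained in $\mathrm{int}\,\mathbb{B}_{\widetilde{a}}(\bar{x})$ and hence in $U$. Since $a^{\ast} = \min\{a, b/c\} \leq a$, it follows that $U^{\ast} = \mathbb{B}_{a^{\ast}}(\bar{x}) \subseteq \mathbb{B}_{a}(\bar{x}) \subseteq U$. Note also that $b^{\ast} = c a^{\ast} \leq c \cdot (b/c) = b \leq c\tau$.

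Next I would invoke the freedom to shrink $U$ further. Since $f$ is continuous at $\bar{x}$ and $b \leq c\tau$, we may (after possibly replacing $U$ by a smaller neighborhood of $\bar{x}$, still containing $\mathbb{B}_{a}(\bar{x})$ provided $a$ is chosen close enough to $\widetilde{a}$) assume
\begin{equation*}
   \| f(x) - f(\bar{x}) \| \leq c\tau - b \qquad \text{for all } x \in U .
\end{equation*}
Then, for every $(x,y) \in U^{\ast} \times V^{\ast}$, the triangle inequality gives
\begin{equation*}
   \| y - f(x) \| \leq \| y - f(\bar{x}) \| + \| f(\bar{x}) - f(x) \| \leq b^{\ast} + (c\tau - b) \leq b + (c\tau - b) = c\tau ,
\end{equation*}
where the last inequality uses $b^{\ast} \leq b$. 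Hence, for each fixed $x \in U^{\ast} \subseteq U$, the ball $V^{\ast}$ is a neighborhood of $\bar{y} = f(\bar{x})$ satisfying the condition $\|y - f(x)\| \leq c\tau$ required by Corollary \ref{teorema_principala_spatii_infinit_dimensionale_cor_th_Graves}. That corollary then yields (\ref{e5_57_consecinta_th_Graves_dem_carte}) for this $x$ and every $y \in V^{\ast}$; since $x \in U^{\ast}$ was arbitrary, the inequality holds on the whole of $U^{\ast} \times V^{\ast}$.

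The main obstacle is precisely the $x$-dependence of the neighborhood $V$ in the original corollary: its size is controlled by the residual $c\tau - \|f(x) - f(\bar{x})\|$, and one must produce a single $V^{\ast}$ fitting inside every such $V_{x}$ as $x$ ranges over $U^{\ast}$. The device of prescribing $b \leq c\tau$ and then shrinking $U$ (using continuity of $f$) to make $\|f(x) - f(\bar{x})\| \leq c\tau - b$ provides the required uniform gap; the choice $b^{\ast} = c a^{\ast}$ with $a^{\ast} \leq b/c$ then exactly fills this gap without exceeding $b$. The rest is a direct appeal to the already-established corollary.
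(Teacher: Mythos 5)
Your overall strategy differs from the paper's. The paper's proof treats Corollary \ref{teorema_principala_spatii_infinit_dimensionale_cor_th_Graves} as already furnishing a single pair $(U,V)$ on whose product (\ref{e5_57_consecinta_th_Graves_dem_carte}) holds, then reads the Lemma as a pure monotonicity statement: $U^{\ast}\subseteq U$ (because $a^{\ast}\leq a<\widetilde{a}$) and $V^{\ast}\subseteq V$ (because $b^{\ast}\leq b\leq c\tau$, the paper asserting that $V$ may be taken as a ball of radius up to $c\tau$ about $\bar{y}$), and the motivation for $b/c$ is linear openness. You instead read the corollary as producing, for each $x\in U$, an $x$-dependent admissible set $\{y:\|y-f(x)\|\leq c\tau\}$ and try to show $V^{\ast}$ sits inside all of them. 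That reading is a legitimate, arguably more careful, way to unpack the corollary, and the triangle-inequality calculation is the right idea.

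The gap is in the device you use to make the inclusion uniform. You shrink $U$ so that $\|f(x)-f(\bar{x})\|\leq c\tau-b$ on the new $U$. Two problems. First, the Lemma permits $b=c\tau$, in which case the required bound is $\|f(x)-f(\bar{x})\|\leq 0$, which forces $f$ to be locally constant; your argument simply cannot cover this allowed endpoint. Second, and more structurally, $\widetilde{a}$, $a$, and hence $a^{\ast}$ are defined \emph{from} the given $U$ (via the maximal open ball contained in it), so you are not free to replace $U$ afterwards without also changing $\widetilde{a}$ and invalidating the choice $a<\widetilde{a}$. Your parenthetical hedge is inverted: choosing $a$ ``close enough to $\widetilde{a}$'' enlarges $\mathbb{B}_{a}(\bar{x})$ and makes it harder, not easier, for a shrunken $U$ to contain it; you would need $a$ far from $\widetilde{a}$, which the Lemma does not impose. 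The way to get a uniform bound on $\|f(x)-f(\bar{x})\|$ over $U^{\ast}$ without touching $U$ is the one the paper uses in Lemma \ref{Lema_spatii_infinit_dimensionale_th_Graves_estimare_raze_vecinatati_ast}: estimate $\|f(x)-f(\bar{x})\|\leq\|f(x)-f(\bar{x})-Df(\bar{x})(x-\bar{x})\|+\|Df(\bar{x})(x-\bar{x})\|\leq\bigl(L(\varepsilon)+M\bigr)a^{\ast}$ directly from (\ref{e5_57_conditie_th_Graves_dem}), and then constrain $a^{\ast}$ so this plus $b^{\ast}$ stays $\leq c\tau$. That replaces your continuity-and-shrinking step by an explicit Lipschitz-type bound compatible with the fixed $U$.
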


\begin{proof}

From the definition of $U$, we deduce that $U$ $\subset$
$\mathbb{B}_{\varepsilon}(\bar{x})$, so
$0<\widetilde{a}<\varepsilon$.

From the definition of $V$, there results that we can choose $V$
of the form $\mathbb{B}_{c \tau}(f(\bar{x}))$, $int \mathbb{B}_{c
\tau}(f(\overline{x}))$ or $\mathbb{B}_{b}(f(\overline{x}))$, with
$0 < b < c \tau$.

The value $\frac{b}{c}$ is taken related to the conclusion of
Graves' theorem (Theorem
\ref{teorema_principala_spatii_infinit_dimensionale_th_Graves}),
to the relation (\ref{e5_57_consecinta_th_Graves_revisited}) in
Theorem
\ref{teorema_principala_spatii_infinit_dimensionale_th_Graves_revisited}
and  to linear openess
(\cite{CLBichir_bib_Dontchev_Rockafellar2009}).

For any $U^{\ast}$ and $V^{\ast}$ such that $U^{\ast} \subseteq
U$, $V^{\ast} \subseteq V$, we have
(\ref{e5_57_consecinta_th_Graves_dem_carte}) for $(x,y)$ $\in$
$U^{\ast} \times V^{\ast}$.

\qquad
\end{proof}

Consider $L(\varepsilon)$ as in
\cite{CLBichir_bib_CalozRappaz1997, CLBichir_bib_Cr_Ra1990,
CLBichir_bib_Gir_Rav1986} but with another radius.

\begin{lem}
\label{Lema_spatii_infinit_dimensionale_th_Graves_estimare_raze_vecinatati_ast}
Let us retain the hypotheses of Graves' theorem
\ref{teorema_principala_spatii_infinit_dimensionale_th_Graves}.
Take $\mu$ $=$ $L(\varepsilon)$ $=$
$\widetilde{L}(f,\bar{x},x,\varepsilon,X,Y)$ (where we use
(\ref{e_A2_17_TEXT_mu_bar_gen})). Let $M$ $>$ $\|Df(\bar{x})\|$.
Let us consider the neighborhoods $U$ and $V$ from Corollary
\ref{teorema_principala_spatii_infinit_dimensionale_cor_th_Graves}.

(i) Relation (\ref{e5_57_consecinta_th_Graves_dem_carte}) is
satisfied for any $(x,y) \in \mathbb{B}_{a}(\bar{x}) \times V$,
where $\mathbb{B}_{a}(\bar{x})$ $\subset$ $U$ and $V$ $\supseteq$
$\mathbb{B}_{b}(\bar{y})$, with $a$ $\geq$ $a^{\ast}$ $>$
$a_{h}^{\ast}$ and $b$ $\geq$ $b^{\ast}$ $>$ $b_{h}^{\ast}$.
$0<a<\varepsilon$. $\tau$, $a^{\ast}$, $b^{\ast}$ are given by
\begin{equation}
\label{dem_e_A2_38_TEXT_G_estimare_exact_tau_DOI_inf}
   \tau < \frac{L(\varepsilon) + M}{L(\varepsilon) + M + c} \cdot \varepsilon \, ,
\end{equation}
(We can impose even $\leq$ because of the increase with $M$.)
\begin{equation}
\label{dem_e_A2_38_TEXT_G_estimare_exact_a_h_ast_h_DOI_inf_th_exact}
   a_{U}^{\ast} = \frac{c \tau}{L(\varepsilon) + M} \, ,
   \quad
   b_{V}^{\ast} = c \tau \, ,
\end{equation}
\begin{equation}
\label{dem_e_A2_38_TEXT_G_estimare_exact_a_h_ast_h_DOI_inf_min}
   a^{\ast} = \min \{ a_{U}^{\ast}, \frac{b_{V}^{\ast}}{c} \} \, ,
   \quad
   b^{\ast} = c a^{\ast} \, .
\end{equation}

(ii) Assume that $2 \kappa L(\varepsilon) < 1$. Relation
(\ref{e5_57_consecinta_th_Graves_dem_carte}) holds for any $(x,y)
\in \mathbb{B}_{a_{h}^{\ast}}(\bar{x}) \times V$, where
$\mathbb{B}_{a_{h}^{\ast}}(\bar{x})$ $\subset$ $U$ and $V$ $=$
$\mathbb{B}_{b_{h}^{\ast}}(\bar{y})$. $\tau$, $a_{h}^{\ast}$,
$b_{h}^{\ast}$ are given by
\begin{equation}
\label{dem_e_A2_38_TEXT_G_estimare_exact_tau_DOI}
   \tau < \frac{1 + 2 \kappa M}{2 + 2 \kappa M} \cdot \varepsilon \, ,
\end{equation}
\begin{equation}
\label{dem_e_A2_38_TEXT_G_estimare_exact_a_h_ast_h_DOI_inf_th_exact_h}
   a_{U}^{\ast} = \frac{\tau}{1 + 2 \kappa M} \, ,
   \quad
   b_{V}^{\ast} = \frac{\tau}{2 \kappa} < c \tau \, ,
\end{equation}
\begin{equation}
\label{dem_e_A2_38_TEXT_G_estimare_exact_a_h_ast_h_DOI_inf_min_h}
   a_{h}^{\ast} = \min \{ a_{U}^{\ast}, \kappa b_{V}^{\ast} \} \, ,
   \quad
   b_{h}^{\ast} = c a_{h}^{\ast} \, .
\end{equation}
\end{lem}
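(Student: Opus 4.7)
The plan is to make the abstract neighborhoods $U$ and $V$ furnished by Corollary~\ref{teorema_principala_spatii_infinit_dimensionale_cor_th_Graves} explicit, by inscribing balls of quantitative radii in them. The single analytic input I need is the Lipschitz-type bound obtained from the strict first-order approximation condition (\ref{e5_57_conditie_th_Graves_dem}) upon setting $x'=\bar{x}$: with $A=Df(\bar{x})$ and $\mu=L(\varepsilon)$, for every $x\in\mathbb{B}_\varepsilon(\bar{x})$,
\[
   \|f(x)-f(\bar{x})\|\le(\|Df(\bar{x})\|+L(\varepsilon))\|x-\bar{x}\|\le(L(\varepsilon)+M)\|x-\bar{x}\|.
\]

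For part (i), I revisit the construction used in the proof of the Corollary. Fix $\tau<\varepsilon$; then the containment $\mathbb{B}_\tau(x)\subset\mathbb{B}_\varepsilon(\bar{x})$ holds exactly when $x\in\mathbb{B}_{\varepsilon-\tau}(\bar{x})$, and the Corollary's additional requirement $\|f(x)-f(\bar{x})\|<c\tau$ is ensured by the displayed bound as soon as $\|x-\bar{x}\|<c\tau/(L(\varepsilon)+M)$. This yields the explicit radius $a_U^\ast=c\tau/(L(\varepsilon)+M)$ in (\ref{dem_e_A2_38_TEXT_G_estimare_exact_a_h_ast_h_DOI_inf_th_exact}); imposing the compatibility $a_U^\ast\le\varepsilon-\tau$ and solving for $\tau$ gives precisely (\ref{dem_e_A2_38_TEXT_G_estimare_exact_tau_DOI_inf}). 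On the $V$-side, the largest admissible $V$ at a given $x$ is $\mathbb{B}_{c\tau}(f(x))$, so setting $b_V^\ast=c\tau$ and then coupling the $x$- and $y$-budgets through $b^\ast=c a^\ast$ and $a^\ast=\min\{a_U^\ast,b_V^\ast/c\}$ places every $(x,y)\in\mathbb{B}_{a^\ast}(\bar{x})\times\mathbb{B}_{b^\ast}(\bar{y})$ inside $U\times V$, so that (\ref{e5_57_consecinta_th_Graves_dem_carte}) follows from the Corollary.

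For part (ii), the stronger hypothesis $2\kappa L(\varepsilon)<1$ gives $c=\kappa^{-1}-L(\varepsilon)>1/(2\kappa)$. This extra slack lets me use $b_V^\ast=\tau/(2\kappa)<c\tau$, which is the natural scale coming from the linear-openness form of Graves (Theorem~\ref{teorema_principala_spatii_infinit_dimensionale_th_Graves_revisited}), and to replace $L(\varepsilon)+M$ in the denominator of $a_U^\ast$ by $1+2\kappa M$. Repeating the bookkeeping of part (i) with these tighter constants yields (\ref{dem_e_A2_38_TEXT_G_estimare_exact_tau_DOI}), (\ref{dem_e_A2_38_TEXT_G_estimare_exact_a_h_ast_h_DOI_inf_th_exact_h}) and (\ref{dem_e_A2_38_TEXT_G_estimare_exact_a_h_ast_h_DOI_inf_min_h}); the strict inequalities $a^\ast>a_h^\ast$ and $b^\ast>b_h^\ast$ then merely record that the (ii)-radii sit strictly inside the (i)-radii.

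The main obstacle is the $V$-side bookkeeping, since the Corollary only delivers a neighborhood $V$ that depends on the chosen $x$: one has to split the tolerance $c\tau$ between the $x$-displacement (controlled by the Lipschitz estimate) and the $y$-displacement (controlled by $b_V^\ast$), and the prescriptions $b^\ast=c a^\ast$, $a^\ast=\min\{a_U^\ast,b_V^\ast/c\}$ encode exactly this splitting. Once the correct splitting is fixed, the arithmetic inside each minimum is routine and all four displayed formulas of the lemma drop out.
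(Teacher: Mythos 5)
Your proof follows the paper's strategy closely: extract the Lipschitz bound $\|f(x)-f(\bar{x})\|\le(L(\varepsilon)+M)\|x-\bar{x}\|$, and choose $a_U^{\ast}$ and $\tau$ so that both constraints from Corollary~\ref{teorema_principala_spatii_infinit_dimensionale_cor_th_Graves} --- $\|f(x)-f(\bar{x})\|<c\tau$ and $\mathbb{B}_{\tau}(x)\subset\mathbb{B}_{\varepsilon}(\bar{x})$ --- hold on the ball $\mathbb{B}_{a_U^{\ast}}(\bar{x})$, while on the $V$-side the Graves/linear-openness scale $c\tau$ fixes $b_V^{\ast}$. This is exactly what the paper does.

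One point in part (ii) deserves explicit justification rather than the assertion that "repeating the bookkeeping yields" the formulas. If you literally re-run the part (i) computation with $b_V^{\ast}=\tau/(2\kappa)$ and $a_U^{\ast}=\tau/(1+2\kappa M)$, the minimum comes out as $\min\{a_U^{\ast}, b_V^{\ast}/c\}$, not $\min\{a_U^{\ast}, \kappa b_V^{\ast}\}$ as in (\ref{dem_e_A2_38_TEXT_G_estimare_exact_a_h_ast_h_DOI_inf_min_h}). The paper deliberately replaces $1/c$ by $\kappa$ (valid since $\kappa c = 1-\kappa\mu < 1$ gives $\kappa < 1/c$) precisely so that $a_h^{\ast}$ will not depend on $c$, and hence not on the parameter $h$ once this lemma is applied in Theorem~\ref{teorema_principala_spatii_infinit_dimensionale_widetilde_s_3_0_lim}. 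Because $\kappa b_V^{\ast} < b_V^{\ast}/c$, the smaller radius still lies inside $U$, so the conclusion remains valid --- but the formula you state does not follow from the bookkeeping alone and needs this substitution (and its justification $\kappa<1/c$) spelled out, as the paper does.
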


\begin{proof}

\textbf{Proof of (i)}

We consider $b_{V}^{\ast}$ from
(\ref{dem_e_A2_38_TEXT_G_estimare_exact_a_h_ast_h_DOI_inf_th_exact})
and $V$ $=$ $\mathbb{B}_{b_{V}^{\ast}}(\bar{y})$. We seek
$a_{U}^{\ast}$ such that

$\| f(x) - f(\bar{x}) \|$ $\leq$ $\| f(x) - f(\bar{x}) -
Df(\bar{x})(x - \bar{x}) \|$ $+$ $\| Df(\bar{x})(x - \bar{x}) \|$
$\leq$ $L(\varepsilon) \|x - \bar{x}\|$ $+$ $\|Df(\bar{x})\| \|x -
\bar{x}\|$ $\leq$ $(L(\varepsilon) + \|Df(\bar{x})\|)a_{U}^{\ast}$
$\leq$ $(L(\varepsilon) + M)a_{U}^{\ast}$ and we impose the
condition for $a_{U}^{\ast}$
\begin{equation}
\label{dem_e_A2_38_TEXT_G_estimare_exact_a_h_ast_h_DOI_conditie_inf}
   (L(\varepsilon) + M)a_{U}^{\ast}
   \leq c \tau \, .
\end{equation}

We take $a_{U}^{\ast}$ from
(\ref{dem_e_A2_38_TEXT_G_estimare_exact_a_h_ast_h_DOI_inf_th_exact}).
We have $\mathbb{B}_{a_{U}^{\ast}}(\bar{x})$ $\subset$ $U$.

We must verify the condition $\mathbb{B}_{\tau}(x)$ $\subset$
$\mathbb{B}_{\varepsilon}(\bar{x})$ for all $x \in U$.

If  $U$ $=$ $\mathbb{B}_{a_{U}^{\ast}}(\bar{x})$, we have: let
$x'$ $\in$ $\mathbb{B}_{\tau}(x)$. $\| x' - \bar{x} \|$ $\leq$ $\|
x' - x \|$ $+$ $\| x - \bar{x} \|$ $\leq$ $\tau$ $+$
$a_{U}^{\ast}$ $=$ $\tau$ $+$ $\frac{c \tau}{L(\varepsilon) + M}$
$<$ $\varepsilon$. So the condition
(\ref{dem_e_A2_38_TEXT_G_estimare_exact_tau_DOI_inf}) for $\tau$.

Instead of $a_{U}^{\ast}$ and
$\mathbb{B}_{a_{U}^{\ast}}(\bar{x})$, let us consider the value
$\frac{b_{V}^{\ast}}{c}$ and
$\mathbb{B}_{\frac{b_{V}^{\ast}}{c}}(\bar{x})$ related to the
conclusion of Graves' theorem (Theorem
\ref{teorema_principala_spatii_infinit_dimensionale_th_Graves}),
to the relation (\ref{e5_57_consecinta_th_Graves_revisited}) in
Theorem
\ref{teorema_principala_spatii_infinit_dimensionale_th_Graves_revisited}
and to linear openess
(\cite{CLBichir_bib_Dontchev_Rockafellar2009}).

We take $a^{\ast}$ from
(\ref{dem_e_A2_38_TEXT_G_estimare_exact_a_h_ast_h_DOI_inf_min}).
We have $\mathbb{B}_{a^{\ast}}(\bar{x})$ $\subset$ $U$.

If $a^{\ast}$ $=$ $\frac{b_{V}^{\ast}}{c}$, it is not necessary to
modify $\tau$ from
(\ref{dem_e_A2_38_TEXT_G_estimare_exact_tau_DOI_inf}).

From the definition of $U$, we deduce that $U$ $\subset$
$\mathbb{B}_{\varepsilon}(\bar{x})$, so $0<a^{\ast}<\varepsilon$.

Linear openness gives us $\mathbb{B}_{b^{\ast}}(\bar{y})$ $=$ $[
f(\bar{x})+c a^{\ast} int \mathbb{B} ] \cap V$ (see also Theorem
\ref{teorema_principala_spatii_infinit_dimensionale_th_Graves_revisited}).
Hence
(\ref{dem_e_A2_38_TEXT_G_estimare_exact_a_h_ast_h_DOI_inf_min})
for $b^{\ast}$.

\textbf{Proof of (ii)}

We have $2 \kappa L(\varepsilon) < 1$, so
\begin{equation}
\label{dem_e_A2_37_TEXT_G_prod_estimare_lemma}
   c = \frac{1-\kappa L(\varepsilon)}{\kappa} > \frac{1-\frac{1}{2}}{\kappa}
   = \frac{1}{2 \kappa} \, .
\end{equation}
or $\frac{1}{c}$ $<$ $2 \kappa$. We also have $\kappa$ $<$
$\frac{1}{c}$ (\cite{CLBichir_bib_Dontchev_Rockafellar2009}).

We also consider the following estimates:

Using
(\ref{dem_e_A2_38_TEXT_G_estimare_exact_a_h_ast_h_DOI_inf_th_exact}),
we take $b_{V}^{\ast}$ from
(\ref{dem_e_A2_38_TEXT_G_estimare_exact_a_h_ast_h_DOI_inf_th_exact_h}).
We take $V$ $=$ $\mathbb{B}_{b_{V}^{\ast}}(\bar{y})$.

The relation
(\ref{dem_e_A2_38_TEXT_G_estimare_exact_a_h_ast_h_DOI_conditie_inf})
gives

$\| f(x) - f(\bar{x}) \|$ $\leq$ $\| f(x) - f(\bar{x}) -
Df(\bar{x})(x - \bar{x}) \|$ $+$ $\| Df(\bar{x})(x - \bar{x}) \|$
$\leq$ $L(\varepsilon) \|x - \bar{x}\|$ $+$ $\|Df(\bar{x})\| \|x -
\bar{x}\|$ $\leq$ $(L(\varepsilon) + \|Df(\bar{x})\|)a_{U}^{\ast}$
$\leq$ $(L(\varepsilon) + M)a_{U}^{\ast}$ and we impose the
condition for $a_{U}^{\ast}$
\begin{equation}
\label{dem_e_A2_38_TEXT_G_estimare_exact_a_h_ast_h_DOI_conditie}
   (L(\varepsilon) + M)a_{U}^{\ast}
   < (\frac{1}{2 \kappa} + M)a_{U}^{\ast}
   \leq \frac{\tau}{2 \kappa}
   < c \tau \, .
\end{equation}

We take $a_{U}^{\ast}$ from
(\ref{dem_e_A2_38_TEXT_G_estimare_exact_a_h_ast_h_DOI_inf_th_exact_h}).
We have $\mathbb{B}_{a_{U}^{\ast}}(\bar{x})$ $\subset$ $U$.

If  $U$ $=$ $\mathbb{B}_{a_{U}^{\ast}}(\bar{x})$, we have: using
(\ref{dem_e_A2_38_TEXT_G_estimare_exact_a_h_ast_h_DOI_inf_th_exact_h}),
we obtain $\tau$ from
(\ref{dem_e_A2_38_TEXT_G_estimare_exact_tau_DOI}).

We have $\kappa$ $<$ $\frac{1}{c}$. In order to avoid the
dependence on the index "$h$" in Theorem
\ref{teorema_principala_spatii_infinit_dimensionale_widetilde_s_3_0_lim},
we take $\kappa$ instead of $\frac{1}{c}$. More precisely, we take
$\kappa b_{V}^{\ast}$ instead of $\frac{b_{V}^{\ast}}{c}$. $\kappa
b_{V}^{\ast}$ do not depend on "$h$". $\kappa b_{V}^{\ast}$ $<$
$\frac{b_{V}^{\ast}}{c}$.

We take $a_{h}^{\ast}$ from
(\ref{dem_e_A2_38_TEXT_G_estimare_exact_a_h_ast_h_DOI_inf_min_h}).
We have $\mathbb{B}_{a_{h}^{\ast}}(\bar{x})$ $\subset$ $U$.

If $a_{h}^{\ast}$ $=$ $\kappa b_{V}^{\ast}$, it is not necessary
to modify $\tau$ from
(\ref{dem_e_A2_38_TEXT_G_estimare_exact_tau_DOI}).

Linear openness (\cite{CLBichir_bib_Dontchev_Rockafellar2009})
gives us $\mathbb{B}_{b_{h}^{\ast}}(\bar{y})$ $=$ $[ f(\bar{x})+c
a_{h}^{\ast} int \mathbb{B} ] \cap V$ (see also Theorem
\ref{teorema_principala_spatii_infinit_dimensionale_th_Graves_revisited}).
Hence
(\ref{dem_e_A2_38_TEXT_G_estimare_exact_a_h_ast_h_DOI_inf_min_h})
for $b_{h}^{\ast}$.

\qquad
\end{proof}

\begin{lem}
\label{Lema_spatii_infinit_dimensionale_th_Graves_estimare_raze_vecinatati_general}

In Theorems
\ref{teorema_principala_spatii_infinit_dimensionale_widetilde_s_3_0_exact_inf},
\ref{teorema_principala_spatii_infinit_dimensionale_widetilde_s_3_0_exact_h} and
\ref{teorema_principala_spatii_infinit_dimensionale_widetilde_s_3_0_lim},
we can consider $\tau$, $a^{\ast}$ and $b^{\ast}$ from the general
case as in Lemmas
\ref{Lema_spatii_infinit_dimensionale_th_Graves_estimare_raze_vecinatati}
and
\ref{Lema_spatii_infinit_dimensionale_th_Graves_estimare_raze_vecinatati_ast}
by replacing $f$ by $\mathcal{G}$. In Theorem
\ref{teorema_principala_spatii_infinit_dimensionale_widetilde_s_3_0_lim},
as in Lemma
\ref{Lema_spatii_infinit_dimensionale_th_Graves_estimare_raze_vecinatati_ast}
(ii), $\varepsilon$, $\tau$ and $a_{h}^{\ast}$ do not depend on
the parameter $h$ (they are constants).

\end{lem}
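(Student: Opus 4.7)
The plan is to invoke Lemmas \ref{Lema_spatii_infinit_dimensionale_th_Graves_estimare_raze_vecinatati} and \ref{Lema_spatii_infinit_dimensionale_th_Graves_estimare_raze_vecinatati_ast} as ready-made templates, with the specialization $f \leftarrow \mathcal{G}$, $X \leftarrow \Gamma \times \Gamma$, $Y \leftarrow \Sigma$, $\bar{x} \leftarrow (\widetilde{s}_{0},\widetilde{\phi}_{0}')$, $A \leftarrow \mathcal{A}$. For the first half of the conclusion I need to check that the hypotheses of Graves' theorem \ref{teorema_principala_spatii_infinit_dimensionale_th_Graves} are in force for $\mathcal{G}$ in the context of each of the three cited theorems. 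From (\ref{e5_57_forma2_sistem_REG_2_GGGvarianta_infinit_dimensionale_DEM}) one reads off that $\mathcal{G}$ is of class $C^{p-1}$, since $S$ and $\Phi$ inherit that regularity from $F$, while $\mathcal{H}_{S}$, $\mathcal{H}_{\Phi}$ and the $\Phi(\widetilde{x}_{0},\xi(\cdot))$-terms are affine. Surjectivity of $\mathcal{A} = D\mathcal{G}(\widetilde{s}_{0},\widetilde{\phi}_{0}')$ was verified immediately after its definition (by fixing $\overline{y}'$ and solving in $\overline{\phi}'$), hence $\kappa \geq reg\,\mathcal{A}$ is finite by the Banach open mapping theorem 5A.1 of \cite{CLBichir_bib_Dontchev_Rockafellar2009}. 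The choice $\mu = L(\varepsilon)$ is precisely the Lipschitz constant of $D\mathcal{G}$ on the closed ball $\mathbb{B}_{\varepsilon}(\widetilde{s}_{0},\widetilde{\phi}_{0}')$, and a mean value argument yields (\ref{e5_57_conditie_th_Graves_dem}) for $\mathcal{G}$.

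With those hypotheses verified, the estimates (\ref{dem_e_A2_38_TEXT_G_estimare_exact_tau_DOI_inf})--(\ref{dem_e_A2_38_TEXT_G_estimare_exact_a_h_ast_h_DOI_inf_min}) and (\ref{dem_e_A2_38_TEXT_G_estimare_exact_tau_DOI})--(\ref{dem_e_A2_38_TEXT_G_estimare_exact_a_h_ast_h_DOI_inf_min_h}) are purely formal consequences of the derivations already given inside those two lemmas; nothing has to be redone. The first assertion of the present lemma is therefore immediate by substitution in the closed-form formulas for $\tau$, $a^{\ast}$ and $b^{\ast}$.

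For the second assertion the task is to make the constants uniform in the discretization parameter $h$. Under the setup of Theorem \ref{teorema_principala_spatii_infinit_dimensionale_widetilde_s_3_0_lim} I would take a $\kappa$ independent of $h$ as a uniform upper bound $\kappa \geq \sup_{h} reg\,\mathcal{A}_{h}$ and an $M$ independent of $h$ as a uniform upper bound $M > \sup_{h} \| D\mathcal{G}_{h}(\widetilde{s}_{0,h},\widetilde{\phi}_{0,h}') \|$. Such uniform bounds are to be extracted from the consistency/convergence hypotheses relating $F_{h}$ to $F$ (compare Theorem \ref{teorema_lema5_8p}): they force $\mathcal{A}_{h}$ to be uniformly surjective with a uniformly bounded selection of preimages, and $D\mathcal{G}_{h}$ to be uniformly bounded in operator norm. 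A common $L(\varepsilon)$ can be taken since $F$ is $C^{p}$ and $F_{h}$ is obtained from $F$ by composition with uniformly bounded approximation operators. Then the explicit formulas (\ref{dem_e_A2_38_TEXT_G_estimare_exact_tau_DOI}), (\ref{dem_e_A2_38_TEXT_G_estimare_exact_a_h_ast_h_DOI_inf_th_exact_h}) and (\ref{dem_e_A2_38_TEXT_G_estimare_exact_a_h_ast_h_DOI_inf_min_h}) deliver an $\varepsilon$, a $\tau$ and an $a_{h}^{\ast}$ expressed entirely in terms of $\kappa$, $M$ and $L(\varepsilon)$, hence $h$-independent.

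The main obstacle I anticipate is the rigorous uniform bound on $reg\,\mathcal{A}_{h}$. This is essentially a statement of uniform invertibility of the linearized approximate extended system; it must be extracted from the persistence, under the approximation scheme, of the isomorphism property of the limit operator $\mathcal{A}$, which is exactly the kind of statement encoded in Theorem \ref{teorema_lema5_8p}. Once that uniform invertibility is granted, the remainder of the uniformity argument is a bookkeeping reading of constants out of the explicit expressions supplied by Lemma \ref{Lema_spatii_infinit_dimensionale_th_Graves_estimare_raze_vecinatati_ast} (ii), and no new analytical input is required.
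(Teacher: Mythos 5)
Your proposal is correct and takes essentially the same approach as the paper: the first assertion is the substitution $f \leftarrow \mathcal{G}$, $\bar{x} \leftarrow (\widetilde{s}_{0},\widetilde{\phi}_{0}')$, $A \leftarrow \mathcal{A}$ carried out explicitly in step (i) of the proof of Theorem \ref{teorema_principala_spatii_infinit_dimensionale_widetilde_s_3_0_exact_inf}, and the second is the observation (made inside the proof of Lemma \ref{Lema_spatii_infinit_dimensionale_th_Graves_estimare_raze_vecinatati_ast}(ii) and used in the proof of Theorem \ref{teorema_principala_spatii_infinit_dimensionale_widetilde_s_3_0_lim}) that the formulas (\ref{dem_e_A2_38_TEXT_G_estimare_exact_tau_DOI})--(\ref{dem_e_A2_38_TEXT_G_estimare_exact_a_h_ast_h_DOI_inf_min_h}) express $\tau$, $a_{U}^{\ast}$, $b_{V}^{\ast}$, $a_{h}^{\ast}$ purely in terms of $\varepsilon$, $\kappa$, $M$; only $b_{h}^{\ast}=c_{h}a_{h}^{\ast}$ retains $h$-dependence through $c_{h}=(1-\kappa L_{h}(\varepsilon))/\kappa$, which is exactly why the lemma omits $b_{h}^{\ast}$ from its $h$-independence claim. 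One small recalibration: the paper does not derive the $h$-uniform bounds on $\kappa_{h}$ and $M_{h}$ from consistency estimates as your last paragraph proposes; it builds them into the hypotheses by setting $\kappa=\sup_{h\leq h_{0}}\kappa_{h}$ and $M=\sup_{h\leq h_{0}}M_{h}$ in Theorem \ref{teorema_principala_spatii_infinit_dimensionale_widetilde_s_3_0_lim}, so the obstacle you flag as "the main obstacle" is disposed of by assumption rather than by proof in this lemma (a route via Theorem \ref{teorema_lema5_8p} and Lemma \ref{Lema_spatii_infinit_dimensionale_reg} is plausible but belongs to the verification of the hypotheses, not to the present statement). A trivial imprecision: your closing sentence says the formulas are expressed "in terms of $\kappa$, $M$ and $L(\varepsilon)$", but $L(\varepsilon)$ does not occur in (\ref{dem_e_A2_38_TEXT_G_estimare_exact_tau_DOI})--(\ref{dem_e_A2_38_TEXT_G_estimare_exact_a_h_ast_h_DOI_inf_min_h}) except through $c$, which only feeds $b_{h}^{\ast}$.
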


\subsection{Proof of Theorem
\ref{teorema_principala_spatii_infinit_dimensionale_widetilde_s_3_0_exact_inf}}
\label{sectiunea_01_O_formulare_pe_spatii_infinit_dimensionale_5}

Let us observe first that
(\ref{dem_e_A2_37_TEXT_G_prod_alphaastastast_var_exact}) is
equivalent to
\begin{equation}
\label{dem_e_A2_37_TEXT_G_prod_alphaastastast_var_exact_equivalent}
   \frac{1}{c}(\frac{1}{2} L(\varepsilon)
      + \alpha \widehat{a}) < \frac{1}{2} \, .
\end{equation}

Since $\mathcal{G}$ is surjective (see Lemma
\ref{Lema_mathcal_G_surjective}), $\mathcal{G}^{-1}$ is a
set-valued mapping, so we can apply the framework of Subsection
\ref{sectiunea_1_preliminaries_Dont_Rock}.

\textbf{- (i) The verification of the assumptions of Graves'
theorem
\ref{teorema_principala_spatii_infinit_dimensionale_th_Graves}}

In Theorem
\ref{teorema_principala_spatii_infinit_dimensionale_th_Graves}, in
Corollary
\ref{teorema_principala_spatii_infinit_dimensionale_cor_th_Graves},
in Lemma
\ref{Lema_spatii_infinit_dimensionale_th_Graves_estimare_raze_vecinatati}
and in Lemma
\ref{Lema_spatii_infinit_dimensionale_th_Graves_estimare_raze_vecinatati_ast},
let us replace $X$, $Y$, $f$, $x$, $\bar{x}$, $y$, $\bar{y} =
f(\bar{x})$, $\varepsilon$, $A$, $\kappa$, $\mu$, $c$, $\tau$,
$U$, $V$, $d(\cdot,\cdot)$, $\|\cdot\|$ by $\Gamma \times \Gamma$,
$\Sigma$, $\mathcal{G}$, $(s,\phi')$,
$(\widetilde{s}_{0},\widetilde{\phi}_{0}')$, $\zeta$,
$\widetilde{\zeta}_{0}$ $=$
$\mathcal{G}(\widetilde{s}_{0},\widetilde{\phi}_{0}')$,
$\varepsilon$, $\mathcal{A}$, $\kappa$, $L(\varepsilon)$, $c$,
$\tau$, $U$, $V$, $d(\cdot,\cdot)$, $\|\cdot\|$, respectively.

Here and in Lemmas
\ref{Lema_spatii_infinit_dimensionale_th_Graves_estimare_raze_vecinatati},
\ref{Lema_spatii_infinit_dimensionale_th_Graves_estimare_raze_vecinatati_ast},
we use the same notations $M$, $\tau$, $a^{\ast}$, $b^{\ast}$.

- Condition $A$ $=$ $\mathcal{A}$ is surjective is verified above.

- Condition $\kappa \geq reg \, A$ is verified since we take
$\kappa = reg \, \mathcal{A}$.

- Let us introduce $\mu$ $=$ $L(\varepsilon)$ $=$
$\widetilde{L}(\mathcal{G},(\widetilde{s}_{0},\widetilde{\phi}_{0}'),(s,\phi'),\varepsilon,\Gamma
\times \Gamma,\Sigma)$.

- Condition $\kappa \mu < 1$ results from
(\ref{dem_e_A2_37_TEXT_G_prod_alphaastastast_var_exact}).

- Let us verify condition (\ref{e5_57_conditie_th_Graves_dem}) of
Graves' theorem
\ref{teorema_principala_spatii_infinit_dimensionale_th_Graves}.

We have

$\| \mathcal{G}(s,\phi') -
\mathcal{G}(\overline{s},\overline{\phi}')
       - \mathcal{A}((s,\phi') - (\overline{s},\overline{\phi}'))
       \|$ $=$
       $\psi_{1}((s,\phi'),(\overline{s},\overline{\phi}'))$.

As in the proofs of Theorem 3.1 \cite{CLBichir_bib_Cr_Ra1990},
Theorem IV.3.1 \cite{CLBichir_bib_Gir_Rav1986} and Theorem I.2.1
\cite{CLBichir_bib_CalozRappaz1997}, we obtain:

$\mathcal{G}(s,\phi') - \mathcal{G}(\overline{s},\overline{\phi}')
- \mathcal{A}((s,\phi') - (\overline{s},\overline{\phi}'))$

$=$

$\int_{0}^{1}[D\mathcal{G}((\overline{s},\overline{\phi}')+t((s,\phi')-(\overline{s},\overline{\phi}')))
- D\mathcal{G}(\widetilde{s}_{0},\widetilde{\phi}_{0}')] \cdot
((s,\phi') - (\overline{s},\overline{\phi}')) dt$

Relation (\ref{e5_57_conditie_th_Graves_dem_mathcal_G_DR}) holds
with $\mu$ $=$ $L(\varepsilon)$ $=$
$\widetilde{L}(\mathcal{G},(\widetilde{s}_{0},\widetilde{\phi}_{0}'),(s,\phi'),\varepsilon,\Gamma
\times \Gamma,\Sigma)$.
\begin{equation}
\label{e5_57_conditie_th_Graves_dem_mathcal_G_DR}
   \| \mathcal{G}(s,\phi') - \mathcal{G}(\overline{s},\overline{\phi}')
       - \mathcal{A}((s,\phi') - (\overline{s},\overline{\phi}')) \|
       \leq L(\varepsilon) \| (s,\phi') - (\overline{s},\overline{\phi}') \|
   \, .
\end{equation}

($\mathcal{G}$ is strictly differentiable at
$(\widetilde{s}_{0},\widetilde{\phi}_{0}')$ (see definition, pages
31 and 275, \cite{CLBichir_bib_Dontchev_Rockafellar2009})).

Then, the conclusions and the consequences of Graves' theorem
\ref{teorema_principala_spatii_infinit_dimensionale_th_Graves}
hold for $\mathcal{G}$ and
$(\widetilde{s}_{0},\widetilde{\phi}_{0}')$.

\textbf{- (ii) The formulation of the consequence of Graves'
theorem, Corollary
\ref{teorema_principala_spatii_infinit_dimensionale_cor_th_Graves}}

Let us take a positive $\tau < \varepsilon$. From Corollary
\ref{teorema_principala_spatii_infinit_dimensionale_cor_th_Graves}
for $\mathcal{G}$, there results that there exist a neighborhood
$U$ of $(\widetilde{s}_{0},\widetilde{\phi}_{0}')$ and a
neighborhood $V$ of
$\mathcal{G}(\widetilde{s}_{0},\widetilde{\phi}_{0}')$ such that
(i) $\mathbb{B}_{\tau}(s,\phi')$ $\subset$
$\mathbb{B}_{\varepsilon}(\widetilde{s}_{0},\widetilde{\phi}_{0}')$
for all $(s,\phi')$ $\in$ $U$ and $\| \mathcal{G}(s,\phi')$ $-$
$\mathcal{G}(\widetilde{s}_{0},\widetilde{\phi}_{0}') \|$ $<$ $c
\tau$ for $(s,\phi')$ $\in$ $U$, (ii) $\|\zeta -
\mathcal{G}(s,\phi') \|$ $\leq$ $c \tau$ for $\zeta$ $\in$ $V$,
where $(s,\phi')$ $\in$ $U$, (iii) $\mathcal{G}$ is the metrically
regular at $(\widetilde{s}_{0},\widetilde{\phi}_{0}')$ for
$\mathcal{G}(\widetilde{s}_{0},\widetilde{\phi}_{0}')$ with
constant $\frac{\kappa}{1-\kappa \mu}$, that is,
\begin{equation}
\label{e5_57_consecinta_th_Graves_dem}
   d((s,\phi'),\mathcal{G}^{-1}(\zeta)) \leq \frac{\kappa}{1-\kappa \mu} \|\zeta-\mathcal{G}(s,\phi')\|
   \, ,
\end{equation}
for $((s,\phi'),\zeta)$ $\in$ $U \times V$.

We can consider $\tau$, $a^{\ast}$ and $b^{\ast}$ from the general
case as in Lemma
\ref{Lema_spatii_infinit_dimensionale_th_Graves_estimare_raze_vecinatati}
or from the formulas in Lemma
\ref{Lema_spatii_infinit_dimensionale_th_Graves_estimare_raze_vecinatati_ast}
for $\mathcal{G}$. Hence, we have
(\ref{e5_57_consecinta_th_Graves_dem}). The following discussion
is true for every choice.

In order to fix a choice, we use Lemma
\ref{Lema_spatii_infinit_dimensionale_th_Graves_estimare_raze_vecinatati_ast}
(i).
$\mathbb{B}_{a^{\ast}}(\widetilde{s}_{0},\widetilde{\phi}_{0}')$
$\subset$ $U$,
$\mathbb{B}_{b^{\ast}}(\mathcal{G}(\widetilde{s}_{0},\widetilde{\phi}_{0}'))$
$\subseteq$ $V$.

\textbf{- (iii) The imposition of some conditions}

In order to have $\mathcal{Q}(s,\phi')$ $\in$
$\mathbb{B}_{b^{\ast}}(\mathcal{G}(\widetilde{s}_{0},\widetilde{\phi}_{0}'))$,
for $(s,\phi')$ $\in$
$\mathbb{B}_{a^{\ast}}(\widetilde{s}_{0},\widetilde{\phi}_{0}')$,
we impose the following two conditions
(\ref{dem_e_A2_38_TEXT_G_estimare_inf}) and
(\ref{dem_e_A2_38_TEXT_G_estimare_g_DR}).

We first assure that $0$ $\in$
$\mathbb{B}_{b^{\ast}}(\mathcal{G}(\widetilde{s}_{0},\widetilde{\phi}_{0}'))$
and $\|0-\mathcal{G}(\widetilde{s}_{0},\widetilde{\phi}_{0}')\|$
$<$ $\frac{b^{\ast}}{2}$ by imposing
\begin{equation}
\label{dem_e_A2_38_TEXT_G_estimare_inf}
   \delta < \frac{1}{2} \cdot b^{\ast} \, ,
\end{equation}
in other words, the assumption
(\ref{dem_e_A2_38_TEXT_G_estimare_M_a_b_h_conditie_inf_th_exact})
(since $b^{\ast}$ $=$ $c a^{\ast}$).

For $(s,\phi')$ $\in$
$\mathbb{B}_{a^{\ast}}(\widetilde{s}_{0},\widetilde{\phi}_{0}')$,
we have
\begin{eqnarray}
   && \|\mathcal{Q}(s,\phi')\|
         \leq
         \psi_{1}((s,\phi'),(\widetilde{s}_{0},\widetilde{\phi}_{0}'))
         + \alpha \psi_{2}((s,\phi'),(\widetilde{s}_{0},\widetilde{\phi}_{0}'))
          \label{dem_e_A2_38_TEXT_G_estimare_g_DR_ini} \\
   && \leq (\frac{1}{2} L(a^{\ast}) + \alpha \widehat{a})
         \|(\widetilde{s}_{0},\widetilde{\phi}_{0}')-(s,\phi')\| \, .
         \nonumber
\end{eqnarray}

Let us impose the condition
\begin{equation}
\label{dem_e_A2_38_TEXT_G_estimare_g_DR}
   \|\mathcal{Q}(s,\phi')\| < \frac{1}{2} \cdot b^{\ast} \, , \
      \forall (s,\phi') \in \mathbb{B}_{a^{\ast}}(\widetilde{s}_{0},\widetilde{\phi}_{0}') \, .
\end{equation}

If $\frac{a^{\ast}}{b^{\ast}}(\frac{1}{2} L(a^{\ast}) + \alpha
\widehat{a})$ $<$ $\frac{1}{2}$, then we have
(\ref{dem_e_A2_38_TEXT_G_estimare_g_DR}). We have $b^{\ast}$ $=$
$c a^{\ast}$ and $L(a^{\ast})$ $\leq$ $L(\varepsilon)$. There
results that if
(\ref{dem_e_A2_37_TEXT_G_prod_alphaastastast_var_exact_equivalent})
is satisfied, that is, if $\lambda < \frac{1}{2}$ is satisfied,
where $\lambda$ is defined below, in
(\ref{dem_e_A2_37_TEXT_G_prod_alphaastastast_var_lambda_LA_FEL_SI_CEL_INITIAL}),
then we have (\ref{dem_e_A2_38_TEXT_G_estimare_g_DR}).

Let $z$ $\in$ $\mathbb{B}_{\frac{b^{\ast}}{2}}(0)$. Then,
$\|z-\mathcal{G}(\widetilde{s}_{0},\widetilde{\phi}_{0}')\|$
$\leq$ $\|z-0\|$ $+$
$\|0-\mathcal{G}(\widetilde{s}_{0},\widetilde{\phi}_{0}')\|$
$\leq$ $\frac{b^{\ast}}{2}$ $+$
$\|\mathcal{G}(\widetilde{s}_{0},\widetilde{\phi}_{0}')\|$ $<$
$\frac{b^{\ast}}{2}$ $+$ $\frac{b^{\ast}}{2}$ $=$ $b^{\ast}$, so
$z$ $\in$ $int
\mathbb{B}_{b^{\ast}}(\mathcal{G}(\widetilde{s}_{0},\widetilde{\phi}_{0}'))$.

Taking condition (\ref{dem_e_A2_38_TEXT_G_estimare_g_DR}) and
replacing $z$ $=$ $\mathcal{Q}(s,\phi')$, the previous relation
leads us to
\begin{equation}
\label{dem_e_A2_38_TEXT_G_estimare_g_DR_B_b_h}
   \|\mathcal{Q}(s,\phi')-\mathcal{G}(\widetilde{s}_{0},\widetilde{\phi}_{0}')\|
   < b^{\ast} \, ,
\end{equation}
for $(s,\phi')$ $\in$
$\mathbb{B}_{a^{\ast}}(\widetilde{s}_{0},\widetilde{\phi}_{0}')$.
In other words, $\mathcal{Q}(s,\phi')$ $\in$
$\mathbb{B}_{b^{\ast}}(\mathcal{G}(\widetilde{s}_{0},\widetilde{\phi}_{0}'))$,
for $(s,\phi')$ $\in$
$\mathbb{B}_{a^{\ast}}(\widetilde{s}_{0},\widetilde{\phi}_{0}')$.

Then, (\ref{e5_57_consecinta_th_Graves_dem}) holds for
$((s,\phi'),\zeta)$$\in$
$\mathbb{B}_{a^{\ast}}(\widetilde{s}_{0},\widetilde{\phi}_{0}')$
$\times$
$\mathbb{B}_{b^{\ast}}(\mathcal{G}(\widetilde{s}_{0},\widetilde{\phi}_{0}'))$.

We obtain, by replacing $\zeta$ $=$
$\mathcal{Q}(\bar{s},\bar{\phi}')$ in
(\ref{e5_57_consecinta_th_Graves_dem}),
\begin{equation}
\label{e5_57_consecinta_th_Graves_mathcalG_g}
   d((s,\phi'),\mathcal{G}^{-1}(\mathcal{Q}(\bar{s},\bar{\phi}'))
   \leq
   \frac{\kappa}{1-\kappa \mu} \|\mathcal{Q}(\bar{s},\bar{\phi}')-\mathcal{G}(s,\phi')\|
   \, ,
\end{equation}
for $(s,\phi')$, $(\bar{s},\bar{\phi}')$ $\in$
$\mathbb{B}_{a^{\ast}}(\widetilde{s}_{0},\widetilde{\phi}_{0}')$.

\textbf{- (iv) The definition of the mapping $T_{0}$ for the
contraction mapping principle for set-valued mappings}

Let us define the mappings:

$\zeta$ $\in$
$\mathbb{B}_{b^{\ast}}(\mathcal{G}(\widetilde{s}_{0},\widetilde{\phi}_{0}'))$,
$\Theta : \zeta \mapsto \mathcal{G}^{-1}(\zeta)$,

$(s,\phi')$ $\in$
$\mathbb{B}_{a^{\ast}}(\widetilde{s}_{0},\widetilde{\phi}_{0}')$,
$T_{0} : (s,\phi') \mapsto \Theta(\mathcal{Q}(s,\phi'))$.

Observe that $dom \ \mathcal{G}^{-1}$ $=$ $\Sigma$, where we use
Lemma \ref{Lema_mathcal_G_surjective}. We extend the definition of
these mappings to the spaces $\Gamma$ and $\Sigma$.

$T_{0}:\Gamma \times \Gamma \rightrightarrows \Gamma \times
\Gamma$, $T_{0} : (s,\phi') \mapsto
\mathcal{G}^{-1}(\mathcal{Q}(s,\phi'))$.

\textbf{- (v) The verification of the assumptions of the
contraction mapping principle for set-valued mappings, Theorem
\ref{teorema_contraction_mapping_principle_Dontchev_Rockafellar2009}}

In Theorem
\ref{teorema_contraction_mapping_principle_Dontchev_Rockafellar2009},
let us replace $X$, $\rho$, $T$, $x$, $\bar{x}$, $a$, $\lambda$,
$d(\cdot,\cdot)$, $e(\cdot,\cdot)$, $u$, $v$ by $\Gamma \times
\Gamma$, $\rho$, $T_{0}$, $(s,\phi')$,
$(\widetilde{s}_{0},\widetilde{\phi}_{0}')$, $a^{\ast}$,
$\lambda$, $d(\cdot,\cdot)$, $e(\cdot,\cdot)$, $(s,\phi')$,
$(\overline{s},\overline{\phi}')$ respectively.

\textbf{- The verification of the condition $gph T_{0} \cap
(\mathbb{B}_{a^{\ast}}(\widetilde{s}_{0},\widetilde{\phi}_{0}')
\times
\mathbb{B}_{a^{\ast}}(\widetilde{s}_{0},\widetilde{\phi}_{0}'))$
is closed}

For brevity, only for this verification, we keep some notations
from the general case from Subsection
\ref{sectiunea_1_preliminaries_Dont_Rock}. So, instead of
$a^{\ast}$, $(s,\phi')$,
$(\widetilde{s}_{0},\widetilde{\phi}_{0}')$, $\zeta$, $\Gamma
\times \Gamma$, $\Sigma$, we keep $a$, $x$, $\bar{x}$, $y$, $X$,
$Y$ respectively. Let us verify that $gph T_{0} \cap
(\mathbb{B}_{a}(\bar{x}) \times \mathbb{B}_{a}(\bar{x}))$ is
closed.

The graph of the continuous mapping $\mathcal{G}$, $gph \
\mathcal{G}$ $=$ $\{ (x,y) \in X \times Y | y = \mathcal{G}(x) \}$
$=$ $\{ (x,y) \in X \times Y | y \in \mathcal{G}(x) \}$ is closed
in $X \times Y$. Here, we identify the function $\mathcal{G}:X
\rightarrow Y$ with the set-valued mapping $\mathcal{G}:X
\rightrightarrows Y$ such that $\mathcal{G}$ is single-valued at
every point of $dom \ \mathcal{G}$.

$\mathcal{G}^{-1}(y)$ $=$ $\{ x \in X | y \in \mathcal{G}(x) \}$.

$gph \ \mathcal{G}^{-1}$ $=$ $\{ (y,x) \in Y \times X | (x,y) \in
gph \ \mathcal{G} \}$.

Hence the graph of $\mathcal{G}^{-1}$, $gph \ \mathcal{G}^{-1}$,
is closed in $Y \times X$.

$Range(\mathcal{Q})$ $=$ $Y$.

$gph \ \mathcal{Q}$ $=$ $\{ (x,y) \in X \times Y | y =
\mathcal{Q}(x) \}$.

$gph \ \mathcal{G}^{-1}$ $=$ $\{ (y,x) \in Range(\mathcal{Q})
\times X | y \in \mathcal{G}(x) \}$.

$gph \ \mathcal{G}^{-1}$ $=$ $\{ (y,x) \in Range(\mathcal{Q})
\times X | (x',y) \in gph \ \mathcal{Q}, \ y \in \mathcal{G}(x)
\}$.

$gph \ \mathcal{G}^{-1}(\mathcal{Q}(\cdot))$ $=$ $\{ (x',x) \in X
\times X | (x',y) \in gph \ \mathcal{Q}, \ y \in \mathcal{G}(x)
\}$.

\textbf{Proof of $gph T_{0} \cap (\mathbb{B}_{a}(\bar{x}) \times
\mathbb{B}_{a}(\bar{x}))$ is closed}

We prove using sequences for the formulation

$gph \ \mathcal{G}^{-1}(\mathcal{Q}(\cdot))$ $=$ $\{ (x',x) \in X
\times X | y=\mathcal{Q}(x'), \ y \in \mathcal{G}(x) \}$.

Let $(x_{\ell}',x_{\ell})$ $\in$ $gph \
\mathcal{G}^{-1}(\mathcal{Q}(\cdot))$, $(x_{\ell}',x_{\ell})$
$\rightarrow$ $(x',x)$ as $\ell$ $\rightarrow$ $\infty$, $(x',x)$
$\in$ $X \times X$.

Define $y_{\ell}=\mathcal{Q}(x_{\ell}')$. From
$(x_{\ell}',x_{\ell})$ $\in$ $gph \
\mathcal{G}^{-1}(\mathcal{Q}(\cdot))$, we have $y_{\ell} \in
\mathcal{G}(x_{\ell})$.

Since $\mathcal{Q}$ is continuous, $x_{\ell}'$ $\rightarrow$ $x'$
implies $y_{\ell}$ $\rightarrow$ $\mathcal{Q}(x')$, so
$(y_{\ell},x_{\ell})$ $\rightarrow$ $(\mathcal{Q}(x'),x)$ as
$\ell$ $\rightarrow$ $\infty$.

$y_{\ell} \in \mathcal{G}(x_{\ell})$ implies that
$(x_{\ell},y_{\ell})$ $\in$ $gph \ \mathcal{G}$. Since $gph \
\mathcal{G}$ is closed in $X \times Y$, there results that
$(x,\mathcal{Q}(x'))$ $\in$ $gph \ \mathcal{G}$ so
$\mathcal{Q}(x') \in \mathcal{G}(x)$.

Let $y=\mathcal{Q}(x')$. We have $y \in \mathcal{G}(x)$. Hence
$(x',x)$ $\in$ $gph \ \mathcal{G}^{-1}(\mathcal{Q}(\cdot))$ and
$gph \ \mathcal{G}^{-1}(\mathcal{Q}(\cdot))$ is closed in $X
\times X$.

We now take the intersection with $\mathbb{B}_{a}(\bar{x}) \times
\mathbb{B}_{a}(\bar{x})$.

There results that $gph T_{0} \cap (\mathbb{B}_{a}(\bar{x}) \times
\mathbb{B}_{a}(\bar{x}))$ is closed, that is, $gph T_{0} \cap
(\mathbb{B}_{a^{\ast}}(\widetilde{s}_{0},\widetilde{\phi}_{0}')
\times
\mathbb{B}_{a^{\ast}}(\widetilde{s}_{0},\widetilde{\phi}_{0}'))$
is closed.

\textbf{- An estimate for
$d((\widetilde{s}_{0},\widetilde{\phi}_{0}'),T_{0}(\widetilde{s}_{0},\widetilde{\phi}_{0}'))$
from the condition I of Theorem
\ref{teorema_contraction_mapping_principle_Dontchev_Rockafellar2009}}

Using (\ref{e5_57_consecinta_th_Graves_mathcalG_g}), we have
$d((\widetilde{s}_{0},\widetilde{\phi}_{0}'),T_{0}(\widetilde{s}_{0},\widetilde{\phi}_{0}'))$
$=$
$d((\widetilde{s}_{0},\widetilde{\phi}_{0}'),\mathcal{G}^{-1}(\mathcal{Q}(\widetilde{s}_{0},\widetilde{\phi}_{0}')))$
\\
$\leq$ $\frac{\kappa}{1-\kappa \mu}
\|\mathcal{Q}(\widetilde{s}_{0},\widetilde{\phi}_{0}')-\mathcal{G}(\widetilde{s}_{0},\widetilde{\phi}_{0}')\|$
$=$ $\frac{\kappa}{1-\kappa \mu} \|S(\widetilde{s}_{0}) \|$ $=$
$\frac{\kappa}{1-\kappa \mu} \delta$ $=$ $\frac{1}{c}\delta$.

\textbf{- An estimate for $e(T_{0}(s,\phi') \cap
\mathbb{B}_{a^{\ast}}(\widetilde{s}_{0},\widetilde{\phi}_{0}'),
T_{0}(\overline{s},\overline{\phi}'))$ from the condition II of
Theorem
\ref{teorema_contraction_mapping_principle_Dontchev_Rockafellar2009}}

We obtain
\begin{equation}
\label{e5_57_conditie_th_dem_mathcal_dif_g_DR_S3}
   \| \mathcal{Q}(s,\phi') - \mathcal{Q}(\overline{s},\overline{\phi}') \|
   \leq
   \psi_{1}((s,\phi'),(\overline{s},\overline{\phi}'))
   + \alpha \psi_{2}((s,\phi'),(\overline{s},\overline{\phi}'))
   \, .
\end{equation}

We have the equivalence $(\hat{s},\hat{\phi}') \in
\mathcal{G}^{-1}(\mathcal{Q}(s,\phi')) \cap
\mathbb{B}_{a^{\ast}}(\widetilde{s}_{0},\widetilde{\phi}_{0}')$ $\Leftrightarrow$ \\
$\mathcal{Q}(s,\phi') \in \mathcal{G}(\hat{s},\hat{\phi}')$ and
$(\hat{s},\hat{\phi}') \in
\mathbb{B}_{a^{\ast}}(\widetilde{s}_{0},\widetilde{\phi}_{0}')$.
We remember that $\mathcal{Q}$ is a function and
$\frac{\kappa}{1-\kappa \mu}$ $=$ $\frac{1}{c}$.

Using (\ref{e5_57_consecinta_th_Graves_mathcalG_g}), we have
$e(T_{0}(s,\phi') \cap
\mathbb{B}_{a^{\ast}}(\widetilde{s}_{0},\widetilde{\phi}_{0}'),
T_{0}(\overline{s},\overline{\phi}'))$ \\
$\leq$ $\sup \{
d((\hat{s},\hat{\phi}'),T_{0}(\overline{s},\overline{\phi}')) |
(\hat{s},\hat{\phi}') \in T_{0}(s,\phi') \cap
\mathbb{B}_{a^{\ast}}(\widetilde{s}_{0},\widetilde{\phi}_{0}') \}$ \\
$=$ $\sup \{
d((\hat{s},\hat{\phi}'),\mathcal{G}^{-1}(\mathcal{Q}(\overline{s},\overline{\phi}')))
| (\hat{s},\hat{\phi}') \in \mathcal{G}^{-1}(\mathcal{Q}(s,\phi'))
\cap
\mathbb{B}_{a^{\ast}}(\widetilde{s}_{0},\widetilde{\phi}_{0}') \}$ \\
$\leq$ $\sup \{ \frac{\kappa}{1-\kappa \mu}
\|\mathcal{Q}(\overline{s},\overline{\phi}')-\mathcal{G}(\hat{s},\hat{\phi}')\|
\; | \; (\hat{s},\hat{\phi}') \in
\mathcal{G}^{-1}(\mathcal{Q}(s,\phi')) \cap
\mathbb{B}_{a^{\ast}}(\widetilde{s}_{0},\widetilde{\phi}_{0}') \}$ \\
$\leq$ $\sup \{ \frac{\kappa}{1-\kappa \mu}
\|\mathcal{Q}(\overline{s},\overline{\phi}')-\mathcal{G}(\hat{s},\hat{\phi}')\|
\; | \; \mathcal{Q}(s,\phi') \in \mathcal{G}(\hat{s},\hat{\phi}'),
(\hat{s},\hat{\phi}') \in  \mathbb{B}_{a^{\ast}}(\widetilde{s}_{0},\widetilde{\phi}_{0}') \}$ \\
$=$ $\frac{\kappa}{1-\kappa \mu}
\|\mathcal{Q}(\overline{s},\overline{\phi}')-\mathcal{Q}(s,\phi')\|$
$\leq$ $\frac{\kappa}{1-\kappa \mu} (\frac{1}{2} L(\varepsilon) +
\alpha \widehat{a})
\|(\overline{s},\overline{\phi}')-(s,\phi')\|$, \\
for all $(s,\phi')$, $(\overline{s},\overline{\phi}')$ $\in$
$\mathbb{B}_{a^{\ast}}(\widetilde{s}_{0},\widetilde{\phi}_{0}')$.

\textbf{- (vi) The formulation of some conditions related to
conditions I and II from Theorem
\ref{teorema_contraction_mapping_principle_Dontchev_Rockafellar2009}}

Condition II from Theorem
\ref{teorema_contraction_mapping_principle_Dontchev_Rockafellar2009}
leads to
\begin{equation}
\label{dem_e_A2_37_TEXT_G_prod_alphaastastast_Conditia doi}
   \frac{1}{c}(\frac{1}{2} L(\varepsilon) + \alpha \widehat{a})
      \leq \lambda \, .
\end{equation}
We take $\lambda$ $=$ $\frac{1}{c}(\frac{1}{2} L(\varepsilon) +
\alpha \widehat{a})$. From
(\ref{dem_e_A2_37_TEXT_G_prod_alphaastastast_var_exact_equivalent}),
we have $0 < \lambda < \frac{1}{2}$. Hence,
\begin{equation}
\label{dem_e_A2_37_TEXT_G_prod_alphaastastast_var_lambda_LA_FEL_SI_CEL_INITIAL}
   \lambda = \frac{1}{c}(\frac{1}{2} L(\varepsilon)
      + \alpha \widehat{a}) < \frac{1}{2} \, .
\end{equation}

Condition I from Theorem
\ref{teorema_contraction_mapping_principle_Dontchev_Rockafellar2009}
leads to
\begin{equation}
\label{dem_e_A2_37_TEXT_G_prod_alphaastastast_Conditia unu}
   \frac{1}{c}\delta < a^{\ast} (1-\lambda)
   \quad \textrm{or} \quad
   \delta < c a^{\ast} (1-\lambda) \, .
\end{equation}

We have $b^{\ast}$ $=$ $c a^{\ast}$ from
(\ref{dem_e_A2_38_TEXT_G_estimare_exact_a_h_ast_h_DOI_inf_min})
and $\frac{1}{2}$ $<$ $1-\lambda$. Taking into account conditions
(\ref{dem_e_A2_38_TEXT_G_estimare_inf}) and
(\ref{dem_e_A2_37_TEXT_G_prod_alphaastastast_Conditia unu}), we
impose
\begin{equation}
\label{dem_e_A2_38_TEXT_G_estimare_M_a_h_conditie_est2_inf_a_si_b_lambda}
   \delta < \min \{ c a^{\ast} (1-\lambda), \frac{1}{2} b^{\ast} \}
   = \frac{1}{2} b^{\ast}
   = \frac{1}{2} c a^{\ast} \, ,
\end{equation}
that is, assumption
(\ref{dem_e_A2_38_TEXT_G_estimare_M_a_b_h_conditie_inf_th_exact}).

With $\lambda$ given by
(\ref{dem_e_A2_37_TEXT_G_prod_alphaastastast_var_lambda_LA_FEL_SI_CEL_INITIAL}),
conditions I and II, from Theorem
\ref{teorema_contraction_mapping_principle_Dontchev_Rockafellar2009},
are verified.

\textbf{- (vii) The formulation of the conclusion of the
contraction mapping principle for set-valued mappings}

The assumptions of Theorem
\ref{teorema_contraction_mapping_principle_Dontchev_Rockafellar2009}
are verified, hence $T_{0}$ has a fixed point in
$\mathbb{B}_{a^{\ast}}(\widetilde{s}_{0},\widetilde{\phi}_{0}')$;
that is, there exists $(\bar{s},\bar{\phi}')$ $\in$
$\mathbb{B}_{a^{\ast}}(\widetilde{s}_{0},\widetilde{\phi}_{0}')$
such that $(\bar{s},\bar{\phi}') \in T_{0}(\bar{s},\bar{\phi}')$.

$$(\bar{s},\bar{\phi}') \in T_{0}(\bar{s},\bar{\phi}')
\Leftrightarrow (\bar{s},\bar{\phi}') \in
\mathcal{G}^{-1}(\mathcal{Q}(\bar{s},\bar{\phi}'))$$

$$\Leftrightarrow \mathcal{Q}(\bar{s},\bar{\phi}') \in
\mathcal{G}(\bar{s},\bar{\phi}') \Leftrightarrow
\mathcal{G}(\bar{s},\bar{\phi}') -
\mathcal{Q}(\bar{s},\bar{\phi}') \ni 0 \Leftrightarrow$$

\begin{equation}
\label{e5_57_forma2_sistem_REG_2_GGGvarianta_infinit_dimensionale_DEM_pc_fix}
   S(\bar{s})
   - \Phi(\bar{x},\bar{\phi}')
   - \Phi(\widetilde{x}_{0},\xi(\bar{f},\bar{g}_{i},\bar{e}_{k}))
   + \Phi(\widetilde{x}_{0},\xi(\bar{g}',\bar{g}_{i}',\bar{e}_{k}'))
   \ni 0 \, ,
\end{equation}
$$\Leftrightarrow$$
\begin{equation}
\label{e5_57_forma2_sistem_REG_2_GGGvarianta_infinit_dimensionale_DEM_pc_fix_cont}
   S(\hat{s}_{0})
   - \Phi(\hat{x}_{0},\hat{\phi}_{0}')
   \ni 0 \, ,
\end{equation}
that is, $(\hat{s}_{0},\hat{\phi}_{0}')$ is a solution  of the
equation
(\ref{e5_57_forma2_sistem_REG_2_GGGvarianta_infinit_dimensionale_DEM_pc_fix_cont_th_princ}),
where \\ $\hat{s}_{0}$ $=$
$(\hat{x}_{0},\hat{y}_{1,0},\ldots,\hat{y}_{q+m,0},\hat{\mathrm{z}}_{1,0},\ldots,\hat{\mathrm{z}}_{n,0})$,
$\hat{\phi}_{0}'$ $=$
$(\hat{y}_{0}',\hat{y}_{1,0}',\ldots,\hat{y}_{q+m,0}',\hat{\mathrm{z}}_{1,0}',\ldots,\hat{\mathrm{z}}_{n,0}')$
and $\hat{x}_{0}$ $=$
$(\hat{f}_{0},\hat{\lambda}_{0},\hat{u}_{0})$ $=$
$(0,\bar{\lambda},\bar{u})$, $\hat{y}_{i,0}$ $=$
$(\hat{g}_{i,0},\hat{\mu}_{i,0},\hat{w}_{i,0})$ $=$
$(0,\bar{\mu}_{i},\bar{w}_{i})$, $\hat{\mathrm{z}}_{k,0}$ $=$
$(\hat{e}_{k,0},\hat{v}_{k,0})$ $=$ $(0,\bar{v}_{k})$,
$\hat{y}_{0}'$ $=$ $(\hat{g}_{0}',\hat{\mu}_{0}',\hat{w}_{0}')$
$=$ $(0,\bar{\mu}',\bar{w}')$, $\hat{y}_{i,0}'$ $=$
$(\hat{g}_{i,0}',\hat{\mu}_{i,0}',\hat{w}_{i,0}')$ $=$
$(0,\bar{\mu}_{i}',\bar{w}_{i}')$, $\hat{\mathrm{z}}_{k,0}'$ $=$
$(\hat{e}_{k,0}',\hat{v}_{k,0}')$ $=$ $(0,\bar{v}_{k}')$, for
$i=1,\ldots,q+m$, $k=1,\ldots,n$.

It also results $(\hat{s}_{0},\hat{\phi}_{0}')$ $\in$
$\mathbb{B}_{a^{\ast}}(\widetilde{s}_{0},\widetilde{\phi}_{0}')$.
Indeed, we have:

$\| (\widetilde{s}_{0},\widetilde{\phi}_{0}') -
(\hat{s}_{0},\hat{\phi}_{0}') \|$ $\leq$ $\|
(\widetilde{s}_{0},\widetilde{\phi}_{0}') -
(\hat{s}_{0},\hat{\phi}_{0}') \|$ $+$ $\| \widetilde{f}_{0} -
\bar{f} \|$ $+$ $\sum_{i = 1}^{q+m}\| \widetilde{g}_{i,0} -
\bar{g}_{i} \|$ $+$ $\sum_{k = 1}^{n}\| \widetilde{e}_{k,0} -
\bar{e}_{k} \|$ $+$ $\| \widetilde{g}_{0}' - \bar{g}' \|$ $+$
$\sum_{i = 1}^{q+m}\| \widetilde{g}_{i,0}' - \bar{g}_{i}' \|$ $+$
$\sum_{k = 1}^{n}\| \widetilde{e}_{k,0}' - \bar{e}_{k}' \|$ $=$
$\| (\widetilde{s}_{0},\widetilde{\phi}_{0}') -
(\bar{s},\bar{\phi}') \|$ $\leq$ $a^{\ast}$.

(\ref{e5_57_forma2_sistem_REG_2_GGGvarianta_infinit_dimensionale_DEM_pc_fix_cont})
becomes
\begin{equation}
\label{e5_57_forma2_sistem_REG_2_GGGvarianta_infinit_dimensionale_DEM_pc_fix_cont_bar}
   \left[\begin{array}{l}
      B(\hat{x}_{0})-\widetilde{\theta}_{0}-B(\hat{y}_{0}') \\
      G(\hat{x}_{0})-DG(\hat{x}_{0})\hat{y}_{0}' \\
      B(\hat{y}_{i,0}-\hat{y}_{i,0}')-\delta_{i}^{q+m} \\
      DG(\hat{x}_{0})(\hat{y}_{i,0}-\hat{y}_{i,0}') \\
      \bar{\mathcal{B}}(\hat{\mathrm{z}}_{k,0}-\hat{\mathrm{z}}_{k,0}')-\delta_{k}^{n} \\
      H(\lambda_{0},u_{0},(\hat{\mathrm{z}}_{k,0}-\hat{\mathrm{z}}_{k,0}'))
   \end{array}\right]
   \ni 0 \, .
\end{equation}

Let us fix $\hat{x}_{0}$ and $\hat{y}_{0}'$ in $\Phi_{G}(x,y')$
from the first line of $\Phi(x,\phi')$ in
(\ref{e5_57_forma2_sistem_REG_2_GGGvarianta_infinit_dimensionale_DEM_pc_fix_cont_th_princ}).
Let us take $\theta_{0}$ $=$
$\widetilde{\theta}_{0}+B(\hat{y}_{0}')$ and $\varrho$ $=$
$DF(\hat{\lambda}_{0},\hat{u}_{0})(\hat{\mu}_{0}',\hat{w}_{0}')$.
$S_{0}$ denotes the function $S$ from (\ref{e5_57}) formulated for
this $\theta_{0}$ and for $F(\cdot,\cdot)-\varrho$ instead of $F$.
Let us use Remark \ref{observatia_diferentiala_G}.

Then,
(\ref{e5_57_forma2_sistem_REG_2_GGGvarianta_infinit_dimensionale_DEM_pc_fix_cont_bar})
gives
\begin{equation}
\label{e5_57_forma2_sistem_REG_2_GGGvarianta_infinit_dimensionale_DEM_pc_fix_cont_bar_ec}
   \left[\begin{array}{l}
      B(\hat{x}_{0})-\theta_{0} \\
      G(\hat{x}_{0})-\varrho \\
      B(\hat{y}_{i,0}-\hat{y}_{i,0}')-\delta_{i}^{q+m} \\
      D(G(\hat{x}_{0})-\varrho)(\hat{y}_{i,0}-\hat{y}_{i,0}') \\
      \bar{\mathcal{B}}(\hat{\mathrm{z}}_{k,0}-\hat{\mathrm{z}}_{k,0}')-\delta_{k}^{n} \\
      \bar{H}(\varrho,\lambda_{0},u_{0},(\hat{\mathrm{z}}_{k,0}-\hat{\mathrm{z}}_{k,0}'))
   \end{array}\right]
   = 0 \, ,
\end{equation}
where $\bar{H}$ is $H$ for $F(\cdot,\cdot)-\varrho$ instead of
$F(\cdot,\cdot)$,
\begin{equation*}
\label{e5_2_H_bar}
   \bar{H}:Z \times \mathbb{R}^{m} \times W \times \Delta \rightarrow Z, \
      \bar{H}(\varrho,\lambda,u,\mathrm{z})=D_{u}(F(\lambda,u)-\varrho)v
         -\sum_{k=1}^{n}e^{k}\bar{b}_{k} \, .
\end{equation*}

Let $s_{0}$ $=$ $(x_{0}, y_{1,0}, \ldots, y_{q+m,0},
\mathrm{z}_{1,0}, \ldots, \mathrm{z}_{n,0})$ with $x_{0}$ $=$
$\hat{x}_{0}$, $y_{i,0}$ $=$ $\hat{y}_{i,0}-\hat{y}_{i,0}'$,
$\mathrm{z}_{k,0}$ $=$
$\hat{\mathrm{z}}_{k,0}-\hat{\mathrm{z}}_{k,0}'$ and
$i=1,\ldots,q+m$, $k=1,\ldots,n$.

Relation
(\ref{e5_57_forma2_sistem_REG_2_GGGvarianta_infinit_dimensionale_DEM_pc_fix_cont_bar_ec})
means that $s_{0}$ is a solution of the system (\ref{e5_56_b})
verifying the assertion (a) of the statement (ii) of Theorem
\ref{teorema_principala_parteaMAIN}.

It also results $s_{0}$ $\in$
$\mathbb{B}_{a^{\ast}}(\widetilde{s}_{0})$. Indeed, we have:

$\| s_{0} - \widetilde{s}_{0} \|$ $=$ $\| \hat{x}_{0} -
\widetilde{x}_{0} \|$ $+$ $\sum_{i = 1}^{q+m} \|
(\hat{y}_{i,0}-\hat{y}_{i,0}') - \widetilde{y}_{i,0} \|$ $+$
$\sum_{k = 1}^{n} \|
(\hat{\mathrm{z}}_{k,0}-\hat{\mathrm{z}}_{k,0}') -
\widetilde{\mathrm{z}}_{k,0} \|$ $\leq$ $\| \hat{x}_{0} -
\widetilde{x}_{0} \|$ $+$ $\sum_{i = 1}^{q+m} \| \hat{y}_{i,0} -
\widetilde{y}_{i,0} \|$ $+$ $\sum_{k = 1}^{n} \|
\hat{\mathrm{z}}_{k,0} - \widetilde{\mathrm{z}}_{k,0} \|$ $+$
$\sum_{i = 1}^{q+m} \| \hat{y}_{i,0}' - \widetilde{y}_{i,0}' \|$
$+$ $\sum_{k = 1}^{n} \| \hat{\mathrm{z}}_{k,0}' -
\widetilde{\mathrm{z}}_{k,0}' \|$ $\leq$ $\|
(\hat{s}_{0},\hat{\phi}_{0}') -
(\widetilde{s}_{0},\widetilde{\phi}_{0}') \|$ $\leq$ $a^{\ast}$.

\textbf{- (viii) The existence of some isomorphisms}

We have $\gamma_{S_{0}}$ $=$ $\gamma$ , $L_{S}(\varepsilon)$ $=$
$L_{S_{0}}(\varepsilon)$. $DS_{0}(\widetilde{s}_{0})$ $=$
$DS(\widetilde{s}_{0})$ is an isomorphism of $\Gamma$ onto
$\Sigma$. We have $\kappa \leq \gamma$ and
$\frac{1}{2}L_{S}(\varepsilon) \leq L(\varepsilon)$. Then,

$\gamma L_{S}(a^{\ast})$ $\leq$ $2 \gamma
\frac{1}{2}L_{S}(\varepsilon)$ $\leq$ $2 \gamma L(\varepsilon) <
1$.

$L_{S}(a^{\ast})$ $<$ $\frac{1}{\gamma}$, so $DS_{0}(s_{0})$ $=$
$DS(s_{0})$ is an isomorphism of $\Gamma$ onto $\Sigma$.

$\gamma_{\Psi}$ $\leq$ $\gamma$ (this is verified using the
definition), $L_{\Psi}(a^{\ast})$ $\leq$ $L_{S}(a^{\ast})$.

$\gamma_{\Psi} L_{\Psi}(a^{\ast})$ $\leq$ $\gamma L_{S}(a^{\ast})$
$<$ $1$, hence $D\Psi_{0}(x_{0})$ $=$ $D\Psi(x_{0})$ is an
isomorphism of $X$ onto $Y$.

The system (\ref{e5_56_b}) and its solution $s_{0}$ verify the
assertion (b) of the statement (ii) of Theorem
\ref{teorema_principala_parteaMAIN}.

As in the proofs of Theorem 3.1 \cite{CLBichir_bib_Cr_Ra1990},
Theorem IV.3.1 \cite{CLBichir_bib_Gir_Rav1986} and Theorem I.2.1
\cite{CLBichir_bib_CalozRappaz1997}, it follows that $s_{0}$ is
the unique solution of the equation (\ref{e5_56_b}) in
$\mathbb{B}_{a}(\widetilde{s}_{0})$ for every $a$ $\geq$
$a^{\ast}$ that satisfies $\gamma L_{S}(a) < 1$.

We now use Theorem \ref{teorema_principala_parteaMAIN}. Then, the
component $(\lambda_{0},u_{0})$ of $s_{0}$ is a solution of the
equation
(\ref{e5_1_sol_widetilde_x_0h_Inv_Fc_Th_ec_DATA_introd_exact})
that satisfies hypothesis (\ref{ipotezaHypF}) and the rest of the
hypotheses of the statement (i) of Theorem
\ref{teorema_principala_parteaMAIN} in
$\mathbb{B}_{a}(\widetilde{\lambda}_{0},\widetilde{u}_{0})$.

Assume that there exist two different such solutions in
$\mathbb{B}_{a}(\widetilde{\lambda}_{0},\widetilde{u}_{0})$. Then,
equation (\ref{e5_56_b}) has two different solutions in
$\mathbb{B}_{a}(\widetilde{s}_{0})$. This contradicts the
uniqueness of $s_{0}$ in $\mathbb{B}_{a}(\widetilde{s}_{0})$.
Then, $(\lambda_{0},u_{0})$ is the unique solution of the equation
(\ref{e5_1_sol_widetilde_x_0h_Inv_Fc_Th_ec_DATA_introd_exact})
that satisfies hypothesis (\ref{ipotezaHypF}) and the rest of the
hypotheses of the statement (i) of Theorem
\ref{teorema_principala_parteaMAIN} in
$\mathbb{B}_{a}(\widetilde{\lambda}_{0},\widetilde{u}_{0})$ for
every $a$ $\geq$ $a^{\ast}$ that satisfies $\gamma L_{S}(a) < 1$.

\textbf{- (ix) Some estimates}

We have $2 \gamma L_{S_{0}}(a^{\ast})$ $\leq$ $1$ and
$DS_{0}(\widetilde{s}_{0})$ is an isomorphism of $\Gamma$ onto
$\Sigma$. Then, the hypotheses of Lemma
\ref{lema_A2_7_th_fc_impl_th_apl_inv} are satisfied in our
situation. We get: for any $\zeta$ $\in$ $\textrm{int} \,
\mathbb{B}_{\frac{a^{\ast}}{2 \gamma}}(S_{0}(\widetilde{s}_{0}))$,
the equation $S_{0}(s)$ $=$ $\zeta$ has a unique solution $s$ in
$\mathbb{B}_{a_{1}}(\widetilde{s}_{0})$, where $a_{1}$ $=$ $2
\gamma\| S_{0}(\widetilde{s}_{0}) - \zeta \|$ $\leq$ $a^{\ast}$.

For our proof, we take $\zeta = 0$. We obtain $2 \gamma\|
S_{0}(\widetilde{s}_{0}) \|$ $\leq$ $a^{\ast}$. We have $\|
\varrho \|$ $=$ $\| S_{0}(\widetilde{s}_{0}) -
S(\widetilde{s}_{0}) \|$ $\leq$ $\| S_{0}(\widetilde{s}_{0}) \|$
$+$ $\| S(\widetilde{s}_{0}) \|$ $\leq$ $\|
S_{0}(\widetilde{s}_{0}) \|$ $+$ $\| S(\widetilde{s}_{0}) \|$
$\leq$ $\frac{a^{\ast}}{2 \gamma}$ $+$ $\| S(\widetilde{s}_{0})
\|$. Hence,
(\ref{e5_1_sol_widetilde_x_0h_Inv_Fc_Th_ec_DATA_introd_exact_varrho})
holds. We also use
(\ref{dem_e_A2_38_TEXT_G_estimare_M_a_b_h_conditie_inf_th_exact})
and $\frac{a^{\ast}}{2 \gamma}$ $+$ $\frac{1}{2} c a^{\ast}$ $=$
$(\frac{1}{\gamma} - \frac{1}{2}L(\varepsilon))a^{\ast}$.

We also have $\| \varrho \|$ $=$ $\|
DF(\hat{\lambda}_{0},\hat{u}_{0})(\hat{\mu}_{0}',\hat{w}_{0}') \|$
$\leq$ $(\| DF(\widetilde{\lambda}_{0},\widetilde{u}_{0}) \|$ $+$
$L_{F}(a^{\ast})) a^{\ast}$. Then, we have
(\ref{e5_1_sol_widetilde_x_0h_Inv_Fc_Th_ec_DATA_introd_exact_varrho_var}).

Relation (\ref{e5_49_th}) is obtained as in the proofs of Theorem
3.1 \cite{CLBichir_bib_Cr_Ra1990}, Theorem IV.3.1
\cite{CLBichir_bib_Gir_Rav1986} and Theorem I.2.1
\cite{CLBichir_bib_CalozRappaz1997}.

\subsection{Some consequences}
\label{sectiunea_01_O_formulare_pe_spatii_infinit_dimensionale_6}

\begin{cor}
\label{corolarul_raze_teorema_principala_spatii_infinit_dimensionale_widetilde_s_3_0_exact}
Theorem
\ref{teorema_principala_spatii_infinit_dimensionale_widetilde_s_3_0_exact_inf}
holds for those $a^{\ast}$ and $b^{\ast}$ given by Lemma
\ref{Lema_spatii_infinit_dimensionale_th_Graves_estimare_raze_vecinatati},
under the conditions that
(\ref{dem_e_A2_38_TEXT_G_estimare_g_DR_B_b_h}) is verified, that
is, $
\|\mathcal{Q}(s,\phi')-\mathcal{G}(\widetilde{s}_{0},\widetilde{\phi}_{0}')\|
   < b^{\ast}$, for $(s,\phi')$ $\in$
$\mathbb{B}_{a^{\ast}}(\widetilde{s}_{0},\widetilde{\phi}_{0}')$,
and it is satisfied condition I from Theorem
\ref{teorema_contraction_mapping_principle_Dontchev_Rockafellar2009},
that is, (\ref{dem_e_A2_37_TEXT_G_prod_alphaastastast_Conditia
unu}) or $\frac{1}{c}\delta$ $<$ $a^{\ast} (1-\lambda)$, with
$\lambda$ given by
(\ref{dem_e_A2_37_TEXT_G_prod_alphaastastast_var_lambda_LA_FEL_SI_CEL_INITIAL})
or $\lambda$ $=$ $\frac{1}{c}(\frac{1}{2} L(\varepsilon) + \alpha
\widehat{a})$ $<$ $\frac{1}{2}$.
\end{cor}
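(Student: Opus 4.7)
The plan is to revisit the proof of Theorem \ref{teorema_principala_spatii_infinit_dimensionale_widetilde_s_3_0_exact_inf} step by step and identify precisely where the particular choice of $a^{\ast}$ and $b^{\ast}$ from Lemma \ref{Lema_spatii_infinit_dimensionale_th_Graves_estimare_raze_vecinatati_ast}\,(i) was used, then show that those uses can be replaced by the two hypotheses that the corollary assumes. The proof splits into three pieces: (a) the metric regularity inequality (\ref{e5_57_consecinta_th_Graves_dem}) on $\mathbb{B}_{a^{\ast}}(\widetilde{s}_{0},\widetilde{\phi}_{0}') \times \mathbb{B}_{b^{\ast}}(\mathcal{G}(\widetilde{s}_{0},\widetilde{\phi}_{0}'))$, (b) the verification of the two conditions of Theorem \ref{teorema_contraction_mapping_principle_Dontchev_Rockafellar2009}, and (c) the extraction of $s_{0}$, $\varrho$ and the isomorphism statements from the resulting fixed point. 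Only parts (a) and (b) involve the radii.

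For (a), the inequality (\ref{e5_57_consecinta_th_Graves_dem}) is inherited from Corollary \ref{teorema_principala_spatii_infinit_dimensionale_cor_th_Graves} applied to $\mathcal{G}$ at $(\widetilde{s}_{0},\widetilde{\phi}_{0}')$; by Lemma \ref{Lema_spatii_infinit_dimensionale_th_Graves_estimare_raze_vecinatati_general} the pair $(a^{\ast},b^{\ast})$ produced by the general case Lemma \ref{Lema_spatii_infinit_dimensionale_th_Graves_estimare_raze_vecinatati} also lies inside the neighborhoods $U\times V$ furnished by that corollary, so (\ref{e5_57_consecinta_th_Graves_dem}) continues to hold verbatim with the new radii. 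Hence (\ref{e5_57_consecinta_th_Graves_mathcalG_g}) also carries over, which is the ingredient needed to estimate $d((s,\phi'),T_{0}(s,\phi'))$ and $e(T_{0}(u)\cap\mathbb{B}_{a^{\ast}}, T_{0}(v))$ for the set-valued contraction mapping $T_{0}(s,\phi') = \mathcal{G}^{-1}(\mathcal{Q}(s,\phi'))$.

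For (b), two facts have to be checked. First, one must know that $\mathcal{Q}(s,\phi')$ lies in $\mathbb{B}_{b^{\ast}}(\mathcal{G}(\widetilde{s}_{0},\widetilde{\phi}_{0}'))$ so that metric regularity can be applied at $\zeta = \mathcal{Q}(\bar{s},\bar{\phi}')$; in the original proof this was deduced by imposing $\delta < \tfrac{1}{2}ca^{\ast}$, whereas here it is precisely the standing assumption (\ref{dem_e_A2_38_TEXT_G_estimare_g_DR_B_b_h}). Second, Condition I of Theorem \ref{teorema_contraction_mapping_principle_Dontchev_Rockafellar2009}, namely $d((\widetilde{s}_{0},\widetilde{\phi}_{0}'),T_{0}(\widetilde{s}_{0},\widetilde{\phi}_{0}')) \leq \tfrac{1}{c}\delta < a^{\ast}(1-\lambda)$, is exactly (\ref{dem_e_A2_37_TEXT_G_prod_alphaastastast_Conditia unu}), which is the second standing assumption. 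Condition II is the contraction estimate derived via (\ref{e5_57_consecinta_th_Graves_mathcalG_g}) together with the bound $\|\mathcal{Q}(s,\phi')-\mathcal{Q}(\overline{s},\overline{\phi}')\| \le (\tfrac12 L(\varepsilon)+\alpha\widehat{a})\|(s,\phi')-(\overline{s},\overline{\phi}')\|$, and this bound depends only on $\varepsilon$ and (\ref{dem_e_A2_37_TEXT_G_prod_alphaastastast_var_lambda_LA_FEL_SI_CEL_INITIAL}), not on the specific choice of $a^{\ast}$; so Condition II also holds. The closedness of $gphT_{0}\cap(\mathbb{B}_{a^{\ast}}\times\mathbb{B}_{a^{\ast}})$ is a general fact about $\mathcal{G}$ and $\mathcal{Q}$ and does not depend on the radii either.

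Once the set-valued contraction principle yields a fixed point $(\bar{s},\bar{\phi}')\in\mathbb{B}_{a^{\ast}}(\widetilde{s}_{0},\widetilde{\phi}_{0}')$, the remainder of the proof (extracting $\hat{s}_{0},\hat{\phi}_{0}'$, defining $\theta_{0}$, $\varrho$ and $S_{0}$, verifying that $s_{0}$ solves (\ref{e5_56_b}), and concluding through Theorem \ref{teorema_principala_parteaMAIN} that $(\lambda_{0},u_{0})$ is a bifurcation point of (\ref{e5_1_sol_widetilde_x_0h_Inv_Fc_Th_ec_DATA_introd_exact})) is word-for-word identical to the original, since those arguments never invoked the explicit formulas (\ref{dem_e_A2_38_TEXT_G_estimare_exact_a_h_ast_h_DOI_inf_min}). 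The delicate point, which deserves one careful check, is the estimate $\|\varrho\|$: the bound (\ref{e5_1_sol_widetilde_x_0h_Inv_Fc_Th_ec_DATA_introd_exact_varrho}) was obtained via Lemma \ref{lema_A2_7_th_fc_impl_th_apl_inv} starting from $2\gamma L_{S_{0}}(a^{\ast})\le 1$, so one needs the assumption $\gamma L_{S}(a^{\ast})<1$ (or the stronger $2\gamma L(\varepsilon)+2\kappa\alpha\widehat{a}<1$) to remain in force; with that, the estimates (\ref{e5_1_sol_widetilde_x_0h_Inv_Fc_Th_ec_DATA_introd_exact_varrho})--(\ref{e5_49_th}) are reproduced verbatim. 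I expect the only genuine subtlety to be bookkeeping: making sure that the new $a^{\ast}$ and $b^{\ast}=ca^{\ast}$ are still compatible with the uniqueness ball $\gamma L_{S}(a)<1$ used at the end of the argument, which is automatic since $L_{S}$ is nondecreasing in the radius and $a^{\ast}<\varepsilon$.
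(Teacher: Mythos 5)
Your proposal is correct and takes essentially the same route as the paper: the paper gives no separate proof of this corollary, and its justification is already flagged inside the proof of Theorem \ref{teorema_principala_spatii_infinit_dimensionale_widetilde_s_3_0_exact_inf} ("We can consider $\tau$, $a^{\ast}$ and $b^{\ast}$ from the general case as in Lemma \ref{Lema_spatii_infinit_dimensionale_th_Graves_estimare_raze_vecinatati} $\ldots$ The following discussion is true for every choice"); you correctly identify the only two points where the explicit formulas entered, namely keeping $\mathcal{Q}(s,\phi')$ inside $\mathbb{B}_{b^{\ast}}(\mathcal{G}(\widetilde{s}_{0},\widetilde{\phi}_{0}'))$ and verifying condition I of Theorem \ref{teorema_contraction_mapping_principle_Dontchev_Rockafellar2009}, and you substitute the corollary's two standing assumptions for them. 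One tiny caveat: your "reproduced verbatim" for the estimates is slightly loose, since the second strict inequality in (\ref{e5_1_sol_widetilde_x_0h_Inv_Fc_Th_ec_DATA_introd_exact_varrho}) relied on $\delta < \tfrac{1}{2}ca^{\ast}$, which the corollary's weaker condition (\ref{dem_e_A2_37_TEXT_G_prod_alphaastastast_Conditia unu}) does not imply, so that particular numerical bound would need a slightly adjusted constant; this does not affect the existence and uniqueness conclusions.
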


\begin{cor}
\label{corolarul_trei_3_teorema_principala_spatii_infinit_dimensionale_widetilde_s_3_0_exact_zero}
Assume that $(\widetilde{\lambda}_{0},\widetilde{u}_{0})$ belongs
to a solution branch of equation (\ref{e5_1}).
$(\widetilde{\lambda}_{0},\widetilde{u}_{0})$ can be a regular or
a nonregular solution. Assume the hypotheses of Theorem
\ref{teorema_principala_spatii_infinit_dimensionale_widetilde_s_3_0_exact_inf}.
$\Psi(\widetilde{x}_{0})=0$ in $\delta$. If $\varrho$ $\neq$ $0$,
then, the given problem (\ref{e5_1}) is a perturbation of the
bifurcation problem
(\ref{e5_1_sol_widetilde_x_0h_Inv_Fc_Th_ec_DATA_introd_exact}). If
$\varrho$ $=$ $0$, then, the bifurcation point
$(\lambda_{0},u_{0})$ belongs to the solution branch of equation
(\ref{e5_1}).
\end{cor}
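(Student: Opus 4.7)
The plan is to verify the equality $\Psi(\widetilde{x}_{0})=0$ directly from the setup of Theorem \ref{teorema_principala_spatii_infinit_dimensionale_widetilde_s_3_0_exact_inf} and then to split the conclusion into the two cases $\varrho \neq 0$ and $\varrho = 0$, each of which reduces to a short algebraic observation. First I would use that, in the notation of Subsection \ref{sectiunea_01_O_formulare_pe_spatii_infinit_dimensionale_2}, $\widetilde{x}_{0}=(\widetilde{f}_{0},\widetilde{\lambda}_{0},\widetilde{u}_{0})$ with $\widetilde{f}_{0}=0$. Plugging this into the definition (\ref{e5_2}) of $G$ gives $G(\widetilde{x}_{0}) = F(\widetilde{\lambda}_{0},\widetilde{u}_{0}) - \sum_{i=1}^{q}\widetilde{f}_{0}^{\,i}\bar{a}_{i} = F(\widetilde{\lambda}_{0},\widetilde{u}_{0})$. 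By the standing hypothesis of the corollary, $(\widetilde{\lambda}_{0},\widetilde{u}_{0})$ lies on a solution branch of (\ref{e5_1}), hence $F(\widetilde{\lambda}_{0},\widetilde{u}_{0})=0$, so $G(\widetilde{x}_{0})=0$. Since Theorem \ref{teorema_principala_spatii_infinit_dimensionale_widetilde_s_3_0_exact_inf} fixes $\widetilde{\theta}_{0}=B(\widetilde{x}_{0})$, we also have $B(\widetilde{x}_{0})-\widetilde{\theta}_{0}=0$. Reading the two rows of (\ref{e5_23}), this yields $\Psi(\widetilde{x}_{0})=0$. In particular, the first block of $S(\widetilde{s}_{0})$ vanishes, so the quantity $\delta = \|S(\widetilde{s}_{0})\|_{\Sigma}$ receives no contribution from $\Psi(\widetilde{x}_{0})$ (in general $\delta$ is not zero, since the blocks of $S$ involving $\Phi_{G}(\widetilde{x}_{0},\widetilde{y}_{i,0})$ and $\Phi_{H}(\widetilde{x}_{0},\widetilde{\mathrm{z}}_{k,0})$ need not vanish).

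Next I would treat the case $\varrho \neq 0$. Here I simply rewrite the given problem (\ref{e5_1}) as
\begin{equation*}
   \bigl(F(\lambda,u)-\varrho\bigr)+\varrho = 0,
\end{equation*}
which displays (\ref{e5_1}) as the equation (\ref{e5_1_sol_widetilde_x_0h_Inv_Fc_Th_ec_DATA_introd_exact}) perturbed by the nonzero constant $\varrho$, in exactly the sense of perturbation used throughout the paper (compare with the discussion following (\ref{e5_1_sol_widetilde_x_0h_Inv_Fc_Th_ec_DATA_introd_epsilon}) and the paragraph containing the inverse-problem reformulation of $\varrho_{h}$). Since Theorem \ref{teorema_principala_spatii_infinit_dimensionale_widetilde_s_3_0_exact_inf} asserts that (\ref{e5_1_sol_widetilde_x_0h_Inv_Fc_Th_ec_DATA_introd_exact}) has the bifurcation point $(\lambda_{0},u_{0})$, this is the desired conclusion.

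For the remaining case $\varrho=0$, the equation (\ref{e5_1_sol_widetilde_x_0h_Inv_Fc_Th_ec_DATA_introd_exact}) coincides term by term with (\ref{e5_1}). Theorem \ref{teorema_principala_spatii_infinit_dimensionale_widetilde_s_3_0_exact_inf} provides a solution $(\lambda_{0},u_{0})\in \mathbb{B}_{a^{\ast}}(\widetilde{\lambda}_{0},\widetilde{u}_{0})$ of (\ref{e5_1_sol_widetilde_x_0h_Inv_Fc_Th_ec_DATA_introd_exact}), hence of (\ref{e5_1}). Together with the assumption that $(\widetilde{\lambda}_{0},\widetilde{u}_{0})$ belongs to a solution branch of (\ref{e5_1}) and the local character of that branch (guaranteed either directly when $(\widetilde{\lambda}_{0},\widetilde{u}_{0})$ is regular, or via the Liapunov--Schmidt/Crouzeix--Rappaz reduction reminded in Section \ref{sectiunea_1_preliminaries_Cr_Rap} when it is not), $(\lambda_{0},u_{0})$ sits on that branch.

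The main obstacle I expect is not the $\varrho\neq 0$ case, which is essentially a tautology once the definitions are laid out, but rather the precise meaning of ``belongs to the solution branch'' in the degenerate case $\varrho=0$: one must use the local structure of the solution set around $(\widetilde{\lambda}_{0},\widetilde{u}_{0})$ supplied by Lemma \ref{lema_A2_7_th_fc_impl} (regular case) or by the reduction described in Section \ref{sectiunea_1_preliminaries_Cr_Rap} (nonregular case), combined with the inclusion $(\lambda_{0},u_{0}) \in \mathbb{B}_{a^{\ast}}(\widetilde{\lambda}_{0},\widetilde{u}_{0})$ from Theorem \ref{teorema_principala_spatii_infinit_dimensionale_widetilde_s_3_0_exact_inf}, to identify $(\lambda_{0},u_{0})$ with a point of the branch.
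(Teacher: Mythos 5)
Your proof is correct and rests on the same underlying observation as the paper's, namely that when $\varrho\neq 0$ equation (\ref{e5_1}) is obtained from (\ref{e5_1_sol_widetilde_x_0h_Inv_Fc_Th_ec_DATA_introd_exact}) by adding the constant $\varrho$. The paper's own proof is essentially a single sentence (``equation (\ref{e5_56_b}) provides a bifurcation problem (\ref{e5_1_sol_widetilde_x_0h_Inv_Fc_Th_ec_DATA_introd_exact}) for which (\ref{e5_1}) is a perturbation''), whereas you spell out more: you verify $\Psi(\widetilde{x}_0)=0$ directly from $\widetilde{f}_0=0$, $F(\widetilde{\lambda}_0,\widetilde{u}_0)=0$ and $\widetilde{\theta}_0=B(\widetilde{x}_0)$, you make the perturbation rewriting $(F(\lambda,u)-\varrho)+\varrho=0$ explicit, and you handle the $\varrho=0$ branch identification through Lemma \ref{lema_A2_7_th_fc_impl} together with the inclusion $(\lambda_0,u_0)\in\mathbb{B}_{a^\ast}(\widetilde{\lambda}_0,\widetilde{u}_0)$ — a case the paper's terse proof leaves implicit. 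What you gain is completeness and a clean separation of the two cases; the paper's version buys brevity at the cost of not addressing $\varrho=0$ in the proof itself (that case is characterised separately in Corollary \ref{corolarul_trei_3_teorema_principala_spatii_infinit_dimensionale_widetilde_s_3_0_exact_zero_ro}).
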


\begin{proof}
Equation (\ref{e5_56_b}) provides a bifurcation problem
(\ref{e5_1_sol_widetilde_x_0h_Inv_Fc_Th_ec_DATA_introd_exact}) for
which the given problem (\ref{e5_1}) is a perturbation.

\qquad
\end{proof}

\begin{cor}
\label{corolarul_trei_3_teorema_principala_spatii_infinit_dimensionale_widetilde_s_3_0_exact_zero_ro}
$\varrho$ $=$ $0$ if and only if $(\hat{\mu}_{0}',\hat{w}_{0}')$
$=$ $\sum_{i=1}^{q+m}\beta_{i}$ $((\hat{\mu}_{i,0},\hat{w}_{i,0})$
$-$ $(\hat{\mu}_{i,0}',\hat{w}_{i,0}'))$, $\beta_{i}$ $\in$
$\mathbb{R}$.
\end{cor}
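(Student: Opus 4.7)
The plan is to exploit the two explicit identifications that Theorem \ref{teorema_principala_spatii_infinit_dimensionale_widetilde_s_3_0_exact_inf} has already given us: first, the definition $\varrho = DF(\hat{\lambda}_{0},\hat{u}_{0})(\hat{\mu}_{0}',\hat{w}_{0}')$, and second, the fact that $(\lambda_{0},u_{0}) = (\hat{\lambda}_{0},\hat{u}_{0})$ together with $(\mu_{i,0},w_{i,0}) = (\hat{\mu}_{i,0},\hat{w}_{i,0}) - (\hat{\mu}_{i,0}',\hat{w}_{i,0}')$ for $i=1,\ldots,q+m$. The key auxiliary fact I will use is that these vectors $(\mu_{i,0},w_{i,0})$ form a basis of $Ker(DF(\lambda_{0},u_{0}))$.

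To obtain this basis property, I would invoke Lemma \ref{corolarul_teorema_principala_parteaMAIN} applied to the system $S_{0}$ and its solution $s_{0}$. Indeed, Theorem \ref{teorema_principala_spatii_infinit_dimensionale_widetilde_s_3_0_exact_inf} guarantees that $S_{0}$ and $s_{0}$ verify assertions (a) and (b) of statement (ii) of Theorem \ref{teorema_principala_parteaMAIN}, so Lemma \ref{corolarul_teorema_principala_parteaMAIN} yields that $\{(\mu_{1,0},w_{1,0}),\ldots,(\mu_{q+m,0},w_{q+m,0})\}$ is a basis of $Ker(D(F-\varrho)(\lambda_{0},u_{0})) = Ker(DF(\lambda_{0},u_{0}))$ (since $\varrho$ is constant and hence drops out of the derivative).

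For the implication $(\Leftarrow)$, assume $(\hat{\mu}_{0}',\hat{w}_{0}') = \sum_{i=1}^{q+m}\beta_{i}[(\hat{\mu}_{i,0},\hat{w}_{i,0})-(\hat{\mu}_{i,0}',\hat{w}_{i,0}')]$. Applying $DF(\hat{\lambda}_{0},\hat{u}_{0}) = DF(\lambda_{0},u_{0})$ to both sides and using that each $(\hat{\mu}_{i,0},\hat{w}_{i,0})-(\hat{\mu}_{i,0}',\hat{w}_{i,0}') = (\mu_{i,0},w_{i,0})$ lies in $Ker(DF(\lambda_{0},u_{0}))$, every term on the right vanishes, so $\varrho = 0$. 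For the implication $(\Rightarrow)$, assume $\varrho = 0$; then $DF(\lambda_{0},u_{0})(\hat{\mu}_{0}',\hat{w}_{0}') = 0$, so $(\hat{\mu}_{0}',\hat{w}_{0}') \in Ker(DF(\lambda_{0},u_{0}))$ and, using that $\{(\mu_{i,0},w_{i,0})\}_{i=1,\ldots,q+m}$ is a basis of this kernel, we can write $(\hat{\mu}_{0}',\hat{w}_{0}') = \sum_{i}\beta_{i}(\mu_{i,0},w_{i,0}) = \sum_{i}\beta_{i}[(\hat{\mu}_{i,0},\hat{w}_{i,0})-(\hat{\mu}_{i,0}',\hat{w}_{i,0}')]$ with scalars $\beta_{i} \in \mathbb{R}$.

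There is no real obstacle: the only point that requires care is the invocation of Lemma \ref{corolarul_teorema_principala_parteaMAIN} for the perturbed function $S_{0}$ rather than for $S$, which is legitimate because Theorem \ref{teorema_principala_spatii_infinit_dimensionale_widetilde_s_3_0_exact_inf} asserts that $S_{0}$ and $s_{0}$ satisfy the hypotheses (a), (b) of Theorem \ref{teorema_principala_parteaMAIN} (ii), and because $D(F-\varrho) = DF$ since $\varrho$ is a constant element of $Z$.
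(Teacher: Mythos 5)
Your proof is correct and gives the natural argument; the paper itself states this corollary without proof, so there is no competing route to compare against. You identified exactly the two facts that make the statement work: the definition $\varrho = DF(\hat{\lambda}_{0},\hat{u}_{0})(\hat{\mu}_{0}',\hat{w}_{0}')$ (so $\varrho=0$ iff $(\hat{\mu}_{0}',\hat{w}_{0}')\in Ker(DF(\lambda_{0},u_{0}))$), and the fact — via Lemma \ref{corolarul_teorema_principala_parteaMAIN} applied to $S_{0}$ and $s_{0}$, which Theorem \ref{teorema_principala_spatii_infinit_dimensionale_widetilde_s_3_0_exact_inf} certifies satisfy assertions (a) and (b) of Theorem \ref{teorema_principala_parteaMAIN} (ii) — that $\{(\mu_{i,0},w_{i,0})\}_{i=1,\ldots,q+m} = \{(\hat{\mu}_{i,0},\hat{w}_{i,0})-(\hat{\mu}_{i,0}',\hat{w}_{i,0}')\}_{i}$ is a basis of $Ker(D(F-\varrho)(\lambda_{0},u_{0})) = Ker(DF(\lambda_{0},u_{0}))$, the last equality because $\varrho$ is a constant of $Z$. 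Both directions of the equivalence then follow immediately, as you wrote.
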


\subsection{The plan of the study for the next sections}
\label{sectiunea_01_O_formulare_pe_spatii_infinit_dimensionale_7}

In Section
\ref{sectiunea_01_O_formulare_pe_spatii_infinit_dimensionale_COMPLEMENTE_NOU},
we investigate the formulation of $S_{0}$ from (\ref{e5_56_b})
using the function
$F(\lambda,u)-DF(\lambda,u)(\hat{\mu}_{0}',\hat{w}_{0}')$ (where
$(\hat{\mu}_{0}',\hat{w}_{0}')$ is fixed) instead of the function
$F(\lambda,u)-DF(\hat{\lambda}_{0},\hat{u}_{0})(\hat{\mu}_{0}',\hat{w}_{0}')$
(where $(\hat{\lambda}_{0},\hat{u}_{0})$ and
$(\hat{\mu}_{0}',\hat{w}_{0}')$ are fixed and $\varrho$ $=$
$DF(\hat{\lambda}_{0},\hat{u}_{0})(\hat{\mu}_{0}',\hat{w}_{0}')$
$=$ $constant$).

In Sections \ref{sectiunea05_class} -
\ref{sectiunea04_cazul_ecNS}, for simplicity, we study only the
case $\varrho$ $=$
$DF(\hat{\lambda}_{0},\hat{u}_{0})(\hat{\mu}_{0}',\hat{w}_{0}')$
$=$ $constant$ from Theorem
\ref{teorema_principala_spatii_infinit_dimensionale_widetilde_s_3_0_exact_inf}.

\section{The existence of a class of equivalent maps}
\label{sectiunea05_class}

We prove that there exists a class (family) of maps $C^{p}$ -
equivalent (right equivalent) at $(\lambda_{0},u_{0})$ to
$F(\cdot)-\varrho$ at $(\lambda_{0},u_{0})$ and that satisfies
hypothesis (\ref{ipotezaHypF}) in $(\lambda_{0},u_{0})$.

We use the definition of equivalence of maps from
\cite{CLBichir_bib_E_Zeidler_NFA_IV,
CLBichir_bib_E_Zeidler_main1995}. The following discussion has the
references \cite{CLBichir_bib_Ashwin_Bohmer_Mei1995,
CLBichir_bib_Chillingworth1980,
CLBichir_bib_Golubitsky_Schaeffer1985,
CLBichir_bib_Golubitsky_Stewart_Schaeffer1988,
CLBichir_bib_Govaerts2000, CLBichir_bib_Jepson_Spence1989,
CLBichir_bib_Jepson_Spence_Cliffe1991,
CLBichir_bib_E_Zeidler_NFA_I, CLBichir_bib_E_Zeidler_NFA_IV,
CLBichir_bib_E_Zeidler_main1995}.

Consider the equation
(\ref{e5_1_sol_widetilde_x_0h_Inv_Fc_Th_ec_DATA_introd_exact}).

Let $\varphi_{0}$ be a local $C^{p}$ - diffeomorphism from some
open neighborhood $\mathcal{U}(\lambda_{0},u_{0})$ $=$
$\mathcal{U}_{1}(\lambda_{0})$ $\times$ $\mathcal{U}_{2}(u_{0})$
of $(\lambda_{0},u_{0})$ in $\mathbb{R}^{m} \times W$ onto some
open neighborhood
$\widehat{\mathcal{U}}(\widehat{\lambda}_{0},\widehat{u}_{0})$ $=$
$\widehat{\mathcal{U}}_{1}(\widehat{\lambda}_{0})$ $\times$
$\widehat{\mathcal{U}}_{2}(\widehat{u}_{0})$ of
$(\widehat{\lambda}_{0},\widehat{u}_{0})$ $=$
$(\lambda_{0},u_{0})$ in $\mathbb{R}^{m} \times W$ of the form
$(\widehat{\lambda},\widehat{u})$ $=$ $\varphi_{0}(\lambda,u)$ $=$
$(\Lambda(\lambda),\varphi_{\ast}(\lambda,u))$, $\Lambda(\lambda)$
$\in$ $\mathbb{R}^{m}$, $\varphi_{\ast}(\lambda,u)$ $\in$ $W$,
satisfying the following assumptions:
$\varphi_{0}(\lambda_{0},u_{0})$ $=$
$(\widehat{\lambda}_{0},\widehat{u}_{0})$,
$D_{u}\varphi_{\ast}(\lambda_{0},u_{0})$ is bijective.

Let $\Pi$ be a local $C^{p}$ - diffeomorphism  from some open
neighborhood $\mathcal{U}_{3}(0)$ of $0$ in $\mathbb{R}^{q}$ onto
some open neighborhood $\widehat{\mathcal{U}}_{3}(0)$ of $0$ in
$\mathbb{R}^{q}$. $\Pi(0)$ $=$ $0$.

Let $\bar{\Pi}$ be a local $C^{p}$ - diffeomorphism  from some
open neighborhood $\mathcal{U}_{4}(0)$ of $0$ in $\mathbb{R}^{n}$
onto some open neighborhood $\widehat{\mathcal{U}}_{4}(0)$ of $0$
in $\mathbb{R}^{n}$. $\bar{\Pi}(0)$ $=$ $0$.

$(\widehat{\lambda},\widehat{u})$ $=$ $\varphi_{0}(\lambda,u)$ $=$
$(\Lambda(\lambda),\varphi_{\ast}(\lambda,u))$, $(\lambda,u)$ $=$
$\varphi_{0}^{-1}(\widehat{\lambda},\widehat{u})$.

$\widehat{f}$ $=$ $\Pi(f)$, $f$ $=$ $\Pi^{-1}(\widehat{f})$ $=$
$((\Pi^{-1})_{1}(\widehat{f}), \ldots,
(\Pi^{-1})_{q}(\widehat{f}))$.

$\widehat{e}$ $=$ $\bar{\Pi}(e)$, $e$ $=$
$\bar{\Pi}^{-1}(\widehat{e})$ $=$
$((\bar{\Pi}^{-1})_{1}(\widehat{e}), \ldots,
(\bar{\Pi}^{-1})_{n}(\widehat{e}))$.

Let $\varphi$ be the local $C^{p}$ - diffeomorphism from
$\mathcal{U}_{3}(0)$ $\times$ $\mathcal{U}_{1}(\lambda_{0})$
$\times$ $\mathcal{U}_{2}(u_{0})$ onto
$\widehat{\mathcal{U}}_{3}(0)$ $\times$
$\widehat{\mathcal{U}}_{1}(\widehat{\lambda}_{0})$ $\times$
$\widehat{\mathcal{U}}_{2}(\widehat{u}_{0})$ given by
$\varphi(f,\lambda,u)$ $=$ $(\Pi(f),\varphi_{0}(\lambda,u))$.

$(\widehat{f},\widehat{\lambda},\widehat{u})$ $=$
$\varphi(f,\lambda,u)$ $=$ $(\Pi(f),\varphi_{0}(\lambda,u))$ $=$
$(\Pi(f),\Lambda(\lambda),\varphi_{\ast}(\lambda,u))$.

$(f,\lambda,u)$ $=$
$\varphi^{-1}(\widehat{f},\widehat{\lambda},\widehat{u})$ $=$
$(\Pi^{-1}(\widehat{f}),\varphi_{0}^{-1}(\widehat{\lambda},\widehat{u}))$

$D(\varphi_{0}^{-1})(\widehat{\lambda},\widehat{u})$ $=$
$D\varphi_{0}(\lambda,u)^{-1}$, where
$(\widehat{\lambda},\widehat{u})$ $=$ $\varphi_{0}(\lambda,u)$,
for all $(\lambda,u)$ in a small neighborhood of
$(\lambda_{0},u_{0})$.

$D(\Pi^{-1})(\widehat{f})$ $=$ $D\Pi(f)^{-1}$, where $\widehat{f}$
$=$ $\Pi(f)$, for all $f$ in a small neighborhood of $0$.

$D(\bar{\Pi}^{-1})(\widehat{e})$ $=$ $D\bar{\Pi}(e)^{-1}$, where
$\widehat{e}$ $=$ $\bar{\Pi}(e)$, for all $e$ in a small
neighborhood of $0$.

$D(\varphi^{-1})(\widehat{f},\widehat{\lambda},\widehat{u})$ $=$
$D\varphi(f,\lambda,u)^{-1}$, where
$(\widehat{f},\widehat{\lambda},\widehat{u})$ $=$
$\varphi(f,\lambda,u)$, for all $(f,\lambda,u)$ in a small
neighborhood of $(0,\lambda_{0},u_{0})$.

When we write $(\lambda,u)$ $\in$ $\mathcal{U}_{1}(\lambda_{0})$
$\times$ $\mathcal{U}_{2}(u_{0})$, without "$\forall$", related to
a relation, we understand that $(\lambda,u)$ is in the maximal
neighborhood (related to inclusion) contained in
$\mathcal{U}_{1}(\lambda_{0})$ $\times$ $\mathcal{U}_{2}(u_{0})$
such that that relation holds and so on.

Let us retain $F(\lambda,u)- \varrho$ from
(\ref{e5_1_sol_widetilde_x_0h_Inv_Fc_Th_ec_DATA_introd_exact}).

Using the definition for $C^{p}$ equivalence (right equivalence)
of maps on open subsets of Banach spaces, from
\cite{CLBichir_bib_E_Zeidler_NFA_IV,
CLBichir_bib_E_Zeidler_main1995}, related to
\cite{CLBichir_bib_Ashwin_Bohmer_Mei1995,
CLBichir_bib_Chillingworth1980,
CLBichir_bib_Golubitsky_Schaeffer1985,
CLBichir_bib_Golubitsky_Stewart_Schaeffer1988,
CLBichir_bib_Govaerts2000, CLBichir_bib_Jepson_Spence1989,
CLBichir_bib_Jepson_Spence_Cliffe1991,
CLBichir_bib_E_Zeidler_NFA_I}, let us consider a map
$\widehat{F}:\mathbb{R}^{m} \times W \rightarrow Z$, of class
$C^{p}$, defined locally by
\begin{equation}
\label{e5_1_sol_widetilde_x_0h_Inv_Fc_Th_ec_DATA_introd_exact_echiv_F}
      \widehat{F}(\widehat{\lambda},\widehat{u})
      = F(\varphi_{0}^{-1}(\widehat{\lambda},\widehat{u}))-\varrho \, , \
      (\widehat{\lambda},\widehat{u}) \in
      \widehat{\mathcal{U}}_{1}(\widehat{\lambda}_{0}) \times \widehat{\mathcal{U}}_{2}(\widehat{u}_{0}) \, ,
\end{equation}
so $\widehat{F}(\varphi_{0}(\lambda,u))$ $=$
$F(\lambda,u)-\varrho$, $(\lambda,u)$ $\in$
$\mathcal{U}_{1}(\lambda_{0})$ $\times$ $\mathcal{U}_{2}(u_{0})$,
\begin{equation}
\label{e5_1_sol_widetilde_x_0h_Inv_Fc_Th_ec_DATA_introd_exact_echiv_F_cont}
      \widehat{F}(\varphi_{0}(\lambda,u)) = F(\lambda,u)-\varrho \, , \
      (\lambda,u) \in
      \mathcal{U}_{1}(\lambda_{0}) \times \mathcal{U}_{2}(u_{0}) \, ,
\end{equation}
that is, $\widehat{F}$ is $C^{p}$ - equivalent (right equivalent)
at $(\widehat{\lambda}_{0},\widehat{u}_{0})$ to $F-\varrho$ at
$(\lambda_{0},u_{0})$.

We have $\widehat{F}(\varphi_{0}(\lambda_{0},u_{0}))$ $=$
$\widehat{F}(\lambda_{0},u_{0})$ $=$ $0$ so
$(\widehat{\lambda}_{0},\widehat{u}_{0})$ is the solution of
\begin{equation}
\label{e5_1_sol_widetilde_x_0h_Inv_Fc_Th_ec_DATA_introd_exact_echiv_sol_ec}
      \widehat{F}(\widehat{\lambda},\widehat{u}) = 0 \, ,
\end{equation}

Let us prove that $\widehat{F}$ satisfies hypothesis
(\ref{ipotezaHypF}) in $(\lambda_{0},u_{0})$.

Let us consider a map $\widehat{G}:\mathbb{R}^{q} \times
\mathbb{R}^{m} \times W \rightarrow Z$, of class $C^{p}$, defined
locally, for $(\widehat{f},\widehat{\lambda},\widehat{u})$ $\in$
$\widehat{\mathcal{U}}_{3}(0) \times
\widehat{\mathcal{U}}(\widehat{\lambda}_{0},\widehat{u}_{0})$, by
\begin{equation}
\label{e5_1_sol_widetilde_x_0h_Inv_Fc_Th_ec_DATA_introd_exact_echiv_G}
      \widehat{G}(\widehat{f},\widehat{\lambda},\widehat{u})
      = F(\varphi_{0}^{-1}(\widehat{\lambda},\widehat{u}))-\varrho
      -\sum_{i=1}^{q}(\Pi^{-1})_{i}(\widehat{f})\bar{a}_{i} \, ,
\end{equation}
or $\widehat{G}(\varphi(f,\lambda,u))$ $=$
$G(f,\lambda,u)-\varrho$, $(f,\lambda,u)$ $\in$
$\mathcal{U}_{3}(0)$ $\times$ $\mathcal{U}(\lambda_{0},u_{0})$,
that is, $G-\varrho$ is $C^{p}$ - equivalent (right equivalent) at
$(0,\lambda_{0},u_{0})$ to $\widehat{G}$ at
$(0,\widehat{\lambda}_{0},\widehat{u}_{0})$.

We have $\widehat{G}(\widehat{f},\widehat{\lambda},\widehat{u})$
$=$
$G(\varphi^{-1}(\widehat{f},\widehat{\lambda},\widehat{u}))-\varrho$
$=$
$G(\Pi^{-1}(\widehat{f}),\varphi_{0}^{-1}(\widehat{\lambda},\widehat{u}))-\varrho$
$=$
$F(\varphi_{0}^{-1}(\widehat{\lambda},\widehat{u}))-\sum_{i=1}^{q}(\Pi^{-1})_{i}(\widehat{f})\bar{a}_{i}-\varrho$
$=$
$\widehat{F}(\widehat{\lambda},\widehat{u})-\sum_{i=1}^{q}(\Pi^{-1})_{i}(\widehat{f})\bar{a}_{i}$.

Let us introduce some new notations.

$(\widehat{g},\widehat{\mu},\widehat{w})$ $=$
$D\varphi(f,\lambda,u)(g,\mu,w)$ $=$
$(D\Pi(f)g,D\varphi_{0}(\lambda,u)(\mu,w))$.

$(g,\mu,w)$ $=$
$D\varphi(f,\lambda,u)^{-1}(\widehat{g},\widehat{\mu},\widehat{w})$
$=$
$(D\Pi(f)^{-1}\widehat{g},D\varphi_{0}(\lambda,u)^{-1}(\widehat{\mu},\widehat{w}))$.

$D\Pi(f)g$ $=$ $\widehat{g}$, $g$ $=$ $D\Pi(f)^{-1}\widehat{g}$,
with $D\Pi(f)^{-1}\widehat{g}$ $=$
$D(\Pi^{-1})(\widehat{f})\widehat{g}$ if $\widehat{f}$ $=$
$\Pi(f)$, for all $f$ in a small neighborhood of $0$.

$D\varphi_{0}(\lambda,u)(\mu,w)$ $=$
$(D\Lambda(\lambda)\mu,D\varphi_{\ast}(\lambda,u)(\mu,w))$ $=$
$(\widehat{\mu},\widehat{w})$

$(\widehat{g}_{i},\widehat{\mu}_{i},\widehat{w}_{i})$ $=$
$D\varphi(f,\lambda,u)(g_{i},\mu_{i},w_{i})$.

$(0,\widehat{\mu}_{i,0},\widehat{w}_{i,0})$ $=$
$D\varphi(0,\lambda_{0},u_{0})(0,\mu_{i,0},w_{i,0})$ $=$
$(D\Pi(0)0,D\varphi_{0}(\lambda_{0},u_{0})(\mu_{i,0},w_{i,0}))$
$=$
$(0,D\Lambda(\lambda_{0})\mu_{i,0},D\varphi_{\ast}(\lambda_{0},u_{0})(\mu_{i,0},w_{i,0}))$.

$(\mu_{i,0},w_{i,0})$ $=$
$D\varphi_{0}(\lambda_{0},u_{0})^{-1}(\widehat{\mu}_{i,0},\widehat{w}_{i,0})$,
$0$ $=$ $D\Pi(0)^{-1}0$.

$\widehat{v}$ $=$ $D_{u}\varphi_{\ast}(\lambda,u)v$,
$\widehat{v}_{k}$ $=$ $D_{u}\varphi_{\ast}(\lambda,u)v_{k}$,
$\widehat{v}_{k,0}$ $=$
$D_{u}\varphi_{\ast}(\lambda_{0},u_{0})v_{k,0}$, $v_{k,0}$ $=$
$D_{u}\varphi_{\ast}(\lambda_{0},u_{0})^{-1}\widehat{v}_{k,0}$,
$e_{k}$ $=$ $D(\bar{\Pi}^{-1})(0)\widehat{e}_{k}$, $0$ $=$
$D\bar{\Pi}(0)^{-1}0$.

We have

$D(G(f,\lambda,u)-\varrho)(g,\mu,w)$ $=$
$D\widehat{G}(\varphi(f,\lambda,u))D\varphi(f,\lambda,u)(g,\mu,w)$
$=$
$D\widehat{G}(\widehat{f},\widehat{\lambda},\widehat{u})(\widehat{g},\widehat{\mu},\widehat{w})$
$=$
$D\widehat{F}(\widehat{\lambda},\widehat{u})(\widehat{\mu},\widehat{w})$
$-$
$\sum_{i=1}^{q}D(\Pi^{-1})_{i}(\widehat{f})\widehat{g}\bar{a}_{i}$

$D(F(\lambda,u)-\varrho)(\mu,w)$ $=$
$D\widehat{F}(\varphi_{0}(\lambda,u))D\varphi_{0}(\lambda,u)(\mu,w)$
$=$
$D\widehat{F}(\widehat{\lambda},\widehat{u})(\widehat{\mu},\widehat{w})$.

$D_{u}(F(\lambda,u)-\varrho)v$ $=$
$D_{u}\widehat{F}(\varphi_{0}(\lambda,u))v$ $=$
$D_{u}\widehat{F}(\Lambda(\lambda),\varphi_{\ast}(\lambda,u))v$
$=$
$D_{\varphi_{\ast}}\widehat{F}(\varphi_{0}(\lambda,u))D_{u}\varphi_{\ast}(\lambda,u)v$
$=$
$D_{\varphi_{\ast}}\widehat{F}(\widehat{\lambda},\widehat{u})\widehat{v}$
$=$
$D_{\widehat{u}}\widehat{F}(\widehat{\lambda},\widehat{u})\widehat{v}$.

Let us consider a map $\widehat{H}:\mathbb{R}^{m} \times W \times
\Delta \rightarrow Z$, of class $C^{p-1}$, defined locally, for
$(\widehat{e},\widehat{v})$ $\in$ $\widehat{\mathcal{U}}_{4}(0)
\times \widehat{\mathcal{U}}_{2}(\widehat{u}_{0})$, by
\begin{equation}
\label{e5_1_sol_widetilde_x_0h_Inv_Fc_Th_ec_DATA_introd_exact_echiv_H}
   \widehat{H}(\widehat{\lambda},\widehat{u},(\widehat{e},\widehat{v}))
      =D_{\varphi_{\ast}}\widehat{F}(\widehat{\lambda},\widehat{u})\widehat{v}
         -\sum_{k=1}^{n}D(\bar{\Pi}^{-1})_{k}(0)\widehat{e}\bar{b}_{k} \, .
\end{equation}

We have

\begin{equation}
\label{e5_57_echiv_G_var_0_patru}
\begin{split}
 & B(\Pi^{-1}(\widehat{f}),\varphi_{0}^{-1}(\widehat{\lambda},\widehat{u}))-\theta_{0}
      =B(f,\lambda,u)-\theta_{0}   \\
 & \widehat{G}(\widehat{f},\widehat{\lambda},\widehat{u})
      =G(f,\lambda,u)-\varrho   \\
 & B(D\Pi(f)^{-1}\widehat{g}_{i},D\varphi_{0}(\lambda,u)^{-1}(\widehat{\mu}_{i},\widehat{w}_{i}))-\delta_{i}^{q+m}
      = B(g_{i},\mu_{i},w_{i})-\delta_{i}^{q+m}   \\
 & D\widehat{G}(\widehat{f},\widehat{\lambda},\widehat{u})(\widehat{g}_{i},\widehat{\mu}_{i},\widehat{w}_{i})
      = D(G(f,\lambda,u)-\varrho)(g_{i},\mu_{i},w_{i})   \\
 & \bar{\mathcal{B}}(D(\bar{\Pi}^{-1})(0)\widehat{e}_{k},D_{u}\varphi_{\ast}(\lambda_{0},u_{0})^{-1}\widehat{v}_{k})-\delta_{k}^{n}
      = \bar{\mathcal{B}}(e_{k},v_{k})-\delta_{k}^{n}   \\
 & \widehat{H}(\widehat{\lambda},\widehat{u},(\widehat{e}_{k},\widehat{v}_{k}))
      = H(\lambda,u,(e_{k},v_{k}))
\end{split}
\end{equation}

Let us define
\begin{equation}
\label{e5_57_echiv_G_var_0_patru_zero_widehat_def}
\begin{split}
 & \widehat{B}_{N}(\overline{f},\overline{\lambda},\overline{u})
      =B(\Pi^{-1}(\overline{f}),\varphi_{0}^{-1}(\overline{\lambda},\overline{u})) \\
 & \widehat{B}(\overline{g},\overline{\mu},\overline{w})
      =B(D\Pi(0)^{-1}\overline{g},D\varphi_{0}(\lambda_{0},u_{0})^{-1}(\overline{\mu},\overline{w})) \\
 & \widehat{\bar{\mathcal{B}}}(\overline{e},\overline{v})
      =\bar{\mathcal{B}}(D(\bar{\Pi}^{-1})(0)\overline{e},D_{u}\varphi_{\ast}(\lambda_{0},u_{0})^{-1}\overline{v})
\end{split}
\end{equation}

and

\begin{equation}
\label{e5_57_widehat}
   \widehat{S}:\Gamma \rightarrow \Sigma , \
   \widehat{S}(\overline{s})=\left[\begin{array}{l}
      \widehat{B}_{N}(\overline{f},\overline{\lambda},\overline{u})-\theta_{0} \\
      \widehat{G}(\overline{f},\overline{\lambda},\overline{u}) \\
      \widehat{B}(\overline{g}_{i},\overline{\mu}_{i},\overline{w}_{i})-\delta_{i}^{q+m} \\
      D\widehat{G}(\overline{f},\overline{\lambda},\overline{u})(\overline{g}_{i},\overline{\mu}_{i},\overline{w}_{i}) \\
      \widehat{\bar{\mathcal{B}}}(\overline{e}_{k},\overline{v}_{k})-\delta_{k}^{n} \\
      \widehat{H}(\overline{\lambda},\overline{u},(\overline{e}_{k},\overline{v}_{k}))
      \end{array}\right] \ ,
   \ \begin{array}{l}
      i=1,\ldots,q+m \, , \\
      k=1,\ldots,n \, ,
      \end{array}
\end{equation}

Let $\overline{s}_{0}$ $=$
$(\overline{x}_{0},\overline{y}_{1,0},\ldots,\overline{y}_{q+m,0},\overline{\mathrm{z}}_{1,0},\ldots,\overline{\mathrm{z}}_{n,0})$,
$\overline{x}_{0}$ $=$
$(\overline{f}_{0},\overline{\lambda}_{0},\overline{u}_{0})$ $=$
$(0,\widehat{\lambda}_{0},\widehat{u}_{0})$, $\overline{y}_{i,0}$
$=$ $(\overline{g}_{i,0},\overline{\mu}_{i,0},\overline{w}_{i,0})$
$=$ $(0,\widehat{\mu}_{i,0},\widehat{w}_{i,0})$,
$\overline{\mathrm{z}}_{k,0}$ $=$
$(\overline{e}_{k,0},\overline{v}_{k,0})$ $=$
$(0,\widehat{v}_{k,0})$, $i=1,\ldots,q+m$, $k=1,\ldots,n$.

From (\ref{e5_57_echiv_G_var_0_patru}), we obtain
\begin{equation}
\label{e5_57_widehat_patru_s0}
   \widehat{S}(\overline{s}_{0})=S(s_{0})=0 \, .
\end{equation}

\begin{thm}
\label{teorema_principala_spatii_infinit_dimensionale_widetilde_s_3_0_exact_inf_echiv_G}

There exists a class of maps $C^{p}$ - equivalent (right
equivalent) at $(\lambda_{0},u_{0})$ to $F(\cdot)-\varrho$ at
$(\lambda_{0},u_{0})$ and that satisfies hypothesis
(\ref{ipotezaHypF}) in $(\lambda_{0},u_{0})$.

\end{thm}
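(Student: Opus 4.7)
The plan is to verify directly that any $\widehat{F}$ of the form \eqref{e5_1_sol_widetilde_x_0h_Inv_Fc_Th_ec_DATA_introd_exact_echiv_F} satisfies hypothesis \eqref{ipotezaHypF} at $(\widehat{\lambda}_{0},\widehat{u}_{0}) = (\lambda_{0},u_{0})$ by transferring the Fredholm structure through the diffeomorphism $\varphi_{0}$, and then to observe that the construction is parametrized by a rich family of diffeomorphisms, which furnishes the announced class. Theorem \ref{teorema_principala_spatii_infinit_dimensionale_widetilde_s_3_0_exact_inf} already gives that $F(\cdot,\cdot)-\varrho$ satisfies \eqref{ipotezaHypF} at $(\lambda_{0},u_{0})$ with parameters $n$ and $q$, so this is the base fact to be transported.

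The first step would be to apply the chain rule to the defining identity \eqref{e5_1_sol_widetilde_x_0h_Inv_Fc_Th_ec_DATA_introd_exact_echiv_F_cont} at $(\lambda_{0},u_{0})$, obtaining $D\widehat{F}(\lambda_{0},u_{0}) = D(F-\varrho)(\lambda_{0},u_{0}) \circ D\varphi_{0}(\lambda_{0},u_{0})^{-1}$. Since $\varphi_{0}$ is a local $C^{p}$-diffeomorphism, $D\varphi_{0}(\lambda_{0},u_{0})$ is an isomorphism of $\mathbb{R}^{m}\times W$ onto itself, so $\operatorname{Range}(D\widehat{F}(\lambda_{0},u_{0})) = \operatorname{Range}(D(F-\varrho)(\lambda_{0},u_{0}))$ and consequently $\operatorname{codim}\operatorname{Range}(D\widehat{F}(\lambda_{0},u_{0})) = q$. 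Next, using the block form $\varphi_{0}=(\Lambda,\varphi_{\ast})$ and the assumption that $D_{u}\varphi_{\ast}(\lambda_{0},u_{0})$ is bijective (hence an isomorphism of $W$), the computation $D_{u}(F-\varrho)(\lambda_{0},u_{0})v = D_{\widehat{u}}\widehat{F}(\widehat{\lambda}_{0},\widehat{u}_{0})\,\widehat{v}$ with $\widehat{v}=D_{u}\varphi_{\ast}(\lambda_{0},u_{0})v$ shows that $D_{\widehat{u}}\widehat{F}(\widehat{\lambda}_{0},\widehat{u}_{0})= D_{u}(F-\varrho)(\lambda_{0},u_{0})\circ D_{u}\varphi_{\ast}(\lambda_{0},u_{0})^{-1}$. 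Composition of a Fredholm operator of index zero from $W$ onto $Z$ with an isomorphism of $W$ preserves both surjectivity and index-zero Fredholmness, and maps the kernel isomorphically, so $D_{\widehat{u}}\widehat{F}(\widehat{\lambda}_{0},\widehat{u}_{0})$ is Fredholm of $W$ onto $Z$ with index zero and $\dim\operatorname{Ker}(D_{\widehat{u}}\widehat{F}(\widehat{\lambda}_{0},\widehat{u}_{0}))=n$. This is exactly \eqref{ipotezaHypF}.

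As a consistency check (and an alternative route using the apparatus of Sections \ref{sectiunea_1_main_theorem_on_extended_systems}--\ref{sectiunea_01_O_formulare_pe_spatii_infinit_dimensionale}), one can invoke Theorem \ref{teorema_principala_parteaMAIN} in the direction (ii) $\Rightarrow$ (i) applied to the transported extended system $\widehat{S}$ from \eqref{e5_57_widehat}, with $\widehat{B}_{N}$, $\widehat{B}$, $\widehat{\bar{\mathcal{B}}}$ defined in \eqref{e5_57_echiv_G_var_0_patru_zero_widehat_def} and base point $\overline{s}_{0}$: the identities \eqref{e5_57_echiv_G_var_0_patru} and \eqref{e5_57_widehat_patru_s0} give $\widehat{S}(\overline{s}_{0})=S(s_{0})=0$, while the chain rule expresses $D\widehat{S}(\overline{s}_{0})$ (and similarly $D\widehat{\Psi}(\overline{x}_{0})$) as the composition of $DS(s_{0})$ (respectively $D\Psi(x_{0})$), known to be an isomorphism by Theorem \ref{teorema_principala_spatii_infinit_dimensionale_widetilde_s_3_0_exact_inf}, with the block-diagonal isomorphism of $\Gamma$ induced by $D\Pi(0)^{-1}$, $D\varphi_{0}(\lambda_{0},u_{0})^{-1}$ and $D\bar{\Pi}(0)^{-1}$. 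The two approaches agree, which is reassuring.

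Finally, the class assertion follows because the admissible triples $(\varphi_{0},\Pi,\bar{\Pi})$ form an infinite-dimensional family: they are only constrained by $\varphi_{0}(\lambda_{0},u_{0})=(\widehat{\lambda}_{0},\widehat{u}_{0})$, $\Pi(0)=0$, $\bar{\Pi}(0)=0$, and bijectivity of $D_{u}\varphi_{\ast}(\lambda_{0},u_{0})$, so there are genuinely many distinct $\widehat{F}$ obtained by \eqref{e5_1_sol_widetilde_x_0h_Inv_Fc_Th_ec_DATA_introd_exact_echiv_F}, each $C^{p}$-equivalent (right equivalent) to $F(\cdot,\cdot)-\varrho$ at $(\lambda_{0},u_{0})$ and each satisfying \eqref{ipotezaHypF} at that point. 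The main pitfall I anticipate is not mathematical but notational: keeping straight the three distinct diffeomorphisms $\varphi_{0}$, $\Pi$, $\bar{\Pi}$ and their base-point derivatives in the second (confirmatory) route; the direct Fredholm calculus above avoids this and suffices for the statement.
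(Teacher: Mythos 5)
Your primary argument (the direct Fredholm calculus in the second paragraph) is correct, and it is in fact a cleaner and more elementary route than the one the paper takes. You transfer hypothesis (\ref{ipotezaHypF}) through $\varphi_0$ by the chain rule, observe that precomposing $D(F-\varrho)(\lambda_0,u_0)$ with the isomorphism $D\varphi_0(\lambda_0,u_0)^{-1}$ preserves the range (hence $q$), and precomposing $D_u(F-\varrho)(\lambda_0,u_0)$ with the isomorphism $D_u\varphi_\ast(\lambda_0,u_0)^{-1}$ preserves index-zero Fredholmness and $n=\dim\operatorname{Ker}$. That suffices for the claim, and the ``class'' is obtained by letting $\varphi_0$ range over all admissible local $C^p$-diffeomorphisms. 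The paper, by contrast, routes through the extended-system machinery: it invokes Corollary \ref{corolar_teorema_principala_parteaMAIN_neliniar} (the nonlinear-$B_N$ version of Theorem \ref{teorema_principala_parteaMAIN}) applied to $\widehat{S}$ with solution $\overline{s}_0$, using (\ref{e5_57_echiv_G_var_0_patru})--(\ref{e5_57_widehat_patru_s0}) to get $\widehat{S}(\overline{s}_0)=S(s_0)=0$, and reads off (\ref{ipotezaHypF}) from the implication (ii)$\Rightarrow$(i). Your route avoids the extended system entirely, which is simpler and self-contained; the paper's route keeps everything inside its Theorem \ref{teorema_principala_parteaMAIN} framework, at the cost of having to produce and verify the transported operators $\widehat{B}_N$, $\widehat{B}$, $\widehat{\bar{\mathcal{B}}}$, $\widehat{S}$.

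However, your ``consistency check'' (third paragraph) contains a real error: $D\widehat{S}(\overline{s}_0)$ is \emph{not} the composition of $DS(s_0)$ with the block-diagonal isomorphism of $\Gamma$ built from $D\Pi(0)^{-1}$, $D\varphi_0(\lambda_0,u_0)^{-1}$, $D\bar{\Pi}(0)^{-1}$. Differentiating the fourth row $\overline{s}\mapsto D\widehat{G}(\overline{x})\overline{y}_i$ of (\ref{e5_57_widehat}) at $\overline{s}_0$ produces, by the chain rule applied twice to $\widehat{G}=(G-\varrho)\circ\varphi^{-1}$, an extra term of the form $DG(x_0)\,D^2(\varphi^{-1})(\overline{x}_0)(\delta\overline{x},\overline{y}_{i,0})$ on top of $D^2 F(\lambda_0,u_0)\bigl(D\varphi_0^{-1}(\delta\overline{\lambda},\delta\overline{u}),D\varphi_0^{-1}(\overline{\mu}_{i,0},\overline{w}_{i,0})\bigr)+DG(x_0)D\varphi(x_0)^{-1}\delta\overline{y}_i$; an analogous extra term appears in the $\widehat{H}$ row. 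These second-derivative contributions of $\varphi$ are absent from $DS_0(s_0)\circ(\text{block-diagonal})$, so the factorization you claim fails, and the ``two approaches agree'' conclusion does not hold as stated. (The paper itself glosses over the verification of (ii)(b) of Theorem \ref{teorema_principala_parteaMAIN} for $\widehat{S}$; the clean way to fix this in the extended-system route is to note that $\Phi_G(x,\cdot)$ is linear in its second argument, so the transported map $\widehat{\Phi}_G(\overline{x}_0,\cdot)=\Phi_G(x_0,\cdot)\circ D\varphi(x_0)^{-1}$ \emph{is} a genuine composition with an isomorphism, and likewise for $\widehat{\Phi}_H$; then Lemma \ref{Lema_solutie_01_DS3} delivers the isomorphism of $D\widehat{S}(\overline{s}_0)$ without computing it directly.) Since your primary argument does not rely on this consistency check, the proof stands, but the check itself should be either corrected along the lines above or removed.
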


\begin{proof}

There exists a local $C^{p}$ - diffeomorphism $\varphi_{0}$, as
defined above, such that
(\ref{e5_1_sol_widetilde_x_0h_Inv_Fc_Th_ec_DATA_introd_exact_echiv_F_cont})
takes place. We now apply Corollary
\ref{corolar_teorema_principala_parteaMAIN_neliniar} for the
equation $\widehat{S}(\overline{s})$ $=$ $0$ and for its solution
$\overline{s}_{0}$. Hence, $\widehat{F}$ satisfies hypothesis
(\ref{ipotezaHypF}) in $(\lambda_{0},u_{0})$. Taking all the local
$C^{p}$ - diffeomorphism of the form of $\varphi_{0}$,
$\widehat{F}$ becomes a representative of a class of maps $C^{p}$
- equivalent (right equivalent) at $(\lambda_{0},u_{0})$ to
$F(\cdot)-\varrho$ at $(\lambda_{0},u_{0})$ and that satisfies
hypothesis (\ref{ipotezaHypF}) in $(\lambda_{0},u_{0})$.

\qquad
\end{proof}

\section{Bifurcation of the solutions of approximate equations}
\label{sectiunea06}

In this Section, under the hypothesis that the exact problem has a
bifurcation point, we study the existence of an equation, defined
on closed subspaces of the given spaces, which has a bifurcation
point that approximates the exact one and has the same type as
this one.

\subsection{Approximate spaces and functions}
\label{sectiunea_an_approximate_problem}

Let us consider a closed subspace $W_{h}$ of $W$ and a closed
subspace $Z_{h}$ of $Z$. The spaces $W_{h}$ and $Z_{h}$ are both
finite-dimensional spaces, with $dim \ W_{h}$ $=$ $dim \ Z_{h}$,
or they are both infinite-dimensional spaces. Let $F_{h} \in
C^{p}(\mathbb{R}^{m} \times W_{h},Z_{h})$ be an approximation of
the function $F$ that defines equation (\ref{e5_1}). $h$ is a
positive parameter.

The mathematical entities are the same in the exact case and in
the approximate case, depending on the Banach spaces only. This is
why, in the approximate case, we maintain the notations from the
exact case and adjoin an index $h$. The entities $W$, $Z$, $F$,
$(\lambda,u)$, $X$, $Y$, $x$, $x_{0}=(f_{0},\lambda_{0},u_{0})$,
$\widetilde{x}_{0}$, ..., from the infinite-dimensional case,
become $W_{h}$, $Z_{h}$, $F_{h}$, $(\lambda_{h},u_{h})$, $X_{h}$,
$Y_{h}$, $x_{h}$, $x_{0h}=(f_{0h},\lambda_{0h},u_{0h})$,
$\widetilde{x}_{0h}$, ..., respectively, in the approximate case.

Let $\pi_{h}^{W}:W \rightarrow W_{h}$ and $\pi_{h}^{Z} \in
L(Z,Z_{h})$ be two operators. As a conclusion of the conditions
used to relate the exact spaces and the approximate spaces in the
literature \cite{CLBichir_bib_Bochev_GunzburgerLSFEM2009,
CLBichir_bib_Bohmer2001, CLBichir_bib_Bohmer_Dahlke2003,
CLBichir_bib_Bohmer2010, CLBichir_bib_Brezzi_Rappaz_Raviart1_1980,
CLBichir_bib_Brezzi_Rappaz_Raviart2_1981,
CLBichir_bib_Brezzi_Rappaz_Raviart3_1981,
CLBichir_bib_Cliffe_Spence_Tavener2000, CLBichir_bib_Cr_Ra1990,
CLBichir_bib_Elman_Silvester_Wathen2005, CLBichir_bib_Gir_Rav1986,
CLBichir_bib_Ili1980, CLBichir_bib_Ka_Ak1986,
CLBichir_bib_Quarteroni_Valli2008, CLBichir_bib_Temam1979,
CLBichir_bib_E_Zeidler_NFA_IIA, CLBichir_bib_E_Zeidler_NFA_IIB},
we assume that $\pi_{h}^{Z}$ has the following property
\begin{equation}
\label{e5_8_proprietati}
   \| \bar{a}-\pi_{h}^{Z}\bar{a} \|_{Z} \leq C \| \bar{a} \|_{Z},
      \ \forall \bar{a} \in Z, \ \textrm{where} \ C < 1 \, .
\end{equation}
Let $\bar{a}_{ih}$ $=$ $\pi_{h}^{Z}\bar{a}_{i}$, $i=1,\ldots,q$,
$\bar{b}_{kh}$ $=$ $\pi_{h}^{Z}\bar{b}_{k}$, $k=1,\ldots,n$,
$Z_{1,h}$ $=$ $sp \ \{\bar{a}_{1h},\ldots,\bar{a}_{qh} \}$,
$Z_{3,h}$ $=$ $sp \ \{\bar{b}_{1h},\ldots,\bar{b}_{nh} \}$. Let
$I_{N}$ be the identity operator on $\mathbb{R}^{N}$, $N \geq 1$,
and $\widetilde{f}_{h}$ $=$ $I_{q}f$, $\widetilde{g}_{h}$ $=$
$I_{q}g$ $\in$ $\mathbb{R}^{q}$, $\widetilde{e}_{h}$ $=$ $I_{n}e$
$\in$ $\mathbb{R}^{n}$, $\widetilde{\lambda}_{h}$ $=$
$I_{m}\lambda$, $\widetilde{\mu}_{h}$ $=$ $I_{m}\mu$ $\in$
$\mathbb{R}^{m}$, $\widetilde{u}_{h}$ $=$ $\pi_{h}^{W}u$,
$\widetilde{w}_{h}$ $=$ $\pi_{h}^{W}w$, $\widetilde{v}_{h}$ $=$
$\pi_{h}^{W}v$ $\in$ $W_{h}$. Since $f_{0}=0$, $g_{i,0}=0$,
$e_{k,0}=0$, we take $\widetilde{f}_{0h}=0$,
$\widetilde{g}_{i,0,h}=0$, $\widetilde{e}_{k,0,h}=0$.

\begin{lem}
\label{Lema_aih_linearly_independent_set} (i) For every $\bar{a}
\in Z$, $\bar{a} \neq 0$, we have $\pi_{h}^{Z}\bar{a} \neq 0$.
(ii) Assume that the elements $\bar{a}_{1}$, $\ldots$,
$\bar{a}_{k}$ $\in$ $Z$ form a linearly independent set. Then, the
elements $\bar{a}_{1h}, \ldots, \bar{a}_{kh}$ $\in$ $Z_{h}$ form a
linearly independent set.
\end{lem}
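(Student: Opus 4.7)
The plan is to derive both statements directly from the approximation property (\ref{e5_8_proprietati}), exploiting that the contraction constant satisfies $C < 1$. The linearity of $\pi_h^Z$ will then reduce part (ii) to part (i).

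For part (i), I would argue by contradiction. Suppose $\bar{a} \in Z$ with $\bar{a} \neq 0$ but $\pi_h^Z \bar{a} = 0$. Applying (\ref{e5_8_proprietati}) gives
\begin{equation*}
\| \bar{a} \|_Z = \| \bar{a} - 0 \|_Z = \| \bar{a} - \pi_h^Z \bar{a} \|_Z \leq C \| \bar{a} \|_Z,
\end{equation*}
whence $(1-C) \| \bar{a} \|_Z \leq 0$. Since $C < 1$, this forces $\| \bar{a} \|_Z = 0$, so $\bar{a} = 0$, a contradiction. Hence $\pi_h^Z \bar{a} \neq 0$.

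For part (ii), let $\alpha_1, \ldots, \alpha_k \in \mathbb{R}$ be such that $\sum_{i=1}^{k} \alpha_i \bar{a}_{ih} = 0$ in $Z_h$. Set $\bar{a} = \sum_{i=1}^{k} \alpha_i \bar{a}_i \in Z$. Since $\pi_h^Z \in L(Z, Z_h)$ is linear,
\begin{equation*}
\pi_h^Z \bar{a} = \sum_{i=1}^{k} \alpha_i \pi_h^Z \bar{a}_i = \sum_{i=1}^{k} \alpha_i \bar{a}_{ih} = 0.
\end{equation*}
By part (i), this forces $\bar{a} = 0$, and then the linear independence of $\{\bar{a}_1, \ldots, \bar{a}_k\}$ in $Z$ gives $\alpha_1 = \cdots = \alpha_k = 0$. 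Therefore $\{\bar{a}_{1h}, \ldots, \bar{a}_{kh}\}$ is linearly independent in $Z_h$.

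There is no real obstacle here; the entire lemma follows from the strict inequality $C < 1$ in (\ref{e5_8_proprietati}), which guarantees that $\pi_h^Z$ is injective (not merely bounded). The only point worth emphasizing is that it is essential that $C < 1$ strictly—an equality $C = 1$ would not rule out the existence of nonzero elements in the kernel of $\pi_h^Z$, and the proof of (i) would collapse.
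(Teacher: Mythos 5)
Your proof is correct and follows essentially the same approach as the paper: for (i), you rearrange (\ref{e5_8_proprietati}) under the assumption $\pi_h^Z\bar{a}=0$ to obtain $\|\bar{a}\|_Z \le C\|\bar{a}\|_Z$ and use $C<1$ to force $\bar{a}=0$ (the paper states this as deducing $1\le C$, which is the same estimate after dividing by $\|\bar{a}\|_Z\ne 0$); for (ii), you use linearity of $\pi_h^Z$ to push a linear relation in $Z_h$ back to a nonzero preimage in $Z$ annihilated by $\pi_h^Z$, contradicting (i), just as the paper does via the contrapositive.
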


\begin{proof}(i) If there exists $\bar{a} \neq 0$ such that $\pi_{h}^{Z}\bar{a} = 0$,
then we deduce from (\ref{e5_8_proprietati}) that $1 \leq C$. This
contradicts the condition $C < 1$.

(ii) Suppose that $\bar{a}_{1h}$, $\ldots$, $\bar{a}_{qh}$ form a
linearly dependent set, therefore there exists
$f'=((f')^{1},\ldots,(f')^{q}) \in \mathbb{R}^{q}$, $f' \neq 0$
and $\sum_{i=1}^{q}(f')^{i}\bar{a}_{ih}=0$. Then,
$\sum_{i=1}^{q}(f')^{i}\bar{a}_{i} \neq 0$. This contradicts the
condition (i) since
$\pi_{h}^{Z}(\sum_{i=1}^{q}(f')^{i}\bar{a}_{i})$ $=$
$\sum_{i=1}^{q}(f')^{i}\bar{a}_{ih}$ $=$ $0$.

\qquad
\end{proof}

Introduce the function of class $C^{p}$,
\begin{equation*}
\label{e5_10}
   G_{h}:X_{h} \rightarrow Z_{h}, \ G_{h}(x_{h})
      =F_{h}(\lambda_{h},u_{h})-\sum_{i=1}^{q}f_{h}^{i}\bar{a}_{ih} \, ,
\end{equation*}
and the function of class $C^{p-1}$,
\begin{equation*}
\label{e5_10_H}
   H_{h}:\mathbb{R}^{m} \times W_{h} \times \Delta_{h} \rightarrow Z_{h}, \
      H_{h}(\lambda_{h},u_{h},\mathrm{z}_{h})=D_{u}F_{h}(\lambda_{h},u_{h})v_{h}
         -\sum_{k=1}^{n}e_{h}^{k}\bar{b}_{kh} \, .
\end{equation*}

Introduce two operators $B_{h}$, $B_{h} \in
L(X_{h},\mathbb{R}^{q+m})$ and $\bar{\mathcal{B}}_{h}$ $\in$ $
L(\Delta_{h},\mathbb{R}^{n})$ that approximate the operators $B$
and $\bar{\mathcal{B}}$, respectively. $\widetilde{\theta}_{0h}$
$=$ $B_{h}(\widetilde{x}_{0h})$ and
\begin{equation}
\label{e5_23_h}
   \Psi_{h}:X_{h} \rightarrow Y_{h} , \
   \Psi_{h}(x_{h})=\left[\begin{array}{l}
      B_{h}(x_{h})-\widetilde{\theta}_{0h}\\
      G_{h}(x_{h})
      \end{array}\right] \, .
\end{equation}
We have
$D\Psi_{h}(x_{h})y_{h}=[B_{h}(y_{h}),DG_{h}(x_{h})y_{h}]^{T}$. We
also define
\begin{equation}
\label{e5_26_T1_h}
    \Phi_{G,h}(x_{h},\cdot):X_{h}
      \rightarrow Y_{h}, \
    \Phi_{G,h}(x_{h},y_{h})
    =[B_{h}(y_{h}),DG_{h}(x_{h})y_{h}]^{T} \, ,
\end{equation}
\begin{equation}
\label{e5_27_S_5_T2_h}
   \Phi_{H,h}(x_{h},\cdot):\Delta_{h}
      \rightarrow \Sigma_{h}, \
   \Phi_{H,h}(x_{h},\mathrm{z}_{h})
   =[\bar{\mathcal{B}}_{h}(\mathrm{z}_{h}),H_{h}(\lambda_{h},u_{h},\mathrm{z}_{h})]^{T} \, .
\end{equation}

\begin{rem}
\label{observatia5_ecuatia_aprox_Cr_Rap1} In
\cite{CLBichir_bib_Cr_Ra1990}, the approximate equations are
constructed on the same spaces as the exact equation and the exact
operator $B$ is maintained in the approximate case.
\end{rem}

\subsection{Some sufficient conditions for
$\Phi_{G,h}(\widetilde{x}_{0h},\cdot)$ and
$\Phi_{H,h}(\widetilde{x}_{0h},\cdot)$ to be isomorphisms}
\label{sectiunea_conditii_suficiente}

Assume the following hypotheses:
\begin{equation}
\label{e5_45p_conditia_izomorfism} \textrm{there exists an
isomorphism} \ \mathcal{J} \
      \textrm{of} \ W \
      \textrm{onto} \ Z \ \textrm{such that} \ \mathcal{J}(W_{h}) = Z_{h} \, ,
\end{equation}
\begin{eqnarray}
   &  \textrm{there exists} \ \eta_{1} > 0 \
      \textrm{such that, for every} \
      x_{h} \in X_{h}, \
      \textrm{we have}
         \label{e5_45p_conditia_III} \\
   &  \qquad   \| Bx_{h} - B_{h}x_{h}
      \|_{\mathbb{R}^{q+m}}
      \leq
      \eta_{1} \| x_{h} \|_{X_{h}} \, ,
         \nonumber
\end{eqnarray}
\begin{eqnarray}
   &  \textrm{there exists} \ \eta_{2} > 0 \
      \textrm{such that, for every} \
      (\lambda_{h},u_{h}) \in \mathbb{R}^{m} \times W_{h}, \
      \textrm{we have}
         \label{e5_45p_conditia_I} \\
   &  \| \pi_{h}^{Z}DF(\lambda_{0},u_{0})(\lambda_{h},u_{h})
      - DF_{h}(\widetilde{\lambda}_{0h},\widetilde{u}_{0h})(\lambda_{h},u_{h})
      \|_{Z_{h}}
      \leq
      \eta_{2} \| (\lambda_{h},u_{h}) \|_{\mathbb{R}^{m} \times W_{h}} \, ,
         \nonumber
\end{eqnarray}
\begin{eqnarray}
   &  \textrm{there exists} \ \eta_{3} > 0 \
      \textrm{such that, for every} \
      \mathrm{z}_{h} \in \Delta_{h}, \
      \textrm{we have}
         \label{e5_45p_conditia_IV} \\
   &  \| \bar{\mathcal{B}}\mathrm{z}_{h} - \bar{\mathcal{B}}_{h}\mathrm{z}_{h}
      \|_{\mathbb{R}^{n}}
      \leq
      \eta_{3} \| \mathrm{z}_{h} \|_{\Delta_{h}} \, ,
         \nonumber
\end{eqnarray}
\begin{eqnarray}
   & \ & \textrm{there exists} \ \eta_{4} > 0 \
      \textrm{such that, for every} \
      v_{h} \in W_{h}, \
      \textrm{we have}
         \label{e5_45p_conditia_V} \\
   & \ & \| \pi_{h}^{Z}D_{u}F(\lambda_{0},u_{0})v_{h}
      - D_{u}F_{h}(\widetilde{\lambda}_{0h},\widetilde{u}_{0h})v_{h}
      \|_{Z_{h}}
      \leq
      \eta_{4} \| v_{h} \|_{W_{h}} \, .
         \nonumber
\end{eqnarray}

\begin{lem}
\label{lema_Sir_Spaces_AF1982_Cor1}
(\cite{CLBichir_bib_Sir_Spaces_AF1982}) Let $E$, $F$ be two Banach
spaces and let $T,S \in L(E,F)$. If the operator $T$ is bijective
and $\| T^{-1} \|_{L(F,E)} \| T - S \|_{L(E,F)}$ $<$ $1$, then the
operator $S$ is bijective and $\| S^{-1} \|_{L(F,E)}$ $\leq$ $(1 -
q)^{-1}$ $\| T^{-1} \|_{L(F,E)}$, $\forall q \in \mathbb{R}$ that
satisfies $\| T^{-1} \|_{L(F,E)} \| T - S \|_{L(E,F)}$ $\leq$ $q$
$<$ $1$.
\end{lem}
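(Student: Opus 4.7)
The plan is to prove this by a standard Neumann series (Banach lemma) argument, factoring $S$ through $T$ so that the bijectivity and the norm bound for $S^{-1}$ are inherited from $T^{-1}$ and a geometric series.

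First I would write
\[
S = T - (T-S) = T\bigl(I_{E} - T^{-1}(T-S)\bigr),
\]
and set $U := T^{-1}(T-S) \in L(E,E)$. The submultiplicativity of the operator norm together with the hypothesis gives
\[
\|U\|_{L(E,E)} \leq \|T^{-1}\|_{L(F,E)} \|T-S\|_{L(E,F)} < 1,
\]
and, for any admissible $q$, $\|U\| \leq q < 1$.

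Next I would invoke the Neumann series on the Banach space $L(E,E)$: since $\|U\| < 1$, the series $\sum_{n=0}^{\infty} U^{n}$ converges absolutely in $L(E,E)$, its sum is a bounded linear inverse of $I_{E}-U$, and one obtains the estimate
\[
\|(I_{E}-U)^{-1}\|_{L(E,E)} \leq \sum_{n=0}^{\infty} \|U\|^{n} = \frac{1}{1-\|U\|} \leq \frac{1}{1-q}.
\]
Hence $I_{E}-U$ is a (bounded linear) bijection of $E$ onto $E$.

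Finally, since $T$ is bijective by hypothesis and $I_{E}-U$ is bijective by the previous step, the factorization $S = T\circ(I_{E}-U)$ shows that $S:E\to F$ is bijective with
\[
S^{-1} = (I_{E}-U)^{-1} \circ T^{-1}.
\]
Submultiplicativity then yields
\[
\|S^{-1}\|_{L(F,E)} \leq \|(I_{E}-U)^{-1}\|_{L(E,E)} \|T^{-1}\|_{L(F,E)} \leq (1-q)^{-1} \|T^{-1}\|_{L(F,E)},
\]
which is the desired estimate. (Continuity of $S^{-1}$ is built into the Neumann series construction, so no appeal to the open mapping theorem is needed.) There is no real obstacle here: the only delicate point is to keep track of which space each operator acts on so that the composition $T\circ(I_{E}-U)$ and its inverse are well defined in $L(E,F)$ and $L(F,E)$ respectively; once $U$ is placed in $L(E,E)$, everything reduces to the classical Banach lemma.
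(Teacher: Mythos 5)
Your Neumann-series argument is correct and is the standard proof of this perturbation lemma; the paper itself states it without proof, citing \cite{CLBichir_bib_Sir_Spaces_AF1982}, so there is no in-text argument to compare against. The factorization $S = T(I_{E}-U)$ with $U = T^{-1}(T-S)$, the bound $\|U\|\le q<1$, and the resulting estimate $\|S^{-1}\|\le(1-q)^{-1}\|T^{-1}\|$ all check out.
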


The following Theorem \ref{teorema_lema5_8p} and its proof are the
adaptation, to our conditions, of Theorem XIV.1.1 and of its proof
from \cite{CLBichir_bib_Ka_Ak1986}.

Let us denote $q_{G,1}$ $=$
   $C (\| DF(\lambda_{0},u_{0}) \|_{L(\mathbb{R}^{m} \times W,Z)}$
   $+$ $\sum_{i=1}^{q} \|\bar{a}_{i}\|_{Z}$
   $+$ $\| \mathcal{J} \|_{L(W,Z)})$, $q_{G,2}$ $=$
   $\eta_{1}$
   $+$ $\eta_{2}$
   $+$$C \| \mathcal{J} \|_{L(W,Z)}$, $q_{G}$ $=$ $q_{G,h}$ $=$ $(q_{G,1}$ $+$ $q_{G,2})$ $\|
\Phi_{G}(x_{0},\cdot)^{-1} \|_{L(Y,X)}$,
   $q_{H,1}$ $=$
   $C (\| D_{u}F(\lambda_{0},u_{0}) \|_{L(W,Z)}$
   $+$ $\sum_{k=1}^{n} \|\bar{b}^{k}\|_{Z}$
   $+$ $\| \mathcal{J} \|_{L(W,Z)})$,  $q_{H,2}$ $=$
   $\eta_{3}$
   $+$ $\eta_{4}$
   $+$$C \| \mathcal{J} \|_{L(W,Z)}$, $q_{H}$ $=$ $q_{H,h}$ $=$ $(q_{H,1}$ $+$ $q_{H,2})$ $\|
\Phi_{H}(x_{0},\cdot)^{-1} \|_{L(\Sigma,\Delta)}$.

\begin{thm}
\label{teorema_lema5_8p} (i) Suppose that Hypotheses
(\ref{ipotezaHypF}), (\ref{e5_8_proprietati}),
(\ref{e5_45p_conditia_izomorfism}) - (\ref{e5_45p_conditia_I})
hold. If
\begin{equation}
\label{e5_45p_conditia_q}
   q_{G} = q_{G,h} = (q_{G,1} + q_{G,2})
      \| \Phi_{G}(x_{0},\cdot)^{-1} \|_{L(Y,X)} < 1 \, ,
\end{equation}
then $\Phi_{G,h}(\widetilde{x}_{0h},\cdot)$ is an isomorphism of
$X_{h}$ onto $Y_{h}$ and
\begin{equation}
\label{e5_45p_conditia_q_estimare}
   \| \Phi_{G,h}(\widetilde{x}_{0h},\cdot)^{-1} \|_{L(Y_{h},X_{h})}
   \leq
   (1-q_{G})^{-1}
   \| \Phi_{G}(x_{0},\cdot)^{-1} \|_{L(Y,X)} \, .
\end{equation}

(ii) Suppose that Hypotheses (\ref{ipotezaHypF}),
(\ref{e5_8_proprietati}), (\ref{e5_45p_conditia_izomorfism}),
(\ref{e5_45p_conditia_IV}), (\ref{e5_45p_conditia_V}) hold. If
\begin{equation}
\label{e5_45p_conditia_q_H}
   q_{H} = q_{H,h} = (q_{H,1} + q_{H,2})
      \| \Phi_{H}(x_{0},\cdot)^{-1} \|_{L(\Sigma,\Delta)} < 1 \, ,
\end{equation}
then $\Phi_{H,h}(\widetilde{x}_{0h},\cdot)$  is an isomorphism of
$\Delta_{h}$ onto $\Sigma_{h}$ and
\begin{equation}
\label{e5_45p_conditia_q_H_estimare}
   \| \Phi_{H,h}(\widetilde{x}_{0h},\cdot)^{-1} \|_{L(\Sigma_{h},\Delta_{h})}
   \leq
   (1-q_{H})^{-1}
   \| \Phi_{H}(x_{0},\cdot)^{-1} \|_{L(\Sigma,\Delta)} \, .
\end{equation}
\end{thm}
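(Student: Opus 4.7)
The strategy is the standard Kantorovich--Akilov perturbation argument built on Lemma \ref{lema_Sir_Spaces_AF1982_Cor1}. Since hypothesis (\ref{ipotezaHypF}) holds, Theorem \ref{teorema_principala_parteaMAIN} together with Lemmas \ref{Lema_solutie_01_DS3}, \ref{Lema_solutie_01_DS3_fi3} give that $\Phi_{G}(x_{0},\cdot)$ is an isomorphism of $X$ onto $Y$ and $\Phi_{H}(\lambda_{0},u_{0};\cdot)$ is an isomorphism of $\Delta$ onto $\Sigma$. The objective is to build from $\Phi_{G,h}(\widetilde{x}_{0h},\cdot)$ and $\Phi_{H,h}(\widetilde{x}_{0h},\cdot)$ a linear operator defined on the ambient space that is close in norm to $\Phi_{G}(x_{0},\cdot)$ (respectively, to $\Phi_{H}(\lambda_{0},u_{0};\cdot)$), apply Lemma \ref{lema_Sir_Spaces_AF1982_Cor1}, and then read off the conclusion on the subspaces.

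For part (i) the key construction is a lifting $T_{h}:X\to Y$. Using the isomorphism $\mathcal{J}$ from (\ref{e5_45p_conditia_izomorfism}) together with $\pi_{h}^{Z}$ from (\ref{e5_8_proprietati}), I would introduce the decompositions $W=W_{h}\oplus W_{h}^{c}$ and $Z=Z_{h}\oplus Z_{h}^{c}$ with $Z_{h}^{c}=\ker\pi_{h}^{Z}$ and $W_{h}^{c}=\mathcal{J}^{-1}(Z_{h}^{c})$. This produces decompositions $X=X_{h}\oplus(\{0\}\times W_{h}^{c})$ and $Y=Y_{h}\oplus(\{0\}\times Z_{h}^{c})$ because $\mathcal{J}(W_{h})=Z_{h}$ forces $\mathcal{J}(W_{h}^{c})=Z_{h}^{c}$. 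Writing $w=w_{h}+w^{c}\in W$ I then define
\begin{equation*}
   T_{h}(g,\mu,w)=\Phi_{G,h}(\widetilde{x}_{0h},(g,\mu,w_{h}))+(0,\mathcal{J}(w^{c})),
\end{equation*}
which is block-diagonal with diagonal blocks $\Phi_{G,h}(\widetilde{x}_{0h},\cdot):X_{h}\to Y_{h}$ and $\mathcal{J}|_{W_{h}^{c}}:W_{h}^{c}\to Z_{h}^{c}$. Since $\mathcal{J}|_{W_{h}^{c}}$ is automatically an isomorphism, bijectivity of $T_{h}$ on $X\to Y$ is equivalent to bijectivity of $\Phi_{G,h}(\widetilde{x}_{0h},\cdot)$ on $X_{h}\to Y_{h}$.

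The next step is to estimate $\|\Phi_{G}(x_{0},\cdot)-T_{h}\|_{L(X,Y)}$. Expanding the difference in the first (Banach) component uses (\ref{e5_45p_conditia_III}) to control $B_{h}-B$ on $X_{h}$ and the continuity of $B$ to absorb the $w^{c}$ contribution. In the second ($Z$-valued) component, after inserting $\pm\pi_{h}^{Z}DF(\lambda_{0},u_{0})$, the three pieces are bounded by $\eta_{2}$ from (\ref{e5_45p_conditia_I}), by $C$ times $\|DF(\lambda_{0},u_{0})\|$ from (\ref{e5_8_proprietati}) applied to $DF(\lambda_{0},u_{0})(\mu,w_{h})$, and by $C\sum\|\bar{a}_{i}\|$ coming from $\bar{a}_{i}-\bar{a}_{ih}=\bar{a}_{i}-\pi_{h}^{Z}\bar{a}_{i}$; the remaining cross term $\mathcal{J}(w^{c})-D_{u}F(\lambda_{0},u_{0})w^{c}$ is estimated using $\|\mathcal{J}(w^{c})\|=\|\mathcal{J}w-\pi_{h}^{Z}\mathcal{J}w\|\le C\|\mathcal{J}\|\|w\|$ to produce the terms $C\|\mathcal{J}\|$ and $C\|DF(\lambda_{0},u_{0})\|$. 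Summing these bounds reproduces exactly $q_{G,1}+q_{G,2}$, whence under (\ref{e5_45p_conditia_q}) one has $\|\Phi_{G}(x_{0},\cdot)^{-1}\|\,\|\Phi_{G}(x_{0},\cdot)-T_{h}\|\le q_{G}<1$. Lemma \ref{lema_Sir_Spaces_AF1982_Cor1} yields that $T_{h}$ is bijective with $\|T_{h}^{-1}\|\le(1-q_{G})^{-1}\|\Phi_{G}(x_{0},\cdot)^{-1}\|$; reading off the $X_{h}\to Y_{h}$ block gives (\ref{e5_45p_conditia_q_estimare}). Part (ii) is carried out verbatim with $\Phi_{H}$, $\Phi_{H,h}$, $\bar{\mathcal{B}}$, $\bar{\mathcal{B}}_{h}$ and hypotheses (\ref{e5_45p_conditia_IV}), (\ref{e5_45p_conditia_V}) in place of the $G$-counterparts.

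The main obstacle is making the direct-sum splittings compatible: one must verify that the lifting via $\mathcal{J}$ in the complement $W_{h}^{c}$ really yields a \emph{block-diagonal} operator with respect to $X=X_{h}\oplus(\{0\}\times W_{h}^{c})$ and $Y=Y_{h}\oplus(\{0\}\times Z_{h}^{c})$, and that the norm estimate is honest, i.e.\ that the contributions from $w^{c}$ can be bounded with the right combination of $C$, $\|\mathcal{J}\|$, and the $\eta_{i}$ so as to match the definitions of $q_{G,1}$ and $q_{G,2}$. Once this book-keeping is in place, transfer of bijectivity to the subspace and of the norm estimate is immediate.
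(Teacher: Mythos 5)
Your strategy — build a single block‑diagonal lifting $T_{h}:X\to Y$ of $\Phi_{G,h}(\widetilde{x}_{0h},\cdot)$ and compare it to $\Phi_{G}(x_{0},\cdot)$ in one shot — is close in spirit to the Kantorovich--Akilov pattern, but it does not quite reproduce the paper's argument and it has two concrete gaps.

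First, the $B$-component is not controlled. With your decomposition $w=w_{h}+w^{c}$, the first component of $\Phi_{G}(x_{0},\cdot)-T_{h}$ is
\begin{equation*}
   B(g,\mu,w)-B_{h}(g,\mu,w_{h})
   = (B-B_{h})(g,\mu,w_{h}) + B(0,0,w^{c}) .
\end{equation*}
The first summand is bounded by $\eta_{1}$ through (\ref{e5_45p_conditia_III}). But the second summand requires some bound on $\|B(0,0,w^{c})\|$, and ``continuity of $B$'' yields only $\|B\|_{L(X,\mathbb{R}^{q+m})}\|w^{c}\|$, which introduces $\|B\|$ into the estimate. Neither $q_{G,1}=C(\|DF(\lambda_{0},u_{0})\|+\sum_{i}\|\bar a_{i}\|+\|\mathcal{J}\|)$ nor $q_{G,2}=\eta_{1}+\eta_{2}+C\|\mathcal{J}\|$ contains $\|B\|$, so you cannot land on the constant $q_{G}$ announced in (\ref{e5_45p_conditia_q}). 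A similar $\|\mathcal{J}^{-1}\|$-dependent factor creeps into your bound for $(D_{u}F(\lambda_{0},u_{0})-\mathcal{J})w^{c}$ once you pass from $\|\mathcal{J}w^{c}\|$ to $\|w^{c}\|$.

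Second, the identity $\|\mathcal{J}w^{c}\|=\|\mathcal{J}w-\pi_{h}^{Z}\mathcal{J}w\|$ that you use to get the $C$-factor holds only if $\pi_{h}^{Z}$ restricts to the identity on $Z_{h}$, i.e.\ if $\pi_{h}^{Z}$ is a projection onto $Z_{h}$. That is nowhere assumed; (\ref{e5_8_proprietati}) only controls $I_{Z}-\pi_{h}^{Z}$ in norm. Without this, $\mathcal{J}w-\pi_{h}^{Z}\mathcal{J}w = (I-\pi_{h}^{Z})\mathcal{J}w_{h} + \mathcal{J}w^{c}$, so you cannot isolate $\mathcal{J}w^{c}$ from it. In fact the construction of the complementary space $Z_{h}^{c}=\ker\pi_{h}^{Z}$, with the attendant direct-sum splittings, is exactly what the paper's proof avoids.

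The paper instead introduces two intermediate operators, $\widetilde{\mathcal{T}}=\widehat{\mathcal{J}}+\Pi_{h}(\Phi_{G}(x_{0},\cdot)-\widehat{\mathcal{J}})$ on $X$ and its restriction $\widetilde{\mathcal{T}}_{h}$ to $X_{h}$, and performs the perturbation in two stages. In the first stage ($\mathcal{T}\to\widetilde{\mathcal{T}}$) the first component equals $B$ on both sides because $\widehat{\mathcal{J}}$ carries $I_{q+m}$ there and $\Pi_{h}$ acts as the identity on the $\mathbb{R}^{q+m}$-block, so no $B$-term appears and one gets exactly $q_{G,1}$. The bijectivity of $\widetilde{\mathcal{T}}_{h}$ on $X_{h}$ is then obtained not from a complementary splitting but from the observation that $\widetilde{\mathcal{T}}^{-1}(Y_{h})\subseteq X_{h}$: writing $\widehat{\mathcal{J}}x_{h}'=\zeta_{h}-\Pi_{h}(\Phi_{G}(x_{0},x_{h}')-\widehat{\mathcal{J}}x_{h}')$ shows $\widehat{\mathcal{J}}x_{h}'\in Y_{h}$ and hence $x_{h}'\in X_{h}$ by $\widehat{\mathcal{J}}(X_{h})=Y_{h}$. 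The second stage ($\widetilde{\mathcal{T}}_{h}\to\mathcal{T}_{h}$) is then entirely on $X_{h}$ and (\ref{e5_45p_conditia_III}), (\ref{e5_45p_conditia_I}) apply cleanly, giving $q_{G,2}$. You should restructure along these lines rather than through the block-diagonal lifting.
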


\begin{proof} Since (\ref{ipotezaHypF}) holds, from the discussion on
(\ref{e5_1_introduction_2_1}), as in
\cite{CLBichir_bib_Cr_Ra1990}, we have that
$\Phi_{G}(x_{0},\cdot)$ is an isomorphism of $X$ onto $Y$. From
Theorem \ref{teorema_principala_parteaMAIN}(ii) and Lemma
\ref{Lema_solutie_01_DS3}, $\Phi_{H}(x_{0},\cdot)$ is an
isomorphism of $\Delta$ onto $\Sigma$.

(i) Define $\widehat{\mathcal{J}}:X \rightarrow Y$,
$\widehat{\mathcal{J}} x$ $=$
$[I_{q+m}(f,\lambda),\mathcal{J}u]^{T}$. From the hypothesis
(\ref{e5_45p_conditia_izomorfism}), there results that
$\widehat{\mathcal{J}}$ is an isomorphism of $X$ onto $Y$ such
that $\widehat{\mathcal{J}}(X_{h})$ $=$ $Y_{h}$.

Let $\mathcal{J}_{h}$ be the restriction of $\mathcal{J}$ to
$W_{h}$ and let $\widehat{\mathcal{J}}_{h}$ be the restriction of
$\widehat{\mathcal{J}}$ to $X_{h}$, $\widehat{\mathcal{J}}_{h} x$
$=$ $[I_{q+m}|_{X_{h}}(f,\lambda),\mathcal{J}_{h}u]^{T}$. We have
$\mathcal{J}_{h}(W_{h})$ $=$ $Z_{h}$ and
$\widehat{\mathcal{J}}_{h}(X_{h})$ $=$ $Y_{h}$. Let $\Pi_{h}$ $=$
$(I_{q+m}, \pi_{h}^{Z})$ $\in$ $L(Y,Y_{h})$.

Let $\mathcal{T}$ $=$ $(\mathcal{T}_{B}$, $\mathcal{T}_{G})$,
$\widetilde{\mathcal{T}}$ $=$ $(\widetilde{\mathcal{T}}_{B}$,
$\widetilde{\mathcal{T}}_{G})$ $\in$ $L(X,Y)$ and
$\widetilde{\mathcal{T}}_{h}$ $=$
$(\widetilde{\mathcal{T}}_{h,B}$,
$\widetilde{\mathcal{T}}_{h,G})$, $\mathcal{T}_{h}$ $=$
$(\mathcal{T}_{h,B}$, $\mathcal{T}_{h,G})$ $\in$ $L(X_{h},Y_{h})$
defined by
\begin{eqnarray}
   && \mathcal{T} =
   \Phi_{G}(x_{0},\cdot) =
   \widehat{\mathcal{J}} + (\Phi_{G}(x_{0},\cdot) - \widehat{\mathcal{J}}) \, ,
         \label{e5_45p_conditia_q_operatorii_T} \\
   && \widetilde{\mathcal{T}} =
   \widehat{\mathcal{J}} + \Pi_{h}(\Phi_{G}(x_{0},\cdot) - \widehat{\mathcal{J}}) \, ,
         \nonumber \\
   && \widetilde{\mathcal{T}}_{h} =
   \widehat{\mathcal{J}}_{h} + \Pi_{h}(\Phi_{G}(x_{0},\cdot) - \widehat{\mathcal{J}}_{h}) \, ,
         \nonumber \\
   && \mathcal{T}_{h} =
   \Phi_{G,h}(\widetilde{x}_{0h},\cdot) =
   \widehat{\mathcal{J}}_{h} + (\Phi_{G,h}(\widetilde{x}_{0h},\cdot) - \widehat{\mathcal{J}}_{h}) \, .
         \nonumber
\end{eqnarray}

Define the first component of the operators $\mathcal{T}$,
$\widetilde{\mathcal{T}}$, $\widetilde{\mathcal{T}}_{h}$,
$\mathcal{T}_{h}$: \\
$\mathcal{T}_{B}x$
   $=$ $Bx$
   $=$ $I_{q+m}(f,\lambda)$
   $+$ $(Bx$
   $-$ $I_{q+m}(f,\lambda))$, \\
$\widetilde{\mathcal{T}}_{B}x$
   $=$ $I_{q+m}(f,\lambda)$
   $+$ $I_{q+m}(Bx$
   $-$ $I_{q+m}(f,\lambda))$, \\
$\widetilde{\mathcal{T}}_{h,B}x$
   $=$ $I_{q+m}(f,\lambda)$
   $+$ $I_{q+m}(Bx$
   $-$ $I_{q+m}(f,\lambda))$, \\
$\mathcal{T}_{h,B}x$
   $=$ $B_{h}x$
   $=$ $I_{q+m}(f,\lambda)$
   $+$ $(B_{h}x$
   $-$ $I_{q+m}(f,\lambda))$.

Define the second component of the operators  $\mathcal{T}$,
$\widetilde{\mathcal{T}}$,
$\widetilde{\mathcal{T}}_{h}$, $\mathcal{T}_{h}$: \\
$\mathcal{T}_{G}x$
   $=$ $DG(x_{0})x$
   $=$ $\mathcal{J}u$
   $+$ $(DF(\lambda_{0},u_{0})(\lambda,u)$
   $-$ $\sum_{i=1}^{q}f^{i}\bar{a}_{i}$
   $-$ $\mathcal{J}u)$, \\
$\widetilde{\mathcal{T}}_{G}x$
   $=$ $\mathcal{J}u$
   $+$ $\pi_{h}^{Z}DF(\lambda_{0},u_{0})(\lambda,u)$
   $-$ $\pi_{h}^{Z}(\sum_{i=1}^{q}f^{i}\bar{a}_{i})$
   $-$ $\pi_{h}^{Z}\mathcal{J}u$, \\
$\widetilde{\mathcal{T}}_{h,G}x$
   $=$ $\mathcal{J}_{h}u$
   $+$ $\pi_{h}^{Z}DF(\lambda_{0},u_{0})(\lambda,u)$
   $-$ $\sum_{i=1}^{q}f^{i}\bar{a}_{ih}$
   $-$ $\pi_{h}^{Z}\mathcal{J}_{h}u$, \\
$\mathcal{T}_{h,G}x$
   $=$ $DG_{h}(\widetilde{x}_{0h})x$
   $=$ $\mathcal{J}_{h}u$
   $+$ $(DF_{h}(\widetilde{\lambda}_{0h},\widetilde{u}_{0h})(\lambda,u)$
   $-$ $\sum_{i=1}^{q}f^{i}\bar{a}_{ih}$
   $-$ $\mathcal{J}_{h}u)$, \\
where $\pi_{h}^{Z}(\sum_{i=1}^{q}f^{i}\bar{a}_{i})$ $=$
$\sum_{i=1}^{q}f^{i}\pi_{h}^{Z}\bar{a}_{i}$ $=$
$\sum_{i=1}^{q}f^{i}\bar{a}_{ih}$.

Let $\bar{x}$ $\in$ $X$. We denote $\mathcal{L}_{G,0}\bar{x}$
   $=$ $DF(\lambda_{0},u_{0})(\bar{\lambda},\bar{u})$
   $-$ $\sum_{i=1}^{q}\bar{f}^{i}\bar{a}_{i}$
   $-$ $\mathcal{J}\bar{u}$. We have, using
   (\ref{e5_8_proprietati}), that
$\| (\mathcal{T} - \widetilde{\mathcal{T}})\bar{x} \|_{Y}$
   $\leq$
   $\| (I_{Z}-\pi_{h}^{Z})\mathcal{L}_{G,0}\bar{x} \|_{Z}$
   $\leq$
   $C \| \mathcal{L}_{G,0}\bar{x} \|_{Z}$.
Using (\ref{e5_45p_conditia_q}), there results
\begin{equation}
\label{trei_ast}
   \| \mathcal{T} - \widetilde{\mathcal{T}} \|_{L(X,Y)}
   =  \sup_{x \in X, \| x \|_{X} \leq 1}
   \| (\mathcal{T} - \widetilde{\mathcal{T}})x \|_{Y}
   \leq q_{G,1} < \| \mathcal{T}^{-1} \|_{L(Y,X)}^{-1} \, .
\end{equation}
Applying Lemma \ref{lema_Sir_Spaces_AF1982_Cor1}, since
$\mathcal{T}$ $=$ $\Phi_{G}(x_{0},\cdot)$ is an isomorphism of $X$
onto $Y$, there results that $\widetilde{\mathcal{T}}$ is an
isomorphism of $X$ onto $Y$ and
\begin{equation}
\label{trei_ast_CONT}
   \| \widetilde{\mathcal{T}}^{-1} \|_{L(Y,X)}
   \leq
   (1-q_{G,1} \| \mathcal{T}^{-1} \|_{L(Y,X)})^{-1}
   \| \mathcal{T}^{-1} \|_{L(Y,X)} \, .
\end{equation}

If $x_{h}'$ $=$ $\widetilde{\mathcal{T}}^{-1}\zeta_{h}$, then
$\widetilde{\mathcal{T}}x_{h}'$ $=$ $\zeta_{h}$ or
$\widehat{\mathcal{J}} x_{h}'$ $+$
$\Pi_{h}(\Phi_{G}(x_{0},x_{h}')$ $-$ $\widehat{\mathcal{J}}
x_{h}')$ $=$ $\zeta_{h}$ or $\widehat{\mathcal{J}} x_{h}'$ $=$
$\zeta_{h}$ $-$ $\Pi_{h}(\Phi_{G}(x_{0},x_{h}')$ $-$
$\widehat{\mathcal{J}} x_{h}')$, where $\zeta_{h}$ $\in$ $Y_{h}$
and $\Pi_{h}(\Phi_{G}(x_{0},x_{h}')$ $-$ $\widehat{\mathcal{J}}
x_{h}')$ $\in$ $Y_{h}$. So $x_{h}'$ $=$
$\widehat{\mathcal{J}}^{-1}(\zeta_{h}$ $-$
$\Pi_{h}(\Phi_{G}(x_{0},x_{h}')$ $-$ $\widehat{\mathcal{J}}
x_{h}'))$ $\in$ $X_{h}$, since $\widehat{\mathcal{J}}(X_{h})$ $=$
$Y_{h}$. There results that the operator $\widetilde{\mathcal{T}}$
has the property that if $\zeta_{h}$ $\in$ $Y_{h}$, then
$\widetilde{\mathcal{T}}^{-1}\zeta_{h}$ $\in$ $X_{h}$.

Let us consider the operator $\widetilde{\mathcal{T}}_{h}$. We
have: $\forall$ $x_{h}$ $\in$ $X_{h}$,
$\widetilde{\mathcal{T}}_{h}x_{h}$ $=$
$\widetilde{\mathcal{T}}x_{h}$. It follows that the operator
$\widetilde{\mathcal{T}}_{h}$ has a continuous inverse that
coincides with $\widetilde{\mathcal{T}}^{-1}$ on $Y_{h}$ and
\begin{equation}
\label{trei_ast_RESTRICTIE}
   \| \widetilde{\mathcal{T}}_{h}^{-1} \|_{L(Y_{h},X_{h})}
   \leq
   \| \widetilde{\mathcal{T}}^{-1} \|_{L(Y,X)} \, .
\end{equation}
It follows that $\widetilde{\mathcal{T}}_{h}$ is an isomorphism of
$X_{h}$ onto $Y_{h}$.

Let $x_{h}$ be an arbitrary element in $X_{h}$. We have $\|
(\widetilde{\mathcal{T}}_{h} - \mathcal{T}_{h})x_{h} \|_{Y_{h}}$
$=$ $\|(B|_{X_{h}} - B_{h})x_{h} \|_{\mathbb{R}^{q+m}}$ $+$ $\|
[\pi_{h}^{Z}DF(\lambda_{0},u_{0})
      -
      DF_{h}(\widetilde{\lambda}_{0h},\widetilde{u}_{0h})](\lambda_{h},u_{h})
      + (I_{Z}-\pi_{h}^{Z})\mathcal{J}_{h}u_{h} \|_{Z_{h}}$.
Using (\ref{e5_8_proprietati}), (\ref{e5_45p_conditia_III}) and
(\ref{e5_45p_conditia_I}), there results
\begin{equation}
\label{patru_ast}
   \| \widetilde{\mathcal{T}}_{h}
   - \mathcal{T}_{h} \|_{L(X_{h},Y_{h})}
   =  \sup_{x_{h} \in X_{h}, \| x_{h} \|_{X_{h}} \leq 1}
   \| (\widetilde{\mathcal{T}}_{h}
   - \mathcal{T}_{h})x_{h} \|_{Y_{h}}
   \leq q_{G,2}  \, .
\end{equation}

We have $\| \widetilde{\mathcal{T}}_{h}^{-1} \|_{L(Y_{h},X_{h})}
\| \widetilde{\mathcal{T}}_{h} - \mathcal{T}_{h}
\|_{L(X_{h},Y_{h})}$ $\leq$ $q_{G,2} \|
\widetilde{\mathcal{T}}_{h}^{-1} \|_{L(Y_{h},X_{h})}$ $\leq$
$q_{G,2} \| \widetilde{\mathcal{T}}^{-1} \|_{L(Y,X)}$ $\leq$ $r$
$<$ $1$, where $r = (1-q_{G,1} \| \mathcal{T}^{-1}
\|_{L(Y,X)})^{-1} q_{G,2} \| \mathcal{T}^{-1} \|_{L(Y,X)}$, using
(\ref{trei_ast_RESTRICTIE}), (\ref{trei_ast_CONT}) and
(\ref{e5_45p_conditia_q}). Applying Lemma
\ref{lema_Sir_Spaces_AF1982_Cor1}, since
$\widetilde{\mathcal{T}}_{h}$ is an isomorphism of $X_{h}$ onto
$Y_{h}$, it follows that $\mathcal{T}_{h}$ $=$
$\Phi_{G,h}(\widetilde{x}_{0h},\cdot)$ is an isomorphism of
$X_{h}$ onto $Y_{h}$ and
\begin{equation*}
\label{patru_ast_CONT}
   \| \mathcal{T}_{h}^{-1} \|_{L(Y_{h},X_{h})}
   \leq
   (1-r)^{-1}
   \| \widetilde{\mathcal{T}}_{h}^{-1} \|_{L(Y_{h},X_{h})} \, ,
\end{equation*}
whence, using (\ref{trei_ast_RESTRICTIE}) and
(\ref{trei_ast_CONT}), we deduce
\begin{equation*}
\label{patru_ast_CONT}
   \| \mathcal{T}_{h}^{-1} \|_{L(Y_{h},X_{h})}
   \leq
   [1-(q_{G,1} + q_{G,2})
      \| \Phi_{G}(x_{0},\cdot)^{-1} \|_{L(Y,X)}]^{-1}
   \| \mathcal{T}^{-1} \|_{L(Y,X)} \, ,
\end{equation*}
so (\ref{e5_45p_conditia_q_estimare}).

\qquad
\end{proof}

\begin{cor}
\label{corolarul_teorema_lema5_8p} Let us adjoin the index $h$ to
$C$ and $\eta_{k}$ in order to indicate the dependence on $h$.
Assume that
\begin{equation}
\label{e_A2_38_TEXT_G_tilde1}
   \lim_{h \rightarrow 0} C_{h}
      =0 \, , \
   \lim_{h \rightarrow 0} \eta_{k,h}
      =0 \, , \
   k=1,2,3,4 \, .
\end{equation}
Then, there exists a real $h_{0} > 0$ such that for all $h$, $h
\leq h_{0}$, $q_{G,h} < 1$ and $q_{H,h} < 1$.
\end{cor}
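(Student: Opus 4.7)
The plan is to observe that the corollary follows by passing to the limit $h \to 0$ in the explicit formulas for $q_{G,h}$ and $q_{H,h}$, using the fact that the norms of the exact inverse operators $\Phi_{G}(x_{0},\cdot)^{-1}$ and $\Phi_{H}(x_{0},\cdot)^{-1}$ are fixed constants that do not involve $h$. First I would invoke the hypothesis (\ref{ipotezaHypF}) together with the consequences established in the proof of Theorem \ref{teorema_lema5_8p} (namely, $\Phi_{G}(x_{0},\cdot)$ is an isomorphism of $X$ onto $Y$ and, via Theorem \ref{teorema_principala_parteaMAIN}(ii) and Lemma \ref{Lema_solutie_01_DS3}, $\Phi_{H}(x_{0},\cdot)$ is an isomorphism of $\Delta$ onto $\Sigma$) to conclude that $\| \Phi_{G}(x_{0},\cdot)^{-1} \|_{L(Y,X)}$ and $\| \Phi_{H}(x_{0},\cdot)^{-1} \|_{L(\Sigma,\Delta)}$ are finite positive constants, say $K_{G}$ and $K_{H}$, which are independent of $h$.

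Next I would inspect the definitions of $q_{G,1,h}$, $q_{G,2,h}$, $q_{H,1,h}$, $q_{H,2,h}$ and note that every one of their summands is a product of either $C_{h}$ or one of the $\eta_{k,h}$ with a constant determined by the exact data (the fixed norms $\| DF(\lambda_{0},u_{0}) \|$, $\| D_{u}F(\lambda_{0},u_{0}) \|$, $\|\bar{a}_{i}\|_{Z}$, $\|\bar{b}_{k}\|_{Z}$ and $\| \mathcal{J} \|_{L(W,Z)}$, which are all independent of $h$). Consequently, by the assumption (\ref{e_A2_38_TEXT_G_tilde1}),
\begin{equation*}
   \lim_{h \to 0} q_{G,1,h} = 0, \quad
   \lim_{h \to 0} q_{G,2,h} = 0, \quad
   \lim_{h \to 0} q_{H,1,h} = 0, \quad
   \lim_{h \to 0} q_{H,2,h} = 0,
\end{equation*}
so that $q_{G,h} = (q_{G,1,h} + q_{G,2,h}) K_{G} \to 0$ and $q_{H,h} = (q_{H,1,h} + q_{H,2,h}) K_{H} \to 0$ as $h \to 0$.

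Finally I would pick, for the fixed tolerance $1$, two thresholds $h_{1}, h_{2} > 0$ such that $q_{G,h} < 1$ for every $h \le h_{1}$ and $q_{H,h} < 1$ for every $h \le h_{2}$, and set $h_{0} = \min\{h_{1}, h_{2}\}$. Then for all $h \le h_{0}$ both $q_{G,h} < 1$ and $q_{H,h} < 1$, as required. There is no substantive obstacle; the only care needed is to separate the $h$-dependent factors ($C_h$, $\eta_{k,h}$) from the $h$-independent exact data when bounding each summand, so that the standard $\varepsilon$-$h_0$ argument applies.
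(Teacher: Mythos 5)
Your proof is correct and is essentially the argument the paper implicitly intends (the paper gives no explicit proof, treating the corollary as immediate from the definitions of $q_{G,h}$ and $q_{H,h}$). You correctly separate the $h$-dependent factors $C_h$, $\eta_{k,h}$ from the $h$-independent exact data ($\|DF(\lambda_0,u_0)\|$, $\|D_uF(\lambda_0,u_0)\|$, $\|\bar a_i\|$, $\|\bar b_k\|$, $\|\mathcal J\|$, $\|\Phi_G(x_0,\cdot)^{-1}\|$, $\|\Phi_H(x_0,\cdot)^{-1}\|$) and deduce $q_{G,h}\to 0$ and $q_{H,h}\to 0$, from which the existence of $h_0$ follows by the standard threshold argument.
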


\subsection{Existence of an approximate bifurcation problem}
\label{subsectiune_din_cap_pr_aprox}

\begin{thm}
\label{teorema_principala_spatii_infinit_dimensionale_widetilde_s_3_0_exact_h}
Assume that the exact problem (\ref{e5_1}) has a bifurcation point
$(\lambda_{0},u_{0})$ that satisfies hypothesis
(\ref{ipotezaHypF}). Assume that, for some fixed $h$, the
hypotheses of Theorem \ref{teorema_lema5_8p} and of Theorem
\ref{teorema_principala_spatii_infinit_dimensionale_widetilde_s_3_0_exact_inf}
for the approximate case are satisfied. Then, there exists an
approximate equation
(\ref{e5_1_sol_widetilde_x_0h_Inv_Fc_Th_ec_DATA_introd_exact_th_h}),
\begin{equation}
\label{e5_1_sol_widetilde_x_0h_Inv_Fc_Th_ec_DATA_introd_exact_th_h}
      F_{h}(\lambda_{h},u_{h})-\varrho_{h}
      = 0 \, ,
\end{equation}
which is of the form of
(\ref{e5_1_sol_widetilde_x_0h_Inv_Fc_Th_ec_DATA_introd_exact}).
The solution $(\lambda_{0h},u_{0h})$ of
(\ref{e5_1_sol_widetilde_x_0h_Inv_Fc_Th_ec_DATA_introd_exact_th_h})
has the same type as the solution $(\lambda_{0},u_{0})$ of
equation (\ref{e5_1}). $(\lambda_{0h},u_{0h})$ satisfies the
hypothesis (\ref{ipotezaHypF}). The radii $a_{h}^{\ast}$ and
$b_{h}^{\ast}$ depend on $h$. Theorem
\ref{teorema_principala_spatii_infinit_dimensionale_widetilde_s_3_0_exact_inf_echiv_G}
holds, that is, there exists a class of maps $C^{p}$ - equivalent
(right equivalent) at $(\widehat{\lambda}_{0h},\widehat{u}_{0h})$
$=$ $(\lambda_{0h},u_{0h})$ to $F_{h}(\cdot)-\varrho_{h}$ at
$(\lambda_{0h},u_{0h})$ and that satisfies the hypothesis
(\ref{ipotezaHypF}) in $(\widehat{\lambda}_{0h},\widehat{u}_{0h})$
$=$ $(\lambda_{0h},u_{0h})$.
\end{thm}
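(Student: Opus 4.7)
The plan is to transplant Theorem \ref{teorema_principala_spatii_infinit_dimensionale_widetilde_s_3_0_exact_inf} wholesale into the approximate setting, using Theorem \ref{teorema_lema5_8p} only to manufacture the two isomorphism hypotheses that Theorem \ref{teorema_principala_spatii_infinit_dimensionale_widetilde_s_3_0_exact_inf} requires at the base point $\widetilde{s}_{0h}$. Everything else is a bookkeeping exercise, replacing $W,Z,F,B,\bar{\mathcal{B}},\bar{a}_i,\bar b_k,\widetilde{s}_0,\ldots$ by their $h$-indexed counterparts $W_h,Z_h,F_h,B_h,\bar{\mathcal{B}}_h,\bar a_{ih},\bar b_{kh},\widetilde{s}_{0h},\ldots$.

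First I would verify the two isomorphism assumptions of Theorem \ref{teorema_principala_spatii_infinit_dimensionale_widetilde_s_3_0_exact_inf} at the approximate base point. By hypothesis, $(\lambda_0,u_0)$ is an exact bifurcation point satisfying (\ref{ipotezaHypF}); hence, by the Crouzeix--Rappaz construction recalled in Section \ref{sectiunea_1_preliminaries_Cr_Rap} and by Lemma \ref{Lema_Range_03}, $\Phi_G(x_0,\cdot)$ is an isomorphism of $X$ onto $Y$, and by the analogous argument for $H$, $\Phi_H(x_0,\cdot)$ is an isomorphism of $\Delta$ onto $\Sigma$. Theorem \ref{teorema_lema5_8p}, whose hypotheses (\ref{e5_45p_conditia_q}) and (\ref{e5_45p_conditia_q_H}) are assumed, then yields that $\Phi_{G,h}(\widetilde{x}_{0h},\cdot)$ is an isomorphism of $X_h$ onto $Y_h$ and $\Phi_{H,h}(\widetilde{x}_{0h},\cdot)$ is an isomorphism of $\Delta_h$ onto $\Sigma_h$, with the quantitative bounds (\ref{e5_45p_conditia_q_estimare}) and (\ref{e5_45p_conditia_q_H_estimare}). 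Applying Lemma \ref{Lema_solutie_01_DS3} in the approximate spaces (its statement is algebraic and transfers verbatim), I conclude that $D\Psi_h(\widetilde{x}_{0h})$ is an isomorphism of $X_h$ onto $Y_h$ and $DS_h(\widetilde{s}_{0h})$ is an isomorphism of $\Gamma_h$ onto $\Sigma_h$.

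Next I would invoke Theorem \ref{teorema_principala_spatii_infinit_dimensionale_widetilde_s_3_0_exact_inf} verbatim, with all data carrying the index $h$. Its structural hypotheses (the $C^p$ regularity of $F_h$, the linear independence of $\bar a_{1h},\ldots,\bar a_{qh}$ and of $\bar b_{1h},\ldots,\bar b_{nh}$, which are guaranteed by Lemma \ref{Lema_aih_linearly_independent_set}, the choice of base points with zero $f,g_i,e_k$ components, the definitions of $S_h,\Phi_h,\mathcal{G}_h,\mathcal{Q}_h,\mathcal{A}_h$) are now all satisfied; the remaining quantitative hypotheses (\ref{dem_e_A2_37_TEXT_G_prod_alphaastastast_var_exact}), (\ref{dem_e_A2_38_TEXT_G_estimare_M_a_b_h_conditie_inf_th_exact}) and (\ref{dem_e_A2_37_TEXT_G_prod_alphaastastast_var_exact_gamma}) hold by assumption for the fixed $h$. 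The theorem therefore produces constants $\theta_{0h}$ and $\varrho_h=DF_h(\hat\lambda_{0h},\hat u_{0h})(\hat\mu_{0h}',\hat w_{0h}')$, a function $S_{0h}$ of the form (\ref{e5_57}) built on $F_h(\cdot,\cdot)-\varrho_h$, and a solution $s_{0h}\in\mathbb B_{a_h^\ast}(\widetilde s_{0h})$ of $S_{0h}(s)=0$ such that assertions (a) and (b) of Theorem \ref{teorema_principala_parteaMAIN}(ii) hold. The direct implication (ii)$\Rightarrow$(i) of Theorem \ref{teorema_principala_parteaMAIN} then says that the component $(\lambda_{0h},u_{0h})$ is a bifurcation point of (\ref{e5_1_sol_widetilde_x_0h_Inv_Fc_Th_ec_DATA_introd_exact_th_h}), and the kernel/range computations of Lemma \ref{corolarul_teorema_principala_parteaMAIN} and Lemma \ref{Lema_Range_02} show that $(\lambda_{0h},u_{0h})$ satisfies (\ref{ipotezaHypF}) with the \emph{same} integers $n$ and $q$ as $(\lambda_0,u_0)$, i.e.\ is of the same type.

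Finally, for the equivalence class statement, I would apply Theorem \ref{teorema_principala_spatii_infinit_dimensionale_widetilde_s_3_0_exact_inf_echiv_G} to the pair $(F_h,\varrho_h)$ and the bifurcation point $(\lambda_{0h},u_{0h})$: since the proof of that theorem only uses (\ref{ipotezaHypF}) at the given bifurcation point together with Corollary \ref{corolar_teorema_principala_parteaMAIN_neliniar}, and both are now available in the approximate setting, the same argument yields a class of maps $C^p$-equivalent at $(\widehat\lambda_{0h},\widehat u_{0h})=(\lambda_{0h},u_{0h})$ to $F_h(\cdot)-\varrho_h$ at the same point and satisfying (\ref{ipotezaHypF}) there. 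I expect the only delicate point to be tracking that the quantitative thresholds on $\kappa_h,\gamma_h,L_{S_h}(\varepsilon),\alpha,\widehat a_h$ in (\ref{dem_e_A2_37_TEXT_G_prod_alphaastastast_var_exact}) and (\ref{dem_e_A2_38_TEXT_G_estimare_M_a_b_h_conditie_inf_th_exact}) can be realized for a \emph{fixed} $h$: this is where the bounds (\ref{e5_45p_conditia_q_estimare})--(\ref{e5_45p_conditia_q_H_estimare}) from Theorem \ref{teorema_lema5_8p} are essential, as they control $\gamma_h$ in terms of the exact $\gamma$, so that the smallness conditions are inherited from the exact case (and $a_h^\ast,b_h^\ast$ shrink with $h$ in a controlled way, as foreshadowed by Lemma \ref{Lema_spatii_infinit_dimensionale_th_Graves_estimare_raze_vecinatati_ast}(ii) and Lemma \ref{Lema_spatii_infinit_dimensionale_th_Graves_estimare_raze_vecinatati_general}). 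The main obstacle is thus not a new idea but the careful identification of the approximate quantities $\kappa_h,L_h(\varepsilon),\widehat a_h,\gamma_h$ with the estimates provided by Theorem \ref{teorema_lema5_8p}, so that the hypotheses of Theorem \ref{teorema_principala_spatii_infinit_dimensionale_widetilde_s_3_0_exact_inf} are demonstrably met at the level of constants, not merely assumed.
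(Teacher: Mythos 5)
Your proposal is correct and follows essentially the same route as the paper: establish the exact isomorphisms from the bifurcation hypothesis, transfer them to the approximate level via Theorem \ref{teorema_lema5_8p} and Lemma \ref{Lema_solutie_01_DS3}, and then apply Theorem \ref{teorema_principala_spatii_infinit_dimensionale_widetilde_s_3_0_exact_inf} in the $h$-indexed setting. The paper's own proof is a terse four-sentence version of exactly this chain; your added bookkeeping (Lemma \ref{Lema_aih_linearly_independent_set} for linear independence, the appeal to Theorem \ref{teorema_principala_parteaMAIN}(ii)$\Rightarrow$(i), the transfer of Theorem \ref{teorema_principala_spatii_infinit_dimensionale_widetilde_s_3_0_exact_inf_echiv_G}) is consistent with it, and the final caveat about realizing the quantitative thresholds is moot because the theorem statement assumes them.
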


\begin{proof}

The construction related to $(\lambda_{0},u_{0})$, from
\cite{CLBichir_bib_Cr_Ra1990}, presented in Section
\ref{sectiunea_1_preliminaries}, and the construction from Section
\ref{sectiunea_1_main_theorem_on_extended_systems} lead to the
statement (i) of Theorem \ref{teorema_principala_parteaMAIN} and,
then, equivalently, to the statement (ii) of Theorem
\ref{teorema_principala_parteaMAIN}.

Using Lemma \ref{Lema_solutie_01_DS3}, there results that
$\Phi_{G}(x_{0},\cdot)$ $=$ $D\Psi(x)$ is an isomorphism of $X$
onto $Y$, $\Phi_{H}(x_{0},\cdot)$ is an isomorphism of $\Delta$
onto $\Sigma$.

Using Theorem \ref{teorema_lema5_8p} and Lemma
\ref{Lema_solutie_01_DS3}, we obtain that
$D\Psi_{h}(\widetilde{x}_{0h})$ is an isomorphism of $X_{h}$ onto
$Y_{h}$ and $DS_{h}(\widetilde{s}_{0,h})$ is an isomorphism of
$\Gamma_{h}$ onto $\Sigma_{h}$.

We now apply Theorem
\ref{teorema_principala_spatii_infinit_dimensionale_widetilde_s_3_0_exact_inf}
for the case of the approximate formulation.

\qquad
\end{proof}

In the following Theorem, we formulate conditions similar to those
from Theorem IV.3.2, page 304, and Theorem IV.3.7, page 312,
\cite{CLBichir_bib_Gir_Rav1986}, and Corollary 3.1, page 52,
\cite{CLBichir_bib_Cr_Ra1990}.

\begin{thm}
\label{teorema_principala_spatii_infinit_dimensionale_widetilde_s_3_0_lim}
Assume that the exact problem has a bifurcation point that
satisfies the statement (i) of Theorem
\ref{teorema_principala_parteaMAIN}. Assume the hypotheses of
Corollary \ref{corolarul_teorema_lema5_8p}. Let $\alpha$ be an
arbitrarily small fixed positive number, $0 < \alpha < 1$, $\alpha
\neq \frac{1}{2}$. Let $h_{0}$ be the real from Corollary
\ref{corolarul_teorema_lema5_8p}. Consider, for each $h$, $h \leq
h_{0}$, the mappings $\mathcal{G}_{h}$ and $\mathcal{Q}_{h}$ given
by
(\ref{e5_57_forma2_sistem_REG_2_GGGvarianta_infinit_dimensionale_DEM})
and
(\ref{e5_57_forma2_sistem_REG_2_GGGmic_infinit_dimensionale_DEM}).

For all $h \leq h_{0}$, we take $\kappa_{h} = reg \,
\mathcal{A}_{h}$ and $M_{h}$ $>$ $\|
D\mathcal{G}_{h}(\widetilde{s}_{0,h},\widetilde{\phi}_{0,h}') \|$.
Let $L_{h}(\varepsilon)$ $=$
$\widetilde{L}(\mathcal{G}_{h},(\widetilde{s}_{0,h},\widetilde{\phi}_{0,h}'),(s_{h},\phi_{h}'),\varepsilon,\Gamma_{h}
\times \Gamma_{h},\Sigma_{h})$, where we use
(\ref{e_A2_17_TEXT_mu_bar_gen}). Define $\kappa = \sup_{h \leq
h_{0}} \kappa_{h}$, $c_{h}$ $=$ $\frac{1-\kappa
L_{h}(\varepsilon)}{\kappa}$, $\widehat{a}$ $=$ $\sup_{h \leq
h_{0}} \max \{$ $\|a_{1h}\|,\ldots,\|a_{qh}\|$,
$\|b_{1h}\|,\ldots,\|b_{nh}\|$ $\}$, $M = \sup_{h \leq h_{0}}
M_{h}$ and  $\delta_{h}$ $=$ $\|
\mathcal{G}_{h}(\widetilde{s}_{0,h},\widetilde{y}_{0h}') \|$.

Assume that
\begin{equation}
\label{e_A2_38_TEXT_G}
   \lim_{h \rightarrow 0} \delta_{h}
      =0 \, ,
\end{equation}
\begin{equation}
\label{th_A2_2_h_4_var_TEXT_G}
   \lim_{\beta \rightarrow 0} (\sup_{h \leq h_{0}}
      L_{h}(\beta)) = 0 \, .
\end{equation}

Assume that $D\Psi_{h}(\widetilde{x}_{0h})$ is an isomorphism of
$X_{h}$ onto $Y_{h}$ and $DS_{h}(\widetilde{s}_{0,h})$ is an
isomorphism of $\Gamma_{h}$ onto $\Sigma_{h}$.

Then, there exists $h_{1} > 0$, $h_{1} \leq h_{0}$, such that
$\forall h \leq h_{1}$, Theorem
\ref{teorema_principala_spatii_infinit_dimensionale_widetilde_s_3_0_exact_inf}
and Theorem
\ref{teorema_principala_spatii_infinit_dimensionale_widetilde_s_3_0_exact_h}
are applied with $\varepsilon$, $\tau$ and $a^{\ast}$ $=$
$a_{h}^{\ast}$ that do not depend on $h$ (they are constants). The
reals $\tau$, $a^{\ast}$, $b^{\ast}$ and the condition
(\ref{dem_e_A2_38_TEXT_G_estimare_M_a_b_h_conditie_inf_th_exact})
from Theorem
\ref{teorema_principala_spatii_infinit_dimensionale_widetilde_s_3_0_exact_inf}
are given by $\tau$, $a_{h}^{\ast}$, $b_{h}^{\ast}$, from
(\ref{dem_e_A2_38_TEXT_G_estimare_exact_tau_DOI}) -
(\ref{dem_e_A2_38_TEXT_G_estimare_exact_a_h_ast_h_DOI_inf_min_h})
for $\mathcal{G}_{h}$, and
(\ref{dem_e_A2_38_TEXT_G_estimare_M_a_b_h_conditie_cor})
respectively,
\begin{equation}
\label{dem_e_A2_38_TEXT_G_estimare_M_a_b_h_conditie_cor}
   \delta_{h} \leq \frac{1}{2} \cdot \frac{1}{2 \kappa} \cdot a_{h}^{\ast}
   < \frac{1}{2} \cdot b_{h}^{\ast}
      \ , \ \forall h \leq h_{1} \, .
\end{equation}

Let $s_{0h}$ be the solution of the equation (\ref{e5_56_b})
written in this case for a fixed index $h$. We
have
\begin{equation}
\label{e5_49_th_h_dif}
   \| (\lambda_{0},u_{0}) - (\lambda_{0h},u_{0h}) \| \leq
      C_{0}\| s_{0}\|
      + [\gamma_{h} /(1-\gamma_{h} L_{S_{h}}(a)] \cdot
      \| S_{0,h}(\widetilde{s}_{0,h}) \|_{\Sigma_{h}} \, ,
\end{equation}
where $S_{0,h}$ is $S_{0}$ in the approximate case. $C_{0}$
results using (\ref{e5_8_proprietati}) for $\| s_{0} -
\widetilde{s}_{0,h} \|$ $\leq$ $C_{0}\| s_{0}\|$.
\end{thm}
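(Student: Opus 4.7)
The plan is to reduce the assertion to a uniform-in-$h$ application of Theorems~\ref{teorema_principala_spatii_infinit_dimensionale_widetilde_s_3_0_exact_inf} and \ref{teorema_principala_spatii_infinit_dimensionale_widetilde_s_3_0_exact_h}. First, I would exploit (\ref{th_A2_2_h_4_var_TEXT_G}) to choose $\varepsilon>0$ so small that $\sup_{h\leq h_{0}}L_{h}(\varepsilon)<\frac{1}{2\kappa}$; this fixes $\varepsilon$ once and for all and yields $2\kappa L_{h}(\varepsilon)<1$ and $c_{h}>\frac{1}{2\kappa}$ for every $h\leq h_{0}$. With this $\varepsilon$ and the uniform bound $M$, formulas (\ref{dem_e_A2_38_TEXT_G_estimare_exact_tau_DOI})--(\ref{dem_e_A2_38_TEXT_G_estimare_exact_a_h_ast_h_DOI_inf_min_h}) applied to $\mathcal{G}_{h}$ produce $\tau$, $a_{U}^{\ast}$ and $b_{V}^{\ast}$ that depend only on $\kappa$, $M$, $\varepsilon$ and not on $h$; consequently $a_{h}^{\ast}=\min\{a_{U}^{\ast},\kappa b_{V}^{\ast}\}=:a^{\ast}$ and $b_{h}^{\ast}=c_{h}a^{\ast}$ are also independent of $h$ for the first argument (only the scaling factor $c_{h}$ enters $b_{h}^{\ast}$), as Lemma~\ref{Lema_spatii_infinit_dimensionale_th_Graves_estimare_raze_vecinatati_general} anticipates.

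Next, I would use (\ref{e_A2_38_TEXT_G}) to pick $h_{1}\in(0,h_{0}]$ such that, for every $h\leq h_{1}$, $\delta_{h}\leq \frac{1}{2}\cdot\frac{a^{\ast}}{2\kappa}$. Since $\frac{1}{2\kappa}<c_{h}$, this yields $\delta_{h}<\frac{1}{2}c_{h}a^{\ast}=\frac{1}{2}b_{h}^{\ast}$, which is precisely (\ref{dem_e_A2_38_TEXT_G_estimare_M_a_b_h_conditie_cor}) and hence (\ref{dem_e_A2_38_TEXT_G_estimare_M_a_b_h_conditie_inf_th_exact}) for each $h\leq h_{1}$. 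Condition (\ref{dem_e_A2_37_TEXT_G_prod_alphaastastast_var_exact}) is ensured by choosing $\alpha$ sufficiently small (independently of $h$) using $\kappa$ and $\widehat{a}$, both uniform in $h$. The isomorphism hypotheses on $D\Psi_{h}(\widetilde{x}_{0h})$ and $DS_{h}(\widetilde{s}_{0,h})$ are assumed (and, moreover, follow from Corollary~\ref{corolarul_teorema_lema5_8p} together with Lemma~\ref{Lema_solutie_01_DS3} under the stated hypotheses). With all hypotheses of Theorem~\ref{teorema_principala_spatii_infinit_dimensionale_widetilde_s_3_0_exact_inf} verified uniformly, Theorem~\ref{teorema_principala_spatii_infinit_dimensionale_widetilde_s_3_0_exact_h} applies for each $h\leq h_{1}$ and produces a bifurcation solution $s_{0h}\in \mathbb{B}_{a^{\ast}}(\widetilde{s}_{0,h})$ of $S_{0,h}(s_{h})=0$ with the desired type.

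Finally, for the error estimate (\ref{e5_49_th_h_dif}) I would combine two contributions by the triangle inequality: a purely approximation-theoretic term $\|s_{0}-\widetilde{s}_{0,h}\|$, controlled by $C_{0}\|s_{0}\|$ through (\ref{e5_8_proprietati}) applied componentwise (using $\widetilde{u}_{h}=\pi_{h}^{W}u$ and the analogous identifications in $\mathbb{R}^{m+q+n}$), and a term $\|\widetilde{s}_{0,h}-s_{0h}\|$, for which I would invoke (\ref{e5_49_th}) applied to $S_{0,h}$ at $r=\widetilde{s}_{0,h}$, yielding $\|\widetilde{s}_{0,h}-s_{0h}\|\leq [\gamma_{h}/(1-\gamma_{h}L_{S_{h}}(a))]\,\|S_{0,h}(\widetilde{s}_{0,h})\|$. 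Projecting onto the $(\lambda,u)$-component then produces (\ref{e5_49_th_h_dif}).

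The main obstacle will be the simultaneous uniformity in $h$: one must verify that $\varepsilon$, $\tau$ and $a^{\ast}$ can all be chosen $h$-independent, which requires controlling $L_{h}$ uniformly via (\ref{th_A2_2_h_4_var_TEXT_G}) and exploiting the uniform upper bounds $\kappa=\sup_{h}\kappa_{h}$ and $M=\sup_{h}M_{h}$; this is exactly why the estimate in Lemma~\ref{Lema_spatii_infinit_dimensionale_th_Graves_estimare_raze_vecinatati_ast}(ii) was deliberately written in terms of $\kappa$ rather than $1/c_{h}$. The remaining check, that the smallness condition (\ref{dem_e_A2_38_TEXT_G_estimare_M_a_b_h_conditie_cor}) is eventually satisfied, is a direct consequence of the convergence $\delta_{h}\to 0$, and the error estimate is then formal.
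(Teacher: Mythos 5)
Your proposal follows essentially the same route as the paper: use (\ref{th_A2_2_h_4_var_TEXT_G}) to fix an $h$-independent $\varepsilon$ making (\ref{dem_e_A2_37_TEXT_G_prod_alphaastastast_var_exact}) uniform in $h$, apply Lemma \ref{Lema_spatii_infinit_dimensionale_th_Graves_estimare_raze_vecinatati_ast}(ii) to obtain $\tau$, $a_{h}^{\ast}$ independent of $h$, use (\ref{e_A2_38_TEXT_G}) to get $h_{1}$ making (\ref{dem_e_A2_38_TEXT_G_estimare_M_a_b_h_conditie_cor}) hold, then conclude with the triangle inequality and (\ref{e5_49_th}) for the error bound. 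The only minor mismatch is the order in which you fix $\varepsilon$ and $\alpha$ --- the theorem statement fixes $\alpha$ first, so the paper chooses $\varepsilon$ once to absorb both $L_{h}(\varepsilon)$ and $\alpha\widehat{a}$ --- but this is a trivial rearrangement.
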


\textbf{Proof of Theorem
\ref{teorema_principala_spatii_infinit_dimensionale_widetilde_s_3_0_lim}}

\begin{proof}

(\ref{th_A2_2_h_4_var_TEXT_G}) implies that we can take an
$\varepsilon > 0$ such that
\begin{equation}
\label{dem_e_A2_37_TEXT_G_prod_alphaastastast}
   2 \kappa L_{h}(\varepsilon) + 2 \kappa \alpha \widehat{a} < 1
      \ , \ \forall h \leq h_{0} \, ,
\end{equation}
that is, (\ref{dem_e_A2_37_TEXT_G_prod_alphaastastast_var_exact}),
$\forall h \leq h_{0}$.

Remark that $\varepsilon$ do not depend on $h$ (for $h \leq
h_{0}$).

For each $c_{h}$, we have
(\ref{dem_e_A2_37_TEXT_G_prod_estimare_lemma}).

(\ref{e_A2_38_TEXT_G}) assures that condition
(\ref{dem_e_A2_38_TEXT_G_estimare_M_a_b_h_conditie_inf_th_exact})
is satisfied. Then, we apply Theorem
\ref{teorema_principala_spatii_infinit_dimensionale_widetilde_s_3_0_exact_inf}.

We use (\ref{dem_e_A2_38_TEXT_G_estimare_exact_tau_DOI}) -
(\ref{dem_e_A2_38_TEXT_G_estimare_exact_a_h_ast_h_DOI_inf_min_h})
from Lemma
\ref{Lema_spatii_infinit_dimensionale_th_Graves_estimare_raze_vecinatati_ast}
(ii) applied to $\mathcal{G}_{h}$. $\varepsilon$, $\tau$ and
$a_{h}^{\ast}$ do not depend on $h$ (they are constants).

Let us remark that, for condition I from Theorem
\ref{teorema_contraction_mapping_principle_Dontchev_Rockafellar2009},
using (\ref{dem_e_A2_37_TEXT_G_prod_estimare_lemma}), we propose
that
\begin{equation}
\label{dem_e_A2_38_TEXT_G_estimare_M_delta_h_conditie_ii}
   \frac{1}{c_{h}}\delta_{h}
   < 2 \kappa \delta_{h}
   \leq \frac{1}{2}a^{\ast}
   < a^{\ast} (1-\lambda_{h}) \, .
\end{equation}
From (\ref{dem_e_A2_38_TEXT_G_estimare_M_delta_h_conditie_ii}),
for condition I, we impose
(\ref{dem_e_A2_38_TEXT_G_estimare_M_a_b_h_conditie_cor}).

We have

$\| (\lambda_{0},u_{0}) - (\lambda_{0h},u_{0h}) \|$ $\leq$ $\|
s_{0} - s_{0h} \|$ $\leq$ $\| s_{0} - \widetilde{s}_{0,h} \|$ $+$
$\| \widetilde{s}_{0,h} - s_{0h} \|$ $\leq$ $C_{0}\| s_{0}\|$ $+$
$\| \widetilde{s}_{0,h} - s_{0h} \|$. We use (\ref{e5_49_th}). We
obtain (\ref{e5_49_th_h_dif}).

\qquad
\end{proof}

\begin{lem}
\label{lema5_teorema_principala_spatii_infinit_dimensionale_epsilon_h}
Let $\delta_{h}$ $=$ $\|
\mathcal{G}_{h}(\widetilde{s}_{0,h},\widetilde{y}_{0h}') \|$ $=$
$\| S_{h}(\widetilde{s}_{0,h}) \|_{\Sigma_{h}}$. If
$\Psi_{h}(\widetilde{x}_{0h})=0$ in $\delta_{h}$, then,
(\ref{e_A2_38_TEXT_G}) is equivalent to the following conditions,
for $i=1,\ldots,q+m$, $k=1,\ldots,n$,
\begin{equation*}
\label{e_A2_38_TEXT_G_fi_G_fi_H}
   \lim_{h \rightarrow 0} (\Phi_{G,h}(\widetilde{x}_{0h},\widetilde{y}_{i,0,h})-[\delta_{i}^{q+m},0]^{T})
      =0 \, ,  \
   \lim_{h \rightarrow 0} (\Phi_{H,h}(\widetilde{x}_{0h},\widetilde{\mathrm{z}}_{k,0,h})-[\delta_{k}^{n},0]^{T})
      =0 \, ,
\end{equation*}
\end{lem}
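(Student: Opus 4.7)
[Proof plan for Lemma \ref{lema5_teorema_principala_spatii_infinit_dimensionale_epsilon_h}]
The plan is to expand $\delta_h$ componentwise using the definition of $S$ in (\ref{e5_57}) transferred to the approximate setting and to exploit the fact that the norm on the product space $\Sigma_h$ is the sum norm $\|\cdot\|_{(1)}$ introduced just before Lemma~\ref{corolarul_teorema_principala_parteaMAIN}. First I would record that, by the identity
\[
   \mathcal{G}_h(\widetilde{s}_{0,h},\widetilde{\phi}_{0,h}') = S_h(\widetilde{s}_{0,h}),
\]
which is exactly (\ref{dem_e5_57_forma2_sistem_REG_2_GGGvarianta_infinit_dimensionale_DEM_val}) in the approximate case, we can rewrite $\delta_h$ as $\|S_h(\widetilde{s}_{0,h})\|_{\Sigma_h}$ (as already noted in the statement).

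Next I would substitute the definition (\ref{e5_57}) of $S$ with the index $h$ adjoined and $\theta_0$ replaced by $\widetilde{\theta}_{0h}=B_h(\widetilde{x}_{0h})$, obtaining
\[
   S_h(\widetilde{s}_{0,h})
   = \bigl[\,
      \Psi_h(\widetilde{x}_{0h}),\
      \Phi_{G,h}(\widetilde{x}_{0h},\widetilde{y}_{i,0,h})-[\delta_{i}^{q+m},0]^{T},\
      \Phi_{H,h}(\widetilde{x}_{0h},\widetilde{\mathrm{z}}_{k,0,h})-[\delta_{k}^{n},0]^{T}
      \,\bigr]^{T},
\]
for $i=1,\ldots,q+m$ and $k=1,\ldots,n$. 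Applying the product norm $\|\cdot\|_{(1)}$, we get
\[
   \delta_h
   = \|\Psi_h(\widetilde{x}_{0h})\|_{Y_h}
   + \sum_{i=1}^{q+m}\|\Phi_{G,h}(\widetilde{x}_{0h},\widetilde{y}_{i,0,h})-[\delta_{i}^{q+m},0]^{T}\|_{Y_h}
   + \sum_{k=1}^{n}\|\Phi_{H,h}(\widetilde{x}_{0h},\widetilde{\mathrm{z}}_{k,0,h})-[\delta_{k}^{n},0]^{T}\|_{\Sigma_h}.
\]

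Now I would invoke the assumption $\Psi_h(\widetilde{x}_{0h})=0$ (interpreted as the corresponding summand dropping out of $\delta_h$), so that $\delta_h$ reduces to a sum of nonnegative terms, one for each $i$ and each $k$. Since each summand is nonnegative, $\delta_h\to 0$ as $h\to 0$ is equivalent to every individual summand tending to zero, which is precisely the pair of limit conditions asserted. This proves both implications simultaneously.

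The argument is essentially bookkeeping: the only point that needs care is the role of the hypothesis $\Psi_h(\widetilde{x}_{0h})=0$, which removes the first summand and is what makes the two stated limit conditions jointly equivalent to $\delta_h\to 0$ (rather than merely sufficient). No compactness, regularity, or passage to the limit beyond the definition of the norm is required, so there is no substantial obstacle.
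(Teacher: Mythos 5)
Your proof is correct, and since the paper states this lemma without proof (it is treated as immediate from the definitions), your bookkeeping argument is exactly the natural one. The key points are all there: the identity $\mathcal{G}_h(\widetilde{s}_{0,h},\widetilde{\phi}_{0,h}')=S_h(\widetilde{s}_{0,h})$ (from the approximate version of (\ref{dem_e5_57_forma2_sistem_REG_2_GGGvarianta_infinit_dimensionale_DEM_val})), the expansion of $S_h$ from (\ref{e5_57}), the fact that the product space carries the sum norm $\|\cdot\|_{(1)}$, and the observation that a finite sum of nonnegative terms tends to zero iff each term does. The hypothesis $\Psi_h(\widetilde{x}_{0h})=0$ is genuine (the first component of $\Psi_h(\widetilde{x}_{0h})$ vanishes automatically since $\widetilde{\theta}_{0h}=B_h(\widetilde{x}_{0h})$, but the second component $G_h(\widetilde{x}_{0h})=F_h(\widetilde{\lambda}_{0h},\widetilde{u}_{0h})$ need not), and you use it correctly to drop the first summand so that the equivalence is two-sided rather than merely one implication. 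One cosmetic remark: the norm on the last family of summands should be that of $\mathbb{R}^n\times Z_h$ (the codomain of $\Phi_{H,h}(\widetilde{x}_{0h},\cdot)$) rather than $\Sigma_h$, but the paper itself overloads the symbol $\Sigma$, so this inherits the source's abuse of notation and does not affect the argument.
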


\begin{cor}
\label{corolarul5_1_teorema_lema5_8p_studiul_pr_fin_dim_q_n}
$q_{h}$ $=$ $q$, $n_{h}$ $=$ $n$, where the index "h" indicates
the approximate case.
\end{cor}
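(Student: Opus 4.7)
The plan is to show that the approximate bifurcation point produced by Theorem \ref{teorema_principala_spatii_infinit_dimensionale_widetilde_s_3_0_exact_h} satisfies hypothesis (\ref{ipotezaHypF}) with the same integer parameters as the exact bifurcation point, from which the equalities $q_{h} = q$ and $n_{h} = n$ will follow directly from the definitions of $q_{h}$ and $n_{h}$. First I would observe that the approximate construction is set up using exactly $q$ elements $\bar{a}_{1h},\ldots,\bar{a}_{qh}$ in the definition of $G_{h}$ and $n$ elements $\bar{b}_{1h},\ldots,\bar{b}_{nh}$ in the definition of $H_{h}$, where Lemma \ref{Lema_aih_linearly_independent_set} guarantees that each family remains linearly independent in $Z_{h}$. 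Consequently, the operators $B_{h} \in L(X_{h},\mathbb{R}^{q+m})$ and $\bar{\mathcal{B}}_{h} \in L(\Delta_{h},\mathbb{R}^{n})$ and the extended map $S_{h}$ take values in spaces parametrized by the same integers $q+m$ and $n$ as in the exact case.

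Next, I would invoke Theorem \ref{teorema_lema5_8p} to obtain that $\Phi_{G,h}(\widetilde{x}_{0h},\cdot)$ is an isomorphism of $X_{h}$ onto $Y_{h}$ and $\Phi_{H,h}(\widetilde{x}_{0h},\cdot)$ is an isomorphism of $\Delta_{h}$ onto $\Sigma_{h}$. Combined with Theorem \ref{teorema_principala_spatii_infinit_dimensionale_widetilde_s_3_0_exact_h}, which produces a solution $s_{0h}$ of the approximate extended system verifying assertions (a) and (b) of statement (ii) of Theorem \ref{teorema_principala_parteaMAIN} in the approximate spaces, the approximate version of Theorem \ref{teorema_principala_parteaMAIN} applies. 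The equivalence (ii) $\Rightarrow$ (i) in that theorem then delivers hypothesis (\ref{ipotezaHypF}) at $(\lambda_{0h},u_{0h})$ precisely with the integers $q$ and $n$ that parametrized the construction, so $q_{h} = q$ and $n_{h} = n$ by definition.

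The underlying dimension counting is largely bookkeeping: the approximate analogues of Lemma \ref{corolarul_teorema_principala_parteaMAIN}, Lemma \ref{Lema_Range_02}, Lemma \ref{Lema_Range_03}, and Lemma \ref{Lema_Frechet_01} give $dim \ Ker(DF_{h}(\lambda_{0h},u_{0h})) = q+m$, $codim \ Range(DF_{h}(\lambda_{0h},u_{0h})) = q$, and $dim \ Ker(D_{u}F_{h}(\lambda_{0h},u_{0h})) = n$. I expect no real obstacle, because every lemma in Section \ref{sectiunea_I_main_th_partea1_PROOF} relied only on abstract isomorphism properties of $\Phi_{G}$ and $\Phi_{H}$, which are now in hand for their $h$-indexed counterparts via Theorem \ref{teorema_lema5_8p}; hence the proofs transcribe verbatim with $W$, $Z$, $x_{0}$ replaced by $W_{h}$, $Z_{h}$, $x_{0h}$. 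The only point requiring care is that Corollary \ref{corolarul_teorema_principala_parteaMAIN} must be instantiated in the approximate setting using the $h$-indexed isomorphism supplied by Theorem \ref{teorema_lema5_8p}, rather than the exact-case one.
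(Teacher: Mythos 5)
Your proposal is correct and follows the same approach as the paper: the paper's entire proof is the one-line observation that $q+m$ and $n$ are fixed by the choice of $B$ and $\bar{\mathcal{B}}$ (since $B_{h}\in L(X_{h},\mathbb{R}^{q+m})$ and $\bar{\mathcal{B}}_{h}\in L(\Delta_{h},\mathbb{R}^{n})$ share the same codomains), and your second paragraph identifies exactly this point before unpacking the routine bookkeeping through Theorem \ref{teorema_principala_parteaMAIN} (ii) $\Rightarrow$ (i) in the approximate setting.
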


\begin{proof} $q+m$ and $n$ are fixed by $B$ and
$\bar{\mathcal{B}}$.

\qquad
\end{proof}

Corollary
\ref{corolarul_trei_3_teorema_principala_spatii_infinit_dimensionale_widetilde_s_3_0_exact_zero}
gives:
\begin{cor}
\label{corolarul_trei_3_teorema_principala_spatii_infinit_dimensionale_widetilde_s_3_0_exact_zero_h}
Assume that $(\widetilde{\lambda}_{0h},\widetilde{u}_{0h})$
belongs to a solution branch of equation (\ref{e5_9_introd}).
$(\widetilde{\lambda}_{0},\widetilde{u}_{0})$ can be a regular or
a nonregular solution. Assume the hypotheses of Theorem
\ref{teorema_principala_spatii_infinit_dimensionale_widetilde_s_3_0_exact_h}
or of Theorem
\ref{teorema_principala_spatii_infinit_dimensionale_widetilde_s_3_0_lim}.
$\Psi_{h}(\widetilde{x}_{0h})=0$ in $\delta_{h}$. If $\varrho_{h}$
$\neq$ $0$, then, the given problem (\ref{e5_9_introd}) is a
perturbation of the bifurcation problem
(\ref{e5_1_sol_widetilde_x_0h_Inv_Fc_Th_ec_DATA_introd_exact_th_h})
(or of (\ref{e5_1_sol_widetilde_x_0h_Inv_Fc_Th_ec_DATA_introd})).
If $\varrho_{h}$ $=$ $0$, then, the bifurcation point
$(\lambda_{0h},u_{0h})$ belongs to the solution branch of equation
(\ref{e5_9_introd}).
\end{cor}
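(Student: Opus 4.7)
The plan is to argue this corollary as the direct analogue, in the approximate setting, of Corollary \ref{corolarul_trei_3_teorema_principala_spatii_infinit_dimensionale_widetilde_s_3_0_exact_zero}, whose proof is already essentially a one-liner. First, I would invoke Theorem \ref{teorema_principala_spatii_infinit_dimensionale_widetilde_s_3_0_exact_h} (or Theorem \ref{teorema_principala_spatii_infinit_dimensionale_widetilde_s_3_0_lim} if we want uniform control in $h$) to produce, for the given approximate data, the bifurcation equation (\ref{e5_1_sol_widetilde_x_0h_Inv_Fc_Th_ec_DATA_introd_exact_th_h}), together with its bifurcation point $(\lambda_{0h},u_{0h})$ of the same type as $(\lambda_0,u_0)$ and the explicit constant $\varrho_h = DF_h(\hat{\lambda}_{0h},\hat{u}_{0h})(\hat{\mu}_{0h}',\hat{w}_{0h}')$. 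The role of the hypothesis $\Psi_h(\widetilde{x}_{0h})=0$ in the expression for $\delta_h$ is to guarantee that $\delta_h$ measures only the remaining rows of the extended system $S_h$ (those involving $\Phi_{G,h}$ and $\Phi_{H,h}$), so that the smallness conditions of the invoked theorem remain compatible with the branch assumption on $(\widetilde{\lambda}_{0h},\widetilde{u}_{0h})$.

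Second, I would read off the trivial identity
\begin{equation*}
   F_{h}(\lambda_{h},u_{h}) \;=\; \bigl[F_{h}(\lambda_{h},u_{h}) - \varrho_{h}\bigr] + \varrho_{h},
\end{equation*}
which exhibits the given approximate equation (\ref{e5_9_introd}) as the bifurcation equation (\ref{e5_1_sol_widetilde_x_0h_Inv_Fc_Th_ec_DATA_introd_exact_th_h}) perturbed by the constant $\varrho_h$. When $\varrho_h \neq 0$ the perturbation is nontrivial, and (\ref{e5_9_introd}) is therefore a genuine perturbation of (\ref{e5_1_sol_widetilde_x_0h_Inv_Fc_Th_ec_DATA_introd_exact_th_h}), which gives the first assertion. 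This is the formal counterpart, in the approximate setting, of the argument already used in Corollary \ref{corolarul_trei_3_teorema_principala_spatii_infinit_dimensionale_widetilde_s_3_0_exact_zero}.

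Third, for the case $\varrho_h = 0$, from $F_h(\lambda_{0h},u_{0h}) - \varrho_h = 0$ and $\varrho_h = 0$ I would deduce $F_h(\lambda_{0h},u_{0h}) = 0$, so that the bifurcation point $(\lambda_{0h},u_{0h})$ is itself a zero of $F_h$. Combining this with the hypothesis that $(\widetilde{\lambda}_{0h},\widetilde{u}_{0h})$ lies on a solution branch of (\ref{e5_9_introd}) and with the proximity bound $(\lambda_{0h},u_{0h}) \in \mathbb{B}_{a_{h}^{\ast}}(\widetilde{\lambda}_{0h},\widetilde{u}_{0h})$ supplied by Theorem \ref{teorema_principala_spatii_infinit_dimensionale_widetilde_s_3_0_exact_inf}, I conclude that $(\lambda_{0h},u_{0h})$ lies on this local branch. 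One may also appeal to the approximate analogue of Corollary \ref{corolarul_trei_3_teorema_principala_spatii_infinit_dimensionale_widetilde_s_3_0_exact_zero_ro} to characterize the equality $\varrho_h=0$ by the linear dependence relation $(\hat{\mu}_{0h}',\hat{w}_{0h}') = \sum_{i=1}^{q+m}\beta_{i}\bigl((\hat{\mu}_{i,0,h},\hat{w}_{i,0,h}) - (\hat{\mu}_{i,0,h}',\hat{w}_{i,0,h}')\bigr)$ among the components produced by the fixed-point construction.

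I do not expect any real obstacle here: both cases reduce to the algebraic splitting $F_h = (F_h-\varrho_h)+\varrho_h$ and to invoking the already-proved Theorems \ref{teorema_principala_spatii_infinit_dimensionale_widetilde_s_3_0_exact_h} and \ref{teorema_principala_spatii_infinit_dimensionale_widetilde_s_3_0_lim}. The only point requiring a brief line of care is making explicit, in the $\varrho_h=0$ case, that the bifurcation point lies on the same local branch as $(\widetilde{\lambda}_{0h},\widetilde{u}_{0h})$ rather than merely on some zero set of $F_h$; this is handled by the localization ball $\mathbb{B}_{a_{h}^{\ast}}(\widetilde{\lambda}_{0h},\widetilde{u}_{0h})$ provided by the main theorem.
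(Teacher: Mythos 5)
Your proposal is correct and follows essentially the same route as the paper: the paper gives no explicit proof for this corollary, introducing it merely with ``Corollary \ref{corolarul_trei_3_teorema_principala_spatii_infinit_dimensionale_widetilde_s_3_0_exact_zero} gives:'', i.e.\ the exact-case one-line argument (the splitting $F_{h} = (F_{h}-\varrho_{h})+\varrho_{h}$ after invoking the approximate existence theorem) is transferred verbatim to the approximate setting, exactly as you do. Your added remark about localizing $(\lambda_{0h},u_{0h})$ in $\mathbb{B}_{a_{h}^{\ast}}(\widetilde{\lambda}_{0h},\widetilde{u}_{0h})$ for the $\varrho_{h}=0$ case makes explicit what the paper leaves implicit and is consistent with the conclusion of Theorem \ref{teorema_principala_spatii_infinit_dimensionale_widetilde_s_3_0_exact_inf}.
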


\begin{rem}
\label{observatia5_1_pc_bif_perturb} We proved that there exists
an approximate bifurcation problem
(\ref{e5_1_sol_widetilde_x_0h_Inv_Fc_Th_ec_DATA_introd_exact_th_h})
(or (\ref{e5_1_sol_widetilde_x_0h_Inv_Fc_Th_ec_DATA_introd})) that
preserves the type of the bifurcation point of (\ref{e5_1}). The
given problem (\ref{e5_9_introd}) is a perturbation of
(\ref{e5_1_sol_widetilde_x_0h_Inv_Fc_Th_ec_DATA_introd_exact_th_h})
(or (\ref{e5_1_sol_widetilde_x_0h_Inv_Fc_Th_ec_DATA_introd}))
(when $\| \varrho_{h} \|$ is small enough.).
\end{rem}

\section{The Dirichlet problem for the stationary Navier-Stokes equations}
\label{sectiunea04_cazul_ecNS}

In the particular case of the stationary Navier-Stokes equations,
we maintain the position from Section
\ref{sectiunea_0_introduction}, where we state that we do not
discuss if the exact bifurcation point is generic or not. As a
consequence of the results from Section \ref{sectiunea06}, in a
certain configuration, if the model of a stationary flow has a
generic or an ungeneric bifurcation point, then there exists (at
least) one approximate equation that has a bifurcation point of
the same type as the exact model. The hypothesis of the existence
of an ungeneric bifurcation point cannot be excluded, see
\cite{CLBichir_bib_Foias_Temam1978, CLBichir_bib_Kielhofer2012,
CLBichir_bib_Temam1995}. For example, we do not exclude
transcritical bifurcation and nonsymmetric pitchfork bifurcation
from our discussion. If they exit, then they are regained in the
approximate case by some perturbed approximate equation of the
form
(\ref{e5_1_sol_widetilde_x_0h_Inv_Fc_Th_ec_DATA_introd_exact_th_h})
(or (\ref{e5_1_sol_widetilde_x_0h_Inv_Fc_Th_ec_DATA_introd})). For
this discussion, other references are mentioned in Section
\ref{sectiunea_0_introduction}.

Here, we show a modality to place the Dirichlet problem for the
stationary Navier-Stokes equations in the framework of the
preceding sections.

\subsection{The setting from \cite{CLBichir_bib_Gir_Rav1986}}
\label{sectiunea04_cazul_ecNS_1}

In order to apply the results for the case of the Dirichlet
problem for the stationary Navier-Stokes equations, formulated in
primitive variables, approximated by finite element method (with
discontinuous pressure), we use the setting of this problem from
Section IV.4.1, \cite{CLBichir_bib_Gir_Rav1986}, in the framework
of Section IV.3.3, \cite{CLBichir_bib_Gir_Rav1986}. We only
indicate the connection to the problem from Section IV.4.1,
\cite{CLBichir_bib_Gir_Rav1986}.

Let $\Omega$ be a bounded and connected open subset of
$\mathbb{R}^{N}$ ($N=2,3$) with a Lipschitz - continuous boundary
$\partial \Omega$. In the sequel, $\textbf{u}$ is the velocity,
$p$ is the kinematic pressure and $\nu$ is the kinematic
viscosity. We take $N=2,3$, $\mathcal{X}$ $=$
$H_{0}^{1}(\Omega)^{N}$ $\times$ $L_{0}^{2}(\Omega)$,
$\mathcal{Y}$ $=$ $H^{-1}(\Omega)^{N}$. $\lambda =1 / \nu > 0$.
$\lambda$ is the bifurcation parameter, $\textbf{u} =
(u_{1},\ldots,u_{N}) \in H_{0}^{1}(\Omega)^{N}$, $\textbf{x} =
(x_{1},\ldots,x_{N}) \in \Omega$. Let $\mathcal{T}_{S} \in
L(\mathcal{Y},\mathcal{X})$ be the Stokes operator that associates
to $\textbf{f} \in \mathcal{Y}$ the solution
$(\textbf{u},p)=\mathcal{T}_{S}\textbf{f}$ of the homogeneous
Stokes problem,
\begin{eqnarray}
   & \ & - \triangle \textbf{u} +
            \textbf{grad} \, p =
            \textbf{f} \; \textrm{in} \; \Omega \, ,
         \nonumber \\
   & \ & div \, \textbf{u} = 0 \; \textrm{in} \; \Omega \, ,
         \label{e1_9_ecStokes} \\
   & \ & \textbf{u} = 0 \; \textrm{on} \; \partial \Omega \, .
         \nonumber
\end{eqnarray}
The functions $F$ and $G$, from Sections IV.3.1 and IV.4.1,
\cite{CLBichir_bib_Gir_Rav1986}, are $F_{NS}$ and $G_{NS}$, where
$F_{NS}:(0,\infty) \times \mathcal{X} \rightarrow \mathcal{X}$,
$F_{NS}(\lambda,v) = v + \mathcal{T}_{S}G_{NS}(\lambda,v)$, and
$G_{NS}:(0,\infty) \times \mathcal{X} \rightarrow \mathcal{Y}$,
$G_{NS}(\lambda,v) = \lambda \cdot (\sum_{j=1}^{N} v_{j} (\partial
\textbf{v}/\partial x_{j}) - \textbf{f})$ for $v =
(\textbf{v},q)$. $G_{NS}$ is of class $C^{\infty}$ and has bounded
derivatives of all order on all bounded subsets of $\mathcal{X}$.
For $(\textbf{u},p), (\overline{\textbf{u}},\overline{p}),
(\overline{\overline{\textbf{u}}},\overline{\overline{p}})$ $\in$
$\mathcal{X}$, we have
$D_{(\textbf{u},p)}G_{NS}(\lambda,\textbf{u},p)(\overline{\textbf{u}},\overline{p})=\lambda
\cdot \sum_{j=1}^{N}(u_{j} (\partial
\overline{\textbf{u}}/\partial x_{j}) + \overline{u}_{j} (\partial
\textbf{\textbf{u}}/\partial x_{j}))$,
$D_{(\textbf{u},p)(\textbf{u},p)}^{2}G_{NS}(\lambda,\textbf{u},p)
((\overline{\textbf{u}},\overline{p}),(\overline{\overline{\textbf{u}}},\overline{\overline{p}}))$
$=$ $\lambda \cdot \sum_{j=1}^{N}(\overline{\overline{u}}_{j}
(\partial \overline{\textbf{u}}/\partial x_{j}) + \overline{u}_{j}
(\partial \overline{\overline{\textbf{u}}}/\partial x_{j}))$.

Let us fix $\lambda$. $(\textbf{u},p)$ is a solution of the
homogeneous Navier-Stokes problem if and only if $u$ $=$
$(\textbf{u},\lambda p)$ is a solution of $F_{NS}(\lambda,u)=0$
(\cite{CLBichir_bib_Gir_Rav1986}).

Let $h$ be a positive parameter tending to zero. For each $h$, let
$\Gamma_{h}$ and $E_{h}$ be two finite-dimensional spaces such
that$\Gamma_{h} \subset {H^{1}(\Omega)}^{N}$, $E_{h} \subset
L^{2}(\Omega)$ and assume that $E_{h}$ contains the constant
functions.

Let $\Gamma_{0h}$ $=$ $\Gamma_{h} \cap H_{0}^{1}(\Omega)^{N}$,
$M_{h}$ $=$ $E_{h} \cap L_{0}^{2}(\Omega)$, $\mathcal{X}_{h}$ $=$
$\Gamma_{0h}$ $\times$ $M_{h}$. Assume:

(a) There exists an operator $r_{h} \in L([H^{2}(\Omega) \cap
H_{0}^{1}(\Omega)]^{N},\Gamma_{0h})$ and an integer $\ell$ such
that
\begin{equation}
\label{HypH1}
   \| \textbf{v}-r_{h}\textbf{v} \|_{1,\Omega} \leq C_{r} \cdot h^{m} \cdot \| \textbf{v} \|_{m+1,\Omega}, \
              \forall \textbf{v} \in H^{m+1}(\Omega)^{N}, \ 1 \leq m \leq \ell \, .
\end{equation}

(b) There exists an operator $s_{h} \in L(L^{2}(\Omega),E_{h})$
such that
\begin{equation}
\label{HypH2}
   \| q-s_{h}q \|_{0,\Omega} \leq C_{s} \cdot h^{m} \cdot \| q \|_{m,\Omega}, \
              \forall q \in H^{m}(\Omega), \ 0 \leq m \leq \ell \, .
\end{equation}

These assumptions (a) and (b) together with the the uniform
inf-sup condition are the hypotheses $H_{1}$, $H_{2}$ and $H_{3}$
for the approximation from \cite{CLBichir_bib_Gir_Rav1986}.
Equation $F_{NS}(\lambda,u)=0$ is approximated by
$\mathcal{F}_{NS,h}(\lambda,u_{h})=0$
(\cite{CLBichir_bib_Gir_Rav1986}), $u_{h}$ $=$
$(\textbf{u}_{h},\lambda p_{h})$, where
$\mathcal{F}_{NS,h}:\mathbb{R} \times \mathcal{X}_{h} \rightarrow
\mathcal{X}_{h}$, $\mathcal{F}_{NS,h}(\lambda,u_{h}) = u_{h} +
\mathcal{T}_{S,h}G_{NS}(\lambda,u_{h})$ and $\mathcal{T}_{S,h} \in
L(\mathcal{Y},\mathcal{X}_{h})$ is the approximate Stokes operator
that associates to $\textbf{f} \in \mathcal{Y}$ the solution
$(\textbf{u}_{h},p_{h}) = \mathcal{T}_{S,h}\textbf{f}$ of the
finite element method approximation of problem
(\ref{e1_9_ecStokes}) (\cite{CLBichir_bib_Gir_Rav1986}),
\begin{eqnarray}
   && (grad \, \textbf{u}_{h},grad \, \textbf{w}_{h})-(p_{h},div \, \textbf{w}_{h})=\langle \textbf{f},\textbf{w}_{h} \rangle, \
              \forall \, \textbf{w}_{h}  \, \in \, \Gamma_{0h} \, ,
         \label{e1_9_ecStokes_h_1} \\
   && (div \, \textbf{u}_{h},\mu_{h})=0, \
              \forall \, \mu_{h}  \, \in \, M_{h} \, .
         \label{e1_9_ecStokes_h_2}
\end{eqnarray}

According to Theorem II.1.8, page 125, and to the proof of Theorem
IV.4.1, page 317, \cite{CLBichir_bib_Gir_Rav1986}, we have:
\begin{equation}
\label{e1_9_ecStokes_h_Gir_Rav_lim_f}
   \lim_{h \rightarrow 0} \| (\mathcal{T}_{S}-\mathcal{T}_{S,h})\textbf{f} \|_{\mathcal{X}}=0 \, ,
   \ \forall  \, \textbf{f} \in \mathcal{Y} \, .
\end{equation}

According to the proof of Theorem IV.4.1, page 317,
\cite{CLBichir_bib_Gir_Rav1986}, we have:
\begin{equation}
\label{e1_9_ecStokes_h_Gir_Rav_lim_f_L_3_2}
   \sum_{j=1}^{N}(u_{j} (\partial
\overline{\textbf{u}}/\partial x_{j}) + \overline{u}_{j} (\partial
\textbf{\textbf{u}}/\partial x_{j})) \in L^{3/2}(\Omega)^{N} \, ,
\end{equation}
where $\textbf{u}$, $\overline{\textbf{u}}$ $\in$
$H_{0}^{1}(\Omega)^{N}$, and, for $\mathcal{Z}$ $=$
$L^{3/2}(\Omega)^{N}$ $\subset$ $\mathcal{Y}$,
\begin{equation}
\label{e1_9_ecStokes_h_Gir_Rav_lim}
   \lim_{h \rightarrow 0} \| \mathcal{T}_{S}-\mathcal{T}_{S,h} \|_{L(\mathcal{Z},\mathcal{X})}=0 \, .
\end{equation}

\subsection{The supplementary variabile $q$}
\label{sectiunea04_cazul_ecNS_3}

Let us write the homogeneous Navier-Stokes problem as a problem of
the form (\ref{e5_1}). $\lambda$ is variable. We introduce a new
variable $q$ $=$ $\lambda p$ and we write the problem from
\cite{CLBichir_bib_Gir_Rav1986} in the following form
\begin{eqnarray}
   & \ & p - \lambda^{-1} q = 0 \, ,
         \label{e1_9_ecStokes_var_q} \\
   & \ & (\textbf{u},q) + \mathcal{T}_{S}G_{NS}(\lambda,\textbf{u},p) = 0  \, ,
         \nonumber
\end{eqnarray}
or
\begin{eqnarray}
   & \ & p + \widetilde{\mathcal{T}}_{S}\widetilde{G}_{NS}(\lambda,q) = 0 \, ,
         \label{e1_9_ecStokes_var_q_op} \\
   & \ & (\textbf{u},q) + \mathcal{T}_{S}G_{NS}(\lambda,\textbf{u},p) = 0  \, ,
         \nonumber
\end{eqnarray}
where $\mathcal{T}_{\ast}(p,\textbf{f})=\left[\begin{array}{l}
      \widetilde{\mathcal{T}}_{S}p \\
      \mathcal{T}_{S}\textbf{f}
      \end{array}\right]
      = \left[\begin{array}{l}
      I_{1}p \\
      \mathcal{T}_{S}\textbf{f}
      \end{array}\right]$,

$\mathcal{G}_{\ast}(\lambda,p,\textbf{u},q)=\left[\begin{array}{l}
      - \lambda^{-1} q \\
      G_{NS}(\lambda,\textbf{u},p)
      \end{array}\right]
      = \left[\begin{array}{l}
      \widetilde{G}_{NS}(\lambda,q) \\
      G_{NS}(\lambda,\textbf{u},p)
      \end{array}\right]$.

$I_{1}$, $I_{2}$ are the identity operators on
$L_{0}^{2}(\Omega)$, $H_{0}^{1}(\Omega)^{N}$ respectively.

$\mathcal{T}_{\ast}^{-1}(\bar{a}_{i})$ are linearly independent if
and only if $\bar{a}_{i}$ are linearly independent.

We denote $\mathcal{W}$ $=$ $L_{0}^{2}(\Omega)$ $\times$
$H_{0}^{1}(\Omega)^{N}$ $\times$ $L_{0}^{2}(\Omega)$. Let
$\mathcal{F}_{\ast}:(0,\infty)$ $\times$ $\mathcal{W}$
$\rightarrow$ $\mathcal{W}$,
$\mathcal{F}_{\ast}(\lambda,p,\textbf{u},q) = (p,\textbf{u},q) +
\mathcal{T}_{\ast}\mathcal{G}_{\ast}(\lambda,p,\textbf{u},q)$.
$(\lambda,\textbf{u},q)$ $=$ $(\lambda,\textbf{u},\lambda p)$ is a
solution of $F_{NS}(\lambda,\textbf{u},q)=0$ is equivalent to say
that $(\lambda,p,\textbf{u},q)$ is a solution of the following
problem
\begin{equation}
\label{e5_1_NS}
   \mathcal{F}_{\ast}(\lambda,p,\textbf{u},q)=0 \, .
\end{equation}

$\mathcal{F}_{\ast}$ is approximated by
$\mathcal{F}_{\ast,h}:(0,\infty)$ $\times$ $\mathcal{W}_{h}$
$\rightarrow$ $\mathcal{W}_{h}$,
$\mathcal{F}_{\ast,h}(\lambda,p,\textbf{u},q) = (p,\textbf{u},q) +
\mathcal{T}_{\ast,h}\mathcal{G}_{\ast}(\lambda,p,\textbf{u},q)$,
$\mathcal{W}_{h}$ $=$ $M_{h}$ $\times$ $\Gamma_{0h}$ $\times$
$M_{h}$.

\subsection{The verification of the hypotheses of Theorem
\ref{teorema_lema5_8p}}
\label{sectiunea04_cazul_ecNS_2}

Let us replace $F$, $\mathbb{R}^{m} \times W$, $Z$, $(\lambda,u)$,
$(\lambda_{0},u_{0})$, in problem (\ref{e5_1}) and in hypothesis
(\ref{ipotezaHypF}), by $\mathcal{F}_{\ast}$, $\mathbb{R}$
$\times$ $\mathcal{W}$, $\mathcal{W}$, $(\lambda,p,\textbf{u},q)$,
$(\lambda_{0},p_{0},\textbf{u}_{0},q_{0})$ respectively. We say
that we study problem (\ref{e5_1_NS}) under hypothesis
(\ref{ipotezaHypF}) and we want to prove that the results of
Theorem \ref{teorema_lema5_8p} hold for the approximation
(\ref{e5_1_NS_h}) of (\ref{e5_1_NS}). For this, it suffices to
verify hypotheses (\ref{e5_8_proprietati}),
(\ref{e5_45p_conditia_izomorfism}) - (\ref{e5_45p_conditia_V}).

Let $\pi_{h}^{\mathcal{W}}$ $=$ $\pi_{h}^{Z}$ $=$ $\pi_{h}^{W}$
$=$ $(s_{h},r_{h},s_{h})$ and $I$ $=$ $(I_{1},I_{2},I_{1})$. We
take
\begin{equation}
\label{HypH1_lambda_sa}
   \widetilde{\lambda}_{0h} = \lambda_{0} \, , \
   \widetilde{p}_{0h} = s_{h} p_{0} \, ,
   \widetilde{\textbf{u}}_{0h} = r_{h} \textbf{u}_{0} \, ,
   \widetilde{q}_{0h} = s_{h} q_{0} \, .
\end{equation}

We have $\widetilde{\lambda}_{0h}$ $=$ $\lambda_{0}$. We denote
$\sigma_{0}$ $=$ $(\lambda_{0},p_{0},\textbf{u}_{0})$,
$\widetilde{\sigma}_{0h}$ $=$
$(\widetilde{\lambda}_{0h},\widetilde{p}_{0h},\widetilde{\textbf{u}}_{0h})$,
$\sigma_{0}^{NS}$ $=$ $(\lambda_{0},\textbf{u}_{0},p_{0})$,
$\widetilde{\sigma}_{0h}^{NS}$ $=$
$(\widetilde{\lambda}_{0h},\widetilde{\textbf{u}}_{0h},\widetilde{p}_{0h})$.

Let us first verify (\ref{e5_45p_conditia_V}). We have
\begin{eqnarray}
   & \ & \textrm{there exists} \ \eta_{4} > 0 \
      \textrm{such that, for every} \
      v_{h} \in W_{h}, \
      \textrm{we have}
         \label{e5_45p_conditia_V_verif}
\end{eqnarray}
$$ \| \pi_{h}^{Z}D_{u}F(\lambda_{0},u_{0})v_{h}
      - D_{u}F_{h}(\widetilde{\lambda}_{0h},\widetilde{u}_{0h})v_{h}
      \|_{Z_{h}} $$
$$ = \|
   \pi_{h}^{\mathcal{W}}D_{(p,\textbf{u},q)}\mathcal{F}_{\ast}(\sigma_{0},q_{0})(\overline{p},\overline{\textbf{u}},\overline{q})
      - D_{(p,\textbf{u},q)}\mathcal{F}_{\ast,h}(\widetilde{\sigma}_{0h},\widetilde{q}_{0h})(\overline{p},\overline{\textbf{u}},\overline{q})
      \|_{\mathcal{W}_{h}} $$
$$ \leq \| (\pi_{h}^{\mathcal{W}}-I)(\overline{p},\overline{\textbf{u}},\overline{q})
      \|_{\mathcal{W}_{h}} $$
$$ + \| \pi_{h}^{\mathcal{W}}
      \left[\begin{array}{l}
      \widetilde{\mathcal{T}}_{S} D_{q}\widetilde{G}_{NS}(\lambda_{0},q_{0})\overline{q} \\
      \mathcal{T}_{S} D_{(\textbf{u},p)}G_{NS}(\sigma_{0}^{NS})(\overline{\textbf{u}},\overline{p})
      \end{array}\right]
      -
      \left[\begin{array}{l}
      \tilde{\mathcal{T}_{S,h}} D_{q}\widetilde{G}_{NS}(\widetilde{\lambda}_{0h},\widetilde{q}_{0h})\overline{q} \\
      \mathcal{T}_{S,h} D_{(\textbf{u},p)}G_{NS}(\widetilde{\sigma}_{0h}^{NS})(\overline{\textbf{u}},\overline{p})
      \end{array}\right]
      \|_{\mathcal{W}_{h}} $$
$$ = \| (\pi_{h}^{\mathcal{W}}-I)(\overline{p},\overline{\textbf{u}},\overline{q})
      \|_{\mathcal{W}_{h}} $$
$$ + \| (r_{h},s_{h}) \mathcal{T}_{S} D_{(\textbf{u},p)}G_{NS}(\sigma_{0}^{NS})(\overline{\textbf{u}},\overline{p})
         - \mathcal{T}_{S,h} D_{(\textbf{u},p)}G_{NS}(\widetilde{\sigma}_{0h}^{NS})(\overline{\textbf{u}},\overline{p})
      \|_{\mathcal{X}_{h}} $$
$$ \leq \| (\pi_{h}^{\mathcal{W}}-I)(\overline{p},\overline{\textbf{u}},\overline{q})
      \|_{\mathcal{W}_{h}} $$
$$ + \| ((r_{h},s_{h})-(I_{2},I_{1})) \mathcal{T}_{S} D_{(\textbf{u},p)}G_{NS}(\sigma_{0}^{NS})(\overline{\textbf{u}},\overline{p})
      \|_{\mathcal{X}_{h}} $$
$$ + \| (\mathcal{T}_{S} - \mathcal{T}_{S,h}) D_{(\textbf{u},p)}G_{NS}(\sigma_{0}^{NS})(\overline{\textbf{u}},\overline{p})
      \|_{\mathcal{X}_{h}} $$
$$ + \| \mathcal{T}_{S,h}(D_{(\textbf{u},p)}G_{NS}(\sigma_{0}^{NS})(\overline{\textbf{u}},\overline{p})
         - D_{(\textbf{u},p)}G_{NS}(\widetilde{\sigma}_{0h}^{NS})(\overline{\textbf{u}},\overline{p}))
      \|_{\mathcal{X}_{h}} $$
$$ \leq
      \eta_{4} \| (\overline{p},\overline{\textbf{u}},\overline{q}) \|_{\mathcal{W}_{h}}
      = \eta_{4} \| v_{h} \|_{W_{h}} \, , $$
where $\eta_{4}$ is obtained using (\ref{HypH1_lambda_sa}),
(\ref{HypH1}), (\ref{HypH2}),
(\ref{e1_9_ecStokes_h_Gir_Rav_lim_f_L_3_2}) si
(\ref{e1_9_ecStokes_h_Gir_Rav_lim}).

Let us verify (\ref{e5_45p_conditia_I}). We have
\begin{eqnarray}
   & \ &  \textrm{there exists} \ \eta_{2} > 0 \
      \textrm{such that, for every} \
      (\lambda_{h},u_{h}) \in \mathbb{R}^{m} \times W_{h}, \
      \textrm{we have}
         \label{e5_45p_conditia_I_verif}
\end{eqnarray}
$$ \| \pi_{h}^{Z}DF(\lambda_{0},u_{0})(\lambda_{h},u_{h})
      - DF_{h}(\widetilde{\lambda}_{0h},\widetilde{u}_{0h})(\lambda_{h},u_{h})
      \|_{Z_{h}} $$
$$ = \|
   \pi_{h}^{\mathcal{W}}D\mathcal{F}_{\ast}(\sigma_{0},q_{0})(\overline{\lambda},\overline{p},\overline{\textbf{u}},\overline{q})
      - D\mathcal{F}_{\ast,h}(\widetilde{\sigma}_{0h},\widetilde{q}_{0h})(\overline{\lambda},\overline{p},\overline{\textbf{u}},\overline{q})
      \|_{\mathcal{W}_{h}} $$
$$ \leq \| \pi_{h}^{\mathcal{W}}
      \left[\begin{array}{l}
      \widetilde{\mathcal{T}}_{S} D_{\lambda}\widetilde{G}_{NS}(\lambda_{0},q_{0})\overline{\lambda} \\
      \mathcal{T}_{S} D_{\lambda}G_{NS}(\sigma_{0}^{NS})\overline{\lambda}
      \end{array}\right]
      -
      \left[\begin{array}{l}
      \tilde{\mathcal{T}_{S,h}} D_{\lambda}\widetilde{G}_{NS}(\widetilde{\lambda}_{0h},\widetilde{q}_{0h})\overline{\lambda} \\
      \mathcal{T}_{S,h} D_{\lambda}G_{NS}(\widetilde{\sigma}_{0h}^{NS})\overline{\lambda}
      \end{array}\right]
      \|_{\mathcal{W}_{h}} $$
$$ + \| \pi_{h}^{\mathcal{W}}D_{(p,\textbf{u},q)}\mathcal{F}_{\ast}(\sigma_{0},q_{0})(\overline{p},\overline{\textbf{u}},\overline{q})
      - D_{(p,\textbf{u},q)}\mathcal{F}_{\ast,h}(\widetilde{\sigma}_{0h},\widetilde{q}_{0h})(\overline{p},\overline{\textbf{u}},\overline{q})
      \|_{\mathcal{W}_{h}} $$
$$ = \| s_{h} \widetilde{\mathcal{T}}_{S} D_{\lambda}\widetilde{G}_{NS}(\lambda_{0},q_{0})\overline{\lambda}
        - \tilde{\mathcal{T}_{S,h}} D_{\lambda}\widetilde{G}_{NS}(\widetilde{\lambda}_{0h},\widetilde{q}_{0h})\overline{\lambda}
      \|_{\mathcal{M}_{h}} $$
$$ + \| (r_{h},s_{h}) \mathcal{T}_{S} G_{NS}(\overline{\lambda},\textbf{u}_{0},p_{0})
         - \mathcal{T}_{S,h} G_{NS}(\overline{\lambda},\widetilde{\textbf{u}}_{0h},\widetilde{p}_{0h})
      \|_{\mathcal{X}_{h}} $$
$$ + \| \pi_{h}^{\mathcal{W}}D_{(p,\textbf{u},q)}\mathcal{F}_{\ast}(\sigma_{0},q_{0})(\overline{p},\overline{\textbf{u}},\overline{q})
      - D_{(p,\textbf{u},q)}\mathcal{F}_{\ast,h}(\widetilde{\sigma}_{0h},\widetilde{q}_{0h})(\overline{p},\overline{\textbf{u}},\overline{q})
      \|_{\mathcal{W}_{h}} $$
$$ \leq \| ((r_{h},s_{h})-(I_{2},I_{1})) \mathcal{T}_{S} G_{NS}(\overline{\lambda},\textbf{u}_{0},p_{0}))
      \|_{\mathcal{X}_{h}} $$
$$ + \| (\mathcal{T}_{S} - \mathcal{T}_{S,h}) G_{NS}(\overline{\lambda},\textbf{u}_{0},p_{0})
      \|_{\mathcal{X}_{h}}
      + \| \mathcal{T}_{S,h}(G_{NS}(\overline{\lambda},\textbf{u}_{0},p_{0})
         - G_{NS}(\overline{\lambda},\widetilde{\textbf{u}}_{0h},\widetilde{p}_{0h}))
      \|_{\mathcal{X}_{h}} $$
$$ + \| \pi_{h}^{\mathcal{W}}D_{(p,\textbf{u},q)}\mathcal{F}_{\ast}(\sigma_{0},q_{0})(\overline{p},\overline{\textbf{u}},\overline{q})
      - D_{(p,\textbf{u},q)}\mathcal{F}_{\ast,h}(\widetilde{\sigma}_{0h},\widetilde{q}_{0h})(\overline{p},\overline{\textbf{u}},\overline{q})
      \|_{\mathcal{W}_{h}} $$
$$ \leq
      \eta_{2} \| (\overline{\lambda},\overline{p},\overline{\textbf{u}},\overline{q}) \|_{\mathbb{R} \times \mathcal{W}_{h}}
      = \eta_{2} \| (\lambda_{h},u_{h}) \|_{\mathbb{R}^{m} \times W_{h}} \, , $$
where $\eta_{2}$ is obtained using
(\ref{e5_45p_conditia_V_verif}), (\ref{HypH1_lambda_sa}),
(\ref{HypH1}), (\ref{HypH2}) and
(\ref{e1_9_ecStokes_h_Gir_Rav_lim_f}) for $\textbf{f}$ $=$
$G_{NS}(1,\textbf{u}_{0},p_{0})$.

Equation (\ref{e5_1_NS}) has the form of equation (\ref{e5_1}).
Assume that (\ref{e5_1_NS}) satisfies the hypotheses of Theorem
\ref{teorema_principala_spatii_infinit_dimensionale_widetilde_s_3_0_exact_h}.
Then the corresponding equation
(\ref{e5_1_sol_widetilde_x_0h_Inv_Fc_Th_ec_DATA_introd_exact_th_h}),
with $\varrho_{h}$ $=$
$(\widetilde{\varrho}_{h},\overline{\varrho}_{h},\widehat{\varrho}_{h})$,
is
\begin{equation}
\label{e5_1_sol_widetilde_x_0h_Inv_Fc_Th_ec_DATA_introd_exact_th_h_NS}
      \mathcal{F}_{\ast,h}(\lambda_{h},p_{h},\textbf{u}_{h},q_{h})-\varrho_{h}
      = 0 \, ,
\end{equation}
or
\begin{equation}
\label{e5_1_sol_widetilde_x_0h_Inv_Fc_Th_ec_DATA_introd_exact_th_h_NS_comp}
      (p_{h},\textbf{u}_{h},q_{h}) +
         \mathcal{T}_{\ast,h}\mathcal{G}_{\ast}(\lambda_{h},p_{h},\textbf{u}_{h},q_{h})
         -(\widetilde{\varrho}_{h},\overline{\varrho}_{h},\widehat{\varrho}_{h})
      = 0 \, ,
\end{equation}
or
\begin{eqnarray}
   && p_{h} - \lambda_{h}^{-1} q_{h} = \widetilde{\varrho}_{h} \, ,
         \label{e1_9_ecStokes_var_q_h_bif} \\
   && (grad \, (\textbf{u}_{h}-\overline{\varrho}_{h}),grad \, \textbf{w}_{h})-((q_{h}-\widehat{\varrho}_{h}),div \, \textbf{w}_{h})
         \label{e1_9_ecStokes_h_1_bif} \\
   && \quad = \lambda_{h}\langle \textbf{f}-\sum_{j=1}^{N} u_{h,j} \frac{\partial \textbf{u}_{h}}{\partial x_{j}},\textbf{w}_{h} \rangle, \
              \forall \, \textbf{w}_{h}  \, \in \, \Gamma_{0h} \, ,
         \nonumber \\
   && (div \, (\textbf{u}_{h}-\overline{\varrho}_{h}),\mu_{h})=0, \
              \forall \, \mu_{h}  \, \in \, M_{h} \, .
         \label{e1_9_ecStokes_h_2_bif}
\end{eqnarray}

With the same settings, the framework of Section 10.2.3 (and also
of Section 10.2.2, rewritten for spectral Galerkin
approximations), from \cite{CLBichir_bib_Quarteroni_Valli2008},
allows the approximate Stokes operator $\mathcal{T}_{S,h}$ to be
constructed using spectral methods.

\section{A complement to Theorem
\ref{teorema_principala_spatii_infinit_dimensionale_widetilde_s_3_0_exact_inf}}
\label{sectiunea_01_O_formulare_pe_spatii_infinit_dimensionale_COMPLEMENTE_NOU}

In this section, we investigate if we can fix only $\hat{y}_{0}'$
in
(\ref{e5_57_forma2_sistem_REG_2_GGGvarianta_infinit_dimensionale_DEM_pc_fix_cont_th_princ}),
in Theorem
\ref{teorema_principala_spatii_infinit_dimensionale_widetilde_s_3_0_exact_inf}.

Let $F:\mathbb{R}^{m} \times W \rightarrow Z$ be a nonlinear
function of class $C^{p}$. Let $(\mu',w')$ $\in$ $\mathbb{R}^{m}
\times W$. Let us define
\begin{equation*}
\label{e5_1_F_tilde}
   \widetilde{F}:\mathbb{R}^{m} \times W \rightarrow Z, \
      \widetilde{F}(\lambda,u)=F(\lambda,u)-DF(\lambda,u)(\mu',w') \, ,
\end{equation*}
and $\widetilde{G}:X \rightarrow Z$, $\widetilde{H}:\mathbb{R}^{m}
\times W \times \Delta \rightarrow Z$, $\widetilde{\Psi}:X
\rightarrow Y$, where $\widetilde{G}$, $\widetilde{H}$,
$\widetilde{\Psi}$ are obtained by replacing $F$ by
$\widetilde{F}$ in the definitions of $G$, $H$, $\Psi$
respectively.

Let us define $\widetilde{S}:\Gamma \rightarrow \Sigma$,
$\widetilde{S}(s)$ $=$ $S(s)$, and $\widetilde{\Phi}: \Gamma
\times \Gamma \rightarrow \Sigma$,
\begin{equation*}
\label{e5_57_Phi_3_modif}
   \widetilde{\Phi}(s,\phi')=\Phi(s,\phi')
      +
      \left[\begin{array}{l}
         \left[\begin{array}{l}
              0 \\
              0
         \end{array}\right] \\
         \left[\begin{array}{l}
              0 \\
              D^{2}F(\lambda,u)((\mu',w'),((\mu_{i},w_{i})-(\mu_{i}',w_{i}')))
         \end{array}\right] \\
         \left[\begin{array}{l}
              0 \\
              D_{u}(DF(\lambda,u)(\mu',w'))(v_{k}-v_{k}')
         \end{array}\right]
      \end{array}\right] \, ,
\end{equation*}
for all $i=1,\ldots,q+m$, $k=1,\ldots,n$.

In the definitions from Subsections
\ref{sectiunea_01_O_formulare_pe_spatii_infinit_dimensionale_1}
and
\ref{sectiunea_01_O_formulare_pe_spatii_infinit_dimensionale_2},
let us replace $F$, $G$, $H$, $\Psi$, $S$, $\Phi$ by
$\widetilde{F}$, $\widetilde{G}$, $\widetilde{H}$,
$\widetilde{\Psi}$, $\widetilde{S}$, $\widetilde{\Phi}$
respectively.

\begin{cor}
\label{corolarul_doi_2_teorema_principala_spatii_infinit_dimensionale_widetilde_s_3_0_exact_fi3_DIF}
Assume the hypotheses of Theorem
\ref{teorema_principala_spatii_infinit_dimensionale_widetilde_s_3_0_exact_inf}
where we replace $F$, $G$, $H$, $S$, $\Phi$ by $\widetilde{F}$,
$\widetilde{G}$, $\widetilde{H}$, $\widetilde{\Psi}$,
$\widetilde{S}$, $\widetilde{\Phi}$ respectively. Then
(\ref{e5_57_forma2_sistem_REG_2_GGGvarianta_infinit_dimensionale_DEM_pc_fix_cont_th_princ})
becomes
\begin{equation}
\label{corolarul_doi_2_e5_57_forma2_sistem_REG_2_GGGvarianta_infinit_dimensionale_DEM_pc_fix_cont_fi3_DIF}
   \widetilde{S}(s_{0})
   - \widetilde{\Phi}(s_{0},\phi_{0}')
   \ni 0 \, ,
\end{equation}

Let us fix $\hat{y}_{0}'$ (whose existence is demonstrated) in
(\ref{corolarul_doi_2_e5_57_forma2_sistem_REG_2_GGGvarianta_infinit_dimensionale_DEM_pc_fix_cont_fi3_DIF}).
Let us take $\theta_{0}$ $=$
$\widetilde{\theta}_{0}+B(\hat{y}_{0}')$. Let us define
\begin{equation} \label{e5_1_F_tilde_0}
   \widetilde{F}_{0}:\mathbb{R}^{m} \times W \rightarrow Z, \
      \widetilde{F}_{0}(\lambda,u)=F(\lambda,u)-DF(\lambda,u)(\hat{\mu}_{0}',\hat{w}_{0}') \, ,
\end{equation}
and $\widetilde{G}_{0}:X \rightarrow Z$,
$\widetilde{H}_{0}:\mathbb{R}^{m} \times W \times \Delta
\rightarrow Z$, $\widetilde{\Psi}_{0}:X \rightarrow Y$, where
$\widetilde{G}_{0}$, $\widetilde{H}_{0}$, $\widetilde{\Psi}_{0}$
are obtained by replacing $F$ by $\widetilde{F}_{0}$ in the
definitions of $G$, $H$, $\Psi$ respectively. Let
$\widetilde{S}_{0}:\Gamma \rightarrow \Sigma$ be $S$ where we
replace $F$ by $\widetilde{F}_{0}$ in (\ref{e5_57}) and we use
$\theta_{0}$ defined above.

Then, $s_{0}$ is the solution of the equation
\begin{equation}
\label{e5_57_forma2_sistem_REG_2_GGGvarianta_infinit_dimensionale_DEM_pc_fix_cont_ec_VAR}
   \widetilde{S}_{0}(s)= 0 \ .
\end{equation}
Equation
(\ref{e5_57_forma2_sistem_REG_2_GGGvarianta_infinit_dimensionale_DEM_pc_fix_cont_ec_VAR})
is of the form of equation (\ref{e5_56}).

Then, the component $(\lambda_{0},u_{0})$ of $s_{0}$ is a solution
of the equation
\begin{equation}
\label{e5_1_sol_widetilde_x_0h_Inv_Fc_Th_ec_DATA_introd_exact_prim}
      F(\lambda,u)-DF(\lambda,u)(\hat{\mu}_{0}',\hat{w}_{0}') = 0 \, ,
\end{equation}
$(\lambda_{0},u_{0})$ $\in$
$\mathbb{B}_{a^{\ast}}(\widetilde{\lambda}_{0},\widetilde{u}_{0})$.

Assume that $D\widetilde{\Psi}_{0}(\widetilde{x}_{0})$ is an
isomorphism of $X$ onto $Y$ and
$D\widetilde{S}_{0}(\widetilde{s}_{0})$ is an isomorphism of
$\Gamma$ onto $\Sigma$. Under some other additional conditions
like in Theorem
\ref{teorema_principala_spatii_infinit_dimensionale_widetilde_s_3_0_exact_inf},
we have:

Then, $s_{0}$ is the unique solution of the equation
(\ref{e5_57_forma2_sistem_REG_2_GGGvarianta_infinit_dimensionale_DEM_pc_fix_cont_ec_VAR})
in some $\mathbb{B}_{a}(\widetilde{s}_{0})$ with $a$ $\geq$
$a^{\ast}$. The system
(\ref{e5_57_forma2_sistem_REG_2_GGGvarianta_infinit_dimensionale_DEM_pc_fix_cont_ec_VAR})
and its solution $s_{0}$ verify the assertions (a) and (b) of the
statement (ii) of Theorem \ref{teorema_principala_parteaMAIN}.

Then, the component $(\lambda_{0},u_{0})$ of $s_{0}$ is the unique
solution of the equation
(\ref{e5_1_sol_widetilde_x_0h_Inv_Fc_Th_ec_DATA_introd_exact_prim})
that satisfies hypothesis (\ref{ipotezaHypF}) and the rest of the
hypotheses of the statement (i) of Theorem
\ref{teorema_principala_parteaMAIN} in some
$\mathbb{B}_{a}(\widetilde{\lambda}_{0},\widetilde{u}_{0})$ with
$a$ $\geq$ $a^{\ast}$. The solution $(\lambda_{0},u_{0})$ is a
bifurcation point of problem
(\ref{e5_1_sol_widetilde_x_0h_Inv_Fc_Th_ec_DATA_introd_exact_prim}).
\end{cor}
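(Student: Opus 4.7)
The plan is to apply Theorem \ref{teorema_principala_spatii_infinit_dimensionale_widetilde_s_3_0_exact_inf} verbatim, but in the modified setting where $F$, $G$, $H$, $\Psi$, $S$, $\Phi$ are replaced by $\widetilde{F}$, $\widetilde{G}$, $\widetilde{H}$, $\widetilde{\Psi}$, $\widetilde{S}$, $\widetilde{\Phi}$ throughout. By hypothesis all quantitative assumptions of that theorem hold for the tilde entities, and the construction of the mappings $\mathcal{G}$ and $\mathcal{Q}$ in Subsection \ref{sectiunea_01_O_formulare_pe_spatii_infinit_dimensionale_2} goes through unchanged since $\widetilde{\Phi}(x, \phi')$ is still linear in $\phi'$ for fixed $x$ (the extra second-derivative terms are linear in the combinations $(\mu_i,w_i)-(\mu_i',w_i')$ and $v_k - v_k'$). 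Consequently, the contraction-mapping argument in Subsection \ref{sectiunea_01_O_formulare_pe_spatii_infinit_dimensionale_5} produces a solution $(\hat{s}_0,\hat{\phi}_0') \in \mathbb{B}_{a^\ast}(\widetilde{s}_0, \widetilde{\phi}_0')$ of (\ref{corolarul_doi_2_e5_57_forma2_sistem_REG_2_GGGvarianta_infinit_dimensionale_DEM_pc_fix_cont_fi3_DIF}), with all the prescribed vanishing components $\hat{f}_0 = 0$, $\hat{g}_{i,0} = 0$, $\hat{e}_{k,0} = 0$, $\hat{g}_0' = 0$, $\hat{g}_{i,0}' = 0$, $\hat{e}_{k,0}' = 0$.

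The heart of the argument is then a row-by-row algebraic identification, showing that (\ref{corolarul_doi_2_e5_57_forma2_sistem_REG_2_GGGvarianta_infinit_dimensionale_DEM_pc_fix_cont_fi3_DIF}) is equivalent to (\ref{e5_57_forma2_sistem_REG_2_GGGvarianta_infinit_dimensionale_DEM_pc_fix_cont_ec_VAR}) after fixing $\hat{y}_0'$ and setting $\theta_0 = \widetilde{\theta}_0 + B(\hat{y}_0')$. Since $\widetilde{S}(s_0) = S(s_0)$, the first two rows give $B(\hat{x}_0) - \theta_0 = 0$ and $F(\hat{\lambda}_0,\hat{u}_0) - DF(\hat{\lambda}_0,\hat{u}_0)(\hat{\mu}_0',\hat{w}_0') = 0$, that is, $\widetilde{F}_0(\hat{\lambda}_0,\hat{u}_0) = 0$ by (\ref{e5_1_F_tilde_0}). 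In the middle block, the original $DG(\hat{x}_0)(\hat{y}_{i,0} - \hat{y}_{i,0}')$ contribution combined with the added term $D^2 F(\hat{\lambda}_0,\hat{u}_0)((\hat{\mu}_0',\hat{w}_0'),(\hat{\mu}_{i,0}-\hat{\mu}_{i,0}',\hat{w}_{i,0}-\hat{w}_{i,0}'))$ in $\widetilde{\Phi}$ reproduces exactly $D\widetilde{G}_0(\hat{x}_0)(\hat{y}_{i,0} - \hat{y}_{i,0}')$, because differentiating $F(\lambda,u) - DF(\lambda,u)(\hat{\mu}_0',\hat{w}_0')$ yields precisely $DF - D^2 F \cdot (\hat{\mu}_0',\hat{w}_0')$. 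The analogous fact for the last block, relying on the additional term $D_u(DF(\lambda,u)(\hat{\mu}_0',\hat{w}_0'))(v_k - v_k')$, shows that the resulting equations are the kernel equations for $D_u \widetilde{F}_0(\hat{\lambda}_0,\hat{u}_0)$, i.e. $\widetilde{H}_0(\hat{\lambda}_0,\hat{u}_0,\hat{\mathrm{z}}_{k,0}-\hat{\mathrm{z}}_{k,0}') = 0$. Defining $s_0$ as in the statement, this says exactly $\widetilde{S}_0(s_0) = 0$.

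To conclude, invoke Theorem \ref{teorema_principala_parteaMAIN}, implication (ii) $\Rightarrow$ (i), applied to $\widetilde{F}_0$ in place of $F$: the isomorphism hypotheses on $D\widetilde{\Psi}_0(\widetilde{x}_0)$ and $D\widetilde{S}_0(\widetilde{s}_0)$, together with the system $\widetilde{S}_0(s_0) = 0$ and the vanishing components of $s_0$, force $(\lambda_0, u_0)$ to satisfy hypothesis (\ref{ipotezaHypF}) for $\widetilde{F}_0$, so that $(\lambda_0, u_0)$ is a bifurcation point of (\ref{e5_1_sol_widetilde_x_0h_Inv_Fc_Th_ec_DATA_introd_exact_prim}). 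Uniqueness in a ball $\mathbb{B}_a(\widetilde{s}_0)$, and consequently of the bifurcation point in $\mathbb{B}_a(\widetilde{\lambda}_0,\widetilde{u}_0)$, is obtained verbatim as in part (viii) of the proof of Theorem \ref{teorema_principala_spatii_infinit_dimensionale_widetilde_s_3_0_exact_inf} using the estimates $\gamma_{\widetilde{S}_0} L_{\widetilde{S}_0}(a) < 1$. The main obstacle is the bookkeeping in the row-by-row identification: one has to verify that the particular placement of the extra second-derivative corrections in $\widetilde{\Phi}$ matches precisely the Fr\'echet derivatives of $\widetilde{G}_0$ and $\widetilde{H}_0$ once $(\hat{\mu}_0',\hat{w}_0')$ is frozen; everything else is a direct transcription of Theorem \ref{teorema_principala_spatii_infinit_dimensionale_widetilde_s_3_0_exact_inf}.
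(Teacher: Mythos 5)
Your proposal takes the same route as the paper's proof: apply Theorem \ref{teorema_principala_spatii_infinit_dimensionale_widetilde_s_3_0_exact_inf} with the tilde substitutions, then identify the resulting fixed-point relation row by row with $\widetilde{S}_{0}(s_{0})=0$. The paper simply records that "the proof remains valid" and displays the tilde form of the fixed-point relation in block form; your write-up fills in the algebra showing that the term $D^{2}F(\lambda,u)((\mu',w'),\cdot)$ inserted into $\widetilde{\Phi}$ is exactly what is needed to turn $DF$ into $D\widetilde{F}_{0}$ once $(\hat{\mu}_{0}',\hat{w}_{0}')$ is frozen, which is the crux of the bookkeeping and is correctly identified.

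One small inaccuracy worth correcting: you claim $\widetilde{\Phi}(x,\phi')$ "is still linear in $\phi'$ for fixed $x$". First, $\widetilde{\Phi}$ depends on the full $s$ (through $(\lambda,u)$, $(\mu_{i},w_{i})$ and $v_{k}$), not just on $x$, so the first argument of $\widetilde{\Phi}$ should be $s$. Second, since $(\mu',w')$ is taken from $\phi'$ — this is what ultimately allows $\varrho$ to become the function $DF(\lambda,u)(\hat{\mu}_{0}',\hat{w}_{0}')$ — the correction block $D^{2}F(\lambda,u)((\mu',w'),(\mu_{i},w_{i})-(\mu_{i}',w_{i}'))$ is bilinear in $((\mu',w'),(\mu_{i}',w_{i}'))$, hence quadratic, not linear, in $\phi'$. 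This does not break the argument, because the contraction-mapping framework in the proof of Theorem \ref{teorema_principala_spatii_infinit_dimensionale_widetilde_s_3_0_exact_inf} only needs $\widetilde{\Phi}$ to be smooth with the quantitative estimates assumed to hold (and the surjectivity Lemma \ref{Lema_mathcal_G_surjective} is noted in the paper to be dispensable in the set-valued setting), but the rationale you give for re-using the construction is not the right one. The rest — invoking Theorem \ref{teorema_principala_parteaMAIN}(ii)$\Rightarrow$(i) for $\widetilde{F}_{0}$ and obtaining uniqueness from the $\gamma L_{S}(a)<1$ estimate — matches the paper.
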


\begin{proof}
The proof of Theorem
\ref{teorema_principala_spatii_infinit_dimensionale_widetilde_s_3_0_exact_inf}
remains valid. We only mention that
(\ref{e5_57_forma2_sistem_REG_2_GGGvarianta_infinit_dimensionale_DEM_pc_fix})
has the formulation
\begin{equation*}
\label{e5_57_forma2_sistem_REG_2_GGGvarianta_infinit_dimensionale_DEM_pc_fix_widetilde}
   \widetilde{S}(\bar{s})
   - \widetilde{\Phi}(\bar{x},\bar{\phi}')
   - \widetilde{\Phi}(\widetilde{x}_{0},\xi(\bar{f},\bar{g}_{i},\bar{e}_{k}))
   + \widetilde{\Phi}(\widetilde{x}_{0},\xi(\bar{g}',\bar{g}_{i}',\bar{e}_{k}'))
   \ni 0 \, ,
\end{equation*}
$$\Leftrightarrow$$
\begin{equation*}
\label{e5_57_forma2_sistem_REG_2_GGGvarianta_infinit_dimensionale_DEM_pc_fix_cont_bar_widetilde}
   \left[\begin{array}{l}
      B(0,\bar{\lambda},\bar{u})-\widetilde{\theta}_{0}-B(0,\bar{\mu}',\bar{w}') \\
      F(\bar{\lambda},\bar{u})-DF(\bar{\lambda},\bar{u})(\bar{\mu}',\bar{w}') \\
      B((0,\bar{\mu}_{i},\bar{w}_{i})-(0,\bar{\mu}_{i}',\bar{w}_{i}'))-\delta_{i}^{q+m} \\
      D(F(\bar{\lambda},\bar{u})
         -DF(\bar{\lambda},\bar{u})(\bar{\mu}',\bar{w}'))((\bar{\mu}_{i},\bar{w}_{i})-(\bar{\mu}_{i}',\bar{w}_{i}'))) \\
      \bar{\mathcal{B}}((0,\bar{v}_{k})-(0,\bar{v}_{k}'))-\delta_{k}^{n} \\
      D_{u}(F(\lambda,u)
         -DF(\bar{\lambda},\bar{u})(\bar{\mu}',\bar{w}'))(\bar{v}_{k}-\bar{v}_{k}')
   \end{array}\right]
   \ni 0 \, ,
\end{equation*}

\qquad
\end{proof}

\begin{lem}
\label{Lema_izom_tilde_dif} Assume that $DS(\widetilde{s}_{0})$ is
an isomorphism of $\Gamma$ onto $\Sigma$. If
\begin{equation}
\label{dem_cond_izom_tilde_dif}
   \gamma \|DS(\widetilde{s}_{0})
   -D\widetilde{S}_{0}(\widetilde{s}_{0})\|_{L(\Gamma,\Sigma)} < 1 \, ,
\end{equation}
then $D\widetilde{S}_{0}(\widetilde{s}_{0})$ is an isomorphism of
$\Gamma$ onto $\Sigma$.
\end{lem}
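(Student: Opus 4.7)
The plan is to recognize this as a direct instance of the perturbation-of-bijective-operators lemma already recorded in the paper, namely Lemma \ref{lema_Sir_Spaces_AF1982_Cor1}, so no new machinery is needed.

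First I would unpack the constant $\gamma$ via the definition (\ref{e_A2_17_TEXT_gamma_bar_gen}): applied to $S$ at $\widetilde{s}_{0}$ this gives $\gamma = \gamma_{S} = \widetilde{\gamma}(S,\widetilde{s}_{0},\Gamma,\Sigma) = \|DS(\widetilde{s}_{0})^{-1}\|_{L(\Sigma,\Gamma)}$, which is meaningful precisely because $DS(\widetilde{s}_{0})$ is assumed to be an isomorphism of $\Gamma$ onto $\Sigma$ (its inverse is continuous by the Banach open mapping theorem, or simply by the definition of isomorphism reminded in the paragraph preceding Lemma \ref{lema_A2_7_th_fc_impl_th_apl_inv}).

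Next I would apply Lemma \ref{lema_Sir_Spaces_AF1982_Cor1} with the identifications $E = \Gamma$, $F = \Sigma$, $T = DS(\widetilde{s}_{0})$, and $S = D\widetilde{S}_{0}(\widetilde{s}_{0})$ (noting the clash of the symbol $S$ between the lemma and the paper's function $S$; I would mention this to avoid ambiguity). The hypothesis $\|T^{-1}\|_{L(F,E)}\,\|T-S\|_{L(E,F)} < 1$ of that lemma is then exactly
\[
   \|DS(\widetilde{s}_{0})^{-1}\|_{L(\Sigma,\Gamma)}\,
   \|DS(\widetilde{s}_{0}) - D\widetilde{S}_{0}(\widetilde{s}_{0})\|_{L(\Gamma,\Sigma)}
   \;=\; \gamma\,\|DS(\widetilde{s}_{0}) - D\widetilde{S}_{0}(\widetilde{s}_{0})\|_{L(\Gamma,\Sigma)} \;<\; 1,
\]
which is our hypothesis (\ref{dem_cond_izom_tilde_dif}). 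The conclusion of Lemma \ref{lema_Sir_Spaces_AF1982_Cor1} is that $D\widetilde{S}_{0}(\widetilde{s}_{0})$ is bijective from $\Gamma$ onto $\Sigma$, with the norm estimate
\[
   \|D\widetilde{S}_{0}(\widetilde{s}_{0})^{-1}\|_{L(\Sigma,\Gamma)}
   \;\leq\; (1-q)^{-1}\,\gamma
\]
for any $q$ with $\gamma\,\|DS(\widetilde{s}_{0}) - D\widetilde{S}_{0}(\widetilde{s}_{0})\| \leq q < 1$. Bijectivity plus continuity of the inverse (again by the Banach open mapping theorem, as the continuous linear operator is between Banach spaces) give that $D\widetilde{S}_{0}(\widetilde{s}_{0})$ is an isomorphism of $\Gamma$ onto $\Sigma$, which is the desired conclusion.

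There is really no obstacle: the only mild bookkeeping point is to keep the symbol $S$ of the generic perturbation lemma separate from the map $S$ of the present paper (and the companion map $\widetilde{S}_{0}$). If one wished to also record a quantitative by-product for use elsewhere (for instance in an analogue of the final estimates in Theorem \ref{teorema_principala_spatii_infinit_dimensionale_widetilde_s_3_0_exact_inf}), the norm bound displayed above would be the natural output to keep.
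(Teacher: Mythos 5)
Your proof is correct and mathematically identical to the paper's: the paper's own proof is a one-line citation to Lemma 3.1 of Crouzeix--Rappaz, Theorem 6A.1 of Dontchev--Rockafellar, and Lemma IV.3.3 of Girault--Raviart, all of which are the same Banach perturbation lemma for invertible linear operators that you invoke via the paper's internal Lemma \ref{lema_Sir_Spaces_AF1982_Cor1}. Your choice to use Lemma \ref{lema_Sir_Spaces_AF1982_Cor1} is arguably tidier since it keeps the argument self-contained, but the underlying idea is the same.
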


\begin{proof}
This results from Lemma 3.1 \cite{CLBichir_bib_Cr_Ra1990}, Theorem
6A.1 \cite{CLBichir_bib_Dontchev_Rockafellar2009} and Lemma IV.3.3
\cite{CLBichir_bib_Gir_Rav1986}.

\qquad
\end{proof}

\begin{rem}
\label{observatia_diferentiala_G} Instead of a constant $\varrho$
in the equation
(\ref{e5_1_sol_widetilde_x_0h_Inv_Fc_Th_ec_DATA_introd_exact}), we
have obtained $\varrho$ in the form of a function
$\varrho(\lambda,u)$ $=$
$DF(\lambda,u)(\hat{\mu}_{0}',\hat{w}_{0}')$ in the equation
(\ref{e5_1_sol_widetilde_x_0h_Inv_Fc_Th_ec_DATA_introd_exact_prim}).

\end{rem}

\section{Future Work}
\label{sectiunea_CONCLUZII_0}

We have formulated some sufficient conditions for the existence of
an approximate equation
(\ref{e5_1_sol_widetilde_x_0h_Inv_Fc_Th_ec_DATA_introd_exact_th_h})
that has a bifurcation point of the same type as the bifurcation
point of a given exact equation (\ref{e5_1}).

In a further research that will continue the present one, we have
the following purposes:

(i) to prove that, given a function $F$ and an approximation
$F_{h}$ for this, under some conditions, if there exists
$\varrho_{h}$ (which is zero or nonzero) so that the equation
\begin{equation}
\label{e5_1_sol_widetilde_x_0h_Inv_Fc_Th_ec_DATA_introd_exact_th_h_CONCLUZII_0}
      F_{h}(\lambda_{h},u_{h})-\varrho_{h}
      = 0 \,
\end{equation}
has a bifurcation point, then there exists $\varrho$ such that the
equation
\begin{equation}
\label{e5_1_sol_widetilde_x_0h_Inv_Fc_Th_ec_DATA_introd_exact_CONCLUZII_0}
      F(\lambda,u) - \varrho = 0 \,
\end{equation}
has a bifurcation point of the same type as the bifurcation point
of
(\ref{e5_1_sol_widetilde_x_0h_Inv_Fc_Th_ec_DATA_introd_exact_th_h_CONCLUZII_0}).
This idea is inspired by a result from
\cite{CLBichir_bib_Ka_Ak1986} where Kantorovich and Akilov prove
that, given the linear operators that define an exact equation and
an approximate equation, under certain hypotheses, if the
approximate operator is an isomorphism, then the exact operator is
an isomorphism.

(ii) to formulate some algorithms so that, by studying the
approximate equation
\begin{equation}
\label{e5_9_introd_CONCLUZII_0}
   F_{h}(\lambda_{h},u_{h})=0 \, ,
\end{equation}
to decide if there exists $\varrho_{h}$ such that the equation
(\ref{e5_1_sol_widetilde_x_0h_Inv_Fc_Th_ec_DATA_introd_exact_th_h_CONCLUZII_0})
has a bifurcation point, to determine $\varrho_{h}$ (only if this
is necessary) and to determine the type of the bifurcation point
of
(\ref{e5_1_sol_widetilde_x_0h_Inv_Fc_Th_ec_DATA_introd_exact_th_h_CONCLUZII_0})
and hence of
(\ref{e5_1_sol_widetilde_x_0h_Inv_Fc_Th_ec_DATA_introd_exact_CONCLUZII_0}).
In the study for (ii), we will extend the methods introduced in
\cite{CLBichir_bib_PhDThesis2002}.

In this way, we can reduce the study of the qualitative aspects of
a bifurcation problem on infinite-dimensional Banach spaces to the
study of an approximate problem. Finally, the study can be
performed on a computer.

\appendix

\section{A formulation of $D\mathcal{G}(\widetilde{s}_{0},\widetilde{\phi}_{0}')(\overline{s},\overline{\phi}') - D\mathcal{G}(s,\phi')(\overline{s},\overline{\phi}')$}
\label{sectiunea_evaluarea lui_mu_L_h}

\begin{equation}
\label{appendix_e_A2_17_TEXT_mu}
   \Upsilon(\widetilde{s}_{0},\widetilde{\phi}_{0}',s,\phi',\overline{s},\overline{\phi}')
   = D\mathcal{G}(\widetilde{s}_{0},\widetilde{\phi}_{0}')(\overline{s},\overline{\phi}')
   - D\mathcal{G}(s,\phi')(\overline{s},\overline{\phi}')
\end{equation}
\begin{equation*}
\label{abc_e5_57_forma2_sistem_REG_2_GGGvarianta_infinit_dimensionale_DEM_dif_tilde}
   = \frac{1}{2}DS(\widetilde{s}_{0})\overline{s}
   - \frac{1}{2}D\Phi(\widetilde{x}_{0},\widetilde{\phi}_{0}')(\overline{x},\overline{\phi}')
   - (1-\alpha) \Phi(\widetilde{x}_{0},\xi(\overline{f},\overline{g}_{i},\overline{e}_{k}))
   + (1-\alpha) \Phi(\widetilde{x}_{0},\xi(\overline{g}',\overline{g}_{i}',\overline{e}_{k}'))
\end{equation*}
\begin{equation*}
\label{abc_e5_57_forma2_sistem_REG_2_GGGvarianta_infinit_dimensionale_DEM_dif}
   -\frac{1}{2}DS(s)\overline{s}
   + \frac{1}{2}D\Phi(x,\phi')(\overline{x},\overline{\phi}')
   + (1-\alpha) \Phi(\widetilde{x}_{0},\xi(\overline{f},\overline{g}_{i},\overline{e}_{k}))
   - (1-\alpha) \Phi(\widetilde{x}_{0},\xi(\overline{g}',\overline{g}_{i}',\overline{e}_{k}'))
\end{equation*}

\begin{equation*}
\label{abc_e5_57_forma2_sistem_REG_2_GGGvarianta_infinit_dimensionale_DEM_dif_tilde}
   = \frac{1}{2}(DS(\widetilde{s}_{0})\overline{s}
   -DS(s)\overline{s})
\end{equation*}
\begin{equation*}
\label{abc_e5_57_forma2_sistem_REG_2_GGGvarianta_infinit_dimensionale_DEM_dif_tilde}
   - \frac{1}{2}(D\Phi(\widetilde{x}_{0},\widetilde{\phi}_{0}')(\overline{x},\overline{\phi}')
   -D\Phi(x,\phi')(\overline{x},\overline{\phi}'))
\end{equation*}
\begin{equation*}
\label{abc_e5_57_forma2_sistem_REG_2_GGGvarianta_infinit_dimensionale_DEM_dif_tilde}
   - (1-\alpha) \Phi(\widetilde{x}_{0},\xi(\overline{f},\overline{g}_{i},\overline{e}_{k}))
   + (1-\alpha) \Phi(\widetilde{x}_{0},\xi(\overline{g}',\overline{g}_{i}',\overline{e}_{k}'))
\end{equation*}
\begin{equation*}
\label{abc_e5_57_forma2_sistem_REG_2_GGGvarianta_infinit_dimensionale_DEM_dif}
   + (1-\alpha) \Phi(\widetilde{x}_{0},\xi(\overline{f},\overline{g}_{i},\overline{e}_{k}))
   - (1-\alpha) \Phi(\widetilde{x}_{0},\xi(\overline{g}',\overline{g}_{i}',\overline{e}_{k}')) \, .
\end{equation*}

\begin{equation*}
\label{abc_e5_27_S_3_0_tilde}
   \Upsilon(\widetilde{s}_{0},\widetilde{\phi}_{0}',s,\phi',\overline{s},\overline{\phi}')
      = \frac{1}{2}(\left[\begin{array}{l}
      B(\overline{x}) \\
      DG(\widetilde{x}_{0})\overline{x} \\
      B(\overline{y}_{i}) \\
      D^{2}F(\widetilde{\lambda}_{0},\widetilde{u}_{0})
         ((\widetilde{\mu}_{i,0},\widetilde{w}_{i,0}),(\overline{\lambda},\overline{u}))
            +DG(\widetilde{x}_{0})\overline{y}_{i} \\
      \bar{\mathcal{B}}(\overline{\mathrm{z}}_{k}) \\
      D_{(\lambda,u)}(D_{u}F(\widetilde{\lambda}_{0},\widetilde{u}_{0})\widetilde{v}_{k,0})
         (\overline{\lambda},\overline{u})
            +H(\widetilde{\lambda}_{0},\widetilde{u}_{0},\overline{\mathrm{z}}_{k})
      \end{array}\right]
\end{equation*}
\begin{equation*}
\label{abc_e5_27_S_3}
      -\left[\begin{array}{l}
      B(\overline{x}) \\
      DG(x)\overline{x} \\
      B(\overline{y}_{i}) \\
      D^{2}F(\lambda,u)
         ((\mu_{i},w_{i}),(\overline{\lambda},\overline{u}))
            +DG(x)\overline{y}_{i} \\
      \bar{\mathcal{B}}(\overline{\mathrm{z}}_{k}) \\
      D_{(\lambda,u)}(D_{u}F(\lambda,u)v_{k})
         (\overline{\lambda},\overline{u})
            +H(\lambda,u,\overline{\mathrm{z}}_{k})
      \end{array}\right])
\end{equation*}
\begin{equation*}
\label{abc_e5_57_Phi_3_dif_0_tilde}
      - \frac{1}{2}(\left[\begin{array}{l}
      B(\overline{y}') \\
      D^{2}F(\widetilde{\lambda}_{0},\widetilde{u}_{0})
         ((\widetilde{\mu}_{0}',\widetilde{w}_{0}'),(\overline{\lambda},\overline{u}))
            +DG(\widetilde{x}_{0})\overline{y}' \\
      B(\overline{y}_{i}') \\
      D^{2}F(\widetilde{\lambda}_{0},\widetilde{u}_{0})
         ((\widetilde{\mu}_{i,0}',\widetilde{w}_{i,0}'),(\overline{\lambda},\overline{u}))
            +DG(\widetilde{x}_{0})\overline{y}_{i}'
            +A \\
      \bar{\mathcal{B}}(\overline{\mathrm{z}}_{k}') \\
      D_{(\lambda,u)}(D_{u}F(\widetilde{\lambda}_{0},\widetilde{u}_{0})\widetilde{v}_{k,0}')
         (\overline{\lambda},\overline{u})
            +H(\widetilde{\lambda}_{0},\widetilde{u}_{0},\overline{\mathrm{z}}_{k}')
            +B
      \end{array}\right]
\end{equation*}
\begin{equation*}
\label{abc_e5_57_Phi_3_dif}
      -\left[\begin{array}{l}
      B(\overline{y}') \\
      D^{2}F(\lambda,u)
         ((\mu',w'),(\overline{\lambda},\overline{u}))
            +DG(x)\overline{y}' \\
      B(\overline{y}_{i}') \\
      D^{2}F(\lambda,u)
         ((\mu_{i}',w_{i}'),(\overline{\lambda},\overline{u}))
            +DG(x)\overline{y}_{i}'
            +C \\
      \bar{\mathcal{B}}(\overline{\mathrm{z}}_{k}') \\
      D_{(\lambda,u)}(D_{u}F(\lambda,u)v_{k}')
         (\overline{\lambda},\overline{u})
            +H(\lambda,u,\overline{\mathrm{z}}_{k}')
            +D
      \end{array}\right]) \, .
\end{equation*}

\section*{Acknowledgments}
\label{sectiunea_Acknowledgment}

The author owes gratitude to Mr. Costache Bichir and Mrs. Georgeta
Bichir for their moral and financial support without which this
paper would not have existed.

Many thanks to Mrs. Georgeta Stoenescu for her advice with the
English language in Sections \ref{sectiunea_0_introduction} and
\ref{sectiunea_01_O_formulare_pe_spatii_infinit_dimensionale_1}.


\end{document}